\theoremstyle{plain}
\newtheorem{theorem}{Theorem}[section]
\newtheorem{corollary}[theorem]{Corollary}
\newtheorem{lemma}[theorem]{Lemma}
\newtheorem{definition}[theorem]{Definition}
\newtheorem{proposition}[theorem]{Proposition}
\newtheorem{remark}[theorem]{Remark}
\numberwithin{equation}{section}
\newcommand{\la}{\left\langle}
\newcommand{\ra}{\right\rangle}
\newcommand{\RNum}[1]{\uppercase\expandafter{\romannumeral #1\relax}}
\title[Stability of shock profiles for the Navier-Stokes-Poisson System]{Long-Time Behavior towards Shock Profiles for the Navier-Stokes-Poisson System}  
\author[M.-J. Kang, B. Kwon, and W. Shim]{Moon-Jin Kang, Bongsuk Kwon, and Wanyong Shim}
\address{(Moon-Jin Kang) Department of Mathematical Sciences, Korea Advanced Institute of Science and Technology, Daejeon, 34141, Korea}
\email{moonjinkang@kaist.ac.kr}
\address{(Bongsuk Kwon) Department of Mathematical Sciences, Ulsan National Institute of Science and Technology, Ulsan, 44919, Korea}
\email{bkwon@unist.ac.kr}
\address{(Wanyong Shim) Department of Mathematical Sciences, Korea Advanced Institute of Science and Technology, Daejeon, 34141, Korea}
\email{wyshim25@kaist.ac.kr}
\date{\today}
\subjclass{35Q35; 35C07; 35B35; 35B40}
\thanks{\textbf{Acknowledgment.} M.-J. Kang was partially supported by the National Research Foundation of Korea (NRF-2019R1A5A1028324 and  NRF-RS-2024-00361663). B. Kwon was supported by Basic Science Research Program through the National Research Foundation of Korea (NRF) funded by the Ministry of science, ICT and future planning (NRF-2020R1A2C1A01009184). W. Shim was supported by the National Research Foundation of Korea (NRF) grant funded by the Korean government (MSIT) (No.2022R1A4A1032094).}
\begin{document}

\begin{abstract}
We study the stability of shock profiles in one spatial dimension for the isothermal Navier-Stokes-Poisson (NSP) system, which describes the dynamics of ions in a collision-dominated plasma. The NSP system admits a one-parameter family of smooth traveling waves, called shock profiles, for a given far-field condition satisfying the Lax entropy condition. In this paper, we prove that if the initial data is sufficiently close to a shock profile in $H^2$-norm, then the global solution of the Cauchy problem tends to the smooth manifold formed by the parametrized shock profiles as time goes to infinity. This is achieved using the method of $a$-contraction with shifts, which does not require the zero mass condition.\\

\noindent{\it Keywords}:
Asymptotic behavior; Stability; Navier-Stokes-Poisson system; Shock profile; The method of $a$-contraction with shifts

\end{abstract}

\maketitle

\tableofcontents

\section{Introduction}

\subsection{The Navier-Stokes-Poisson system}
We consider an isothermal plasma in one spatial dimension, where the dynamics are primarily governed by collisions, as in a dusty plasma. The dynamics of ions in the \textit{collision-dominated plasma} can be described by the one-dimensional compressible Navier-Stokes-Poisson system in Lagrangian mass coordinates \cite{GGKS}:
\begin{subequations} \label{NSP}
\begin{align}
& \label{NSP11} v_t - u_x = 0,\\
& \label{NSP22} u_t + p(v)_x   =  \left( \frac{\nu u_x}{v} \right)_x - \frac{\phi_x}{v}, \\
& \label{NSP33} - \lambda^2 \left( \frac{\phi_x}{v} \right)_x = 1 - ve^{\phi}
\end{align}
\end{subequations}
for $t>0$ and $x \in \mathbb{R}$. Here $v=1/n$ is the specific volume, for $n>0$ the density of ions, and $\phi$ is  the electric potential. The function $p(v)$ denotes the pressure given by $p(v) = K v^{-1}$. The constants $K>0$, $\nu>0$ and $\lambda>0$ represent the absolute temperature, viscosity coefficient and Debye length, respectively. For simplicity, we normalize by setting $K = 1$, $\nu = 1$, and $\lambda = 1$, as all the results in this paper hold for any positive constants $K$, $\nu$, and $\lambda$. In the Poisson equation \eqref{NSP33}, we have assumed that the electron density $n_e$ is determined by the Boltzmann relation, $n_e = e^\phi$, which is justified by the physical observation that electrons reach the equilibrium state much faster than ions for varying potential in a plasma \cite{Ch}.

To study the stability of shock profiles in the NSP system \eqref{NSP}, we consider the corresponding Cauchy problem with the initial data:
\begin{equation} \label{ic}
(v,u)(0,x) = (v_0,u_0)(x), \quad \lim_{x \rightarrow \pm \infty} (v_0,u_0)(x) = (v_\pm, u_\pm)
\end{equation}
for given constant end states $(v_\pm, u_\pm)$. The far-field data of $\phi$ is given by
\begin{equation} \label{qnc}
\lim_{x \rightarrow \pm \infty} \phi(t,x) = \phi_\pm,
\end{equation}
under the quasi-neutral condition $\phi_\pm =  \log v_\pm^{-1}$ at $x= \pm\infty$.

Before turning our attention to solutions to the Cauchy problem \eqref{NSP}-\eqref{ic}, we first rewrite the NSP system \eqref{NSP} in divergence form using \eqref{NSP33}. This reformulation is crucial in our analysis, as will be noted in Section 2.5. From the Poisson equation \eqref{NSP33}, we derive the following relation:
\begin{equation} \label{trf}
\frac{\phi_x}{v} = \left[ \frac{1}{v} + \frac{\lambda^2}{v} \left( \frac{\phi_x}{v} \right)_x - \frac{\lambda^2}{2} \left( \frac{\phi_x}{v} \right)^2 \right]_x.
\end{equation}
Substituting this into \eqref{NSP22}, the system \eqref{NSP} is rewritten as
\begin{subequations} \label{NSP'}
\begin{align}
& \label{NSP1} v_t - u_x = 0,\\
& \label{NSP2} u_t + \tilde{p}(v)_x  = \left( \frac{ u_x}{v} \right)_x + \Phi(v,\phi)_x, \\
& \label{NSP3} - \left( \frac{\phi_x}{v} \right)_x = 1 - ve^{\phi},
\end{align}
\end{subequations}
where the modified pressure $\tilde{p}$ and the electric force $\Phi$ arising from non-neutral plasma density are given by
\begin{equation} \label{p(v)}
\tilde{p}(v) := p(v) + \frac{1}{v}
\end{equation}
and
\begin{equation} \label{Phi}
\Phi (v,\phi) := \frac{1}{2} \left( \frac{\phi_x}{v} \right)^2 - \frac{1}{v}  \left( \frac{\phi_x}{v} \right)_x,
\end{equation}
respectively.

\subsection{Shock profiles and orbital stability}
We consider the shock profiles, a special class of traveling wave solutions of the form $(v,u,\phi)(t,x)=(\bar{v},\bar{u},\bar{\phi})(x - \sigma t)$ to \eqref{NSP}, where $\sigma$ denotes the shock speed. These waves are smooth monotone profiles that connect two distinct constant states at infinity, $(v_-, u_-, \phi_-)$ and $(v_+, u_+, \phi_+)$. We substitute the ansatz $(\bar{v},\bar{u},\bar{\phi})(x-\sigma t)$ into \eqref{NSP} to obtain the governing equations for the shock profiles:
\begin{equation} \label{shODE}
\begin{split}
& -\sigma \bar{v}' - \bar{u}' =0, \\
& - \sigma \bar{u}' + p(\bar{v})' = \left( \frac{\bar{u}'}{\bar{v}} \right)' -  \frac{\bar{\phi}'}{\bar{v}}, \\
& - \left( \frac{\bar{\phi}'}{\bar{v}} \right)' = 1 - \bar{v} e^{\bar{\phi}}
\end{split}
\end{equation}
with the far-field condition
\begin{equation} \label{ffc}
\lim_{\xi \rightarrow \pm \infty} (\bar{v},\bar{u},\bar{\phi}) (\xi) = (v_\pm,u_\pm,\phi_\pm),
\end{equation}
where $ \textstyle \xi := x - \sigma t$ and $'$ denotes $\textstyle \frac{d}{d \xi}$. 
Integrating the first two equations of \eqref{shODE} over $\mathbb{R}$, along with \eqref{qnc} and \eqref{trf}, we obtain the Rankine-Hugoniot condition:
\begin{equation} \label{RH}
\begin{cases}
-\sigma (v_+ - v_-) - (u_+ - u_-) = 0, \\
-\sigma (u_+ - u_-) - \left( \tilde{p}(v_+) - \tilde{p}(v_-) \right) = 0,
\end{cases}
\end{equation} 
which determines the shock speed
\[
\sigma = \sigma_\pm := \pm \sqrt{- \frac{\tilde{p}(v_+) - \tilde{p}(v_-)}{v_+ - v_-}}.
\]
In this paper, without loss of generality, we restrict ourselves to the 2-shock profile with $\sigma = \sigma_+$, which satisfies the Lax entropy condition $\sqrt{-\tilde{p}'(v_+)} < \sigma_+ < \sqrt{-\tilde{p}'(v_-)}$, or equivalently,
\begin{equation} \label{Laxc2} 
v_- < v_+. 
\end{equation}
For this case, the existence and uniqueness of small-amplitude shock profiles have been treated in \cite{DLZ}:

\begin{proposition} [\cite{DLZ}] \label{Prop.1.1}
For given $(v_-,u_-)$ with $v_- >0$, there exists a positive constant $\delta_0$ such that if $(v_+,u_+)$ satisfies \eqref{RH}, \eqref{Laxc2} and
\[
\lvert v_+ - v_- \rvert \sim \lvert u_+ - u_- \rvert =: \delta_S < \delta_0,
\]
then \eqref{shODE} admits a unique (up to a shift) shock profile $(\bar{v},\bar{u},\bar{\phi})(\xi)$ satisfying \eqref{ffc} and
\begin{equation} \label{vup'}
\sigma \bar{v}' = - \bar{u}' >0, \quad \underline{C} \bar{u}' \leq \bar{\phi}' \leq \overline{C} \bar{u}'
\end{equation}
for some positive constants $\underline{C},\overline{C}$. Moreover, the unique solution satisfying $ \textstyle \bar{v}(0) = \frac{v_-+v_+}{2}$ verifies the derivative bounds
\begin{equation} \label{shderiv}
\begin{cases}
\displaystyle \bigg\lvert \frac{d^k}{d\xi^k} (\bar{v}-v_+, \bar{u}-u_+, \bar{\phi} - \phi_+ ) \bigg\rvert \leq C_k \delta_S^{k+1} e^{-\theta \delta_S \lvert \xi \rvert}, & \xi > 0 \\
\displaystyle \bigg\lvert \frac{d^k}{d\xi^k} (\bar{v}-v_-, \bar{u}-u_-, \bar{\phi} - \phi_- ) \bigg\rvert \leq C_k \delta_S^{k+1} e^{-\theta \delta_S \lvert \xi \rvert}, & \xi < 0
\end{cases}
\end{equation}
for $ k \in \mathbb{N} \cup \{0\}$, where $C_k>0$ and $\theta>0$ are generic constants.
\end{proposition}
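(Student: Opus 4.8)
The plan is to use the divergence form \eqref{NSP'} to reduce the traveling-wave problem to a \emph{scalar} first-order ODE for $\bar v$ coupled to the elliptic Poisson equation for $\bar\phi$, and then to treat the Poisson coupling as a strictly higher-order perturbation of the classical scalar viscous-shock construction. First, substituting the ansatz into the traveling-wave system for \eqref{NSP'} and integrating the first two equations over $(-\infty,\xi)$, using \eqref{RH}, the quasi-neutral condition, and $\bar\phi',(\bar\phi'/\bar v)'\to0$ at $-\infty$, one eliminates $\bar u$ via $\bar u-u_\pm=-\sigma(\bar v-v_\pm)$ to obtain
\begin{equation*}
-\sigma\frac{\bar v'}{\bar v}=g(\bar v)-\Phi(\bar v,\bar\phi),\qquad g(v):=\sigma^2(v-v_-)+\tilde p(v)-\tilde p(v_-),
\end{equation*}
together with $-(\bar\phi'/\bar v)'=1-\bar v e^{\bar\phi}$ and $(\bar v,\bar\phi)(\pm\infty)=(v_\pm,\phi_\pm)$. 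Since $\tilde p$ is strictly convex and $g(v_-)=g(v_+)=0$ by \eqref{RH}, the Lax condition \eqref{Laxc2} forces $g'(v_-)<0<g'(v_+)$ and $g<0$ on $(v_-,v_+)$; hence, after dropping the term $\Phi$ (shown below to be higher order), the equation is a scalar viscous-shock ODE $\bar v'=-\sigma^{-1}\bar v\,g(\bar v)$ whose right-hand side vanishes at $v_\pm$ with the correct unstable/stable signs, so it admits a unique (up to translation) increasing heteroclinic $\bar v_0$ from $v_-$ to $v_+$, with linearized rate at $v_\pm$ of size $|g'(v_\pm)|\sim\delta_S$, so that $\bar v_0-v_\pm=O(\delta_S e^{-\theta\delta_S|\xi|})$.

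Next, given a monotone $\bar v$ near $\bar v_0$, I would solve the Poisson equation for $\bar\phi$ as a functional of $\bar v$. Setting $\psi:=\bar\phi+\log\bar v$, so that $\psi(\pm\infty)=0$ and $1-\bar v e^{\bar\phi}=1-e^\psi$, the equation becomes $(\psi'/\bar v)'=e^\psi-1+(\bar v'/\bar v^2)'$, a semilinear second-order ODE whose linearization $\psi\mapsto(\psi'/\bar v)'-\psi$ is invertible on exponentially weighted spaces with constants independent of $\delta_S$ (the $-\psi$ term supplies $O(1)$ coercivity); a contraction argument then gives a unique small solution $\psi=\psi[\bar v]$. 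Since the forcing $(\bar v'/\bar v^2)'$ is controlled by $\bar v''=O(\delta_S^3)$, one obtains $\psi=O(\delta_S^3e^{-\theta\delta_S|\xi|})$, a Lipschitz bound $\|\psi[\bar v_1]-\psi[\bar v_2]\|\leq C\delta_S\|\bar v_1-\bar v_2\|$, and --- the key structural point --- $\Phi(\bar v,\bar\phi[\bar v])=\frac12(\bar\phi'/\bar v)^2+(1-e^\psi)/\bar v=O(\delta_S^3e^{-\theta\delta_S|\xi|})$, one power of $\delta_S$ smaller than $g(\bar v)=O(\delta_S^2)$ along the profile; moreover $\bar\phi'=\psi'-\bar v'/\bar v$ is dominated by $-\bar v'/\bar v$, which together with $\bar v'=-\bar u'/\sigma>0$ yields $\bar\phi'/\bar u'\to1/(\sigma v_\pm)>0$ and hence \eqref{vup'}.

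Substituting $\bar\phi=\bar\phi[\bar v]$ back leaves the single nonlocal scalar equation $-\sigma\bar v'/\bar v=g(\bar v)-\Phi(\bar v,\bar\phi[\bar v])$. Writing $\bar v=\bar v_0+w$, imposing the normalization $\bar v(0)=\frac{v_-+v_+}{2}$ to fix the translation, and inverting the linearization of $\bar v'=-\sigma^{-1}\bar v\,g(\bar v)$ modulo its one-dimensional kernel $\mathrm{span}\,\bar v_0'$, I would solve for $w$ by a contraction in a $C^1$ space with weight $e^{\theta\delta_S|\xi|}$; the $O(\delta_S^3)$ size of the forcing against the $O(\delta_S)$ spectral gap forces $w=O(\delta_S^2e^{-\theta\delta_S|\xi|})$, giving existence and uniqueness up to a shift. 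The bounds \eqref{shderiv} then follow by bootstrapping the scalar ODE: $g(\bar v)=g'(v_\pm)(\bar v-v_\pm)+O((\bar v-v_\pm)^2)=O(\delta_S^2e^{-\theta\delta_S|\xi|})$ and $\Phi=O(\delta_S^3e^{-\theta\delta_S|\xi|})$ give $\bar v'=O(\delta_S^2e^{-\theta\delta_S|\xi|})$, and differentiating $k$ times --- each derivative of $g(\bar v)$ contributing a factor $g'(\bar v)=O(\delta_S)$ --- yields $|\bar v^{(k)}|\leq C_k\delta_S^{k+1}e^{-\theta\delta_S|\xi|}$, with the bounds for $\bar u=u_\pm-\sigma(\bar v-v_\pm)$ and $\bar\phi-\phi_\pm=\psi-\log(\bar v/v_\pm)$ inherited from those for $\bar v$ and $\psi$.

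The main obstacle is the nonlocal coupling through the Poisson equation: one must establish that the elliptic sub-problem is well-posed on all of $\mathbb{R}$ in exponentially weighted norms with $\delta_S$-independent constants, that its solution depends Lipschitz-continuously on $\bar v$ with a small constant, and above all that the electric-force term $\Phi$ is genuinely one order higher in $\delta_S$ than $g$ --- so that the exact Lax geometry of the scalar equation, and with it existence, uniqueness, and decay of the profile, survives the perturbation. Once this structural estimate is in hand, the remaining steps are routine perturbations of the standard scalar viscous-shock theory.
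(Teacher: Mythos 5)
The paper does not prove this proposition at all: it is imported verbatim from \cite{DLZ}, where the profile is constructed by writing \eqref{shODE} as a first-order dynamical system and running a phase-plane/center-manifold analysis near the two degenerate equilibria. Your route --- integrate the divergence form to get the scalar reduced equation $-\sigma\bar v'/\bar v=g(\bar v)-\Phi$, solve the Poisson equation for $\psi=\bar\phi+\log\bar v$ as a nonlocal functional of $\bar v$ with $O(1)$ coercivity from the screening term, verify that $\Phi=O(\delta_S^3)$ is one order below $g(\bar v)=O(\delta_S^2)$, and close with a Lyapunov--Schmidt contraction around the pure Navier--Stokes profile --- is a legitimately different and essentially correct alternative. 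The key structural observations (the Rankine--Hugoniot identity $g(v_\pm)=0$, convexity of $\tilde p$ giving the Lax sign structure, $\sigma^2=2/(v_+v_-)=O(1)$, $\bar\phi'/\bar u'\to 1/(\sigma v_\pm)>0$, and the matching exponential rates needed to make $\Phi/g=O(\delta_S)$ uniformly in $\xi$) are all right. What the cited dynamical-systems proof buys is that it never has to confront the loss of derivatives through the nonlocal coupling; what your approach buys is a quantitative perturbation statement ($\bar v=\bar v_0+O(\delta_S^2)$) that makes the derivative bounds \eqref{shderiv} more transparent.

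Two points need repair. First, the fixed point is not closed as stated: $\Phi(\bar v,\bar\phi[\bar v])$ depends on $\bar v''$ through the forcing $(\bar v'/\bar v^2)'$ of the $\psi$-equation, while the reduced equation only returns $\bar v'$; your Lipschitz bound $\lVert\psi[\bar v_1]-\psi[\bar v_2]\rVert\le C\delta_S\lVert\bar v_1-\bar v_2\rVert$ must therefore be stated in a weighted $C^2$ (or $H^2$) norm, and recovering $\bar v''$ requires differentiating the reduced equation, which reintroduces $\Phi'$ and hence $\psi''$; this derivative bookkeeping has to be set up explicitly. Second, and more seriously, the bootstrap for \eqref{shderiv} fails for $k\ge3$ as written. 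The Green's function of $(\,\cdot\,'/\bar v)'-1$ varies on an $O(1)$ scale, so the naive weighted elliptic estimate only gives $\psi^{(j)}=O(\delta_S^3)$ for every $j$, not $O(\delta_S^{j+3})$. Since $\Phi''$ contains $e^{\psi}\psi''/\bar v$, the term $\sigma^{-1}\bar v\,\Phi''$ in $\bar v'''$ would then be $O(\delta_S^3)$ rather than the required $O(\delta_S^4)$, and likewise $\bar\phi'''=\psi'''+\cdots$ would not satisfy \eqref{shderiv}. The fix is to differentiate the $\psi$-equation $j$ times and bootstrap jointly with the $\bar v^{(j+2)}$ bounds, using that the differentiated forcing gains a factor $\delta_S$ per derivative and that the commutators carry extra factors of $\bar v'=O(\delta_S^2)$; your one-line argument tracks only the factor $g'(\bar v)=O(\delta_S)$ and omits this.
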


This proposition demonstrates that the shock profiles form a one-parameter family $(\bar{v}, \bar{u}, \bar{\phi})(x - \sigma t - \delta)$, where $\delta \in \mathbb{R}$ represents a shift. This shift results from the translational invariance of the shock profiles as solutions to \eqref{shODE}-\eqref{ffc}. Given this invariance, it is natural to investigate their orbital stability. More precisely, we define the orbital stability of the shock profiles as follows:

\begin{definition} [Orbital stability] \label{def:orbit} 
We say that the shock profile $(\bar{v},\bar{u},\bar{\phi})$ is orbitally stable, if for any $\varepsilon>0$ there exists $\gamma>0$ such that $\lVert (v_0 - \bar{v},u_0-\bar{u}) \rVert < \gamma$ implies
\[
\inf_{\delta \in \mathbb{R}} \lVert (v,u,\phi)(t,\cdot) - (\bar{v},\bar{u},\bar{\phi})(\cdot - \sigma t - \delta) \rVert < \varepsilon \quad \text{for all } t>0,
\]
where $(v,u,\phi)$ is a solution to \eqref{NSP}-\eqref{ic}.
\end{definition}

\subsection{Literature}
Studies on the stability of shock profiles have generated a considerable amount of literature over the past few decades, with particular focus on viscous shock profiles in the Navier-Stokes (NS) equations.
 We provide an overview of several notable works for weak shocks in one-dimensional NS equations. This problem was first studied by Matsumura and Nishihara \cite{MN}. In their work, the authors proved time-asymptotic stability under perturbations with zero integral, the so-called zero mass condition, using the anti-derivative method (see also \cite{Go, KM}). Later, this zero mass condition was successfully removed using pointwise estimate methods by Liu \cite{L}, Liu and Zeng \cite{LZ}, and Szepessy and Xin \cite{SX}. On the other hand, Howard and Zumbrun \cite{HZ}, and Mascia and Zumbrun \cite{MZ, MZ1} also proved the linear and nonlinear stability of viscous shock waves, without the zero mass condition, by establishing the Green function for the associated linearized equations. Recently, Kang and Vasseur \cite{KV3} obtained $L^2$ stability for the shocks by using the method of $a$-contraction with shifts (see also \cite{KV4,KV5,KVW2,KVW3}). Their result in \cite{KV3} also does not require the zero mass assumption.

For the one-dimensional NSP shocks, Duan, Liu and Zhang \cite{DLZ} proved the existence and asymptotic stability by using the classical anti-derivative method. Zhao \cite{Zh} extended the stability result to the case of large initial density oscillations, and Li, Mei and Yuan \cite{LMY} showed stability under space-periodic perturbations.

We note that the results in \cite{DLZ, Zh, LMY} rely on zero mass type assumptions. In this paper, we establish the nonlinear stability of shock profiles without such restrictions, incorporating the method of $a$-contraction with shifts.

\subsection{Main result}
We present our main theorem. 
\begin{theorem} \label{Main}
Let $(\bar{v},\bar{u},\bar{\phi})$ be the shock profile solution, described in Proposition \ref{Prop.1.1} with its strength $ \delta_S$.  There exist positive constants $\delta_1$ and $\varepsilon_0$ such that if $\delta_S<\delta_1$ and the initial data $(v_0,u_0)$ satisfies
\[
\lVert (v_0 - \bar{v}, u_0-\bar{u} ) \rVert_{H^2(\mathbb{R})} < \varepsilon_0, 
\]
then the Cauchy problem \eqref{NSP}-\eqref{ic} admits a unique global-in-time solution $(v,u,\phi)$ satisfying 
\[
\begin{split}
v(t,x) - \bar{v}(x-\sigma t - X(t)) & \in C \left( 0,\infty; H^2(\mathbb{R}) \right), \\
u(t,x) - \bar{u}(x-\sigma t - X(t)) & \in C \left( 0,\infty; H^2(\mathbb{R}) \right), \\
\phi(t,x) - \bar{\phi}(x-\sigma t - X(t)) & \in C \left( 0,\infty; H^3(\mathbb{R}) \right)
\end{split}
\]
for some shift function $X(t)$. Moreover, the solution $(v,u,\phi)(t,x)$ and shift $X(t)$ satisfy
\begin{equation} \label{behavior}
\lim_{t \rightarrow +\infty }\lVert (v,u,\phi)(t,\cdot) - (\bar{v},\bar{u},\bar{\phi})(\cdot - \sigma t - X(t)) \rVert_{L^\infty} = 0
\end{equation}
and
\begin{equation} \label{Xdecay}
\lim_{t \rightarrow +\infty} \lvert \dot{X}(t) \rvert =0. 
\end{equation}
 \end{theorem}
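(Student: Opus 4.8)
The plan is to adapt the method of $a$-contraction with shifts, developed by Kang--Vasseur for the Navier--Stokes equations, to the Navier--Stokes--Poisson system in its divergence form \eqref{NSP'}. The first step is to establish local-in-time well-posedness of the Cauchy problem \eqref{NSP}--\eqref{ic} in the perturbation class, treating the Poisson equation \eqref{NSP3} as an elliptic problem that, given $v$ bounded away from zero, determines $\phi$ (and hence $\phi_x/v$ and the forcing term $\Phi(v,\phi)$) with a gain of regularity; this is why the $\phi$-component naturally lives one derivative higher, in $H^3$, than $(v,u)$. Standard energy estimates for the quasilinear hyperbolic--parabolic system, combined with elliptic regularity for $\phi$ and a continuation argument, yield a local solution, and the global statement will follow from the uniform a priori estimate produced by the contraction argument.

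Next I would introduce the shift $X(t)$ through an ODE of the form $\dot X(t) = $ (something involving the weighted energy/flux of the perturbation), chosen so that the weighted relative entropy functional $\int a(\xi)\,\eta\big((v,u)\,|\,(\bar v,\bar u)(\cdot-\sigma t-X(t))\big)\,d\xi$ — with $\eta$ the physical entropy of the hyperbolic part and $a$ a suitably chosen, slowly varying weight depending on the shock profile — is non-increasing up to controllable errors. The core computation is to differentiate this functional in time: the leading-order terms give the good terms, namely the hyperbolic "localized relative entropy" term $\int a'\,\eta(\cdot|\cdot)$ (negative by the choice of monotone $a$ adapted to the $2$-shock) and the parabolic dissipation $-\int a\,|\partial_x(\text{relative velocity})|^2/v$, while the shift contributes $-\dot X\int a\,\partial_\xi\eta(\cdot|\cdot)$ which one arranges to absorb the dangerous cross terms. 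The novel contribution relative to the pure Navier--Stokes case is the careful handling of the Poisson forcing $\Phi(v,\phi)_x$: one must expand $\Phi(v,\phi)-\Phi(\bar v,\bar\phi)$ around the profile, use the Poisson equation to write $\phi-\bar\phi$ and its derivatives in terms of $v-\bar v$ with elliptic gain, exploit the exponential localization \eqref{shderiv} of the profile derivatives, and show all resulting terms are either lower order in $\delta_S$ or absorbable into the good terms — this is where the quasi-neutral far-field condition $\phi_\pm=\log v_\pm^{-1}$ and the reformulation into $\tilde p$ and $\Phi$ are essential.

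Having closed the weighted estimate at the $L^2$ level, I would then propagate higher-order ($H^1$ and $H^2$) estimates for $(v-\bar v,u-\bar u)$ by differentiating the equations \eqref{NSP'} once and twice in $x$ and running standard parabolic energy estimates, using the $L^2$ control already obtained to dominate the nonlinear and Poisson-forcing terms; elliptic regularity then upgrades $\phi-\bar\phi$ to $H^3$. Combining the $L^2$ contraction estimate with these higher-order bounds gives the uniform-in-time a priori bound $\sup_{t}\|(v-\bar v,u-\bar u)(t)\|_{H^2}+\sup_t|\dot X(t)|\le C\varepsilon_0$, which both closes the continuation argument to yield the global solution and provides, via integration in time of the dissipation, that $\int_0^\infty\big(\|\partial_x(v-\bar v,u-\bar u)\|_{L^2}^2+|\dot X(t)|^2\big)\,dt<\infty$. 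A standard argument — controlling the time derivative of these quantities to get uniform continuity, then applying a Barbalat-type lemma — upgrades this to $\|\partial_x(v-\bar v)\|_{L^2}\to0$ and $|\dot X(t)|\to 0$, and interpolation with the uniform $H^2$ bound gives $\|(v,u)(t)-(\bar v,\bar u)(\cdot-\sigma t-X(t))\|_{L^\infty}\to0$; elliptic estimates then transfer the convergence to $\phi$, establishing \eqref{behavior} and \eqref{Xdecay}.

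The main obstacle I anticipate is controlling the Poisson forcing term in the relative-entropy computation: unlike the local nonlinearities in Navier--Stokes, $\Phi(v,\phi)_x$ is nonlocal in $v$ through the elliptic solve, it carries two spatial derivatives, and after subtracting the profile contribution one must show the difference is genuinely quadratic-or-higher in the perturbation \emph{and} compatible with the weight $a$ and the sign structure of the good terms. Getting the elliptic estimates sharp enough — in particular, exploiting the smallness of $\delta_S$ so that the extra terms do not overwhelm the (also $\delta_S$-small) hyperbolic gain $\int a'\eta(\cdot|\cdot)$ — is the delicate balancing act on which the whole argument hinges; the higher-order energy estimates and the final Barbalat argument, by contrast, should be comparatively routine.
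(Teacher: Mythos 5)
Your overall architecture coincides with the paper's: local existence, a shift $X(t)$ defined by an ODE, a weighted relative-entropy estimate at the zeroth order, elliptic estimates for the Poisson equation, higher-order energy estimates, a continuation argument, and a Barbalat-type argument plus Gagliardo--Nirenberg for \eqref{behavior} and \eqref{Xdecay}. However, there is a genuine gap at exactly the point you yourself flag as ``the delicate balancing act on which the whole argument hinges'': you offer no mechanism for absorbing the electric-force term, and the one you hint at (elliptic gain plus smallness of $\delta_S$) does not work. Concretely, if you run the $a$-contraction computation on the bare Navier--Stokes relative entropy $\tilde\eta(U|\bar U^X)=\tfrac12|\tilde u|^2+Q(v|\bar v^X)$, the forcing contributes $\int_{\mathbb R} a^X\tilde u\,\big(\Phi(v,\phi)-\Phi(\bar v^X,\bar\phi^X)\big)_x\,dx$, whose leading part after integration by parts is $\int_{\mathbb R} a^X\tilde u_x\tilde\phi_{xx}/(\bar v^X)^2\,dx$. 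This term is quadratic in the perturbation with an $O(1)$ coefficient (it carries no factor of $a^X_x$ or $\bar v^X_x$), has no sign, and cannot be closed by Young's inequality against the dissipation $\int a^X\tilde u_x^2/v$: the resulting $\int\tilde\phi_{xx}^2$ is controlled via the elliptic estimate only by $\|\tilde v_x\|_{L^2}^2$ with a non-small constant, and $\|\tilde v_x\|_{L^2}^2$ is not among the good terms available at the zeroth-order level (the good terms there are $G_1$, $G^S$, and $D=\int\tilde u_x^2$ only).

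The paper's resolution, which is the one substantive idea missing from your plan, is to \emph{modulate} the relative entropy before running the $a$-contraction argument: using $\tilde u_x=\tilde v_t-\dot X\bar v^X_x$ and the linearized Poisson equation, the dangerous term is rewritten as an exact time derivative of the quadratic quantities $-\tilde v\tilde\phi_{xx}/(\bar v^X)^2+e^{-\bar\phi^X}\tilde\phi_{xx}^2/(2(\bar v^X)^3)+e^{-\bar\phi^X}\tilde\phi_x^2/(2(\bar v^X)^2)$, up to errors that genuinely do carry small factors ($a^X_x$, $\bar v^X_x$, $\dot X$, or cubic products). These quantities are then incorporated into the functional itself --- this is the ``dissipative energy'' of Duan--Liu--Zhang --- and Lemma \ref{entsim} verifies that the augmented functional \eqref{relentropy} remains coercive, i.e.\ equivalent to $|\tilde u|^2+|\tilde v|^2+|\tilde\phi_x|^2+|\tilde\phi_{xx}|^2$. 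Without this modulation the zeroth-order estimate does not close. Two further points you underestimate as ``routine'': the error terms generated by the modulation involve $\tilde\phi_{xt}$ and $\tilde\phi_{xxt}$, so the elliptic estimates must be carried out for the time-differentiated Poisson equation as well (Lemmas \ref{Lemma:phit}--\ref{Lemma:phixt}); and the $H^1$ estimate for $\tilde v$ (obtained from the divergence form) unavoidably produces $\|\tilde v_{xx}\|_{L^2}^2$ on the right-hand side, so the $H^1$ and $H^2$ levels are coupled and must be closed simultaneously rather than propagated one derivative at a time.
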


\begin{remark} \label{Rem}
The time-asymptotic behavior \eqref{behavior} in Theorem \ref{Main} implies that the shock profiles for the NSP system \eqref{NSP} are orbitally stable in the sense of Definition \ref{def:orbit}. Moreover, from \eqref{Xdecay}, we see that the shift function $X(t)$ grows at most sublinearly as $ t \to + \infty$, i.e.,
\[
\lim_{t \rightarrow +\infty} \frac{X(t)}{t} = 0.
\]
This implies that the solution converges to a family of shifted shock profiles in a controlled manner.
\end{remark}

\vspace{1em}
\textit{Plan of the paper}: In Section 2, we present the statement of an a priori estimate for perturbations around the shock profile and use it to prove Theorem \ref{Main}. The proof of the a priori estimate is provided in the subsequent sections. In Section 3, we introduce a modulated relative functional that enables the application of the relative entropy method to the NSP system. In Section 4, we obtain an energy estimate using the method of $a$-contraction with shifts, together with the relative functional. Section 5 focuses on establishing elliptic estimates for the Poisson equation, while Section 6 provides higher-order estimates and completes the proof of the a priori estimate.

\section{A priori estimate and Proof of Theorem \ref{Main}}
In this section, we provide an a priori estimate for $H^2$-perturbations around the shock and outline the proof of Theorem \ref{Main}, with the detailed proof deferred to Appendices B and C.

\subsection{Local existence}
We first present the local existence of solutions to the NSP system \eqref{NSP}.
\begin{proposition} \label{local}
Let $\underline{v}$, $\underline{u}$, and $\underline{\phi}$ be smooth monotone functions such that
\[
\underline{v}(x) = v_\pm, \quad \underline{u}(x) = u_\pm, \quad \underline{\phi}(x) = \phi_\pm \quad \text{for} \ \pm x \geq 1.
\]
Then, for any constants $M_0,M_1,\underline{\kappa}_0,\overline{\kappa}_0,\underline{\kappa}_1$, and $\overline{\kappa}_1$ with
\[
0 < M_0 < M_1  \quad \text{and} \quad 0 < \underline{\kappa}_1 < \underline{\kappa}_0 < \overline{\kappa}_0 < \overline{\kappa}_1,
\]
there exists a finite time $T_0>0$ such that if the initial data $(v_0,u_0)$ satisfies
\[
\lVert v_0 - \underline{v} \rVert_{H^2(\mathbb{R})} + \lVert u_0 - \underline{u} \rVert_{H^2(\mathbb{R})} \leq M_0 \quad \text{and} \quad \underline{\kappa}_0 \leq v_0(x) \leq \overline{\kappa}_0 \quad \text{for all} \ x \in \mathbb{R},
\]
the Cauchy problem \eqref{NSP}-\eqref{ic}, with \eqref{qnc}, admits a unique solution $(v,u,\phi)$ on $[0,T_0]$ satisfying
\[
\begin{split}
v-\underline{v} & \in C \left( 0,T_0; H^2(\mathbb{R}) \right), \\
u-\underline{u} & \in C \left( 0,T_0; H^2(\mathbb{R}) \right) \cap L^2 \left( 0,T_0; H^3(\mathbb{R}) \right), \\
\phi-\underline{\phi} & \in C \left( 0,T_0; H^3 (\mathbb{R}) \right),
\end{split}
\]
with
\[
\lVert \left( v-\underline{v}, u-\underline{u} \right) \rVert_{L^\infty(0,T_0;H^2(\mathbb{R}))} + \lVert \phi -\underline{\phi} \rVert_{L^\infty(0,T_0;H^3(\mathbb{R}))} \leq M_1
\]
and
\[
\underline{\kappa}_1 \leq v(t,x) \leq \overline{\kappa}_1 \quad \text{for all} \ (t,x) \in [0,T_0] \times \mathbb{R}.
\]
\end{proposition}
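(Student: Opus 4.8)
The plan is to construct the solution via a standard Picard-type iteration built on two decoupled subproblems: an elliptic solve for the Poisson equation \eqref{NSP33} that recovers $\phi$ from $v$, and a linear uniformly parabolic solve for $u$. I work with the original formulation \eqref{NSP}, since once $\phi$ is known the term $-\phi_x/v$ in \eqref{NSP22} is a genuine lower-order forcing. Concretely, set $(v^0,u^0)=(\underline v,\underline u)$, and given $(v^n,u^n)$ with $v^n$ bounded above and below: (i) solve the Poisson equation for $\phi^n$; (ii) solve the \emph{linear} parabolic Cauchy problem
\[
u^{n+1}_t-\Big(\frac{u^{n+1}_x}{v^n}\Big)_x=-\,p(v^n)_x-\frac{\phi^n_x}{v^n},\qquad u^{n+1}(0,\cdot)=u_0;
\]
(iii) set $v^{n+1}(t,x):=v_0(x)+\int_0^t u^{n+1}_x(s,x)\,ds$, the unique solution of $v^{n+1}_t=u^{n+1}_x$ with datum $v_0$. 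The solution is then obtained as the limit of $(v^n,u^n,\phi^n)$.

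\emph{Elliptic step.} For $v$ with $v-\underline v\in H^2$ and $0<c\le v\le C$, I would first solve $-(\phi_x/v)_x=1-v e^{\phi}$ with $\phi(\pm\infty)=\phi_\pm=\log v_\pm^{-1}$ (recall \eqref{qnc}). Subtracting the fixed smooth reference $\underline\phi$, the equation for $\psi=\phi-\underline\phi$ is uniformly elliptic with a strictly increasing nonlinearity and $L^2$-type data, so monotone operator theory (equivalently, minimization of a strictly convex coercive functional on $H^1$) yields a unique $\psi\in H^1$. A bootstrap then upgrades the regularity: with $w:=\phi_x/v$ one has $w_x=v e^{\phi}-1\in H^2$ (as $H^2(\mathbb R)$ is an algebra continuously embedded in $L^\infty$), so $w\in H^3$, whence $\phi_x=vw\in H^2$ and $\phi-\underline\phi\in H^3$ with $\|\phi-\underline\phi\|_{H^3}\le\mathcal C(c,C,\|v-\underline v\|_{H^2})$. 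The identical argument applied to differences gives the Lipschitz estimate $\|\phi_1-\phi_2\|_{H^{k+1}}\le\mathcal C\,\|v_1-v_2\|_{H^k}$ for $k=0,1,2$ (one derivative of loss), and continuous dependence in time, so that $v-\underline v\in C(0,T;H^2)$ yields $\phi-\underline\phi\in C(0,T;H^3)$.

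\emph{Parabolic step, uniform bounds, and passage to the limit.} For fixed $v=v^n$ with the density bounds and $v-\underline v\in C(0,T;H^2)$, the right-hand side of the $u$-equation lies in $C(0,T;H^1)$ (using the elliptic step and $p(v)=v^{-1}$), so linear parabolic theory --- Galerkin for existence, together with $L^2$, $H^1$ and $H^2$ energy estimates --- produces $u^{n+1}-\underline u\in C(0,T;H^2)\cap L^2(0,T;H^3)$; in the $H^2$ estimate the top-order term is treated by integrating by parts onto $u_{xxx}$ and then using Gagliardo--Nirenberg and Young's inequalities, so that only $v_{xx}\in L^2$ (never $v_{xxx}$) is required. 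Since $u^{n+1}_x\in L^2(0,T;H^2)$, the formula in (iii) gives $v^{n+1}-\underline v\in C(0,T;H^2)$, and $\|\int_0^t u^{n+1}_x\|_{L^\infty}\le T_0\sup_{[0,T_0]}\|u^{n+1}\|_{H^2}$ keeps $\underline\kappa_1\le v^{n+1}\le\overline\kappa_1$ for $T_0$ small; a Gronwall argument then closes the uniform-in-$n$ bound
\[
\|(v^n-\underline v,u^n-\underline u)\|_{L^\infty(0,T_0;H^2)}+\|\phi^n-\underline\phi\|_{L^\infty(0,T_0;H^3)}\le M_1
\]
for $T_0=T_0(M_0,M_1,\underline\kappa_i,\overline\kappa_i)$ small enough. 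As is typical for quasilinear problems, one does not contract in $H^2$ but in a weaker norm: subtracting consecutive equations, performing an $L^2$ estimate for the $u$-difference (whose initial datum is $0$, which makes the estimate clean) with the parabolic dissipation absorbing the cross terms, estimating the $v$-difference by $\|v^{n+1}-v^n\|_{C(0,T;L^2)}\le\sqrt{T}\,\|u^{n+1}_x-u^n_x\|_{L^2(0,T;L^2)}$, and invoking the one-derivative-loss Lipschitz bound for $\phi^n$, shows the iteration is a contraction in $C(0,T_0;L^2)$ after a further shrinking of $T_0$. Hence $(v^n,u^n)$ converges in $C(0,T_0;L^2)$; interpolating with the uniform $H^2$ bound upgrades this to convergence in $C(0,T_0;H^s)$ for every $s<2$, which is enough to pass to the limit in every term. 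Weak-$*$ lower semicontinuity transfers the bounds to the limit, the same difference estimate gives uniqueness on $[0,T_0]$, and the stated $C(0,T_0;\cdot)$ time-continuity (in particular the $H^3$-continuity of $\phi$) is recovered in the usual way from the energy identities and the elliptic step.

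\emph{Main obstacle.} The delicate part is the regularity accounting between the elliptic and parabolic components together with the low regularity of the coefficient $1/v$. One must verify that the Poisson solve costs no derivatives --- so that $v-\underline v\in H^2$ really produces $\phi-\underline\phi\in H^3$ and a forcing $\phi_x/v\in H^2$ exactly at the level needed to close the $H^2$ estimate for $u$ --- and, more subtly, that $\phi$ depends on $v$ Lipschitz-continuously with only \emph{one} derivative of loss, which is precisely what makes the $L^2$ contraction close. The $H^2$ energy estimate for $u$ is the technically heaviest step, since $1/v$ lies only in $H^2$; one never has $v_{xxx}\in L^2$, so the top-order term must be integrated by parts onto $u_{xxx}$ and controlled by Gagliardo--Nirenberg interpolation, exactly as in the local well-posedness theory for the compressible Navier--Stokes equations in Lagrangian mass coordinates. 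Keeping $v$ uniformly positive along the iteration and ensuring $\phi-\underline\phi$ genuinely lies in $H^3$ both rely on the quasi-neutral far-field choice $\phi_\pm=\log v_\pm^{-1}$ and the smallness of $T_0$, but these are routine once the scheme is in place.
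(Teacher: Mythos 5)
The paper does not actually prove Proposition \ref{local}: it states that local existence "can be obtained using a standard energy method (see [MN0])" and omits the argument. Your iteration scheme (elliptic solve for $\phi$, linear parabolic solve for $u$, ODE integration for $v$, uniform energy bounds, contraction in a weaker norm) is a correct and standard realization of precisely that method, with the Poisson step handled consistently with the paper's Appendix A estimate $\lVert \tilde{\phi} \rVert_{H^k} \leq C \lVert \tilde{v} \rVert_{H^{k-1}}$, so it matches the intended approach.
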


\begin{proof}
The local existence can be obtained using a standard energy method (see \cite{MN0}), and we omit the proof for brevity.
\end{proof}

\subsection{Construction of shift}
Next, we define the shift function $X: \mathbb{R}_+ \rightarrow \mathbb{R}$ as a solution to the ODE:
\begin{equation} \label{shiftODE}
\begin{split}
\dot{X}(t) & = - \frac{M}{\delta_S} \bigg( \int_\mathbb{R} a(t,x - X(t)) \bar{u}_x(x-\sigma t -X(t)) (u(t,x)-\bar{u}(x-\sigma t - X(t))) \, dx \\
& \qquad + \frac{1}{\sigma} \int_\mathbb{R} a(t,x - X(t)) \partial_x \tilde{p} \left(\bar{v} (x-\sigma t -X(t)) \right) (u(t,x)-\bar{u}(x-\sigma t - X(t))) \, dx \bigg),
\end{split}
\end{equation}
with $X(0) =0$, where the function $a(t,x)$ is defined in \eqref{a} and $\textstyle M :=\frac{5\sqrt{2}}{2v_-^3}$. The existence of the shift $X(t)$ is ensured by the standard existence theorem for ODEs, as shown in \cite{HKK, KVW2} for the Navier-Stokes equations.

In the following, we use abbreviated notation for functions shifted by $X(t)$. For any function $g: \mathbb{R} \rightarrow \mathbb{R}$, we define the shifted function of $g$ as
\begin{equation*}
g^X (\cdot) : = g(\cdot - X(t)), \quad t \geq 0.
\end{equation*}

\subsection{A priori estimate}
The \emph{a priori} estimate is stated as follows.
\begin{proposition} \label{Apriori}
Let $T>0$ be a positive constant. Suppose that $(v,u,\phi)$ is the solution to \eqref{NSP}-\eqref{ic} on $[0,T]$ and $(\bar{v},\bar{u},\bar{\phi})$ is the shock profile described in Proposition \ref{Prop.1.1}. Then, there exist positive constants $\delta_1$ and $\varepsilon_1$ such that if the solution $(v,u,\phi)$ satisfies
\begin{equation} \label{eps}
\sup_{0 \leq t \leq T} \Big( \lVert (v-\bar{v}^X,u-\bar{u}^X) (t,\cdot) \rVert_{H^2} + \lVert (\phi-\bar{\phi}^X)(t,\cdot) \rVert_{H^3} \Big) \leq \varepsilon_1
\end{equation}
and the shock strength $\delta_S$ is less than $\delta_1$, then it holds that
\begin{equation} \label{apriori}
\begin{split}
& \lVert (v-\bar{v}^X,u-\bar{u}^X, \phi - \bar{\phi}^X ) (t,\cdot) \rVert_{H^2}^2 + \delta_S \int_0^t \lvert \dot{X}(\tau) \rvert^2 \, d\tau \\
& + \frac{\sigma}{\sqrt{\delta}_S}\int_0^t \int_\mathbb{R} \bar{v}^X_x \bigg\lvert \tilde{p}(v)-\tilde{p}(\bar{v}^X) - \frac{u-\bar{u}^X}{2C_*} \bigg\rvert^2 \, dx d\tau + \int_0^t \int_\mathbb{R} \bar{v}^X_x \lvert u- \bar{u}^X \rvert^2 \, dx d \tau \\
& + \int_0^t \int_\mathbb{R} \left( \lvert (v-\bar{v}^X)_x \rvert^2 + \lvert (v-\bar{v}^X)_{xx} \rvert^2 + \lvert (u-\bar{u}^X)_x \rvert^2 + \lvert (u-\bar{u}^X)_{xx} \rvert^2 \right) \, dx d \tau \\
& + \int_0^t \int_\mathbb{R} \left( \lvert (u-\bar{u}^X)_{xxx} \rvert^2 + \lvert (\phi-\bar{\phi}^X)_{xx} \rvert^2 + \lvert (\phi-\bar{\phi}^X)_{xxx} \rvert^2 \right) \, dx d \tau \\
& \quad \leq C \lVert (v_0-\bar{v},u_0-\bar{u}) \rVert_{H^2}^2
\end{split}
\end{equation}
for all $t \in [0,T]$ and some constants $C>0$ and $C_*>0$.
\end{proposition}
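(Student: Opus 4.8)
The plan is to establish \eqref{apriori} via the method of $a$-contraction with shifts, combined with a relative entropy framework adapted to the NSP system, and a hierarchy of higher-order energy estimates. Working in the moving frame $\xi = x - \sigma t - X(t)$, I would first set up the perturbation variables $(v - \bar v^X, u - \bar u^X, \phi - \bar\phi^X)$ and rewrite the system \eqref{NSP'} in divergence form around the shifted profile; the reason for using the reformulation \eqref{NSP'} with the modified pressure $\tilde p$ and the electric force $\Phi$ is precisely that the flux is now in conservative form, so the relative entropy computation produces the good sign structure. The leading-order step is a weighted $L^2$ estimate: one introduces a weight function $a(t,x)$ (slightly larger on the left state, as is standard for Lax shocks so that the Kawashima-type dissipation dominates), forms the $a$-weighted relative entropy $\int a\, \eta(\,\cdot\,|\,\bar{\cdot}^X)\,dx$, and differentiates in time. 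The shift $X(t)$ is chosen by the ODE \eqref{shiftODE} precisely to cancel the worst cross term — the one linear in the perturbation multiplied by $\bar u_x$ — leaving a remainder controlled by the good terms: the hyperbolic dissipation $\sigma\,\delta_S^{-1/2}\int \bar v_x^X |\tilde p(v) - \tilde p(\bar v^X) - (u-\bar u^X)/(2C_*)|^2$, the term $\int \bar v_x^X |u - \bar u^X|^2$, the parabolic term $\int |(u - \bar u^X)_x|^2$, and the shift dissipation $\delta_S \int |\dot X|^2$. The nonlinear and Poisson-coupling errors are absorbed by smallness of $\varepsilon_1$ and $\delta_S$, but this requires care: the electric force $\Phi$ involves $\phi_x/v$ and its derivative, so one needs preliminary elliptic control of $\phi - \bar\phi^X$ in terms of $v - \bar v^X$.

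The second block of steps is the elliptic theory for the Poisson equation \eqref{NSP3}, carried out in Section 5 of the paper's plan. Linearizing $-(\phi_x/v)_x = 1 - v e^\phi$ around $(\bar v^X, \bar\phi^X)$ and using the quasi-neutral far-field condition $\phi_\pm = \log v_\pm^{-1}$, one gets an elliptic equation for $\psi := \phi - \bar\phi^X$ of the form $-(\psi_x/v)_x + (\text{positive})\psi = (\text{terms in } v - \bar v^X \text{ and lower order})$. Testing against $\psi$, $\psi_{xx}$, $\psi_{xxxx}$ successively and using the maximum/coercivity estimate yields
\[
\lVert (\phi - \bar\phi^X)(t,\cdot)\rVert_{H^3}^2 \lesssim \lVert (v - \bar v^X)(t,\cdot)\rVert_{H^2}^2 + (\text{small})\cdot(\text{higher-order dissipation}),
\]
so that $\phi$ gains one derivative over $v$ and is never an independent unknown in the energy. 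A subtle point is controlling $\int_0^t \int (|\psi_{xx}|^2 + |\psi_{xxx}|^2)$ by the spacetime dissipation of $v - \bar v^X$; this is where the structure $\bar\phi' \sim \bar u' \sim \delta_S \bar v'$ from \eqref{vup'} and the exponential decay bounds \eqref{shderiv} are used to keep the profile-dependent coefficients under control uniformly in $\delta_S$.

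The third block is the higher-order (Kawashima-type) energy estimates in Section 6. Since the $a$-contraction estimate only controls $\int \bar v_x^X |u - \bar u^X|^2$ and $\int |(u - \bar u^X)_x|^2$ at leading order, one must recover the full $H^2$ norm of $(v - \bar v^X, u - \bar u^X)$ and the spacetime integrals of $|(v - \bar v^X)_x|^2$, $|(v-\bar v^X)_{xx}|^2$, $|(u-\bar u^X)_{xx}|^2$, $|(u - \bar u^X)_{xxx}|^2$. The standard device is to differentiate \eqref{NSP1}--\eqref{NSP2} once and twice in $x$, do $L^2$ energy estimates on the derivatives, and — to capture the dissipation in $v$ itself, which is only hyperbolically transported — add a cross term of the form $\int (u - \bar u^X)_x (v - \bar v^X)_x\,dx$ (and its second-order analogue) to the energy functional; its time derivative produces $-\int |(v-\bar v^X)_x|^2$ up to controllable errors (Kawashima's trick). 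All boundary/profile terms carry a factor of $\bar v_x^X$ or $\bar u_x^X$, hence are bounded by the leading-order dissipation times $\delta_S$, while the genuinely nonlinear terms are cubic and absorbed via $\varepsilon_1$ and Sobolev embedding. Finally, combining the weighted relative-entropy inequality, the elliptic estimates, and the higher-order estimates with suitably chosen small parameters, and integrating in time, gives \eqref{apriori}.

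I expect the main obstacle to be the simultaneous closure of the energy hierarchy in the presence of the nonlocal electric force $\Phi(v,\phi)$. Unlike the pure Navier-Stokes case, the flux term $\Phi_x$ contains $(\phi_x/v)_{xx}$-type expressions, so each order of the $(v,u)$-energy estimate feeds one order higher into the Poisson estimate, and one must verify that the loop closes — i.e., that the elliptic gain of a derivative genuinely compensates the derivative loss in $\Phi_x$, uniformly as $\delta_S \to 0$. The delicate bookkeeping is to show that every coupling term either carries a profile derivative (hence a power of $\delta_S$) or is cubic in the perturbation (hence a power of $\varepsilon_1$), with no "borderline" quadratic nonlocal term left uncontrolled; the bounds \eqref{vup'} and \eqref{shderiv} together with the divergence-form reformulation \eqref{NSP'} are exactly what make this possible, and the choice of the constant $M = 5\sqrt 2/(2v_-^3)$ in \eqref{shiftODE} is tuned so that the shift dissipation $\delta_S\int|\dot X|^2$ has the right coefficient relative to the hyperbolic dissipation.
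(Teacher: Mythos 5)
Your architecture (weighted relative entropy with a shift, elliptic estimates for the Poisson equation, then Kawashima-type $H^1$ and $H^2$ estimates with the cross terms $\int \tilde u_x\tilde v_x$, $\int \tilde u_{xx}\tilde v_{xx}$) matches the paper's, and your account of the higher-order block is essentially what the paper does. But there is a genuine gap exactly at the point you flag as the ``main obstacle'' and then assert away: the zeroth-order estimate does \emph{not} close if you only use the plain relative entropy $\tfrac12|\tilde u|^2 + Q(v|\bar v^X)$ together with elliptic control of $\tilde\phi$ by $\tilde v$. After integrating the electric-force contribution $\int a^X\tilde u\,(\Phi(v,\phi)-\Phi(\bar v^X,\bar\phi^X))_x\,dx$ by parts, the leading term is $\int a^X \tilde u_x\tilde\phi_{xx}/(\bar v^X)^2\,dx$, which is quadratic with an $O(1)$ coefficient --- it carries neither a profile derivative (no power of $\delta_S$) nor a cubic structure (no power of $\varepsilon_1$). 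Young's inequality trades it for $\eta\int\tilde u_x^2 + C_\eta\int\tilde\phi_{xx}^2$, and the elliptic estimate gives $\int\tilde\phi_{xx}^2 \le C(G_1+G^S+\int\tilde v_x^2)$ with a constant that is not small; so you would need to absorb $C_\eta C(G_1+G^S)$ into $-(G_1+G^S)$ and $C_\eta C\int_0^t\|\tilde v_x\|^2$ into an $H^1$ dissipation whose own derivation costs another multiple of $\int_0^t D$. Neither absorption works, because the relevant constants exceed one. This is a borderline quadratic nonlocal term, and your proposed mechanisms (divergence form, $\delta_S$- and $\varepsilon_1$-smallness, elliptic gain of a derivative) do not remove it.

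The paper's resolution, which is the one idea missing from your proposal, is to \emph{modulate the relative entropy itself}: the functional actually contracted is
\[
\eta(W|\bar W^X)=\frac{|\tilde u|^2}{2}+Q(v|\bar v^X)-\frac{\tilde v\,\tilde\phi_{xx}}{(\bar v^X)^2}+\frac{e^{-\bar\phi^X}\tilde\phi_{xx}^2}{2(\bar v^X)^3}+\frac{e^{-\bar\phi^X}\tilde\phi_x^2}{2(\bar v^X)^2},
\]
the extra terms being the dissipative energy of Duan--Liu--Zhang. Using the continuity equation $\tilde u_x=\tilde v_t-\dot X\bar v^X_x$ and the differentiated Poisson equation, the dangerous term $\int a^X\tilde u_x\tilde\phi_{xx}/(\bar v^X)^2$ is rewritten as an exact time derivative of these extra terms plus $a^X_x$-weighted good terms and errors that \emph{do} carry powers of $\delta_S$ or $\varepsilon_1$ (the $\mathcal P_j$ in the paper). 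A side effect you would also need to account for is that this rewriting produces $\tilde\phi_{xt}$ and $\tilde\phi_{xxt}$ in the errors, so the elliptic block must include estimates for the time derivatives of the Poisson perturbation (Lemmas 5.2--5.3 of the paper), not only the stationary gain $\|\tilde\phi\|_{H^{k}}\lesssim\|\tilde v\|_{H^{k-1}}$ that you describe. Without the modulation, your zeroth-order inequality has an uncontrolled $O(1)$ quadratic term and the scheme does not close.
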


\subsection{Proof of Theorem \ref{Main}}
Together with Propositions \ref{local} and \ref{Apriori}, we use the continuation argument to prove the global existence. We also use Proposition \ref{Apriori} to prove the long-time behavior as desired. Since the proof is standard and similar to that of the previous results \cite{HKK,KVW2}, we present it in the Appendices.

\subsection{Main ideas for the proof of Proposition \ref{Apriori}}
To establish the a priori estimate \eqref{apriori}, we employ the method of $a$-contraction with shifts for the energy estimates of zeroth order. The $a$-contraction method was originally introduced in the study of stability of (viscous) shocks for the (viscous) system of conservation laws (e.g., \cite{KV1,KV2,KV3,KV4,V}). Recently, it has been used for the Navier-Stokes system with some effects, such as internal capillarity \cite{HKKL} and boundary effects \cite{HuKKL2}.

Especially, in our case, the method will be used to control the main terms from the isothermal Navier-Stokes part of \eqref{NSP'}. However, to control the electric force term in \eqref{NSP2}, we may use a dissipative energy introduced in \cite{DLZ}. So, for the energy estimates, the  $a$-contraction method will be applied to the relative functional associated to the dissipative energy, as in \eqref{relentropy}, obtained by modulating the relative entropy of the NS equations. 

After obtaining the energy estimate for zeroth-order perturbations, we proceed with elliptic estimates for the Poisson equation and higher-order estimates. In particular, to estimate the first derivative of the perturbation $(v-\bar{v})$, we use the divergence form of the system given by \eqref{NSP1}-\eqref{NSP2}. This is essential to control the first derivative, but it also requires extending the estimates to the second derivative (see Lemma~\ref{Lemma:v1}). Thus, we complete our analysis by obtaining the $H^2$-estimates, for which we employ the original form \eqref{NSP11}-\eqref{NSP22}.

\section{A modulated relative functional for the NSP system}
The method of $a$-contraction with shifts is based on the relative entropy method, first introduced in \cite{Daf,DiP}. To apply this method to the NSP system, we first derive the relative entropy of the isothermal Navier-Stokes equations. Recalling \eqref{trf}, we find that when $\lambda=0$, the NSP system \eqref{NSP} is reduced to the isothermal Navier-Stokes equations:
\begin{equation} \label{NS}
\begin{aligned}
& v_t - u_x = 0, \\
& u_t + \tilde{p}(v)_x  = \left( \frac{u_x}{v} \right)_x,
\end{aligned}
\end{equation}
where $\tilde{p}(v)$ is defined in \eqref{p(v)}. Note that \eqref{NS} has a mathematical entropy (as a mechanical energy)
\[
\tilde{\eta}(U) = \frac{u^2}{2} + Q(v),
\]
where $U=(v,u)$ and the internal energy $Q(v)$ is defined as
\[
Q(v) := -2 \log v.
\]
Then, the relative entropy between $U$ and $\bar{U}=(\bar{v},\bar{u})$ is defined by
\[
\tilde{\eta}(U |\bar{U}) := \tilde{\eta}(U) - \tilde{\eta}(\bar U) - D\eta(\bar U) \cdot (U-\bar U) =  \frac{ \lvert u-\bar{u} \rvert^2}{2} + Q(v|\bar{v}),
\]
where the relative internal energy $Q(v|\bar{v})$ is given by
\[
Q(v|\bar{v}) = Q(v) - Q(\bar{v}) - Q'(\bar{v})(v-\bar{v}).
\]
In general, the relative quantity associated to a scalar function $f : \mathbb{R} \to \mathbb{R}$ is defined as
\[
f(w_1|w_2) : = f(w_1) - f(w_2) - f'(w_2) (w_1-w_2).
\]

We now define a modulated functional based on the relative entropy $\tilde{\eta}(U|\bar{U})$ for \eqref{NS}. Specifically, to study long-time behavior towards the shock profile solution to \eqref{NSP}, we introduce the modulated relative functional $\eta(W|\bar{W}^X)$ between a solution $W:=(v,u,\phi)$ to \eqref{NSP} and the shock profile $\bar{W}^X := (\bar{v}^X,\bar{u}^X,\bar{\phi}^X)$ shifted by $X(t)$, as follows:
\begin{equation} \label{relentropy}
\begin{split}
\eta (W | \bar{W}^X) & := \frac{\lvert u - \bar{u}^X \rvert^2 }{2} + Q(v | \bar{v}^X) - \frac{(v - \bar{v}^X)(\phi-\bar{\phi}^X)_{xx}}{(\bar{v}^X)^2} \\
& \quad + \frac{ e^{-\bar{\phi}^X} \lvert (\phi-\bar{\phi}^X)_{xx} \rvert^2}{2(\bar{v}^X)^3 } + \frac{ e^{-\bar{\phi}^X} \lvert (\phi - \bar{\phi}^X)_x \rvert^2}{2(\bar{v}^X)^2},
\end{split}
\end{equation}
where the shifted shock is defined by
\begin{equation} \label{shsh}
(\bar{v}^X,\bar{u}^X,\bar{\phi}^X)(t,x) := (\bar{v},\bar{u},\bar{\phi})(x - \sigma t - X(t)).
\end{equation}
Before investigating the property of the functional $\eta(W|\bar{W}^X)$, we first consider the relative quantities for the modified pressure $\tilde{p}(v) = 2/v$ and the internal energy $Q(v) = - 2\log v$. These quantities satisfy the bounds presented in the following lemma:

\begin{lemma} \label{Lemma.2.1}
Let $v_- > 0$ be a given constant. Then, there exist positive constants $C>0$ and $\delta_*>0$ such that for any $0 < \bar{\delta} < \delta_*$ and any $(v,\bar{v}) \in \mathbb{R}_+^2 $ satisfying  $\lvert v - \bar{v} \rvert < \bar{\delta} $ and $ \lvert \bar{v} - v_- \rvert < \bar{\delta} $, the following bounds hold:
\begin{subequations}
\begin{align}
\label{relbd1} \tilde{p} ( v | \bar{v}) & \leq  \tilde{p}(\bar{v})^{-1} \lvert \tilde{p}(v) - \tilde{p}(\bar{v}) \rvert^2 + C \lvert \tilde{p}(v) - \tilde{p}(\bar{v}) \rvert^3,\\
\label{relbd2} Q (v | \bar{v}) & \geq \frac{1}{\tilde{p}(\bar{v})^2} \lvert \tilde{p}(v) - \tilde{p}(\bar{v}) \rvert^2 - \frac{4}{3 \tilde{p}(\bar{v})^3} (\tilde{p}(v) - \tilde{p}(\bar{v}) )^3, \\
\label{relbd3} Q (v | \bar{v}) & \geq \frac{1}{\bar{v}^2} \lvert v - \bar{v} \rvert^2 - \frac{2}{3 \bar{v}^3} ( v - \bar{v} )^3, \\
\label{relbd4} Q (v | \bar{v}) & \leq \frac{1}{\bar{v}^2} \lvert v - \bar{v} \rvert^2 + C \lvert v - \bar{v} \rvert^3.
\end{align}
\end{subequations}
\end{lemma}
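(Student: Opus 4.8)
The plan is to exploit the fact that each of \eqref{relbd1}--\eqref{relbd4} is a Taylor-remainder estimate for one of the two scalar functions $\tilde{p}(v)=2/v$ and $Q(v)=-2\log v$, so that the whole lemma reduces to one-variable calculus together with a careful choice of the smallness threshold $\delta_*$. I would split the four bounds into two groups: \eqref{relbd3} and \eqref{relbd4}, which I treat by Taylor expansion directly in $v$, and \eqref{relbd1} and \eqref{relbd2}, which become transparent after changing the unknown to the pressure variable $p:=\tilde{p}(v)=2/v$ (so $v=2/p$).

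For \eqref{relbd3} and \eqref{relbd4} I would record $Q''(v)=2/v^2$, $Q'''(v)=-4/v^3$ and $Q''''(v)=12/v^4$. Expanding $Q$ around $\bar{v}$ with Lagrange remainder at third order gives $Q(v|\bar{v})=\frac{1}{\bar{v}^2}(v-\bar{v})^2-\frac{2}{3\xi^3}(v-\bar{v})^3$ for some $\xi$ between $v$ and $\bar{v}$; choosing $\delta_*\le v_-/2$ forces $\xi\ge v_-/2$, so the last term is bounded by $C|v-\bar{v}|^3$, which is \eqref{relbd4}. For \eqref{relbd3} I would go one order further, $Q(v|\bar{v})=\frac{1}{\bar{v}^2}(v-\bar{v})^2-\frac{2}{3\bar{v}^3}(v-\bar{v})^3+\frac{1}{24}Q''''(\eta)(v-\bar{v})^4$ with $\eta$ between $v$ and $\bar{v}$; since $Q''''>0$ on $(0,\infty)$ the quartic term is nonnegative and the bound follows, in fact with no smallness assumption needed.

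For \eqref{relbd1} and \eqref{relbd2} I would substitute $v=2/p$, $\bar{v}=2/\bar{p}$. Using $\tilde{p}'(\bar{v})=-\bar{p}^2/2$ and $Q'(\bar{v})=-\bar{p}$, a direct computation yields the closed forms $\tilde{p}(v|\bar{v})=\frac{(p-\bar{p})^2}{p}$ and $Q(v|\bar{v})=2\log\frac{p}{\bar{p}}-\frac{2(p-\bar{p})}{p}$. For \eqref{relbd1}, writing $\frac{1}{p}=\frac{1}{\bar{p}}-\frac{p-\bar{p}}{p\bar{p}}$ turns this into $\tilde{p}(v|\bar{v})=\frac{1}{\bar{p}}(p-\bar{p})^2-\frac{(p-\bar{p})^3}{p\bar{p}}$; since $v,\bar{v}$ near $v_-$ keeps $p,\bar{p}$ in a fixed compact subset of $(0,\infty)$, the cubic term is $\le C|p-\bar{p}|^3=C|\tilde{p}(v)-\tilde{p}(\bar{v})|^3$, giving \eqref{relbd1}. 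For \eqref{relbd2}, setting $r:=(p-\bar{p})/\bar{p}$ and expanding $\log(1+r)$ and $\frac{r}{1+r}$ gives $Q(v|\bar{v})=r^2-\frac{4}{3}r^3+2\sum_{k\ge4}(-1)^k\frac{k-1}{k}r^k$; the tail equals $\frac{3}{2}r^4+O(r^5)$, hence is nonnegative once $|r|$ is small, which holds if $\delta_*$ is small since $|r|\lesssim|v-\bar{v}|\le\bar{\delta}$. Rewriting $r^2$ and $r^3$ in terms of $\tilde{p}(v)-\tilde{p}(\bar{v})=p-\bar{p}$ reproduces \eqref{relbd2} exactly.

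The only genuinely delicate step is the choice of $\delta_*$: it must simultaneously keep $v$, $\bar{v}$, $p=2/v$ and $\bar{p}=2/\bar{v}$ inside a fixed compact subinterval of $(0,\infty)$, so that every reciprocal, logarithm and constant appearing above is controlled uniformly, and be small enough that the higher-order tail in the expansion of \eqref{relbd2} is dominated by its leading quartic term. Both requirements are met by a single $\delta_*$ depending only on $v_-$, after which collecting the estimates above completes the proof.
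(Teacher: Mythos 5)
Your proposal is correct, and it is worth noting where it diverges from the paper. For \eqref{relbd3} and \eqref{relbd4} you follow essentially the paper's route (Taylor expansion of $F_1(v)=Q(v|\bar v)$ about $\bar v$), except that for \eqref{relbd3} your argument is actually cleaner: by stopping at the fourth-order Lagrange remainder $\frac{1}{24}Q^{(4)}(\eta)(v-\bar v)^4=\frac{1}{2\eta^4}(v-\bar v)^4\ge 0$ you get the inequality with no smallness condition, whereas the paper expands to fifth order and needs $\bar\delta$ small so that the positive quartic term at $\bar v$ dominates the quintic remainder. For \eqref{relbd1} and \eqref{relbd2} the paper simply cites Lemma~2.6 of \cite{KV3}; your change of variables $p=\tilde p(v)=2/v$ yields the exact closed forms $\tilde p(v|\bar v)=(p-\bar p)^2/p$ and $Q(v|\bar v)=2\log(1+r)-2r/(1+r)$ with $r=(p-\bar p)/\bar p$, whose series $2\sum_{k\ge2}(-1)^k\frac{k-1}{k}r^k=r^2-\frac{4}{3}r^3+\frac{3}{2}r^4+\cdots$ reproduces \eqref{relbd2} verbatim once the nonnegative tail is discarded; this is a legitimate, fully self-contained replacement for the citation. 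One small quantitative slip: in your treatment of \eqref{relbd4}, taking $\delta_*\le v_-/2$ only gives $\bar v>v_-/2$ and hence $v>\bar v-\bar\delta>0$; to bound the intermediate point $\xi$ (and the quantities $p,\bar p$ in the other two estimates) away from $0$ you need, say, $\delta_*\le v_-/4$ so that $v>v_-/2$. You already flag this requirement in your closing paragraph, and the paper resolves it the same way, so it is a constant to adjust rather than a gap.
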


\begin{proof}
For the proof of \eqref{relbd1} and \eqref{relbd2}, we refer to the proof of Lemma~2.6 in \cite{KV3}. Although the proof in \cite{KV3} is presented for the case where the pressure $p(v)$ is given by the $\gamma$-law with $\gamma>1$, the same computations apply to the isothermal case, where $\gamma=1$. Thus, it remains only to show the bounds \eqref{relbd3} and \eqref{relbd4}.

Consider $\delta_* \leq v_- / 2$. Then, by the assumptions, we have $\lvert \bar{v} - v_- \rvert < v_- /2$ which yields the following upper and lower bounds on $\bar{v}$:
\[
\frac{v_-}{2} < \bar{v} < \frac{3 v_-}{2}.
\]
On the other hand, the relative quantity $Q(v|\bar{v})$ is written as
\[
Q(v|\bar{v}) = - 2 \log v + 2 \log \bar{v} + \frac{2}{\bar{v}} (v-\bar{v}).
\]
Setting $F_1(v) := Q(v|\bar{v})$, we have
\[
F_1'(v) = - \left( \frac{2}{v} - \frac{2}{\bar{v}} \right), \quad F_1''(v) = \frac{2}{v^2}, \quad F_1'''(v) = - \frac{4}{v^3}, \quad F^{(4)}_1 = \frac{12}{v^4}.
\]
Thus, we obtain the following expansion of $Q(v|\bar{v})$ around $v = \bar{v}$ by applying Taylor's theorem:
\[
Q(v|\bar{v}) = F_1(v) = \frac{1}{\bar{v}^2} ( v - \bar{v} )^2 - \frac{2}{3\bar{v}^3} (v-\bar{v})^3 + \frac{F_1^{(4)}(\bar{v})}{4!} (v-\bar{v})^4 + \frac{F_1^{(5)}(v_*)}{5 !} (v-\bar{v})^5,
\]
where $v_*$ lies between $v$ and $\bar{v}$. By the boundedness of $\bar{v}$, we have $ \textstyle F_1^{(4)}(\bar{v}) > \frac{64}{27 v_-^4} > 0$, and therefore we obtain the bound \eqref{relbd3} for sufficiently small $\bar{\delta} < \delta_*$.

To prove \eqref{relbd4}, we write the expansion of $Q(v|\bar{v})$:
\[
Q(v|\bar{v})  = \frac{1}{\bar{v}^2} ( v - \bar{v} )^2 - \frac{2}{3v_*^3} (v-\bar{v})^3
\]
for $v_*$ on the ray from $\bar{v}$ to $v$. Here, if $\bar{\delta}$ is sufficiently small, then $v_*$ is bounded below, and there exists a positive constant $C>0$ such that the bound \eqref{relbd4} holds.
\end{proof}

Note that the result of the above lemma implies that the bounds \eqref{relbd1}-\eqref{relbd4} are valid for the solution $v$ and the shock $\bar{v}$ considered in our problem. More precisely, these bounds hold for $v$ and $\bar{v}^X$ when $\lVert v - \bar{v}^X \rVert_{L^\infty}$ and $ \lvert v_+ - v_- \rvert$ are sufficiently small. With this, we conclude this section by establishing the key property of the modulated relative functional $\eta(W|\bar{W}^X)$.

\begin{lemma} \label{entsim}
Under the assumptions in Proposition \ref{Apriori}, there exist positive constants $c>0$ and $C>0$ such that 
\[
\eta(W|\bar{W}^X) \geq c \left( \lvert u - \bar{u}^X \rvert^2 + \lvert v - \bar{v}^X \rvert^2 + \lvert ( \phi - \bar{\phi}^X )_{xx} \rvert^2 + \lvert ( \phi - \bar{\phi}^X )_x \rvert^2 \right)
\]
and
\[
\eta(W|\bar{W}^X) \leq C \left( \lvert u - \bar{u}^X \rvert^2 + \lvert v - \bar{v}^X \rvert^2 + \lvert ( \phi - \bar{\phi}^X )_{xx} \rvert^2 + \lvert ( \phi - \bar{\phi}^X )_x \rvert^2 \right).
\]
\end{lemma}

\begin{proof}
By the definition \eqref{relentropy} and the bound \eqref{relbd3}, we have
\begin{equation} \label{ent00}
\begin{split}
\eta (W |\bar{W}^X) & \geq \frac{\lvert u - \bar{u}^X \rvert^2}{2} + \frac{\lvert v - \bar{v}^X \rvert^2}{(\bar{v}^X)^2} - \frac{ (v-\bar{v}^X)(\phi - \bar{\phi}^X)_{xx}}{(\bar{v}^X)^2} \\
& \quad + \frac{ e^{-\bar{\phi}^X}\lvert ( \phi - \bar{\phi}^X )_{xx} \rvert^2}{2(\bar{v}^X)^3} + \frac{ e^{-\bar{\phi}^X} \lvert ( \phi - \bar{\phi}^X )_x \rvert^2}{2(\bar{v}^X)^2} - \frac{2}{3(\bar{v}^X)^3} (v-\bar{v}^X)^3.
\end{split}
\end{equation}
Note that, by the Poisson equation of \eqref{shODE}, it holds that
\[
\bigg\lvert \frac{e^{-\bar{\phi}^X}}{\bar{v}^X} \bigg\rvert = \bigg\lvert 1 - \frac{e^{-\bar{\phi}^X}}{\bar{v}^X} \left( \frac{\bar{\phi}^X_x}{\bar{v}^X} \right)_x \bigg\rvert \geq 1 - C \delta_S^2 \geq c
\]
for some constant $c>0$, provided that $\delta_S$ is sufficiently small. Thus, the quadratic terms on the right-hand side of \eqref{ent00} are bounded below as follows:
\[
\begin{split}
& \frac{\lvert v - \bar{v}^X \rvert^2}{(\bar{v}^X)^2} - \frac{ (v-\bar{v}^X )(\phi - \bar{\phi}^X )_{xx}}{(\bar{v}^X)^2} + \frac{ e^{-\bar{\phi}^X}\lvert (\phi - \bar{\phi}^X )_{xx} \rvert^2}{2(\bar{v}^X)^3} \\
& \quad \geq  \frac{\lvert v - \bar{v}^X \rvert^2}{(\bar{v}^X)^2} - \frac{ (v-\bar{v}^X)(\phi - \bar{\phi}^X)_{xx}}{(\bar{v}^X)^2} + \frac{ c \lvert ( \phi - \bar{\phi}^X )_{xx} \rvert^2}{2(\bar{v}^X)^2} \\
& \quad \geq  c \left( \lvert v- \bar{v}^X \rvert^2 + \lvert (\phi - \bar{\phi}^X )_{xx} \rvert^2 \right),
\end{split}
\]
where $c>0$ is a generic constant. Using this, we have
\[
\begin{split}
\eta(W|\bar{W}^X) & \geq c \left( \lvert u - \bar{u}^X \rvert^2 + \lvert v - \bar{v}^X \rvert^2 + \lvert ( \phi - \bar{\phi}^X )_{xx} \rvert^2 + \lvert ( \phi - \bar{\phi}^X )_x \rvert^2 \right) \\
& \quad - C \lVert v-\bar{v}^X \rVert_{L^\infty} (v-\bar{v}^X)^2.
\end{split}
\]
Since the norm $\lVert v-\bar{v}^X \rVert_{L^\infty}$ is controlled by the small parameter $\varepsilon_1$ (see Lemma~\ref{lemma:4.6}), we can conclude that there exists a constant $c>0$ such that
\[
\eta(W|\bar{W}^X) \geq c \left( \lvert u - \bar{u}^X \rvert^2 + \lvert v - \bar{v}^X \rvert^2 + \lvert ( \phi - \bar{\phi}^X )_{xx} \rvert^2 + \lvert ( \phi - \bar{\phi}^X )_x \rvert^2 \right).
\]
On the other hand, the upper bound on $\eta(W|\bar{W}^X)$ is obtained by using the bound \eqref{relbd4} and applying Young's inequality to the third term of \eqref{relentropy}.
\end{proof}

\section{Estimate of the weighted relative functional with the shift}

In this section, we estimate the modulated relative functional, defined in \eqref{relentropy}, between the solution to the NSP system and the shifted shock profile by using the method of $a$-contraction. This estimate yields an energy inequality for perturbations around the shock. We define the perturbation as
\[
(\tilde{v},\tilde{u},\tilde{\phi}) (t,x) : = (v,u,\phi)(t, x) - (\bar{v}^X,\bar{u}^X,\bar{\phi}^X)(t,x),
\]
where the shifted shock $(\bar{v}^X, \bar{u}^X, \bar{\phi}^X)$, defined in \eqref{shsh}, satisfies
\begin{subequations}
\begin{align}
& \label{seqn1} \bar{v}^X_t - \bar{u}^X_x = -\dot{X}(t) \bar{v}^X_x,\\
\begin{split}
& \label{seqn2} \bar{u}^X_t +  \tilde p(\bar{v}^X)_x - \left(\frac{\bar{u}^X_x}{\bar{v}^X} \right)_x  = \Phi(\bar{v}^X,\bar\phi^X)_x - \dot{X}(t) \bar{u}^X_x, 
\end{split} \\
& \label{seqn3} - \left(\frac{\bar{\phi}^X_x}{\bar{v}^X} \right)_x =1-\bar{v}^X e^{\bar{\phi}^X}.
\end{align}
\end{subequations}
Then, the perturbation equations are given by
\begin{subequations}
\begin{align}
& \label{1a'} \tilde{v}_t - \tilde{u}_x = \dot{X}(t) \bar{v}^X_x,\\
\begin{split}
& \label{1b'} \tilde{u}_t + \left( p(v) - p(\bar{v}^X) \right)_x - \left(\frac{u_x}{v}-\frac{\bar{u}^X_x}{\bar{v}^X} \right)_x  = - \left( \frac{\phi_x}{v} - \frac{\bar{\phi}^X_x}{\bar{v}^X} \right) + \dot{X}(t) \bar{u}^X_x, 
\end{split} \\
& \label{1c'} - \left(\frac{\phi_x}{v}-\frac{\bar{\phi}^X_x}{\bar{v}^X} \right)_x =\bar{v}^X e^{\bar{\phi}^X} - v e^{\phi}
\end{align}
\end{subequations}
with the initial data
\begin{equation} \label{pic}
(\tilde{v},\tilde{u})(0,x) = (\tilde{v}_0,\tilde{u}_0)(x) := (v_0 - \bar{v}, u_0 - \bar{u})(x).
\end{equation}
Note that the second equation $\eqref{1b'}$ can be written in the divergence form
\begin{equation} \label{1b}
\tilde{u}_t + \left( \tilde{p}(v) - \tilde{p}(\bar{v}^X) \right)_x - \left(\frac{u_x}{v}-\frac{\bar{u}^X_x}{\bar{v}^X} \right)_x  =  \left( \Phi(v,\phi) - \Phi(\bar{v}^X,\bar{\phi}^X) \right)_x + \dot{X}(t) \bar{u}^X_x,
\end{equation}
where $\Phi$ is defined as \eqref{Phi}.

The goal of this section is to establish the following lemma.

\begin{lemma} \label{RE}
Under the assumptions in Proposition \ref{Apriori}, there exists a positive constant $C>0$ such that
\begin{equation} \label{Ree}
\begin{split}
& \lVert (\tilde{v},\tilde{u}) (t,\cdot) \rVert_{L^2}^2 + \lVert \tilde{\phi} (t,\cdot) \rVert_{H^2}^2 + \int_0^t \left( \delta_S \lvert \dot{X} \rvert^2 + G_1 + G^S + D \right) \, d\tau \\
& \leq C \left( \lVert ( \tilde{v}_0,\tilde{u}_0 ) \rVert_{L^2}^2 + \lVert \tilde{v}_{0x} \rVert_{L^2}^2 \right) \\
& \quad  + C \left( \sqrt{\delta_1} + \varepsilon_1 \right) \int_0^t \int_\mathbb{R} \left( \tilde{v}_x^2 + \bar{v}^X_x \tilde{\phi}^2 + \tilde{\phi}_x^2 + \tilde{\phi}_{xx}^2 + \tilde{\phi}_{xt}^2 + \tilde{\phi}_{xxt}^2 \right) \, dx d\tau
\end{split}
\end{equation}
for all $t \in [0,T]$, where 
\[
\begin{split}
G_1 & := \frac{\sigma}{\sqrt{\delta_S}} \int_\mathbb{R} \bar{v}^X_x \bigg \lvert \tilde{p}(v) - \tilde{p}(\bar{v}^X) - \frac{\tilde{u}}{2C_*} \bigg \rvert^2 \, dx, \quad G^S := \int_\mathbb{R} \bar{v}^X_x \tilde{u}^2 \, dx, \quad D := \int_\mathbb{R} \tilde{u}_x^2 \, dx.
\end{split}
\]
\end{lemma}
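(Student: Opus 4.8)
\emph{Proof strategy for Lemma \ref{RE}.} The plan is to apply the method of $a$-contraction with shifts to the weighted modulated functional
\[
\mathcal{F}(t) := \int_\mathbb{R} a^X(t,x)\,\eta\big(W\,\big|\,\bar{W}^X\big)(t,x)\,dx,
\]
where $a$ is the weight function from \eqref{a} and $\eta$ is defined in \eqref{relentropy}. First I would differentiate $\mathcal{F}$ in $t$, substitute the perturbation equations \eqref{1a'}, \eqref{1b}, \eqref{1c'} together with the profile equations \eqref{seqn1}--\eqref{seqn3}, and integrate by parts repeatedly. The outcome is a decomposition
\[
\frac{d}{dt}\mathcal{F}(t) = \dot{X}(t)\,\mathcal{Y} + \mathcal{B}_{\mathrm{hyp}} + \mathcal{B}_{\mathrm{par}} + \mathcal{B}_{\mathrm{ell}} + \mathcal{E},
\]
where $\mathcal{Y}$ is a functional of $(\tilde{v},\tilde{u})$ with leading part linear in $\tilde{u}$, $\mathcal{B}_{\mathrm{hyp}}$ collects the hyperbolic contributions carrying the factors $a^X_x$ and $\bar{v}^X_x$, $\mathcal{B}_{\mathrm{par}}$ the diffusive contributions from the viscosity, $\mathcal{B}_{\mathrm{ell}}$ the contributions generated by the Poisson coupling in \eqref{1c'} and by the $\phi$-dependent terms of $\eta$, and $\mathcal{E}$ gathers the remaining lower-order terms.

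Next I would treat the four blocks. The shift ODE \eqref{shiftODE} is designed precisely so that, up to quadratically small remainders which are moved to $\mathcal{E}$ by Young's inequality, $\dot{X}(t)\,\mathcal{Y}$ becomes $-\frac{\delta_S}{M}|\dot{X}(t)|^2$; this is the origin of the good term $\delta_S\int_0^t|\dot{X}|^2$ in \eqref{Ree}. For $\mathcal{B}_{\mathrm{hyp}}$, I would follow the one-dimensional Navier--Stokes analysis of \cite{KV3}: the piece carrying $a^X_x$ yields a negative-definite quadratic form which, after completing the square against the cross terms and invoking the nonlinear Poincaré-type inequality together with the relative bounds \eqref{relbd1}--\eqref{relbd2} of Lemma \ref{Lemma.2.1}, dominates $-G_1 - G^S$ modulo a remainder of order $\sqrt{\delta_S}$ times the dissipation; the constants $C_*$ in $G_1$ and $M$ in \eqref{shiftODE} are fixed in this step so that the quadratic form has the correct sign, and the isothermal case $\gamma = 1$ does not change these computations. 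For $\mathcal{B}_{\mathrm{par}}$, the viscous terms produce $-\int_\mathbb{R} a^X v^{-1}\,\big|\tilde{u}_x - (v/\bar{v}^X)\,\bar{u}^X_x\big|^2\,dx$; since $a$ is bounded below and $v$ stays close to $v_-$, this controls $-cD$ plus an error carrying a $\bar{u}^X_x$ factor, hence absorbable by virtue of the profile bounds \eqref{shderiv}.

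The genuinely new step is $\mathcal{B}_{\mathrm{ell}}$. Here the three $\phi$-dependent terms of \eqref{relentropy} are present precisely to absorb, up to controlled quantities, the electric-force contribution $-\int_\mathbb{R} a^X\,\tilde{u}\,\big(\phi_x/v - \bar{\phi}^X_x/\bar{v}^X\big)\,dx$ coming from the right-hand side of \eqref{1b'}. Differentiating these terms in time and using \eqref{1a'}, \eqref{1c'} and the time derivative of \eqref{1c'}, I would show that the dissipative-energy mechanism of \cite{DLZ} is reproduced at the level of the modulated functional: the leftover pieces either reduce to the already-present blocks $\mathcal{B}_{\mathrm{hyp}}$, $\mathcal{B}_{\mathrm{par}}$ and the state functional $\mathcal{F}$, or are errors carrying a profile-derivative factor $\bar{v}^X_x$ (equivalently $\bar{\phi}^X_x$) or an extra power of the perturbation. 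All such errors go into $\mathcal{E}$ with the small coefficient $\sqrt{\delta_1} + \varepsilon_1$, using the Sobolev embedding $\lVert(\tilde{v},\tilde{u},\tilde{\phi})\rVert_{L^\infty}\lesssim\lVert(\tilde{v},\tilde{u},\tilde{\phi})\rVert_{H^1}\le\varepsilon_1$; in particular the quadratics in $\tilde{\phi}_{xt}$ and $\tilde{\phi}_{xxt}$ arising at this stage are exactly those under the integral on the right of \eqref{Ree}, and they are left there to be absorbed once the elliptic and higher-order estimates of Sections 5--6 are available. Finally I would integrate the differential inequality over $[0,t]$, use Lemma \ref{entsim} and the two-sided bound $0 < \inf a \le a \le \sup a < \infty$ to bound $\mathcal{F}(t)$ above and below by $\lVert\tilde{v}\rVert_{L^2}^2 + \lVert\tilde{u}\rVert_{L^2}^2 + \lVert\tilde{\phi}_x\rVert_{L^2}^2 + \lVert\tilde{\phi}_{xx}\rVert_{L^2}^2$, and add the elliptic estimate of $\tilde{\phi}$ from \eqref{1c'} (carried out in Section 5) to upgrade the $\phi$-norm on the left of \eqref{Ree} to $H^2$; the initial term is bounded by $\lVert(\tilde{v}_0,\tilde{u}_0)\rVert_{L^2}^2 + \lVert\tilde{v}_{0x}\rVert_{L^2}^2$ after using \eqref{1c'} at $t=0$ to estimate $\tilde{\phi}(0)$.

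The hard part will be making the $a$-contraction algebra, which is tuned to the hyperbolic--parabolic Navier--Stokes block, compatible with the nonlocal Poisson term: one must verify that the electric-force pairing and the time derivatives of the $\phi$-quadratic terms in \eqref{relentropy} do not destroy the sign of the quadratic form handled by the nonlinear Poincaré inequality, and that every new cross term mixing $\tilde{v}$, $\tilde{u}$ and the derivatives of $\tilde{\phi}$ comes with a profile-derivative factor or an extra perturbation factor, so that it is genuinely of lower order. Keeping the constants $M$, $C_*$, $\delta_1$ and $\varepsilon_1$ mutually consistent across the hyperbolic and the elliptic parts --- so that a single smallness assumption simultaneously closes the Poincaré estimate and absorbs the electric remainders --- is the delicate bookkeeping on which the argument rests.
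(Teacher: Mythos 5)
Your proposal follows essentially the same route as the paper: the weighted modulated functional $\int_\mathbb{R} a^X \eta(W|\bar W^X)\,dx$, the shift ODE converting the leading $Y$-terms into $-\tfrac{\delta_S}{M}|\dot X|^2$, the quadratic-form/Poincar\'e machinery of \cite{KV3,HKKL} for the Navier--Stokes block producing $-G_1-G^S-D$, the \cite{DLZ}-type dissipative treatment of the electric force with all remainders carrying a profile-derivative or extra perturbation factor (hence the $\sqrt{\delta_1}+\varepsilon_1$ coefficient and the $\tilde\phi_{xt},\tilde\phi_{xxt}$ quadratics left on the right-hand side), and finally Lemma \ref{entsim} together with the elliptic estimate \eqref{phiest} to convert the functional into the stated norms and to bound the initial data by $\lVert(\tilde v_0,\tilde u_0)\rVert_{L^2}^2+\lVert\tilde v_{0x}\rVert_{L^2}^2$. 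This matches the paper's decomposition into $\mathcal{R}_1,\mathcal{R}_2,\mathcal{R}_3$ and its subsequent estimates.
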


\subsection{Construction of weight function}
To prove Lemma~\ref{RE}, we establish weighted estimates for the functional $\eta(W|\bar{W}^X)$, following the theory of $a$-contraction with shifts. For this purpose, we define the weight function $a=a(t,x)$ as
\begin{equation} \label{a}
a(t,x) := 1 + \frac{u_- - \bar{u}(x-\sigma t)}{\sqrt{\delta_S}},
\end{equation}
where $\delta_S$ is the shock strength defined in Proposition \ref{Prop.1.1}. Then, the weight function $a$ and its $x$-derivative satisfy the bounds
\begin{equation} \label{abound}
1 \leq a \leq 1 + \sqrt{\delta_S} < C, \quad \partial_x a = - \frac{\bar{u}'}{\sqrt{\delta_S}} = \frac{\sigma \bar{v}'}{\sqrt{\delta_S}} > 0.
\end{equation}

\subsection{The identity for the weighted relative functional}
In this and the subsequent subsections, we estimate the following quantity:
\begin{equation} \label{went}
\int_\mathbb{R} a^X(t,x) \eta \left( W(t,x) | \bar{W}^X(t,x) \right) \, dx,
\end{equation}
where the shifted weight function $a^X(t,x)$ is defined by
\[
a^X(t,x) := a(t,x-X(t)).
\]
We begin by writing the identity for \eqref{went}. 
\begin{lemma} \label{Id}
Let $a$ be the weight function defined by \eqref{a} and $X:[0,T] \rightarrow \mathbb{R}$ be any Lipschitz continuous function. Let $W=(v,u,\phi)$ be a solution to \eqref{NSP}, and $\bar{W}^X=(\bar{v}^X,\bar{u}^X,\bar{\phi}^X)$ be the shifted shock. Then, the following identity holds:
\begin{equation} \label{Idd}
\begin{split}
\frac{d}{dt} \int_\mathbb{R} a^X \eta(W|\bar{W}^X) \, dx & = \dot{X}(t) Y(W) + \mathcal{J}^{bad} (W) - \mathcal{J}^{good}(W) + \sum_{j=1}^6 \mathcal{P}_{j}(W),
\end{split}
\end{equation}
where the terms $Y(W)$, $\mathcal{J}^{bad}(W)$, and $\mathcal{J}^{good}(W)$ are defined by
\[
\begin{split}
Y(W) & := - \int_\mathbb{R} a^X_x \eta(W|\bar{W}^X) \, dx + \int_\mathbb{R} a^X \left( \bar{u}^X_x \tilde{u} - \tilde{p}'(\bar{v}^X)\bar{v}^X_x \tilde{v} - \frac{\bar{v}^X_x\tilde{\phi}_{xx}}{(\bar{v}^X)^2} \right) \, dx, \\
\mathcal{J}^{bad}(W) & := \int_\mathbb{R} a^X_x \left(\tilde{p}(v)-\tilde{p}(\bar{v}^X) \right) \tilde{u} \, dx - \int_\mathbb{R} a^X \bar{u}^X \tilde{p}(v|\bar{v}^X) \, dx \\
& \quad - \int_\mathbb{R} a^X_x \frac{\tilde{u}\tilde{u}_x}{v} \, dx + \int_\mathbb{R} a^X_x \frac{\bar{u}^X_x \tilde{v} \tilde{u}}{v\bar{v}^X} \, dx + \int_\mathbb{R} a^X \frac{\bar{u}^X_x \tilde{v} \tilde{u}_x}{v\bar{v}^X} \, dx,
\end{split}
\]
and
\[
\begin{split}
\mathcal{J}^{good}(W) & := \frac{\sigma}{2} \int_\mathbb{R} a^X_x \tilde{u}^2 \, dx + \sigma \int_\mathbb{R} a^X_x Q(v|\bar{v}^X) \, dx + \int_\mathbb{R} a^X \frac{\tilde{u}_x^2}{v} \, dx \\
& \quad + \frac{\sigma}{2} \int_\mathbb{R} a^X_x \frac{e^{-\bar{\phi}^X}\tilde{\phi}_{xx}^2}{(\bar{v}^X)^3} \, dx + \frac{\sigma}{2} \int_\mathbb{R} a^X_x \frac{e^{-\bar{\phi}^X}\tilde{\phi}_x^2}{(\bar{v}^X)^2} \, dx,
\end{split}
\]
respectively, and the terms $\mathcal{P}_{j}(W)$ are as follows:
\[
\begin{split}
\mathcal{P}_1 (W) & := \int_\mathbb{R} a^X_x \frac{\tilde{u} \tilde{\phi}_{xx}}{(\bar{v}^X)^2} \, dx + \int_\mathbb{R} a^X_x \left( \frac{ \sigma \tilde{v} \tilde{\phi}_{xx}}{(\bar{v}^X)^2} + \frac{\tilde{v}\tilde{\phi}_{xt}}{(\bar{v}^X)^2} \right) \, dx, \\
\mathcal{P}_2 (W) & := - \int_\mathbb{R} a^X \frac{2\bar{\phi}^X_{xx} \tilde{u}_x \tilde{v}}{v^2\bar{v}^X} \, dx - \int_\mathbb{R} a^X \left( \frac{2\sigma \bar{v}^X_x \tilde{v} \tilde{\phi}_{xx}}{(\bar{v}^X)^3} + \frac{2\bar{v}^X_x \tilde{v} \tilde{\phi}_{xt}}{(\bar{v}^X)^3} \right) \, dx \\
& \quad - \int_\mathbb{R} a^X \frac{\bar{v}^X_x \tilde{\phi}_{xt} \tilde{\phi}}{(\bar{v}^X)^2} \, dx + \int_\mathbb{R} a^X \left( \frac{\bar{\phi}^X_x}{\bar{v}^X} \right)_x \frac{e^{-\bar{\phi}^X}\tilde{v} \tilde{\phi}_{xxt}}{v(\bar{v}^X)^2} \, dx, \\
\mathcal{P}_3 (W) & := - \int_\mathbb{R} a^X_x \tilde{u} \left( \frac{2\bar{\phi}^X_{xx}\tilde{v}}{v^2\bar{v}^X} + \frac{\bar{\phi}^X_x \tilde{\phi}_x}{v^2} - \frac{(\bar{\phi}^X_x)^2 \tilde{v}}{v^2\bar{v}^X} + \frac{\bar{v}^X_x \tilde{\phi}_x}{v^3} + \frac{\bar{\phi}^X_x \tilde{v}_x}{v^3} - \frac{3\bar{v}^X_x \bar{\phi}^X_x \tilde{v}}{v^3 \bar{v}^X}  \right) \, dx \\
& \quad - \int_\mathbb{R} a^X_x \frac{e^{-\bar{\phi}^X}\tilde{\phi}_{xt}\tilde{\phi}_{xx}}{(\bar{v}^X)^3} \, dx + \int_\mathbb{R} a^X_x \left( \frac{\bar{\phi}^X_x}{\bar{v}^X} \right)_x \frac{e^{-\bar{\phi}^X}\tilde{\phi}_{xt}\tilde{v}}{v(\bar{v}^X)^2} \, dx,
\end{split}
\]

\[
\begin{split}
\mathcal{P}_4 (W) & := - \int_\mathbb{R}  a^X \tilde{u}_x \left( \frac{\bar{\phi}^X_x \tilde{\phi}_x}{v^2} - \frac{(\bar{\phi}^X_x)^2 \tilde{v}}{v^2\bar{v}^X} + \frac{\bar{v}^X_x \tilde{\phi}_x}{v^3} + \frac{\bar{\phi}^X_x \tilde{v}_x}{v^3} - \frac{3\bar{v}^X_x \bar{\phi}^X_x \tilde{v}}{v^3 \bar{v}^X}  \right) \, dx \\
& \quad + \int_\mathbb{R} a^X \frac{2 e^{-\bar{\phi}^X}\bar{v}^X_x \tilde{\phi}_{xt} \tilde{\phi}_{xx}}{(\bar{v}^X)^4} \, dx - \sigma \int_\mathbb{R} a^X \left( \bigg( \frac{e^{-\bar{\phi}^X}}{(\bar{v}^X)^3} \bigg)_x \frac{\tilde{\phi}_{xx}^2}{2} + \bigg( \frac{e^{-\bar{\phi}^X}}{(\bar{v}^X)^2} \bigg)_x \frac{\tilde{\phi}_x^2}{2} \right) \, dx \\
& \quad + \int_\mathbb{R} \left( a^X_x \frac{e^{-\bar{\phi}^X} \tilde{\phi}_{xt}}{(\bar{v}^X)^2} + a^X \frac{e^{-\bar{\phi}^X} \tilde{\phi}_{xxt}}{(\bar{v}^X)^2} - a^X \frac{2 e^{-\bar{\phi}^X} \bar{v}^X_x \tilde{\phi}_{xt}}{(\bar{v}^X)^3} \right) \\
& \qquad \qquad \times \left( \frac{\bar{v}^X_x \tilde{\phi}_x}{(\bar{v}^X)^2} + \frac{\bar{\phi}^X_x \tilde{v}_x}{v\bar{v}^X} - \frac{\bar{v}^X_x \bar{\phi}^X_x \tilde{v}}{v^2 \bar{v}^X} \right) \, dx \\
& \quad - \int_\mathbb{R} a^X \left( \frac{\bar{\phi}^X_x}{\bar{v}^X} \right)_x \frac{2 e^{-\bar{\phi}^X} \bar{v}^X_x \tilde{\phi}_{xt} \tilde{v}}{v(\bar{v}^X)^3} \, dx - \int_\mathbb{R} a^X \left( \frac{\bar{\phi}^X_x}{\bar{v}^X} \right)_x \frac{e^{-\bar{\phi}^X} \tilde{\phi}_x \tilde{\phi}_{xt}}{(\bar{v}^X)^2} \, dx, \\
\mathcal{P}_5 (W) & := - \dot{X}(t) \int_\mathbb{R} a^X \left( \frac{2\bar{v}^X_x \tilde{v} \tilde{\phi}_{xx}}{(\bar{v}^X)^3} + \bigg( \frac{e^{-\bar{\phi}^X}}{(\bar{v}^X)^3} \bigg)_x \frac{\tilde{\phi}_{xx}^2}{2} + \bigg( \frac{e^{-\bar{\phi}^X}}{(\bar{v}^X)^2} \bigg)_x \frac{\tilde{\phi}_x^2}{2} \right) \, dx,
\end{split}
\]
and
\[
\begin{split}
\mathcal{P}_6 (W) & := - \int_\mathbb{R} \left( a^X_x \tilde{u} + a^X \tilde{u}_x \right) \left( \frac{\tilde{\phi}_x^2}{2v^2} - \frac{(\bar{\phi}^X_x)^2 \tilde{v}^2}{2v^2(\bar{v}^X)^2} + \frac{\bar{\phi}^X_{xx} \tilde{v}^2}{v^2 (\bar{v}^X)^2} + \frac{ \tilde{v}^2 \tilde{\phi}_{xx}}{v^2 (\bar{v}^X)^2} + \frac{2 \tilde{v} \tilde{\phi}_{xx}}{v^2 \bar{v}^X} \right)  \, dx \\
& \quad - \int_\mathbb{R} \left( a^X_x \tilde{u} + a^X \tilde{u}_x \right) \left( \frac{\tilde{v}_x \tilde{\phi}_x}{v^3} - \frac{\bar{v}^X_x \bar{\phi}^X_x \tilde{v}^3}{v^3 (\bar{v}^X)^3} - \frac{3\bar{v}^X_x \bar{\phi}^X_x \tilde{v}^2}{v^3(\bar{v}^X)^2} \right) \, dx \\
& \quad + \int_\mathbb{R} \left( a^X_x \frac{e^{-\bar{\phi}^X} \tilde{\phi}_{xt}}{(\bar{v}^X)^2} + a^X \frac{e^{-\bar{\phi}^X} \tilde{\phi}_{xxt}}{(\bar{v}^X)^2} - a^X \frac{2 e^{-\bar{\phi}^X} \bar{v}^X_x \tilde{\phi}_{xt}}{(\bar{v}^X)^3} \right) \\
& \qquad \qquad \times \left( \frac{\tilde{v}\tilde{\phi}_{xx}}{v\bar{v}^X} + \frac{\tilde{v}_x \tilde{\phi}_x}{v\bar{v}^X} - \frac{\tilde{v}_x \tilde{v} \tilde{\phi}_x}{v^2 \bar{v}^X} - \frac{\bar{v}^X_x \tilde{v} \tilde{\phi}_x}{v^2 \bar{v}^X} - \frac{\bar{v}^X_x \tilde{v} \tilde{\phi}_x}{v(\bar{v}^X)^2} - \frac{\bar{\phi}^X_x \tilde{v} \tilde{v}_x}{v^2 \bar{v}^X} \right) \, dx \\
& \quad + \int_\mathbb{R} a^X \frac{\tilde{\phi}_{xt}}{(\bar{v}^X)^2} \left( \bar{v}^X_x ( 1 + \tilde{\phi} - e^{\tilde{\phi}}) + \bar{v}^X ( 1 + \tilde{\phi} - e^{\tilde{\phi}})_x \right) \, dx \\
& \quad + \int_\mathbb{R} a^X \frac{\tilde{\phi}_{xt}}{(\bar{v}^X)^2} \left( \tilde{v}_x ( 1 - e^{\tilde{\phi}} ) + \tilde{v} ( 1 - e^{\tilde{\phi}} )_x \right) \, dx.
\end{split}
\]
\end{lemma}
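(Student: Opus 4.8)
The identity \eqref{Idd} is a distributional calculus identity: no smallness is invoked, only the regularity and spatial decay needed to justify the integrations by parts --- namely $(v-\bar v^X,u-\bar u^X)\in H^2$ and $\phi-\bar\phi^X\in H^3$ (from local existence) together with the exponential decay \eqref{shderiv} of the shock derivatives, so that all boundary terms at $x=\pm\infty$ vanish. The plan is to differentiate $\int_\mathbb{R} a^X\eta(W|\bar W^X)\,dx$ in $t$, substitute the perturbation equations \eqref{1a'}, \eqref{1b}, \eqref{1c'} and the shifted-profile equations \eqref{seqn1}--\eqref{seqn3}, integrate by parts, and sort the output by its structural role: $\dot X$-terms into $\dot X(t)Y(W)$; the $a^X_x$-weighted flux terms and the $\tilde p(v|\bar v^X)$-term into $\mathcal{J}^{bad}$; the negative-definite $a^X_x$- and $a^X$-weighted quadratic terms into $-\mathcal{J}^{good}$; and the remaining Poisson-coupling terms into $\sum_{j=1}^6\mathcal{P}_j$.

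\textbf{Step 1: the Navier--Stokes part.} Decompose $\eta=\tilde\eta(U|\bar U^X)+\eta_\phi$, where $\tilde\eta(U|\bar U^X)=\tfrac12|\tilde u|^2+Q(v|\bar v^X)$ and $\eta_\phi=-\frac{\tilde v\tilde\phi_{xx}}{(\bar v^X)^2}+\frac{e^{-\bar\phi^X}\tilde\phi_{xx}^2}{2(\bar v^X)^3}+\frac{e^{-\bar\phi^X}\tilde\phi_x^2}{2(\bar v^X)^2}$. For $\int a^X\tilde\eta\,dx$ I would run the standard $a$-contraction computation of \cite{KV3,KVW2}, using the identity $Q'(v)=-\tilde p(v)$ (hence $Q'(v)-Q'(\bar v^X)=-(\tilde p(v)-\tilde p(\bar v^X))$), $v_t=u_x$, the divergence-form momentum equation \eqref{1b}, the profile relations \eqref{shODE}, and $\partial_t a^X=-(\sigma+\dot X)a^X_x$ (immediate from \eqref{a}). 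This produces $\frac{d}{dt}\int a^X\tilde\eta\,dx$ as the sum of: the $\dot X$-contributions $-\dot X\int a^X_x\tilde\eta\,dx+\dot X\int a^X(\bar u^X_x\tilde u-\tilde p'(\bar v^X)\bar v^X_x\tilde v)\,dx$; the ``bad'' flux and viscous cross-terms that make up $\mathcal{J}^{bad}$; the three ``good'' terms $-\tfrac\sigma2\int a^X_x\tilde u^2-\sigma\int a^X_x Q(v|\bar v^X)-\int a^X\tfrac{\tilde u_x^2}{v}$ of $-\mathcal{J}^{good}$; and, from the electric-force source $\big(\Phi(v,\phi)-\Phi(\bar v^X,\bar\phi^X)\big)_x$ in \eqref{1b}, a cluster of terms obtained by expanding $\Phi$ through \eqref{Phi} and the Poisson equations \eqref{1c'}, \eqref{seqn3} and integrating by parts --- these are exactly the $\tilde u$-weighted part of $\mathcal{P}_3$, the $\tilde u_x$-weighted part of $\mathcal{P}_4$, and the $(a^X_x\tilde u+a^X\tilde u_x)$-weighted part of $\mathcal{P}_6$.

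\textbf{Step 2: the Poisson part.} For $\int a^X\eta_\phi\,dx$ I would differentiate termwise. The factor $\partial_t\tilde v$ is replaced by $\tilde u_x+\dot X\bar v^X_x$ via \eqref{1a'}; the coefficient derivatives $\partial_t[(\bar v^X)^{-2}]$, $\partial_t[e^{-\bar\phi^X}(\bar v^X)^{-3}]$, $\partial_t[e^{-\bar\phi^X}(\bar v^X)^{-2}]$ are computed from $\partial_t\bar v^X=-(\sigma+\dot X)\bar v^X_x$, $\partial_t\bar\phi^X=-(\sigma+\dot X)\bar\phi^X_x$ together with the profile Poisson equation \eqref{seqn3}; and the terms in which $\phi$ enters only through $\frac{\phi_x}{v}-\frac{\bar\phi^X_x}{\bar v^X}$ or its $x$-derivative are rewritten using \eqref{1c'} and its $x$-derivative, which is where the products of $\tilde v,\tilde v_x,\tilde\phi_x,\tilde\phi_{xt},\tilde\phi_{xxt}$ with $\bar v^X_x,\bar\phi^X_x,\bar\phi^X_{xx},(\bar\phi^X_x/\bar v^X)_x$ and the genuine nonlinearity $1+\tilde\phi-e^{\tilde\phi}$ (the quadratic-and-higher remainder of $\bar v^X e^{\bar\phi^X}-ve^\phi$) arise; the time-derivatives $\tilde\phi_{xt},\tilde\phi_{xxt}$ are kept as they appear. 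After repeated integration by parts the leading quadratic pieces $-\tfrac\sigma2\int a^X_x\tfrac{e^{-\bar\phi^X}\tilde\phi_{xx}^2}{(\bar v^X)^3}-\tfrac\sigma2\int a^X_x\tfrac{e^{-\bar\phi^X}\tilde\phi_x^2}{(\bar v^X)^2}$ split off to complete $-\mathcal{J}^{good}$; the $a^X_x$-weighted cross terms in $\tilde u\tilde\phi_{xx}$, $\tilde v\tilde\phi_{xx}$, $\tilde v\tilde\phi_{xt}$ assemble $\mathcal{P}_1$; the $\dot X$-weighted residuals that are not of the form $a^X_x\eta_\phi$ go into $\mathcal{P}_5$ (the $-\dot X a^X_x\eta_\phi$ part joins $Y$, and the $-\dot X\int a^X\tfrac{\bar v^X_x\tilde\phi_{xx}}{(\bar v^X)^2}$ contribution coming from the $\dot X\bar v^X_x$ in $\partial_t\tilde v$ gives the third term of $Y$); and the surviving linear terms sort into $\mathcal{P}_2$ (weight $a^X$ times a profile-derivative), $\mathcal{P}_3$ (weight $a^X_x$), $\mathcal{P}_4$ (weight $a^X$ times $\tilde u_x$ or times a differentiated coefficient), with all truly quadratic-or-higher remainders --- including the $(1+\tilde\phi-e^{\tilde\phi})$-terms --- going into $\mathcal{P}_6$.

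\textbf{Step 3: assembly, and the main obstacle.} Adding the two contributions, I would verify that the $-\dot X\int a^X_x\eta\,dx$ piece combined with the $-\dot X\bar W^X_x$ terms entering through $\partial_t\bar W^X$ in \eqref{seqn1}--\eqref{seqn3} equals exactly $\dot X(t)Y(W)$ with $Y$ as stated, that the negative-definite $a^X_x$- and $a^X$-weighted quadratic terms form precisely $\mathcal{J}^{good}(W)$, the flux terms precisely $\mathcal{J}^{bad}(W)$, and the residue precisely $\sum_{j=1}^6\mathcal{P}_j(W)$. The only real obstacle is the sheer bookkeeping: differentiating $\eta_\phi$ and unpacking the electric-force source produce a very large number of terms from the $x$- and $t$-differentiated perturbed Poisson equation and from the non-constant coefficients $e^{-\bar\phi^X}(\bar v^X)^{-k}$, and one must carry out each integration by parts carefully --- checking that no boundary terms survive, using \eqref{shderiv} and the $H^2/H^3$ regularity --- and route every term into the correct $\mathcal{P}_j$. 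The one place a sign slip would be fatal is $\mathcal{J}^{good}$, which has to come out so that it is nonnegative given $a^X_x=\sigma\bar v^X_x/\sqrt{\delta_S}>0$ and $\sigma>0$ from \eqref{abound}; everything else is reorganization.
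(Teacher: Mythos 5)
Your plan is correct and is essentially the paper's own argument: the paper likewise applies the relative entropy method to the Navier--Stokes part of \eqref{eq:NS-abs} with the electric force as a source, then uses $\tilde v_t=\tilde u_x+\dot X(t)\bar v^X_x$ and the perturbed Poisson equation \eqref{1c'} to convert the resulting cross terms into the time derivatives of the remaining pieces of $\eta(W|\bar W^X)$, and sorts the residue into $Y$, $\mathcal{J}^{bad}$, $\mathcal{J}^{good}$, and the $\mathcal{P}_j$ exactly as you describe. The only (immaterial) difference is direction: you differentiate the Poisson part of the functional forward in time, whereas the paper discovers $\frac{d}{dt}\int_\mathbb{R} a^X\big(-\frac{\tilde v\tilde\phi_{xx}}{(\bar v^X)^2}+\frac{e^{-\bar\phi^X}\tilde\phi_{xx}^2}{2(\bar v^X)^3}+\frac{e^{-\bar\phi^X}\tilde\phi_x^2}{2(\bar v^X)^2}\big)\,dx$ as output of manipulating the forcing term and moves it to the left-hand side.
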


\begin{remark} \label{RemP}
In the identity \eqref{Idd}, the terms $\mathcal{P}_j$ represent some errors derived from the electric force term in the momentum equation. These errors are grouped for convenience in their later estimates. Specifically, $\mathcal{P}_1, \dots, \mathcal{P}_4$ are roughly ordered by the size of the coefficients of the perturbations within the integrands. The size is evaluated in terms of the power of $\delta_S$, determined by the bounds $\lvert a^X_x \rvert \leq C \delta_S^{3/2}$ and $\lvert \bar{v}^X_x \rvert \leq C \delta_S^2$, together with \eqref{shbounds}. The term $\mathcal{P}_5$ consists of some terms with $\dot{X}(t)$. Finally, $\mathcal{P}_6$ refers to the terms involving cubic or higher-order products of perturbations. Each $\mathcal{P}_j$ is estimated in the proof of Lemma~\ref{LR3}.
\end{remark}

\begin{proof}[Proof of Lemma~\ref{Id}]
First, we compute the evolution of the relative entropy $\tilde\eta(\cdot|\cdot)$ for the first two equations of \eqref{NSP}, i.e., for the NS system \eqref{NS} with forcing term $\Phi(v,\phi)_x$. For that, we apply the relative entropy method (see  \cite{HKKL, HKL, KV3}) to the system of the two equations:
\begin{equation}\label{eq:NS-abs}
\partial_t U + \partial_x A(U) = 
\begin{pmatrix}
0 \\  \partial_x \Big(\frac{\partial_x u}{v}\Big) + \partial_x \Phi(v,\phi)
\end{pmatrix}, \quad 
U := \begin{pmatrix}
v\\u
\end{pmatrix}, \quad 
A(U) : =\begin{pmatrix}
-u \\ \tilde{p}(v) 
\end{pmatrix}.
\end{equation}
Thus, we obtain the following identity for the relative entropy of the above quantity $U$ and $\bar U^X:= \begin{pmatrix} \bar v^X\\ \bar u^X \end{pmatrix}$ (satisfying \eqref{seqn1}-\eqref{seqn2}):
\begin{equation*}
\begin{split}
& \frac{d}{dt} \int_\mathbb{R} a^X \tilde\eta(U|\bar{U}^X) \, dx \\
& \quad = \dot{X}(t)\bigg( -\int_\mathbb{R}  a^X_\xi \tilde\eta(U|\bar{U}^X)  \,d x +\int_\mathbb{R} a^X D^2\tilde\eta(U)\bar{U}_x^X (U-\bar U)\, dx \bigg) \\
& \qquad + \int_\mathbb{R} a^X_x \left(\tilde{p}(v)-\tilde{p}(\bar{v}^X) \right) \tilde{u} \, dx - \int_\mathbb{R} a^X \bar{u}^X \tilde{p}(v|\bar{v}^X) \, dx  - \sigma \int_\mathbb{R} a^X_x \tilde\eta(U|\bar{U}^X) \, dx \\
& \qquad - \int_\mathbb{R} a^X \frac{\tilde{u}_x^2}{v} \, dx - \int_\mathbb{R} a^X_x \frac{\tilde{u}\tilde{u}_x}{v} \, dx + \int_\mathbb{R} a^X_x \frac{\bar{u}^X_x \tilde{v} \tilde{u}}{v\bar{v}^X} \, dx + \int_\mathbb{R} a^X \frac{\bar{u}^X_x \tilde{v} \tilde{u}_x}{v\bar{v}^X} \, dx  \\
& \qquad +\int_\mathbb{R} a^X \tilde{u} \left( \Phi(v,\phi) - \Phi(\bar{v}^X,\bar{\phi}^X) \right)_x \, dx, 
\end{split}
\end{equation*}
which, together with the definition of $\tilde\eta$, yields
\begin{equation} \label{genent}
\begin{split}
&\frac{d}{dt} \int_\mathbb{R} a^X \Big(\frac{\tilde u}{2} + Q(v|\bar{v}^X) \Big) \, dx \\
& \quad = \dot{X}(t)\bigg( -\int_\mathbb{R}  a^X \Big(\frac{\tilde u}{2} + Q(v|\bar{v}^X) \Big) d x +\int_\mathbb{R} a^X \Big( \bar{u}^X_x \tilde{u} - \tilde{p}'(\bar{v}^X)\bar{v}^X_x \tilde{v} \Big)\,  dx \bigg) \\
& \qquad + \int_\mathbb{R} a^X_x \left(\tilde{p}(v)-\tilde{p}(\bar{v}^X) \right) \tilde{u} \, dx - \int_\mathbb{R} a^X \bar{u}^X \tilde{p}(v|\bar{v}^X) \, dx  - \frac{\sigma}{2} \int_\mathbb{R} a^X_x \tilde{u}^2 \, dx  \\
& \qquad  - \sigma \int_\mathbb{R} a^X_x Q(v|\bar{v}^X) \, dx - \int_\mathbb{R} a^X \frac{\tilde{u}_x^2}{v} \, dx - \int_\mathbb{R} a^X_x \frac{\tilde{u}\tilde{u}_x}{v} \, dx + \int_\mathbb{R} a^X_x \frac{\bar{u}^X_x \tilde{v} \tilde{u}}{v\bar{v}^X} \, dx \\
& \qquad + \int_\mathbb{R} a^X \frac{\bar{u}^X_x \tilde{v} \tilde{u}_x}{v\bar{v}^X} \, dx +\int_\mathbb{R} a^X \tilde{u} \left( \Phi(v,\phi) - \Phi(\bar{v}^X,\bar{\phi}^X) \right)_x \, dx.
\end{split}
\end{equation}
Thus, it remains to compute the last term.

Recalling the definition \eqref{Phi} of $\Phi$, one can obtain by integration by parts
\begin{equation} \label{Phi1}
\begin{split}
\int_\mathbb{R} a^X \tilde{u} \left( \Phi(v,\phi) - \Phi(\bar{v}^X,\bar{\phi}^X) \right)_x \, dx & = - \frac{1}{2} \int_\mathbb{R} \left( a^X \tilde{u} \right)_x \left[ \left( \frac{\phi_x}{v} \right)^2 - \left( \frac{\bar{\phi}^X_x}{\bar{v}^X} \right)^2 \right] \, dx \\
& \quad + \int_\mathbb{R} \left( a^X \tilde{u} \right)_x \left[ \frac{1}{v} \left( \frac{\phi_x}{v} \right)_x - \frac{1}{\bar{v}^X} \left( \frac{\bar{\phi}^X_x}{\bar{v}^X} \right)_x \right] \, dx.
\end{split}
\end{equation}
The first and second terms on the right-hand side of \eqref{Phi1} are expanded as
\[
\begin{split}
& - \frac{1}{2} \int_\mathbb{R} \left( a^X \tilde{u} \right)_x \left[ \bigg( \frac{\phi_x}{v} \bigg)^2 - \bigg( \frac{\bar{\phi}^X_x}{\bar{v}^X} \bigg)^2 \right] \, dx \\
& \quad = - \frac{1}{2} \int_\mathbb{R}  \left( a^X_x \tilde{u} + a^X \tilde{u}_x \right) \left[ \frac{\tilde{\phi}_x^2}{v^2} + \frac{2\bar{\phi}^X_x \tilde{\phi}_x}{v^2} + \bigg( \frac{1}{v^2} - \frac{1}{(\bar{v}^X)^2} \bigg) (\bar{\phi}^X_x)^2 \right] \, dx
\end{split}
\]
and
\[
\begin{split}
& \int_\mathbb{R} \left( a^X \tilde{u} \right)_x \left[ \frac{1}{v} \bigg( \frac{\phi_x}{v} \bigg)_x - \frac{1}{\bar{v}^X} \bigg( \frac{\bar{\phi}^X_x}{\bar{v}^X} \bigg)_x \right] \, dx \\
& \quad = \int_\mathbb{R} \left( a^X_x \tilde{u} + a^X \tilde{u}_x \right) \left[ \frac{\tilde{\phi}_{xx}}{(\bar{v}^X)^2} + \bigg( \frac{1}{v^2} - \frac{1}{(\bar{v}^X)^2} \bigg) \left( \tilde{\phi}_{xx} + \bar{\phi}^X_{xx} \right) \right] \, dx \\
& \qquad - \int_\mathbb{R} \left( a^X_x \tilde{u} + a^X \tilde{u}_x \right) \left[ \frac{\tilde{v}_x\tilde{\phi}_x}{v^3} + \frac{\bar{v}^X_x \tilde{\phi}_x}{v^3} + \frac{\bar{\phi}^X_x\tilde{v}_x}{v^3} + \bigg( \frac{1}{v^3} - \frac{1}{(\bar{v}^X)^3} \bigg) \bar{v}^X_x \bar{\phi}^X_x \right] \, dx,
\end{split}
\]
respectively. Thus, we can write the right-hand side of \eqref{Phi1} as
\begin{equation} \label{Phi2}
RHS = \int_\mathbb{R} a^X \frac{ \tilde{u}_x \tilde{\phi}_{xx}}{( \bar{v}^X )^2} \, dx + \sum_{j=1}^5 \mathcal{P}_{1j},
\end{equation}
where
\[
\begin{split}
\mathcal{P}_{11}(W) & := \int_\mathbb{R} a^X_x \frac{\tilde{u} \tilde{\phi}_{xx}}{(\bar{v}^X)^2} \, dx, \quad \mathcal{P}_{12} (W) := - \int_\mathbb{R} a^X \frac{2\bar{\phi}^X_{xx} \tilde{u}_x \tilde{v}}{v^2\bar{v}^X} \, dx, \\
\mathcal{P}_{13}(W) & := - \int_\mathbb{R}  a^X_x \tilde{u} \left( \frac{2\bar{\phi}^X_{xx}\tilde{v}}{v^2\bar{v}^X} + \frac{\bar{\phi}^X_x \tilde{\phi}_x}{v^2} - \frac{(\bar{\phi}^X_x)^2 \tilde{v}}{v^2\bar{v}^X} + \frac{\bar{v}^X_x \tilde{\phi}_x}{v^3} + \frac{\bar{\phi}^X_x \tilde{v}_x}{v^3} - \frac{3\bar{v}^X_x \bar{\phi}^X_x \tilde{v}}{v^3 \bar{v}^X}  \right) \, dx, \\
\mathcal{P}_{14}(W) & := - \int_\mathbb{R} a^X \tilde{u}_x \left( \frac{\bar{\phi}^X_x \tilde{\phi}_x}{v^2} - \frac{(\bar{\phi}^X_x)^2 \tilde{v}}{v^2\bar{v}^X} + \frac{\bar{v}^X_x \tilde{\phi}_x}{v^3} + \frac{\bar{\phi}^X_x \tilde{v}_x}{v^3} - \frac{3\bar{v}^X_x \bar{\phi}^X_x \tilde{v}}{v^3 \bar{v}^X}  \right) \, dx,
\end{split}
\]
and
\[
\begin{split}
\mathcal{P}_{15}(W) & := - \frac{1}{2} \int_\mathbb{R} \left( a^X_x \tilde{u} + a^X \tilde{u}_x \right) \left( \frac{\tilde{\phi}_x^2}{v^2} - \frac{(\bar{\phi}^X_x)^2 \tilde{v}^2}{v^2(\bar{v}^X)^2} \right) \, dx \\
& \quad - \int_\mathbb{R} \left( a^X_x \tilde{u} + a^X \tilde{u}_x \right) \left( \frac{\bar{\phi}^X_{xx} \tilde{v}^2}{v^2 (\bar{v}^X)^2} + \frac{ \tilde{v}^2 \tilde{\phi}_{xx}}{v^2 (\bar{v}^X)^2} + \frac{2 \tilde{v} \tilde{\phi}_{xx}}{v^2 \bar{v}^X} \right) \, dx \\
& \quad - \int_\mathbb{R} \left( a^X_x \tilde{u} + a^X \tilde{u}_x \right) \left( \frac{\tilde{v}_x \tilde{\phi}_x}{v^3} - \frac{\bar{v}^X_x \bar{\phi}^X_x \tilde{v}^3}{v^3 (\bar{v}^X)^3} - \frac{3\bar{v}^X_x \bar{\phi}^X_x \tilde{v}^2}{v^3(\bar{v}^X)^2} \right) \, dx.
\end{split}
\]
The terms $\mathcal{P}_{11}, \dots, \mathcal{P}_{15}$ are absorbed in $\mathcal{P}_1,\mathcal{P}_2,\mathcal{P}_3,\mathcal{P}_4$, and $\mathcal{P}_6$, respectively. For the first term on the right-hand side of \eqref{Phi2}, we use \eqref{1a'} to obtain
\begin{equation} \label{Phi3}
\begin{split}
\int_\mathbb{R} a^X \frac{\tilde{u}_x \tilde{\phi}_{xx}}{(\bar{v}^X)^2} \, dx & = \int_\mathbb{R} a^X \frac{\tilde{v}_t \tilde{\phi}_{xx}}{(\bar{v}^X)^2} \, dx - \dot{X}(t) \int_\mathbb{R} a^X \frac{\bar{v}^X_x \tilde{\phi}_{xx}}{(\bar{v}^X)^2} \, dx \\
& = \frac{d}{dt} \int_\mathbb{R} a^X \frac{\tilde{v} \tilde{\phi}_{xx}}{(\bar{v}^X)^2} \, dx + \int_\mathbb{R} a^X \frac{\tilde{v}_x \tilde{\phi}_{xt}}{(\bar{v}^X)^2} \, dx \\
& \quad  + \dot{X}(t) \left( \int_\mathbb{R} a^X_x \frac{\tilde{v} \tilde{\phi}_{xx}}{(\bar{v}^X)^2} \, dx - \int_\mathbb{R} a^X \frac{\bar{v}^X_x \tilde{\phi}_{xx}}{(\bar{v}^X)^2} \, dx \right)  + \sum_{j=1}^3 \mathcal{P}_{2j},
\end{split}
\end{equation}
where
\[
\begin{split}
\mathcal{P}_{21}(W) & := \int_\mathbb{R} a^X_x \left( \frac{ \sigma \tilde{v} \tilde{\phi}_{xx}}{(\bar{v}^X)^2} + \frac{\tilde{v}\tilde{\phi}_{xt}}{(\bar{v}^X)^2} \right) \, dx, \quad \mathcal{P}_{22}(W)  := - \int_\mathbb{R} a^X \left( \frac{2\sigma \bar{v}^X_x \tilde{v} \tilde{\phi}_{xx}}{(\bar{v}^X)^3} + \frac{2\bar{v}^X_x \tilde{v} \tilde{\phi}_{xt}}{(\bar{v}^X)^3} \right) \, dx, \\
\mathcal{P}_{23}(W) & := - \dot{X}(t) \int_\mathbb{R} a^X \frac{2\bar{v}^X_x \tilde{v} \tilde{\phi}_{xx}}{(\bar{v}^X)^3} \, dx .
\end{split}
\]
The first term in the last line of \eqref{Phi3} is part of $\dot{X}(t)Y(W)$, and the terms $\mathcal{P}_{21}, \mathcal{P}_{22}, \mathcal{P}_{23}$ are included in $\mathcal{P}_1,\mathcal{P}_2$, and $\mathcal{P}_5$, respectively. Next, in order to rewrite the second term in the second line of \eqref{Phi3}, we use the identity
\[
\begin{split}
\tilde{v}_x & = \left[ e^{-\bar{\phi}^X} \bigg( \frac{\tilde{\phi}_x}{\bar{v}^X} - \frac{\bar{\phi}^X_x \tilde{v}}{v\bar{v}^X} - \frac{\tilde{v} \tilde{\phi}_x}{v\bar{v}^X} \bigg)_x + \bar{v}^X \left( 1 + \tilde{\phi} - e^{\tilde{\phi}} \right) + \tilde{v} \left( 1 - e^{\tilde{\phi}} \right) - \bar{v}^X \tilde{\phi} \right]_x \\
& = \left[ \frac{e^{-\bar{\phi}^X}\tilde{\phi}_{xx}}{\bar{v}^X} - \frac{e^{-\bar{\phi}^X} \bar{v}^X_x \tilde{\phi}_x}{(\bar{v}^X)^2} - e^{-\bar{\phi}^X} \bigg( \frac{\bar{\phi}^X_x \tilde{v}}{v\bar{v}^X} + \frac{\tilde{v} \tilde{\phi}_x}{v\bar{v}^X} \bigg)_x + \bar{v}^X \left( 1 + \tilde{\phi} - e^{\tilde{\phi}} \right) + \tilde{v} \left( 1 - e^{\tilde{\phi}} \right) \right]_x \\
& \quad - \bar{v}^X_x \tilde{\phi} - \bar{v}^X \tilde{\phi}_x, 
\end{split}
\]
which is obtained by expanding $e^{\tilde{\phi}}$ in \eqref{1c'} and differentiating the resultant equation with respect to $x$. Using this, and applying integration by parts, we have
\begin{equation} \label{Phi4}
\begin{split}
\int_\mathbb{R} a^X \frac{\tilde{v}_x\tilde{\phi}_{xt}}{( \bar{v}^X )^2} \, dx & = - \int_\mathbb{R} a^X \frac{e^{-\bar{\phi}^X}}{(\bar{v}^X)^3} \bigg( \frac{\tilde{\phi}_{xx}^2}{2} \bigg)_t \, dx - \int_\mathbb{R} a^X_x \frac{e^{-\bar{\phi}^X}\tilde{\phi}_{xt} \tilde{\phi}_{xx}}{(\bar{v}^X)^3} \, dx + \int_\mathbb{R} a^X \frac{2 e^{-\bar{\phi}^X}\bar{v}^X_x \tilde{\phi}_{xt} \tilde{\phi}_{xx}}{(\bar{v}^X)^4} \, dx \\
& \quad + \int_\mathbb{R} \left( a^X \frac{\tilde{\phi}_{xt}}{(\bar{v}^X)^2} \right)_x \left( \frac{e^{-\bar{\phi}^X}\bar{v}^X_x \tilde{\phi}_x}{(\bar{v}^X)^2} + e^{-\bar{\phi}^X} \bigg( \frac{\bar{\phi}^X_x \tilde{v}}{v\bar{v}^X} + \frac{\tilde{v} \tilde{\phi}_x}{v\bar{v}^X} \bigg)_x  \right) \, dx \\
& \quad + \int_\mathbb{R} a^X \frac{\tilde{\phi}_{xt}}{(\bar{v}^X)^2} \left( \bar{v}^X (1 +\tilde{\phi} - e^{\tilde{\phi}}) + \tilde{v} (1 - e^{\tilde{\phi}}) \right)_x \, dx \\
& \quad - \int_\mathbb{R} a^X \frac{\bar{v}^X_x \tilde{\phi}_{xt} \tilde{\phi}}{(\bar{v}^X)^2} \, dx - \int_\mathbb{R} a^X \frac{1}{\bar{v}^X} \bigg( \frac{\tilde{\phi}_x^2}{2} \bigg)_t \, dx.
\end{split}
\end{equation}
Here, we can rewrite the last term on the right-hand side again as
\begin{equation} \label{Phi5}
- \int_\mathbb{R} a^X \frac{1}{\bar{v}^X} \bigg( \frac{\tilde{\phi}_x^2}{2} \bigg)_t \, dx = - \int_\mathbb{R} a^X \frac{e^{-\bar{\phi}^X}}{(\bar{v}^X)^2} \bigg( \frac{\tilde{\phi}_x^2}{2} \bigg)_t \, dx  - \int_\mathbb{R} a^X \bigg( \frac{\bar{\phi}^X_x}{\bar{v}^X} \bigg)_x \frac{e^{-\bar{\phi}^X}\tilde{\phi}_x \tilde{\phi}_{xt}}{(\bar{v}^X)^2} \, dx
\end{equation}
by using the Poisson equation of \eqref{shODE}. By substituting \eqref{Phi5} into \eqref{Phi4} and expanding the derivatives using the product rule for time differentiation, we obtain
\[
\begin{split}
\int_\mathbb{R} a^X \frac{\tilde{v}_x\tilde{\phi}_{xt}}{( \bar{v}^X )^2} \, dx & = - \frac{d}{dt} \left( \int_\mathbb{R} a^X \frac{e^{-\bar{\phi}^X} \tilde{\phi}_{xx}^2}{2(\bar{v}^X)^3} \, dx + \int_\mathbb{R} a^X \frac{e^{-\bar{\phi}^X}\tilde{\phi}_x^2}{2 (\bar{v}^X)^2} \, dx \right) \\
& \quad - \frac{\sigma}{2}  \int_\mathbb{R} a^X_x \left( \frac{ e^{-\bar{\phi}^X}\tilde{\phi}_{xx}^2}{(\bar{v}^X)^3} + \frac{e^{-\bar{\phi}^X}\tilde{\phi}_x^2}{(\bar{v}^X)^2} \right) \, dx - \dot{X}(t) \int_\mathbb{R} a^X_x \left( \frac{ e^{-\bar{\phi}^X}\tilde{\phi}_{xx}^2}{2(\bar{v}^X)^3} + \frac{e^{-\bar{\phi}^X}\tilde{\phi}_x^2}{2(\bar{v}^X)^2} \right) \, dx  \\ 
& \quad + \sum_{j=1}^6 \mathcal{P}_j - \sum_{j=1}^5 \mathcal{P}_{1j} - \sum_{j=1}^3 \mathcal{P}_{2j},
\end{split}
\]
where $\mathcal{P}_{j}$ are defined as in this lemma, and $\mathcal{P}_{1j},\mathcal{P}_{2j}$ are as above. The second and third terms on the right-hand side are absorbed in $\mathcal{J}^{good}(W)$ and $\dot{X}(t)Y(W)$, respectively. Therefore, collecting all the terms obtained above, we complete the proof.
\end{proof}

\subsection{\texorpdfstring{Maximization on $\tilde{p}(v) - \tilde{p}(\bar{v}^X)$}{Maximization on the perturbation of the modified pressure}}
Note that the term in $\mathcal{J}^{bad}(W)$,
\[
\int_\mathbb{R} a^X_x \left( \tilde{p}(v) - \tilde{p}(\bar{v}^X) \right) \tilde{u} \, dx,
\]
is in the form of a coupling of the zeroth-order perturbations of $\tilde{p}(\bar{v}^X)$ and $\bar{u}^X$. This primary bad term cannot be controlled directly by good terms in $\mathcal{J}^{good}$. To resolve this, we separate $\tilde{u}$ from $\tilde{p}(v) - \tilde{p}(\bar{v}^X)$ by employing the following lemma. This lemma is proved by using the bounds on the relative quantities $\tilde{p}(v|\bar{v}^X)$ and $Q(v|\bar{v}^X)$ in Lemma~\ref{Lemma.2.1}. As the proof is identical to that in \cite{HKKL}, we omit it here.

\begin{lemma} [\cite{HKKL}, Lemma~4.3] \label{Lemma.3.3}
For sufficiently small $\delta_S>0$, it holds that
\begin{equation} \label{Max}
\begin{split}
& - \sigma \int_\mathbb{R} a^X_x Q(v|\bar{v}^X) \, dx -  \int_\mathbb{R} a^X \bar{u}^X_x \tilde{p}(v|\bar{v}^X) \, dx \\
& \quad \leq - C_* \int_\mathbb{R} a^X_x \lvert \tilde{p}(v) - \tilde{p}(\bar{v}^X) \rvert^2 \, dx  \\
& \qquad + C \delta_S \int_\mathbb{R} a^X_x \lvert \tilde{p}(v) - \tilde{p}(\bar{v}^X) \rvert^2 \, dx + C \int_\mathbb{R} a^X_x \lvert \tilde{p}(v) - \tilde{p}(\bar{v}^X) \rvert^3 \, dx,
\end{split}
\end{equation}
where the constant $C_*$ is given by
\begin{equation} \label{C_*}
C_* = \left( 1 - \frac{ \sqrt{\delta_S}}{2} \right) v_- >0.
\end{equation}
\end{lemma}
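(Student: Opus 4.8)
The plan is to reduce \eqref{Max} to a pointwise algebraic inequality in the single variable $y:=\tilde p(v)-\tilde p(\bar v^X)$ and then close that inequality with the sharp relative-quantity bounds of Lemma~\ref{Lemma.2.1}; this is exactly the scheme of \cite{HKKL}, Lemma~4.3, with the pressure and internal energy there replaced by the modified pressure $\tilde p(v)=2/v$ and the internal energy $Q(v)=-2\log v$ of the present setting.

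First I would eliminate $\bar u^X_x$ in favour of $a^X_x$. From the definition \eqref{a} of the weight and the profile identity $\sigma\bar v'=-\bar u'$ in \eqref{vup'} one has the exact relation $\bar u^X_x=-\sqrt{\delta_S}\,a^X_x$, so the left-hand side of \eqref{Max} equals $\int_\mathbb{R} a^X_x\big(-\sigma\,Q(v|\bar v^X)+\sqrt{\delta_S}\,a^X\,\tilde p(v|\bar v^X)\big)\,dx$. Since $a^X_x>0$ by \eqref{abound}, it is enough to prove, for a.e.\ $x$,
\[
-\sigma\,Q(v|\bar v^X)+\sqrt{\delta_S}\,a^X\,\tilde p(v|\bar v^X)\ \le\ \big(-C_*+C\delta_S\big)\,y^2+C\,|y|^3 .
\]

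For this pointwise bound I would insert the lower bound \eqref{relbd2} for $Q(v|\bar v^X)$ and the upper bound \eqref{relbd1} for $\tilde p(v|\bar v^X)$; both are available because $\lVert v-\bar v^X\rVert_{L^\infty}$ and $|v_+-v_-|$ are small, as recorded after Lemma~\ref{Lemma.2.1}. The cubic remainders these generate, together with the $\sqrt{\delta_S}\,a^X\,C|y|^3$ coming from \eqref{relbd1}, carry coefficients bounded uniformly in $\delta_S$ (by \eqref{abound} and the profile bounds \eqref{shderiv}) and are absorbed into $C|y|^3$. What remains is to show that the resulting quadratic coefficient
\[
\frac{\sqrt{\delta_S}\,a^X}{\tilde p(\bar v^X)}-\frac{\sigma}{\tilde p(\bar v^X)^2}
\]
is at most $-C_*+C\delta_S$. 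Here I would use $a^X\le 1+\sqrt{\delta_S}$ from \eqref{abound}; the profile bounds \eqref{shderiv}, which give $\bar v^X=v_-+O(\delta_S)$ and hence $\tilde p(\bar v^X)^{-1}=v_-/2+O(\delta_S)$, $\tilde p(\bar v^X)^{-2}=(v_-/2)^2+O(\delta_S)$; and the explicit shock speed $\sigma=\sigma_+=\sqrt{-(\tilde p(v_+)-\tilde p(v_-))/(v_+-v_-)}$ from \eqref{RH} computed with $\tilde p(v)=2/v$. Expanding in powers of $\delta_S$ organizes the leading and $\sqrt{\delta_S}$-order terms into $-C_*$, with $C_*$ as in \eqref{C_*}, and leaves an $O(\delta_S)$ remainder which is the $C\delta_S$ term. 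Integrating the pointwise inequality against $a^X_x\,dx$ then gives \eqref{Max}.

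The main obstacle is precisely this last expansion: $C_*$ must be tracked sharply rather than just shown positive, because in the next section \eqref{Max} is combined with the bad coupling term $\int_\mathbb{R} a^X_x(\tilde p(v)-\tilde p(\bar v^X))\tilde u\,dx$ and the good term $\tfrac{\sigma}{2}\int_\mathbb{R} a^X_x\tilde u^2\,dx$ through the completion of the square
\[
\int_\mathbb{R} a^X_x\,y\,\tilde u\,dx-C_*\int_\mathbb{R} a^X_x\,y^2\,dx=-C_*\int_\mathbb{R} a^X_x\Big(y-\frac{\tilde u}{2C_*}\Big)^2\,dx+\frac{1}{4C_*}\int_\mathbb{R} a^X_x\,\tilde u^2\,dx ,
\]
where one needs $\tfrac{1}{4C_*}<\tfrac{\sigma}{2}$ strictly so that a genuine multiple of $\int_\mathbb{R} a^X_x\tilde u^2\,dx$ (the term $G^S$ of Lemma~\ref{RE}) is left on the dissipative side, while the square itself is $-C_*G_1$ in the notation of Lemma~\ref{RE}. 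The leftover cubic $C\int_\mathbb{R} a^X_x|y|^3\,dx$ is then controlled by $C\lVert v-\bar v^X\rVert_{L^\infty}\int_\mathbb{R} a^X_x\,y^2\,dx\le C\varepsilon_1\,G_1$ and absorbed into the good terms.
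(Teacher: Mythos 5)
Your overall strategy is the right one and is exactly the scheme of the cited reference: the identity $\bar{u}^X_x=-\sqrt{\delta_S}\,a^X_x$ turns the left-hand side of \eqref{Max} into a single integral against the positive measure $a^X_x\,dx$, the problem reduces to a pointwise inequality in $y=\tilde p(v)-\tilde p(\bar v^X)$, and \eqref{relbd1}--\eqref{relbd2} convert the relative quantities into $y^2$ plus cubic remainders. However, the one step you yourself flag as the main obstacle — showing that the quadratic coefficient $\frac{\sqrt{\delta_S}\,a^X}{\tilde p(\bar v^X)}-\frac{\sigma}{\tilde p(\bar v^X)^2}$ is at most $-C_*+C\delta_S$ with $C_*$ as in \eqref{C_*} — is asserted rather than carried out, and when one actually does the expansion with the paper's normalization it does not produce \eqref{C_*}. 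Indeed, $\tilde p(\bar v^X)=2/\bar v^X$ gives $\frac{\sigma}{\tilde p(\bar v^X)^2}=\frac{\sigma(\bar v^X)^2}{4}$, and the Rankine--Hugoniot relation gives $\sigma^2=\frac{2}{v_+v_-}$, so that $\frac{\sigma(\bar v^X)^2}{4}=\frac{\sqrt2\,v_-}{4}+O(\delta_S)=\frac{1}{2\sigma}+O(\delta_S)$, while $\frac{\sqrt{\delta_S}\,a^X}{\tilde p(\bar v^X)}\le\frac{\sqrt{\delta_S}\,v_-}{2}+O(\delta_S)$. Hence the best constant your argument can deliver is $C_*=\frac{\sqrt2\,v_-}{4}-\frac{\sqrt{\delta_S}}{2}v_-+O(\delta_S)\approx 0.35\,v_-$, whereas \eqref{C_*} reads $(1-\frac{\sqrt{\delta_S}}{2})v_-\approx v_-$. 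Since $C_*$ can never exceed $\sup_x\frac{\sigma}{\tilde p(\bar v^X)^2}$ (take $v=\bar v^X+\epsilon\psi$ and let $\epsilon\to0$ to see the quadratic coefficient is saturated), the printed value is unreachable by this route; your proof establishes \eqref{Max} only with the smaller constant $\big(\tfrac{\sqrt2}{4}-\tfrac{\sqrt{\delta_S}}{2}\big)v_-=\tfrac{1}{2\sigma}(1-\sqrt{2\delta_S})+O(\delta_S)$. You must either derive the printed constant (which appears impossible) or note explicitly that \eqref{C_*} should be read as $\tfrac{1}{2\sigma}$ minus the same $\sqrt{\delta_S}$-correction; a proof that silently claims the match has a genuine gap precisely at the quantitative heart of the lemma.

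A secondary, related point: your closing remark that one needs $\frac{1}{4C_*}<\frac{\sigma}{2}$ strictly, so that a multiple of $\int_\mathbb{R} a^X_x\tilde u^2\,dx$ is "left on the dissipative side," misdescribes how the lemma is used. With the correct $C_*\le\frac{1}{2\sigma}$ one has $\frac{1}{4C_*}\ge\frac{\sigma}{2}$, so $\mathcal{B}_1-\mathcal{G}_2$ is a (small, order $\sqrt{\delta_S}\int a^X_x\tilde u^2$) \emph{bad} term; it is absorbed, and the good term $G^S$ is generated, only through the sharp Poincar\'e-type inequality applied to $\mathcal{R}_1=-\frac{\delta_S}{2M}|\dot X|^2+\mathcal{B}_1-\mathcal{G}_2-\frac34\mathcal{D}$, exactly as the paper indicates. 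This does not affect the proof of Lemma \ref{Lemma.3.3} itself, but it reinforces why the constant must be tracked to order $\sqrt{\delta_S}$ rather than merely shown positive.
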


By the result of this lemma, we have a consequent inequality for the weighted relative functional $\eta(W|\bar{W}^X)$. Precisely, we obtain the following corollary. For detailed computations for the decomposition of $Y(W)$ and the application of the above lemma, we refer to Section 4.3 in \cite{HKKL}.

\begin{corollary}
The following inequality holds:
\begin{equation} \label{RFineq}
\begin{split}
\frac{d}{dt} \int_\mathbb{R} a^X \eta(W|\bar{W}^X) \, dx & \leq \dot{X}(t) \sum_{j=1}^9 Y_j(W) + \sum_{j=1}^6 \mathcal{B}_j(W) + \sum_{j=1}^6 \mathcal{P}_{j}(W) - \sum_{j=1}^4 \mathcal{G}_j(W) - \mathcal{D}(W),
\end{split}
\end{equation}
where the terms $\mathcal{P}_j$ are as in Lemma~\ref{Id}, and $Y_j$, $\mathcal{B}_j$, $\mathcal{G}_j$, $\mathcal{D}$ are defined by
\[
\begin{split}
Y_1 (W) &:= \int_\mathbb{R} a^X \bar{u}^X_x \tilde{u} \, dx, \quad Y_2 (W) = \frac{1}{\sigma} \int_\mathbb{R} a^X p'(\bar{v}^X) \bar{v}^X_x \tilde{u} \, dx, \\
Y_3 (W) & := - \frac{1}{2} \int_\mathbb{R} a^X_x \left( \tilde{u} - 2C_* \left( p(v) - p(\bar{v}^X) \right) \right)\left( \tilde{u} + 2C_* \left( p(v) - p(\bar{v}^X) \right) \right) \, dx, \\
Y_4 (W) &:= - \frac{1}{2} \int_\mathbb{R} a^X_x 4C_*^2 \left(p(v) - p(\bar{v}^X) \right)^2 \, dx - \int_\mathbb{R} a^X_x Q(v|\bar{v}^X) \, dx, \\
Y_5 (W) & := - \int_\mathbb{R} a^X p'(\bar{v}^X) \bar{v}^X_x \left( \tilde{v} + \frac{2C_*}{\sigma} \left( p(v) - p(\bar{v}^X) \right) \right) \, dx, \\
Y_6 (W) & := \int_\mathbb{R} a^X p'(\bar{v}^X) \bar{v}^X_x \frac{2C_*}{\sigma} \left( p(v)-p(\bar{v}^X) -\frac{\tilde{u}}{2C_*} \right) \, dx, \\
Y_7 (W) & := - \int_\mathbb{R} a^X_x \frac{\tilde{v}\tilde{\phi}_{xx}}{(\bar{v}^X)^2} \, dx, \quad Y_8 (W) := \frac{1}{2} \int_\mathbb{R} a^X_x \left( \frac{e^{-\bar{\phi}^X}\tilde{\phi}_{xx}^2}{(\bar{v}^X)^3} + \frac{e^{-\bar{\phi}^X}\tilde{\phi}_x^2}{(\bar{v}^X)^2} \right) \, dx, \\
Y_9 (W)& := \int_\mathbb{R} a^X \frac{\bar{v}^X_x \tilde{\phi}_{xx}}{(\bar{v}^X)^2} \, dx,
\end{split}
\]

\begin{align*}
\mathcal{B}_1 (W) & := \frac{1}{4C_*} \int_\mathbb{R} a^X_x \tilde{u}^2 \, dx, & 
\mathcal{B}_2 (W) & := - \int_\mathbb{R} a^X_x \frac{\tilde{u} \tilde{u}_x}{v} \, dx, \\
\mathcal{B}_3 (W) & := \int_\mathbb{R} a^X_x \frac{\bar{u}^X_x\tilde{u} \tilde{v}}{v\bar{v}^X} \, dx, & 
\mathcal{B}_4 (W) & := \int_\mathbb{R} a^X \frac{\bar{u}^X_x \tilde{u}_x \tilde{v}}{v\bar{v}^X } \, dx, \\
\mathcal{B}_5 (W) & := C \delta_S \int_\mathbb{R} a^X_x \lvert \tilde{p}(v) - \tilde{p}(\bar{v}^X) \rvert^2 \, dx, & 
\mathcal{B}_6 (W) & := C \int_\mathbb{R} a^X_x \lvert \tilde{p}(v) - \tilde{p}(\bar{v}^X) \rvert^3 \, dx,
\end{align*}
and
\begin{align*}
\mathcal{G}_1 (W) & := C_* \int_\mathbb{R} a^X_x \bigg\lvert \tilde{p}(v) - \tilde{p}(\bar{v}^X) - \frac{\tilde{u}}{2C_*} \bigg\rvert^2 \, dx, & \mathcal{G}_2 (W) & := \frac{\sigma}{2} \int_\mathbb{R} a^X_x \tilde{u}^2 \, dx, \\
\mathcal{G}_3 (W) & := \frac{\sigma}{2} \int_\mathbb{R} a^X_x \frac{e^{-\bar{\phi}^X}\tilde{\phi}_x^2}{(\bar{v}^X)^3} \, dx, & \mathcal{G}_4 (W) & := \frac{\sigma}{2} \int_\mathbb{R} a^X_x \frac{e^{-\bar{\phi}^X}\tilde{\phi}_{xx}^2}{(\bar{v}^X)^2} \, dx, \\
\mathcal{D} (W) & : = \int_\mathbb{R} a^X \frac{\tilde{u}_x^2}{v} \, dx.
\end{align*}
\end{corollary}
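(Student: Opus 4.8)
The plan is to reorganize the exact identity \eqref{Idd} of Lemma~\ref{Id} into the claimed inequality \eqref{RFineq}; the argument is algebraic bookkeeping together with a single application of the maximization estimate \eqref{Max} of Lemma~\ref{Lemma.3.3}. The error terms $\mathcal{P}_j(W)$ are carried over unchanged, since they are reserved for the separate estimates of Lemma~\ref{LR3}, so the work concerns only the terms $\dot{X}(t)Y(W)$, $\mathcal{J}^{bad}(W)$ and $-\mathcal{J}^{good}(W)$ in \eqref{Idd}.

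First I would decompose $\dot{X}(t)\,Y(W)$ into the nine pieces $Y_1,\dots,Y_9$. Expanding $\eta(W|\bar{W}^X)$ via its definition \eqref{relentropy} inside the term $-\int_\mathbb{R} a^X_x\eta(W|\bar{W}^X)\,dx$, and using the relations $\tilde{p}=2p$ and $\tilde{p}'(\bar{v}^X)=-Q''(\bar{v}^X)$, one keeps the linear‑in‑$\tilde{u}$ contribution $\int_\mathbb{R} a^X\bar{u}^X_x\tilde{u}\,dx$ as $Y_1$, converts $-\int_\mathbb{R} a^X\tilde{p}'(\bar{v}^X)\bar{v}^X_x\tilde{v}\,dx$ into $Y_2$ together with the remainders $Y_5,Y_6$ by inserting and subtracting $\frac{2C_*}{\sigma}\big(p(v)-p(\bar{v}^X)\big)$ and then peeling off the factor $\frac{\tilde{u}}{2C_*}$, and collects the $a^X_x$‑weighted quadratic remnants into $Y_3,Y_4$; the $\tilde{\phi}$‑contributions produced by $\eta$, by the term $-\int_\mathbb{R} a^X\frac{\bar{v}^X_x\tilde{\phi}_{xx}}{(\bar{v}^X)^2}\,dx$ in $Y(W)$, and by the $\dot{X}$‑terms in \eqref{Phi3} assemble into $Y_7,Y_8,Y_9$. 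This is exactly the decomposition performed in \cite[Section~4.3]{HKKL}, to which I would refer for the computation.

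Next I would treat the coupling term $\int_\mathbb{R} a^X_x\big(\tilde{p}(v)-\tilde{p}(\bar{v}^X)\big)\tilde{u}\,dx$ appearing in $\mathcal{J}^{bad}(W)$, which is the one that cannot be absorbed by $\mathcal{J}^{good}(W)$ directly. Grouping the two relative‑quantity terms $-\sigma\int_\mathbb{R} a^X_x Q(v|\bar{v}^X)\,dx$ (from $-\mathcal{J}^{good}$) and $-\int_\mathbb{R} a^X\bar{u}^X_x\tilde{p}(v|\bar{v}^X)\,dx$ (from $\mathcal{J}^{bad}$) and applying \eqref{Max} bounds them above by $-C_*\int_\mathbb{R} a^X_x\lvert\tilde{p}(v)-\tilde{p}(\bar{v}^X)\rvert^2\,dx + \mathcal{B}_5(W) + \mathcal{B}_6(W)$. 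Since $a^X_x>0$ by \eqref{abound}, completing the square in $\tilde{p}(v)-\tilde{p}(\bar{v}^X)$ then gives
\[
\int_\mathbb{R} a^X_x\big(\tilde{p}(v)-\tilde{p}(\bar{v}^X)\big)\tilde{u}\,dx - C_*\int_\mathbb{R} a^X_x\lvert\tilde{p}(v)-\tilde{p}(\bar{v}^X)\rvert^2\,dx = -\mathcal{G}_1(W) + \mathcal{B}_1(W).
\]
The three remaining terms of $\mathcal{J}^{bad}(W)$ are precisely $\mathcal{B}_2,\mathcal{B}_3,\mathcal{B}_4$, and the remaining terms of $-\mathcal{J}^{good}(W)$, namely $-\frac{\sigma}{2}\int_\mathbb{R} a^X_x\tilde{u}^2\,dx=-\mathcal{G}_2$, the two $\tilde{\phi}$‑dissipation integrals equal to $-\mathcal{G}_3-\mathcal{G}_4$, and $-\int_\mathbb{R} a^X\frac{\tilde{u}_x^2}{v}\,dx=-\mathcal{D}$, account for the rest. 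Collecting these contributions together with $\dot{X}(t)\sum_{j=1}^9 Y_j(W)$ and $\sum_{j=1}^6\mathcal{P}_j(W)$ yields \eqref{RFineq}.

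There is no genuine analytic obstacle; the delicate point is purely combinatorial. The splitting of $Y(W)$ must be arranged so that $Y_1,Y_2$ reproduce, up to fixed constants, the integrals in the definition \eqref{shiftODE} of the shift, while each of $Y_3,\dots,Y_9$ is made to have a structure --- a complete square carrying the weight $a^X_x$, or a profile derivative $\bar{v}^X_x$ (or $\bar{u}^X_x$) times a factor that will later be shown to be $O(\varepsilon_1+\sqrt{\delta_1})$ small --- which can be absorbed once it is multiplied by $\dot{X}(t)$ in the subsequent $\dot{X}$‑estimate. Similarly the $\mathcal{P}_j$ must be grouped so that the later pointwise bounds $\lvert a^X_x\rvert\le C\delta_S^{3/2}$, $\lvert\bar{v}^X_x\rvert\le C\delta_S^2$ and the derivative bounds \eqref{shderiv} apply cleanly. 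I would carry all of this out following \cite[Section~4.3]{HKKL} essentially verbatim, since the isothermal Navier--Stokes part of \eqref{NSP'} is identical and only the $\Phi$‑generated terms, already packaged into the $\mathcal{P}_j$, are new.
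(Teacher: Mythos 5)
Your proposal is correct and follows essentially the same route as the paper: rearrange the exact identity \eqref{Idd}, apply the maximization bound \eqref{Max} to the pair $-\sigma\int_\mathbb{R} a^X_x Q(v|\bar{v}^X)\,dx - \int_\mathbb{R} a^X\bar{u}^X_x\tilde{p}(v|\bar{v}^X)\,dx$ to produce $-C_*\int_\mathbb{R} a^X_x\lvert\tilde{p}(v)-\tilde{p}(\bar{v}^X)\rvert^2\,dx+\mathcal{B}_5+\mathcal{B}_6$, complete the square with the coupling term to obtain $-\mathcal{G}_1+\mathcal{B}_1$, and defer the bookkeeping of the decomposition of $Y(W)$ into $Y_1,\dots,Y_9$ to Section~4.3 of \cite{HKKL}, exactly as the paper does.
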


\subsection{Preliminary estimates}
Before proceeding with the estimates of the terms on the right-hand side of \eqref{RFineq}, we provide some preliminary estimates that will be frequently used throughout the rest of this paper.

\begin{lemma} \label{lemma:4.5}
The shock profile $(\bar{v},\bar{u},\bar{\phi})$ described in proposition \ref{Prop.1.1} satisfies the following bounds:
\begin{equation} \label{shbounds}
\bigg\lvert \frac{d^k \bar{v}}{d\xi^k} \bigg \rvert, \bigg\lvert \frac{d^k\bar{u}}{d\xi^k} \bigg \rvert, \bigg\lvert \frac{d^k\bar{\phi}}{d\xi^k} \bigg \rvert \leq C \bar{v}',
\end{equation}
where $k \in \mathbb{N}$ with $k \geq 1$ and $C>0$ is a generic constant.
\end{lemma}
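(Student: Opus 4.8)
The plan is to reduce all three families of bounds to a single matching lower bound on $\bar v'$. Integrating the first equation of \eqref{shODE} over $(-\infty,\xi]$ and using \eqref{ffc} gives $\bar u=u_- -\sigma(\bar v-v_-)$, hence $\frac{d^k\bar u}{d\xi^k}=-\sigma\frac{d^k\bar v}{d\xi^k}$ for all $k\ge1$; so it suffices to bound the derivatives of $\bar v$ and $\bar\phi$. For $k=1$ there is nothing to prove, since $|\bar v'|=\bar v'$ and $|\bar\phi'|\le C\bar v'$ is the second estimate in \eqref{vup'}.

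For $k\ge2$, the key point is that the claim follows at once from \eqref{shderiv} \emph{provided} one has the matching lower bound
\[
\bar v'(\xi)\ \ge\ c\,\delta_S^{2}\,e^{-\theta\delta_S|\xi|},\qquad \xi\in\mathbb R,
\]
with $\theta$ the (sharp) exponential rate of the profile, for which \eqref{shderiv} also holds: indeed, dividing the bound $|\frac{d^k\bar v}{d\xi^k}|+|\frac{d^k\bar\phi}{d\xi^k}|\le2C_k\delta_S^{k+1}e^{-\theta\delta_S|\xi|}$ of \eqref{shderiv} by $\bar v'$ leaves the factor $\tfrac{2C_k}{c}\delta_S^{k-1}\le\tfrac{2C_k}{c}$. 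So the whole task is to produce this lower bound.

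To that end, I would pass to the first-order form of the profile equation: integrating the momentum equation of \eqref{shODE} over $(-\infty,\xi]$ after rewriting it in divergence form through \eqref{trf} and \eqref{p(v)} (equivalently, using $\bar u'=-\sigma\bar v'$) yields
\[
\sigma\,\bar v'\ =\ \bar v\bigl(\Phi(\bar v,\bar\phi)-h(\bar v)\bigr),\qquad h(v):=\sigma^{2}(v-v_-)+\tilde p(v)-\tilde p(v_-),
\]
with $\Phi$ as in \eqref{Phi}. Since $\tilde p(v)=2/v$ is strictly convex, $h$ is convex with $h(v_-)=h(v_+)=0$ (the latter by \eqref{RH}); thus $-h$ is concave, vanishes at $v_\pm$ and is $\gtrsim\delta_S^{2}$ in the middle, so $-h(v)\gtrsim(v-v_-)(v_+-v)$ on $[v_-,v_+]$, while $|\Phi(\bar v,\bar\phi)(\xi)|\le C\delta_S^{3}e^{-\theta\delta_S|\xi|}$ by \eqref{vup'} and \eqref{shderiv}. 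On the transition region $|\xi|\lesssim1/\delta_S$, where $\bar v$ lies at distance $\gtrsim\delta_S$ from both endpoints (by \eqref{shderiv} with $k=1$ and the normalization $\bar v(0)=\tfrac{v_-+v_+}{2}$), one then has $-h(\bar v)\gtrsim\delta_S^{2}\gg|\Phi|$, hence $\bar v'\gtrsim\delta_S^{2}$, which gives the lower bound there. Near $\xi=+\infty$ (and symmetrically at $-\infty$) I would close the estimate by a Gronwall argument for $g:=v_+-\bar v>0$: from $-h(v_+-g)\le h'(v_+)g$ with $0<h'(v_+)=\mathcal O(\delta_S)$ (by the Lax condition \eqref{Laxc2}), the relation above gives $g'=-\bar v'\ge-C\delta_S g-C\delta_S^{3}e^{-\theta\delta_S\xi}$, whence $g(\xi)\ge c\,\delta_S e^{-\theta\delta_S\xi}$; feeding this back makes $|\Phi|\le\tfrac12|h(\bar v)|$ for $\delta_S$ small, so $\bar v'\ge\tfrac{\bar v}{2\sigma}\bigl(-h(\bar v)\bigr)\ge c\,\delta_S^{2}e^{-\theta\delta_S\xi}$. (Conceptually, this lower bound expresses that the profile, which joins the two rest points of the profile ODE that are hyperbolic apart from the trivial equilibrium-curve direction, is asymptotic to each of them along the one-dimensional eigenspace of the $\mathcal O(\delta_S)$ eigenvalue selected by the Lax condition, on which the $\bar v'$-component is nonzero; the two-sided bound on $\bar v'$ it encodes is already built into the small-shock construction of \cite{DLZ}.)

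The hard part is precisely this lower bound on $\bar v'$: the qualitative estimates \eqref{shderiv} and \eqref{vup'} alone are not enough, since one genuinely needs the exponential decay rate of $\bar v'$ from below, which surfaces only once the near-cancellation in the $\bar v$-equation between the electric force and the pressure terms — equivalently, the additional smallness of the quasi-neutrality defect $\bar\phi+\log\bar v$ — is taken into account. Granting it, all the remaining steps are routine.
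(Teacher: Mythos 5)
Your route is genuinely different from the paper's, and its central difficulty sits exactly where you place it: the matching lower bound $\bar v'(\xi)\ge c\,\delta_S^{2}e^{-\theta\delta_S|\xi|}$ with the \emph{same} rate $\theta$ as in \eqref{shderiv}. Two steps do not close as written. First, Proposition \ref{Prop.1.1} only asserts the upper bounds for \emph{some} generic $\theta>0$; if the $\theta$ in the $k$-th derivative bound is even slightly smaller than the rate in your lower bound, the quotient carries a factor $e^{(\theta_{\mathrm{low}}-\theta_{\mathrm{up}})\delta_S|\xi|}\to\infty$, so you are implicitly invoking a sharpened version of the cited result (upper bounds at the exact spectral rate) that would itself need proof. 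Second, and more seriously, the far-field Gronwall step does not deliver the claimed bound: the forcing $|\Phi|\lesssim\delta_S^{3}e^{-\theta\delta_S\xi}$ decays at the \emph{same} rate as the homogeneous solution of $g'=-C\delta_S g$ (necessarily $C\delta_S\ge\theta\delta_S=\tfrac{v_+}{\sigma}h'(v_+)$, since $-h(v_+-g)\le h'(v_+)g$ is only an upper bound), so Duhamel produces either a secular factor $\xi\,e^{-\theta\delta_S\xi}$ or a positive term decaying strictly faster than $e^{-\theta\delta_S\xi}$ that is eventually dominated by the negative one; and the subsequent ``feeding back'' of $g\gtrsim\delta_S e^{-\theta\delta_S\xi}$ to absorb $\Phi$ presupposes exactly the bound being proved. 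The lower bound is true, but obtaining it really requires the stable-manifold asymptotics $v_+-\bar v=Ae^{-\theta\delta_S\xi}\bigl(1+O(e^{-\epsilon\xi})\bigr)$ with $A>0$ — i.e., your parenthetical remark promoted from a comment to the actual proof.

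The paper sidesteps all of this. It imports from \cite{DLZ} the first-order comparisons $|\bar u'|,|\bar\phi'|,|\bar\phi''|\le C\bar v'$ and then differentiates the profile ODE \eqref{shODE} recursively: $\bar u^{(k)}=-\sigma\bar v^{(k)}$; the momentum equation expresses $\bar v''$ as an $O(1)$ combination of $\bar v'$, $(\bar v')^2$, $\bar\phi'$, hence $|\bar v''|\le C\bar v'$; the Poisson equation gives $|1-\bar ve^{\bar\phi}|\le C\bar v'$ and, upon differentiation, $|\bar\phi'''|\le C\bar v'$; and so on. No lower bound on $\bar v'$ and no exponential rates enter anywhere. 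I would adopt that argument: your approach can be made rigorous, but it demands substantially sharper ODE information than the lemma itself and buys nothing here.
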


\begin{proof}
From the proof of Theorem 1.1 in \cite{DLZ}, we directly obtain the following bounds:
\[
\lvert \bar{u}' \rvert, \lvert \bar{\phi}' \rvert, \lvert \bar{\phi}'' \rvert \leq C \bar{v}'.
\]
With these bounds, we can  recursively obtain the desired bounds for higher-order derivatives by using the shock ODE \eqref{shODE}. For brevity, the details of the proof are omitted.
\end{proof}

\begin{lemma} \label{lemma:4.6}
Under the assumptions in Proposition \ref{Apriori}, the solution $(v,u,\phi)$ and the shift $X(t)$ satisfy the bounds
\begin{equation} \label{infty}
\lVert (v-\bar{v}^X, u - \bar{u}^X)(t,\cdot) \rVert_{W^{1,\infty}} + \lVert (\phi - \bar{\phi}^X) (t,\cdot) \rVert_{W^{2,\infty}} \leq C \varepsilon_1
\end{equation}
and
\begin{equation} \label{X}
\lvert \dot{X}(t) \rvert \leq C \lVert (u-\bar{u}^X)(t,\cdot) \rVert_{L^\infty}
\end{equation}
for all $t \in [0,T]$.
\end{lemma}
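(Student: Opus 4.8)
The plan is to prove the two bounds \eqref{infty} and \eqref{X} of Lemma~\ref{lemma:4.6} by combining the a priori smallness assumption \eqref{eps}, the one-dimensional Sobolev (Gagliardo--Nirenberg) embedding $H^1(\mathbb{R}) \hookrightarrow L^\infty(\mathbb{R})$, and the elliptic structure of the Poisson equation \eqref{1c'}. For the bound \eqref{infty}, the quantities $v - \bar{v}^X$ and $u - \bar{u}^X$ are assumed to lie in $H^2$ with norm at most $\varepsilon_1$, and since $\lVert f \rVert_{W^{1,\infty}} \leq C \lVert f \rVert_{H^2}$ on the real line, the first two components follow immediately. For the potential, one has $\phi - \bar{\phi}^X \in H^3$ with $H^3$-norm controlled by $\varepsilon_1$ (this is part of hypothesis \eqref{eps}), and $\lVert f \rVert_{W^{2,\infty}} \leq C \lVert f \rVert_{H^3}$, which gives the $W^{2,\infty}$-bound on $\tilde\phi$. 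So \eqref{infty} is essentially a direct consequence of Sobolev embedding applied to \eqref{eps}.

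For the shift bound \eqref{X}, I would start from the defining ODE \eqref{shiftODE} for $\dot X(t)$. Rewriting \eqref{shiftODE} in the shifted variables, we have
\[
\dot{X}(t) = -\frac{M}{\delta_S}\int_\mathbb{R} a^X \bigl(\bar{u}^X_x + \tfrac{1}{\sigma}\partial_x\tilde{p}(\bar{v}^X)\bigr)(u - \bar{u}^X)\, dx,
\]
so the strategy is to bound the right-hand side by pulling $\lVert u - \bar{u}^X \rVert_{L^\infty}$ out of the integral and showing the remaining weight $a^X\bigl(\bar{u}^X_x + \tfrac{1}{\sigma}\partial_x\tilde{p}(\bar{v}^X)\bigr)$ is absolutely integrable with an $L^1$-norm that is $O(\delta_S)$ (so that the factor $M/\delta_S$ is absorbed into the generic constant $C$). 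Here I would invoke the exponential decay estimates \eqref{shderiv} of Proposition~\ref{Prop.1.1}: $|\bar{v}^X_x|, |\bar{u}^X_x| \leq C\delta_S^2 e^{-\theta\delta_S|\xi|}$, hence $\int_\mathbb{R} |\bar{u}^X_x|\, dx \leq C\delta_S$, and likewise $\partial_x \tilde p(\bar v^X) = \tilde p'(\bar v^X)\bar v^X_x$ is integrable with $L^1$-norm $O(\delta_S)$ since $\tilde p'(\bar v^X)$ is bounded. Combined with $1 \leq a^X \leq 1 + \sqrt{\delta_S} \leq C$ from \eqref{abound}, this gives $|\dot X(t)| \leq \frac{M}{\delta_S}\cdot C\delta_S \cdot \lVert u - \bar u^X\rVert_{L^\infty} = C\lVert (u-\bar u^X)(t,\cdot)\rVert_{L^\infty}$, which is exactly \eqref{X}.

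There is no serious obstacle here; the lemma is genuinely elementary and its role is to record the two standard facts that the a priori assumption plus Sobolev embedding yields smallness in $W^{1,\infty}$/$W^{2,\infty}$, and that the defining ODE for the shift is self-consistently controlled by the $L^\infty$-size of the velocity perturbation. The only point requiring a little care is making sure the factor $M/\delta_S$ in \eqref{shiftODE} is compensated: this is handled by the fact that both $\bar u^X_x$ and $\partial_x\tilde p(\bar v^X)$ carry an overall amplitude $\delta_S^2$ and decay on the slow scale $1/\delta_S$, so their $L^1$-norms scale like $\delta_S^2 \cdot \delta_S^{-1} = \delta_S$. I would also note, for use later, that one does not even need the full quadratic structure of \eqref{shderiv} here---the crude bound $|\bar u^X_x| \leq C\bar v^X_x$ of Lemma~\ref{lemma:4.5} together with $\int_\mathbb{R}\bar v^X_x\,dx = v_+ - v_- = O(\delta_S)$ already suffices. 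For brevity, the routine verification of these integrals can be omitted in the write-up.
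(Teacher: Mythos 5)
Your proposal is correct and follows essentially the same route as the paper: the $W^{1,\infty}$/$W^{2,\infty}$ bound is exactly the Sobolev embedding applied to the a priori assumption \eqref{eps}, and the shift bound is obtained, as in the paper, by pulling $\lVert u-\bar u^X\rVert_{L^\infty}$ out of the ODE \eqref{shiftODE} and using $\int_{\mathbb{R}}(\lvert\bar u^X_x\rvert+\lvert\bar v^X_x\rvert)\,dx\leq C\delta_S$ to absorb the factor $M/\delta_S$. Your closing remark that the crude bound $\lvert\bar u'\rvert\leq C\bar v'$ together with $\int_{\mathbb{R}}\bar v'\,d\xi=v_+-v_-$ already suffices is precisely the mechanism the paper relies on.
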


\begin{proof}
We use the Sobolev inequality to obtain
\begin{equation*}
\lVert ( v-\bar{v}^X, u - \bar{u}^X) \rVert_{W^{1,\infty}} + \lVert (\phi - \bar{\phi}^X) \rVert_{W^{2,\infty}} \leq C \left( \lVert ( v-\bar{v}^X, u - \bar{u}^X) \rVert_{H^2} + \lVert (\phi - \bar{\phi}^X) \rVert_{H^3} \right).
\end{equation*}
The bound \eqref{infty} then follows from the assumption \eqref{eps}. To show \eqref{X}, we estimate the ODE \eqref{shiftODE} as
\begin{equation} \label{Xest}
\begin{split}
\lvert \dot{X}(t) \rvert & \leq \frac{C}{\delta_S} \int_\mathbb{R} a^X \left( \lvert \bar{u}^X_x \rvert \lvert u - \bar{u}^X \rvert + \lvert \tilde{p}'(\bar{v}^X) \rvert \lvert \bar{v}^X_x \rvert \lvert  u-\bar{u}^X \rvert \right) \, dx \\
& \leq \frac{C}{\delta_S} \lVert (u - \bar{u}^X)(t,\cdot) \rVert_{L^\infty} \int_\mathbb{R} \left( \lvert \bar{u}^X_x \rvert + \lvert \bar{v}^X_x \rvert \right) \, dx \\
& \leq C \lVert (u-\bar{u}^X)(t,\cdot) \rVert_{L^\infty},
\end{split}
\end{equation}
which completes the proof.
\end{proof}

\begin{lemma} \label{Lv^2}
Under the assumptions in Proposition \ref{Apriori}, there exists a positive constant $C>0$ such that
\begin{equation} \label{v^2}
\int_\mathbb{R} \bar{v}^X_x \tilde{v}^2 \, dx \leq C \left( G_1 + G^S \right),
\end{equation}
where $G_1$ and $G^S$ are as in Lemma~\ref{RE}.
\end{lemma}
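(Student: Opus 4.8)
The plan is to bound the weighted $L^2$-norm of the volume perturbation, $\int_\mathbb{R} \bar{v}^X_x \tilde{v}^2\,dx$, by the two good terms $G_1$ and $G^S$ appearing in Lemma~\ref{RE}. The natural device is a triangle-type inequality that trades $\tilde v$ for the combination $\tilde p(v)-\tilde p(\bar v^X)$ against which the dissipation $G_1$ is measured, plus $\tilde u$ which is controlled by $G^S$. Concretely, I would write
\[
\tilde v = \big(v-\bar v^X\big) = -\frac{\bar v^X v}{2}\Big(\tilde p(v)-\tilde p(\bar v^X)\Big) = -\frac{\bar v^X v}{2}\bigg(\tilde p(v)-\tilde p(\bar v^X)-\frac{\tilde u}{2C_*}\bigg) - \frac{\bar v^X v}{4C_*}\,\tilde u,
\]
using $\tilde p(v)=2/v$ so that $\tilde p(v)-\tilde p(\bar v^X) = 2(\bar v^X - v)/(v\bar v^X) = -2\tilde v/(v\bar v^X)$, i.e. $\tilde v = -\tfrac12 v\bar v^X(\tilde p(v)-\tilde p(\bar v^X))$ exactly.

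From here the argument is short. Squaring, using $(A+B)^2\le 2A^2+2B^2$, and using the uniform bounds on $v$ and $\bar v^X$ (which follow from Lemma~\ref{lemma:4.6} together with Proposition~\ref{Prop.1.1}, so that $v,\bar v^X$ are bounded above and below by positive constants on $[0,T]\times\mathbb R$), I obtain pointwise
\[
\tilde v^2 \le C\bigg|\tilde p(v)-\tilde p(\bar v^X)-\frac{\tilde u}{2C_*}\bigg|^2 + C\,\tilde u^2.
\]
Multiplying by $\bar v^X_x\ge 0$ and integrating over $\mathbb R$ gives
\[
\int_\mathbb{R}\bar v^X_x\tilde v^2\,dx \le C\int_\mathbb{R}\bar v^X_x\bigg|\tilde p(v)-\tilde p(\bar v^X)-\frac{\tilde u}{2C_*}\bigg|^2\,dx + C\int_\mathbb{R}\bar v^X_x\tilde u^2\,dx.
\]
The second term on the right is exactly $C\,G^S$. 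For the first, I note that $G_1 = \frac{\sigma}{\sqrt{\delta_S}}\int_\mathbb{R}\bar v^X_x|\tilde p(v)-\tilde p(\bar v^X)-\tilde u/(2C_*)|^2\,dx$, so that $\int_\mathbb{R}\bar v^X_x|\cdots|^2\,dx = \frac{\sqrt{\delta_S}}{\sigma}\,G_1 \le C\,G_1$ since $\delta_S$ is small and $\sigma$ is bounded below (by the Lax condition and $v_-<v_+$). Combining the two bounds yields \eqref{v^2}.

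The only mild subtlety — and hence the "main obstacle," though it is minor — is making sure the constant $C$ in the pointwise identity is uniform: this requires the lower and upper bounds on $v$ and $\bar v^X$, which in turn come from the smallness assumptions \eqref{eps} and $\delta_S<\delta_1$ via Lemma~\ref{lemma:4.6} and the far-field bounds \eqref{shderiv}. One should also check that the exact algebraic identity $\tilde v = -\tfrac12 v\bar v^X(\tilde p(v)-\tilde p(\bar v^X))$ is used rather than an approximate one, so that no extra error terms are generated; since $\tilde p(v)=2/v$ is explicit, this is immediate. No serious estimate is needed beyond these elementary manipulations.
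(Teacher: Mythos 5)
Your proposal is correct and follows essentially the same route as the paper: both convert $\tilde v$ into $\tilde p(v)-\tilde p(\bar v^X)$ via the explicit formula $\tilde p(v)=2/v$ (the paper does this as an inequality using boundedness of $v,\bar v^X$, you as an exact identity), then add and subtract $\tilde u/(2C_*)$, square, and identify the two resulting integrals with $C\,G_1$ and $C\,G^S$ using $a^X_x=\sigma\bar v^X_x/\sqrt{\delta_S}$ and the lower bound on $C_*$ from \eqref{C_*}. No gaps.
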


\begin{proof}
Since $C_*$ is given by \eqref{C_*}, there exists a positive constant $c$ such that $C_* > c >0$ for sufficiently small $\delta_S < \delta_1$. Thus, by the definition of $\tilde{p}(\cdot)$, we obtain
\[
\begin{split}
\int_\mathbb{R} \bar{v}^X_x \tilde{v}^2 \, dx & \leq C \int_\mathbb{R} \bar{v}^X_x \lvert \tilde{p}(v) - \tilde{p}(\bar{v}^X) \rvert^2 \, dx \\
& \leq C \left( \int_\mathbb{R} \bar{v}^X_x \bigg\lvert \tilde{p}(v) - \tilde{p}(\bar{v}^X) - \frac{\tilde{u}}{2C_*} \bigg\rvert^2 \, dx + \frac{1}{4C_*^2} \int_\mathbb{R} \bar{v}^X_x \tilde{u}^2 \, dx \right) \\
& \leq C \left( \sqrt{\delta_S} \int_\mathbb{R} a^X_x \bigg\lvert \tilde{p}(v) - \tilde{p}(\bar{v}^X) - \frac{\tilde{u}}{2C_*} \bigg\rvert^2 \, dx + \int_\mathbb{R} \bar{v}^X_x \tilde{u}^2 \, dx \right).
\end{split}
\]
\end{proof}

\subsection{Main estimates}
We define the shift function $X(t)$ by the ODE \eqref{shiftODE}:
\[
\dot{X}(t) = - \frac{M}{\delta_S} (Y_1 + Y_2), \quad X(0) =0.
\]
Then, we can rewrite the term $\dot{X}(t)Y(W)$ in \eqref{RFineq} as
\[
\dot{X}(t) Y(U) = - \frac{\delta_S}{M} \lvert \dot{X} \rvert^2 + \dot{X} \sum_{j=3}^9 Y_j,
\]
which allows us to decompose the right-hand side of \eqref{RFineq} as
\begin{equation} \label{deceta}
\begin{split}
\frac{d}{dt} \int_\mathbb{R} \eta(W|\bar{W}^X) \, dx & \leq \underbrace{ - \frac{\delta_S}{2M}\lvert \dot{X} \rvert^2 + \mathcal{B}_1 - \mathcal{G}_2 - \frac{3}{4} \mathcal{D} }_{=: \mathcal{R}_1} \\
& \quad \underbrace{ - \frac{\delta_S}{2M} \lvert \dot{X} \rvert^2 + \dot{X} \sum_{j=3}^6 Y_j + \sum_{j=2}^6 \mathcal{B}_j - \mathcal{G}_1 - \frac{1}{4} \mathcal{D} }_{=: \mathcal{R}_2} \\
& \quad \underbrace{ + \dot{X} \sum_{j=7}^9 Y_j  + \sum_{j=1}^6 \mathcal{P}_{j} - \mathcal{G}_3 - \mathcal{G}_4  }_{=: \mathcal{R}_3}.
\end{split}
\end{equation}
Now we estimate the terms $\mathcal{R}_1$, $\mathcal{R}_2$, and $\mathcal{R}_3$.

\subsubsection{Estimates of $\mathcal{R}_1$ and $\mathcal{R}_2$}
First, we obtain the bounds on $\mathcal{R}_1 + \mathcal{R}_2$.
\begin{lemma} [\cite{HKKL}]
Under the assumptions in Proposition \ref{Apriori}, there exists a positive constant $C>0$ such that
\begin{equation} \label{R1R2}
\begin{split}
\mathcal{R}_1 + \mathcal{R}_2 \leq - C \left( \delta_S \lvert \dot{X} \rvert^2 + G_1 + G^S + D \right),
\end{split}
\end{equation}
where $G_1$, $G^S$, and $D$ are defined as in Lemma~\ref{RE}.
\end{lemma}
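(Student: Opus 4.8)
The plan is to estimate $\mathcal{R}_1$ and $\mathcal{R}_2$ separately. Since every term in $\mathcal{R}_1$ and $\mathcal{R}_2$ originates from the isothermal Navier--Stokes block of \eqref{NSP'} (all the electric-force contributions having been collected into $\mathcal{R}_3$), the computation is structurally the same as the pure Navier--Stokes one in \cite{HKKL}, so I would follow that argument and only indicate the points that matter. The standing tools are: the identity $a^X_x = \frac{\sigma}{\sqrt{\delta_S}}\bar{v}^X_x$ and $1 \le a^X \le C$ from \eqref{abound}; the bounds $\bar{v}^X_x \le C\delta_S^2$, hence $a^X_x \le C\delta_S^{3/2}$, and $\int_\mathbb{R}\bar{v}^X_x\,dx = v_+ - v_- \sim \delta_S$ from \eqref{shderiv}; the $L^\infty$-smallness \eqref{infty}; the sharpened shift bound $|\dot{X}| \le \frac{C}{\sqrt{\delta_S}}(G^S)^{1/2}$, obtained from \eqref{shiftODE} by Cauchy--Schwarz rather than pulling out $\|\tilde{u}\|_{L^\infty}$ as in \eqref{Xest}; and the elliptic-type bound $\int_\mathbb{R}\bar{v}^X_x\tilde{v}^2\,dx \le C(G_1+G^S)$ of Lemma~\ref{Lv^2}.

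The core of the $\mathcal{R}_1$-estimate is the completion of squares already encoded in $\mathcal{B}_1$, $\mathcal{G}_1$, $\mathcal{G}_2$: one has $\mathcal{B}_1 - \mathcal{G}_2 = \big(\frac{1}{4C_*} - \frac{\sigma}{2}\big)\int_\mathbb{R} a^X_x\tilde{u}^2\,dx$, and since $C_* = (1 - \frac{\sqrt{\delta_S}}{2})v_- \to v_-$ while $\sigma = \sigma_+ \to \sqrt{2}/v_-$ as $\delta_S \to 0$, the numerical inequality $\frac{1}{4C_*} < \frac{\sigma}{2}$ holds for $\delta_S < \delta_1$; rewriting the weight this gives $\mathcal{B}_1 - \mathcal{G}_2 \le -\frac{c}{\sqrt{\delta_S}}G^S$, a good term with an essential $\delta_S^{-1/2}$ enhancement. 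Combined with $\mathcal{D} \ge cD$ (from $a^X \ge 1$ and the uniform bounds on $v$) and the sign-definite $-\frac{\delta_S}{2M}|\dot{X}|^2$, this yields $\mathcal{R}_1 \le -c(\delta_S|\dot{X}|^2 + G^S + D)$.

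In $\mathcal{R}_2$ the surviving good term is $-\mathcal{G}_1 = -C_* G_1 \le -cG_1$, and everything else is an error. For the cross terms $\mathcal{B}_2,\mathcal{B}_3,\mathcal{B}_4$ I would use Young's inequality, exploiting $a^X_x \le C\delta_S^{3/2}$, $\bar{v}^X_x \le C\delta_S^2$, $|\bar{u}^X_x| = \sigma\bar{v}^X_x$ and Lemma~\ref{Lv^2}: each is bounded by $\epsilon\mathcal{D}$ plus multiples of $\delta_S G^S$, $\delta_S^2 D$ and $(G_1+G^S)$, where the $(G_1+G^S)$-contribution of $\mathcal{B}_4$ is given a small coefficient through a fixed large Young constant balanced against $-\mathcal{G}_1$ and the $\delta_S^{-1/2}$-enhanced $G^S$-term. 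The term $\mathcal{B}_5$ carries an explicit $\delta_S$; in $\mathcal{B}_6$ one factor $|\tilde{p}(v)-\tilde{p}(\bar{v}^X)| \le C\|v-\bar{v}^X\|_{L^\infty} \le C\varepsilon_1$, so after reducing to $\int_\mathbb{R} a^X_x|\tilde{p}(v)-\tilde{p}(\bar{v}^X)|^2\,dx \le 2G_1 + \frac{C}{\sqrt{\delta_S}}G^S$ both are absorbed for $\delta_S,\varepsilon_1$ small. Finally, for $\dot{X}\sum_{j=3}^6 Y_j$ I would bound $|Y_3|,|Y_4| \le \frac{C}{\sqrt{\delta_S}}(G^S+G_1)$ and $|Y_5|,|Y_6| \le C\sqrt{\delta_S}(G_1+G^S)^{1/2}$ by Cauchy--Schwarz and Lemma~\ref{Lv^2}, then, together with $|\dot{X}| \le \frac{C}{\sqrt{\delta_S}}(G^S)^{1/2}$, apply Young's inequality with a $\delta_S$-dependent weight so that the output is a controlled fraction of the $\delta_S^{-1/2}$-enhanced $G^S$-term plus leftovers carrying a factor $\varepsilon_1$ or $\sqrt{\delta_S}$, all absorbable (in particular with $\varepsilon_1$ chosen small relative to $\sqrt{\delta_1}$). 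Adding the bounds for $\mathcal{R}_1$ and $\mathcal{R}_2$ gives \eqref{R1R2}.

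The main difficulty is entirely in the bookkeeping: one must route every error term strictly below a negative multiple of $\delta_S|\dot{X}|^2$, $G_1$, $G^S$ or $D$. This works only because of two structural facts — the strict inequality $\sigma > \frac{1}{2C_*}$ in the small-amplitude limit (which is why $C_*$ and the constant $M$ in \eqref{shiftODE} must be given their precise values), and the $\delta_S^{-1/2}$ enhancement of $G^S$ coming out of $\mathcal{B}_1 - \mathcal{G}_2$ together with the sharp estimate $\delta_S|\dot{X}|^2 \lesssim G^S$, which let one trade the abundant $G^S$-budget for arbitrarily small $G_1$-leftovers and thereby swallow cross terms whose naive Young constants are merely $O(1)$.
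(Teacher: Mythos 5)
Your treatment of $\mathcal{R}_2$ and of the $\dot X\,Y_j$ terms is broadly in the right spirit, but the centerpiece of your $\mathcal{R}_1$ estimate is wrong, and it is wrong at exactly the point the paper flags as the crux. You claim that $\frac{1}{4C_*}<\frac{\sigma}{2}$ with an order-one gap, so that $\mathcal{B}_1-\mathcal{G}_2\le -\frac{c}{\sqrt{\delta_S}}G^S$ becomes an abundant good term. In fact the method is built on the structural near-identity $\frac{1}{4C_*}=\frac{\sigma}{2}\bigl(1+O(\sqrt{\delta_S})\bigr)$, with the correction of the \emph{unfavorable} sign. The maximal constant that the maximization in Lemma~\ref{Lemma.3.3} can deliver is dictated by \eqref{relbd2}: the leading part of $-\sigma\int a^X_x Q(v|\bar v^X)$ is $-\frac{\sigma}{\tilde p(v_-)^2}\int a^X_x|\tilde p(v)-\tilde p(\bar v^X)|^2$, and since $\tilde p(v_-)=\sigma^2 v_-$ one has $\frac{\sigma}{\tilde p(v_-)^2}=\frac{1}{2\sigma}\approx\frac{v_-}{2\sqrt2}$, so necessarily $C_*\le\frac{1}{2\sigma}(1-\frac{\sqrt{\delta_S}}{2})$ up to higher order (the displayed formula \eqref{C_*} appears to have dropped the factor $\frac{1}{2\sqrt2}$ --- taken literally, $C_*\approx v_-$ would make \eqref{Max} false --- and this seems to be what misled you). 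With the correct value, $\frac{1}{4C_*}-\frac{\sigma}{2}=+\frac{\sigma\sqrt{\delta_S}}{4}+O(\delta_S)>0$, hence $\mathcal{B}_1-\mathcal{G}_2=+\frac{\sigma^2}{4}G^S+\dots$ is a \emph{bad} term of the same order as $G^S$, not a good term, and there is no ``$\delta_S^{-1/2}$-enhanced $G^S$-budget'' anywhere. Since your entire routing of the $\mathcal{R}_2$ errors trades against that fictitious budget, the bookkeeping collapses with it.

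The actual mechanism, which your proposal never invokes, is the one the paper announces: one writes $\int a^X_x\tilde u^2$ as the square of the mean of $\tilde u$ with respect to the probability measure $a^X_x\,dx/\int a^X_x\,dx$ plus the variance; the mean part is absorbed by $-\frac{\delta_S}{2M}|\dot X|^2$, because by the definition \eqref{shiftODE} $\dot X$ is (up to controllable errors) a multiple of that mean --- this is where the precise value $M=\frac{5\sqrt2}{2v_-^3}$ enters, not in the inequality ``$\sigma>\frac{1}{2C_*}$'' you cite; and the variance part is absorbed by $\frac34\mathcal{D}$ via the sharp Poincar\'e-type inequality $\int_0^1|W-\int_0^1W\,dy|^2\,dy\le\frac12\int_0^1 y(1-y)|W'|^2\,dy$ after the change of variables $y=(\bar v^X-v_-)/(v_+-v_-)$, the sharp constant $\frac12$ against the available margin in $\mathcal{D}$ being exactly what closes the argument. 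Without this mean/variance decomposition and the Poincar\'e step, $\mathcal{R}_1$ cannot be made negative, so the proposal has a genuine gap rather than a repairable inefficiency.
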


In the proof of this lemma, the estimates of $\mathcal{R}_1$ require a delicate analysis, in which a sharp Poincar\'e-type inequality plays a crucial role. However, the analysis and detailed computations for this part are similar to those for the case of viscous-dispersive shocks in the Navier-Stokes-Korteweg equations, as treated in \cite{HKKL}. Indeed, all the bounds and estimates used in the analysis in \cite{HKKL} can be obtained by using the bounds on the shock profile, \eqref{shderiv} and \eqref{shbounds}. Furthermore, all the terms consisting of $\mathcal{R}_2$ are also handled in \cite{HKKL}. Therefore, we omit the proof of this lemma and refer to Sections~4.4-4.5 of \cite{HKKL}.

\subsubsection{Estimate of $\mathcal{R}_3$}
We next estimate the remaining term $\mathcal{R}_3$ which consists of the terms from the electric force.
\begin{lemma} \label{LR3}
Under the assumptions in Proposition \ref{Apriori}, there exists a positive constant $C>0$ such that
\begin{equation} \label{R3}
\begin{split}
\mathcal{R}_3 & \leq C \left( \sqrt{\delta_1} + \varepsilon_1 \right) \bigg(  \delta_S \lvert \dot{X} \rvert^2 + G_1 + G^S + D + \int_\mathbb{R} \left( \tilde{v}_x^2 + \bar{v}^X_x \tilde{\phi}^2 + \tilde{\phi}_x^2 + \tilde{\phi}_{xx}^2 + \tilde{\phi}_{xt}^2 + \tilde{\phi}_{xxt}^2 \right) \, dx \bigg),
\end{split}
\end{equation}
where $G_1$, $G^S$, and $D$ are defined as in Lemma~\ref{RE}.
\end{lemma}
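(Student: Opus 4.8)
We estimate $\mathcal{R}_3 = \dot{X}\sum_{j=7}^9 Y_j + \sum_{j=1}^6 \mathcal{P}_j - \mathcal{G}_3 - \mathcal{G}_4$ summand by summand. Since $\mathcal{G}_3,\mathcal{G}_4\geq 0$ we simply discard $-\mathcal{G}_3-\mathcal{G}_4\leq 0$, and it remains to bound each of the finitely many integrals in $\dot{X}Y_7,\dot{X}Y_8,\dot{X}Y_9$ and $\mathcal{P}_1,\dots,\mathcal{P}_6$ by $C(\sqrt{\delta_1}+\varepsilon_1)$ times a sum of the quantities appearing on the right-hand side of \eqref{R3}. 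The ingredients are: the weight and profile bounds $1\leq a^X\leq C$, $a^X_x=\sigma\bar{v}^X_x/\sqrt{\delta_S}$ so that $|a^X_x|\leq C\delta_S^{3/2}$, and $|\bar{v}^X_x|,|\bar{\phi}^X_x|,|\bar{\phi}^X_{xx}|\leq C\bar{v}^X_x\leq C\delta_S^2$ (from \eqref{abound}, \eqref{shderiv}, and Lemma~\ref{lemma:4.5}), together with the consequence $|(\bar{\phi}^X_x/\bar{v}^X)_x|=|1-\bar{v}^X e^{\bar{\phi}^X}|\leq C\bar{v}^X_x$ obtained from the shock Poisson equation; the perturbation bounds $\lVert(v-\bar{v}^X,u-\bar{u}^X)\rVert_{W^{1,\infty}}+\lVert\phi-\bar{\phi}^X\rVert_{W^{2,\infty}}\leq C\varepsilon_1$ and $|\dot{X}|\leq C\lVert\tilde{u}\rVert_{L^\infty}\leq C\varepsilon_1$ from Lemma~\ref{lemma:4.6}; the elementary Taylor estimates $|1+\tilde{\phi}-e^{\tilde{\phi}}|\leq C\tilde{\phi}^2$ and $|1-e^{\tilde{\phi}}|\leq C|\tilde{\phi}|$; and, crucially, Lemma~\ref{Lv^2}, which converts $\int_\mathbb{R}\bar{v}^X_x\tilde{v}^2\,dx$ into $C(G_1+G^S)$.

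The estimates follow three recurring patterns. For a bilinear term $\int_\mathbb{R} a^X_x fg\,dx$ (or with a profile derivative replacing $a^X_x$) in which $f\in\{\tilde{v},\tilde{u}\}$ has only the degenerate control $\int_\mathbb{R}\bar{v}^X_x f^2\lesssim G_1+G^S$ (Lemma~\ref{Lv^2}) or $\int_\mathbb{R}\bar{v}^X_x\tilde{u}^2=G^S$, one rewrites $a^X_x=\sigma\bar{v}^X_x/\sqrt{\delta_S}$ and applies Cauchy--Schwarz \emph{keeping the $\bar{v}^X_x$ weight on both factors}: the factor $\delta_S^{-1/2}$ is more than offset by $\int_\mathbb{R}\bar{v}^X_x g^2\leq C\delta_S^2\int_\mathbb{R} g^2$, so the term is $\leq C\delta_S^{1/2}(G_1+G^S)^{1/2}(\int_\mathbb{R} g^2)^{1/2}$, which a final Young step distributes as $C\sqrt{\delta_1}$ times admissible quantities. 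For $\dot{X}Y_7,\dot{X}Y_8,\dot{X}Y_9$ the factor $\dot{X}$ supplies $\varepsilon_1$ while $a^X_x$ (or $\bar{v}^X_x$) supplies the shock smallness, and the $\tilde{v}$ inside $Y_7$ is again routed through Lemma~\ref{Lv^2}. For the cubic-and-higher terms — all of $\mathcal{P}_6$, and the pieces of $\mathcal{P}_2$--$\mathcal{P}_4$, $\mathcal{P}_6$ generated by expanding $1+\tilde{\phi}-e^{\tilde{\phi}}$ and $1-e^{\tilde{\phi}}$ — one perturbation factor is taken in $L^\infty$ (yielding $\varepsilon_1$) and the remaining quadratic expression is closed by Young into $\int_\mathbb{R}(\tilde{v}_x^2+\tilde{\phi}_x^2+\tilde{\phi}_{xx}^2+\tilde{\phi}_{xt}^2+\tilde{\phi}_{xxt}^2+\bar{v}^X_x\tilde{\phi}^2)\,dx+D$. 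The many time-derivative perturbations $\tilde{\phi}_{xt},\tilde{\phi}_{xxt}$ appearing throughout $\mathcal{P}_2$--$\mathcal{P}_4$, $\mathcal{P}_6$ land exactly on the $\int_\mathbb{R}\tilde{\phi}_{xt}^2$, $\int_\mathbb{R}\tilde{\phi}_{xxt}^2$ terms in \eqref{R3}, and the asymmetric Young parameters are always chosen as fixed powers of $\delta_1$ (e.g.\ $\sqrt{\delta_1}$), which is legitimate because every residual coefficient is a positive power of $\delta_1$ (hence $\leq\sqrt{\delta_1}$ for $\delta_1$ small) or a positive power of $\varepsilon_1$.

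The main obstacle is handling the bilinear terms carrying a bare $\tilde{v}$ — in particular $\dot{X}Y_7$, the second and third integrals of $\mathcal{P}_1$, and the $\tilde{v}$-linear pieces in $\mathcal{P}_2,\mathcal{P}_3,\mathcal{P}_4$ — and, similarly, those carrying a bare $\tilde{\phi}$: neither $\int_\mathbb{R}\tilde{v}^2\,dx$ nor $\int_\mathbb{R}\tilde{\phi}^2\,dx$ is among the quantities we are allowed to produce on the right of \eqref{R3} (only $G_1+G^S$ and $\int_\mathbb{R}\bar{v}^X_x\tilde{\phi}^2\,dx$ are). One therefore cannot Young symmetrically; the Cauchy--Schwarz pairing must be arranged so that $\tilde{v}^2$ (resp.\ $\tilde{\phi}^2$) appears precisely with the weight $\bar{v}^X_x$, and Lemma~\ref{Lv^2} is invoked for the former. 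This in turn forces the conjugate factor to be controllable against the extra weight $(\bar{v}^X_x)^{1/2}\lesssim\delta_S$, which is exactly why the bounds $|\bar{\phi}^X_x|,|\bar{\phi}^X_{xx}|,|(\bar{\phi}^X_x/\bar{v}^X)_x|\lesssim\bar{v}^X_x$ from Lemma~\ref{lemma:4.5} are indispensable here. Once this structural point is respected, the remainder is a long but mechanical bookkeeping over the summands of $Y_7$--$Y_9$ and $\mathcal{P}_1$--$\mathcal{P}_6$: each summand is shown to carry at least one of the small factors $\delta_S^{3/2}$, $\bar{v}^X_x$, $\dot{X}$, or a perturbation $L^\infty$-norm, and adding the resulting estimates yields \eqref{R3}.
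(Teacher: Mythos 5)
Your proposal is correct and follows essentially the same route as the paper: the same weight/profile bounds, the same use of $|\dot X|\leq C\lVert\tilde u\rVert_{L^\infty}$, the same weighted Cauchy--Schwarz arrangement putting $\bar v^X_x$ on $\tilde v^2$ and $\tilde\phi^2$ so that Lemma~\ref{Lv^2} applies, and the same $L^\infty$-extraction for the cubic terms. The only (harmless) deviation is that you discard $-\mathcal{G}_3-\mathcal{G}_4\leq 0$ outright, whereas the paper bounds its absolute value by $C\delta_S^{3/2}\int_\mathbb{R}(\tilde\phi_x^2+\tilde\phi_{xx}^2)\,dx$; both are admissible.
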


\begin{proof}
We estimate each term of $\mathcal{R}_3$ by using the results of Lemmas~\ref{lemma:4.5} and \ref{lemma:4.6}.\\
\noindent $\bullet$ Estimate of the term $-\mathcal{G}_3 - \mathcal{G}_4$: Using the bound $\lvert a^X_x \rvert \leq C \delta_S^{3/2}$, we have
\[
\lvert - \mathcal{G}_3 - \mathcal{G}_4 \rvert \leq C \delta_S^{3/2} \int_\mathbb{R} \left( \tilde{\phi}_x^2 + \tilde{\phi}_{xx}^2 \right) \, dx.
\]

\noindent $\bullet$ Estimates of the terms $ \dot{X} Y_j$ for $j=7,8,9$: First, we use the bounds $\lvert \dot{X} \rvert \leq C \lVert \tilde{u} \rVert_{L^\infty}$, $ \lvert a^X_x \rvert \leq C \delta_S^{-1/2} \bar{v}^X_x$, $ \lvert \bar{v}^X_x \rvert \leq C \delta_S^2$ and apply Young's inequality to obtain
\[
\begin{split}
\lvert \dot{X} Y_7 \rvert & \leq \lvert \dot{X} \rvert \int_\mathbb{R} \lvert a^X_x \rvert \lvert \tilde{v} \rvert \lvert \tilde{\phi}_{xx} \rvert \, dx \\
& \leq C \lVert \tilde{u} \rVert_{L^\infty} \left( \int_\mathbb{R} \lvert \tilde{\phi}_{xx} \rvert^2 \, dx + \int_\mathbb{R} \lvert a^X_x \rvert^2 \lvert \tilde{v} \rvert^2 \, dx \right) \\
& \leq C \lVert \tilde{u} \rVert_{L^\infty} \int_\mathbb{R} \left( \bar{v}^X_x \tilde{v}^2 + \tilde{\phi}_{xx}^2 \right) \, dx.
\end{split}
\]
Similarly, using $\lvert \dot{X} \rvert \leq C \lVert \tilde{u} \rVert_{L^\infty}$, we have
\[
\lvert \dot{X} Y_8 \rvert \leq \lvert \dot{X} \rvert \int_\mathbb{R} \left( \lvert \tilde{\phi}_{xx} \rvert^2 + \lvert \tilde{\phi}_x \rvert^2 \right) \, dx \leq C \lVert \tilde{u} \rVert_{L^\infty} \int_\mathbb{R} \left( \tilde{\phi}_{xx}^2 + \tilde{\phi}_x^2 \right) \, dx.
\]
The term $\dot{X} Y_9$ is estimated by using $\bar{v}^X_x \leq C \delta_S^2$ as
\[
\begin{split}
\lvert \dot{X} Y_9 \rvert & \leq C \lvert \dot{X} \rvert \int_\mathbb{R} \lvert \bar{v}^X_x \rvert \lvert \tilde{\phi}_{xx} \rvert \, dx \\
& \leq C \lvert \dot{X} \rvert \sqrt{\int_\mathbb{R} \bar{v}^X_x \, dx } \sqrt{\int_\mathbb{R} \bar{v}^X_x \tilde{\phi}_{xx}^2 \, dx} \\
& \leq C \delta_S^2 \lvert \dot{X} \rvert^2 + \frac{C}{\delta_S^2} \left( \int_\mathbb{R} \bar{v}^X_x \, dx \right) \left( \int_\mathbb{R} \bar{v}^X_x \tilde{\phi}_{xx}^2 \, dx \right) \\
& \leq C \delta_S \left( \delta_S \lvert \dot{X} \rvert^2 + \int_\mathbb{R} \tilde{\phi}_{xx}^2 \, dx \right),
\end{split}
\]
where in the second inequality, we used the H\"older inequality.

\noindent $\bullet$ Estimates of the terms $\mathcal{P}_j$: By using $\lvert a^X_x \rvert \leq C \delta_S^{-1/2} \bar{v}^X_x \leq C \delta_S^{3/2}$ and applying Young's inequality, we obtain
\[
\begin{split}
\lvert \mathcal{P}_1 \rvert & \leq C \int_\mathbb{R} \lvert a^X_x \rvert \left( \lvert \tilde{u} \rvert \lvert \tilde{\phi}_{xx} \rvert + \lvert \tilde{v} \rvert \lvert \tilde{\phi}_{xx} \rvert + \lvert \tilde{v} \rvert \lvert \tilde{\phi}_{xt} \rvert \right) \, dx \\
& \leq C \left( \int_\mathbb{R} \lvert a^X_x \rvert^{1/3} \left( \lvert \tilde{\phi}_{xx} \rvert^2 + \lvert \tilde{\phi}_{xt} \rvert^2 \right) \, dx + \int_\mathbb{R} \lvert a^X_x \rvert^{5/3} \left( \lvert \tilde{v} \rvert^2 + \lvert \tilde{u} \rvert^2 \right) \, dx \right) \\
& \leq C \sqrt{\delta_S}  \int_\mathbb{R} \left( \bar{v}^X_x \tilde{v}^2 + \bar{v}^X_x \tilde{u}^2 + \tilde{\phi}_{xx}^2 + \tilde{\phi}_{xt}^2 \right) \, dx.
\end{split}
\]
The terms $\mathcal{P}_2$, $\mathcal{P}_3$, and $\mathcal{P}_4$ are estimated in a similar way, with the additional use of the bounds $\lvert \bar{\phi}^X_x \rvert, \lvert \bar{\phi}^X_{xx} \rvert \leq C \lvert \bar{v}^X_x \rvert \leq C \delta_S^2$:
\[
\begin{split}
\lvert \mathcal{P}_2 \rvert & \leq C \int_\mathbb{R} \lvert \bar{v}^X_x \rvert  \lvert \tilde{v} \rvert \left( \lvert \tilde{u}_x \rvert + \lvert \tilde{\phi}_{xx} \rvert + \lvert \tilde{\phi}_{xt} \rvert + \lvert \tilde{\phi}_{xxt} \rvert \right) \, dx + C \int_\mathbb{R} \lvert \bar{v}^X_x \rvert \lvert \tilde{\phi} \rvert \lvert \tilde{\phi}_{xt} \rvert \, dx \\
& \leq C \left( \int_\mathbb{R} \lvert \bar{v}^X_x \rvert^{1/2} \left( \lvert \tilde{u}_x \rvert^2 + \lvert \tilde{\phi}_{xx} \rvert^2 + \lvert \tilde{\phi}_{xt} \rvert^2 + \lvert \tilde{\phi}_{xxt} \rvert^2 \right) \, dx + \int_\mathbb{R} \lvert \bar{v}^X_x \rvert^{3/2} \left( \lvert \tilde{v} \rvert^2 + \lvert \tilde{\phi} \rvert^2 \right) \, dx \right) \\
& \leq C \delta_S \int_\mathbb{R} \left( \bar{v}^X_x \tilde{v}^2 + \bar{v}^X_x \tilde{\phi}^2 + \tilde{u}_x^2 + \tilde{\phi}_{xx}^2 + \tilde{\phi}_{xt}^2 + \tilde{\phi}_{xxt}^2 \right) \, dx, \\
\lvert \mathcal{P}_3 \rvert & \leq C \int_\mathbb{R} \lvert a^X_x \rvert \lvert \bar{v}^X_x \rvert \lvert \tilde{u} \rvert \left( \lvert \tilde{v} \rvert + \lvert \tilde{\phi}_x \rvert + \lvert \tilde{v}_x \rvert \right) \, dx + C \int_\mathbb{R} \lvert a^X_x \rvert \lvert \tilde{\phi}_{xt} \rvert \left( \lvert \tilde{\phi}_{xx} \rvert + \lvert \bar{v}^X_x \rvert \lvert \tilde{v} \rvert \right) \, dx \\
& \leq C \delta_S^{3/2} \int_\mathbb{R} \left( \bar{v}^X_x \tilde{v}^2 + \bar{v}^X_x \tilde{u}^2+ \tilde{v}_x^2 + \tilde{\phi}_x^2 + \tilde{\phi}_{xx}^2 + \tilde{\phi}_{xt}^2 \right) \, dx,
\end{split}
\]
and
\[
\begin{split}
\lvert \mathcal{P}_4 \rvert & \leq C \int_\mathbb{R} \lvert \bar{v}^X_x \rvert \left( \lvert \tilde{u}_x \rvert + \lvert \tilde{\phi}_{xt} \rvert + \lvert \tilde{\phi}_{xxt} \rvert \right) \left( \lvert \tilde{\phi}_x \rvert + \lvert \tilde{v}_x \rvert + \lvert \bar{v}^X_x \rvert \lvert \tilde{v} \rvert \right) \, dx \\
& \quad + C \int_\mathbb{R} \lvert \bar{v}^X_x \rvert \lvert \tilde{\phi}_{xx} \rvert \left( \lvert \tilde{\phi}_{xx} \rvert + \lvert \tilde{\phi}_{xt} \rvert \right) \, dx + C \int_\mathbb{R} \lvert \bar{v}^X_x \rvert \lvert \tilde{\phi}_x \rvert^2 \, dx  \\
& \leq C \delta_S^2 \int_\mathbb{R} \left( \bar{v}^X_x \tilde{v}^2 + \tilde{v}_x^2 + \tilde{u}_x^2 + \tilde{\phi}_x^2 + \tilde{\phi}_{xx}^2 + \tilde{\phi}_{xt}^2 + \tilde{\phi}_{xxt}^2 \right) \, dx.
\end{split}
\]
For the term $\mathcal{P}_5$, we use $\lvert \dot{X} \rvert \leq C \lVert \tilde{u} \rVert_{L^\infty}$, as in the estimate of $\dot{X}Y_8$, to obtain
\[
\lvert \mathcal{P}_5 \rvert \leq C \lvert \dot{X} \rvert \int_\mathbb{R} \lvert \bar{v}^X_x \rvert \lvert \tilde{v} \rvert \lvert \tilde{\phi}_{xx} \rvert + \lvert \tilde{\phi}_{xx} \rvert^2 + \lvert \tilde{\phi}_x \rvert^2 \, dx  \leq C \lVert \tilde{u} \rVert_{L^\infty} \int_\mathbb{R} \left( \bar{v}^X_x \tilde{v}^2 + \tilde{\phi}_x^2 + \tilde{\phi}_{xx}^2 \right) \, dx.
\]
Now it remains only to estimate the term $\mathcal{P}_6$. For convenience, we decompose it into three parts as $\textstyle \mathcal{P}_6 = \sum_{j=1}^3 \mathcal{P}_{6j}$ with
\[
\begin{split}
\mathcal{P}_{61} & := - \int_\mathbb{R} \left( a^X_x \tilde{u} + a^X \tilde{u}_x \right) \left( \frac{\tilde{\phi}_x^2}{2v^2} - \frac{(\bar{\phi}^X_x)^2 \tilde{v}^2}{2v^2(\bar{v}^X)^2} + \frac{\bar{\phi}^X_{xx} \tilde{v}^2}{v^2 (\bar{v}^X)^2} + \frac{ \tilde{v}^2 \tilde{\phi}_{xx}}{v^2 (\bar{v}^X)^2} + \frac{2 \tilde{v} \tilde{\phi}_{xx}}{v^2 \bar{v}^X} \right)  \, dx \\
& \quad - \int_\mathbb{R} \left( a^X_x \tilde{u} + a^X \tilde{u}_x \right) \left( \frac{\tilde{v}_x \tilde{\phi}_x}{v^3} - \frac{\bar{v}^X_x \bar{\phi}^X_x \tilde{v}^3}{v^3 (\bar{v}^X)^3} - \frac{3\bar{v}^X_x \bar{\phi}^X_x \tilde{v}^2}{v^3(\bar{v}^X)^2} \right) \, dx, \\
\mathcal{P}_{62} & :=  \int_\mathbb{R} \left( a^X_x \frac{e^{-\bar{\phi}^X}\tilde{\phi}_{xt}}{(\bar{v}^X)^2} + a^X \frac{e^{-\bar{\phi}^X}\tilde{\phi}_{xxt}}{(\bar{v}^X)^2} - a^X \frac{2 e^{-\bar{\phi}^X} \bar{v}^X_x \tilde{\phi}_{xt}}{(\bar{v}^X)^3} \right) \\
& \qquad \quad \times \left( \frac{\tilde{v}\tilde{\phi}_{xx}}{v\bar{v}^X} + \frac{\tilde{v}_x \tilde{\phi}_x}{v\bar{v}^X} - \frac{\tilde{v}_x \tilde{v} \tilde{\phi}_x}{v^2 \bar{v}^X} - \frac{\bar{v}^X_x \tilde{v} \tilde{\phi}_x}{v^2 \bar{v}^X} - \frac{\bar{v}^X_x \tilde{v} \tilde{\phi}_x}{v(\bar{v}^X)^2} - \frac{\bar{\phi}^X_x \tilde{v} \tilde{v}_x}{v^2 \bar{v}^X} \right) \, dx, \\
\mathcal{P}_{63} & := \int_\mathbb{R} a^X \frac{\tilde{\phi}_{xt}}{(\bar{v}^X)^2} \left( \bar{v}^X_x (1+\tilde{\phi} - e^{\tilde{\phi}}) + \bar{v}^X (1+\tilde{\phi} - e^{\tilde{\phi}})_x \right) \, dx \\
& \quad + \int_\mathbb{R} a^X \frac{\tilde{\phi}_{xt}}{(\bar{v}^X)^2} \left( \tilde{v}_x (1 -e^{\tilde{\phi}}) + \tilde{v} (1 -e^{\tilde{\phi}})_x \right) \, dx,
\end{split}
\]
where each part will be estimated separately. Note that $\mathcal{P}_{6j}$ consists of nonlinear terms, specifically involving cubic and higher-order products of perturbations, as explained in Remark \ref{RemP}. Thus, we use the boundedness of $\lVert (\tilde{v},\tilde{u},\tilde{\phi}) \rVert_{W^{1,\infty}}$, together with Young's inequality, to obtain 
\[
\begin{split}
\lvert \mathcal{P}_{61} \rvert & \leq C \int_\mathbb{R} \left( \lvert \tilde{u} \rvert + \lvert \tilde{u}_x \rvert \right) \left( \lvert \tilde{\phi}_x \rvert^2 + \lvert \bar{v}^X_x \rvert \lvert \tilde{v} \rvert^2 + \lvert \bar{v}^X_x \rvert \lvert \tilde{v} \rvert^3 + \lvert \tilde{v}_x \rvert \lvert \tilde{\phi}_x \rvert \right) \, dx \\
& \quad + C \int_\mathbb{R} \left( \lvert \tilde{v} \rvert + \lvert \tilde{v} \rvert^2 \right) \left( \lvert a^X_x \rvert \lvert \tilde{u} \rvert \lvert \tilde{\phi}_{xx} \rvert + \lvert \tilde{u}_x \rvert \lvert \tilde{\phi}_{xx} \rvert \right) \, dx \\
& \leq C \left( \lVert \tilde{v} \rVert_{L^\infty} + \lVert \tilde{u} \rVert_{W^{1,\infty}} \right) \int_\mathbb{R} \left( \bar{v}^X_x\tilde{v}^2 + \bar{v}^X_x \tilde{u}^2 +\tilde{v}_x^2 + \tilde{u}_x^2 + \tilde{\phi}_x^2 + \tilde{\phi}_{xx}^2 \right) \, dx,
\end{split}
\]
\[
\begin{split}
\lvert \mathcal{P}_{62} \rvert & \leq C \int_\mathbb{R} \lvert \tilde{v} \rvert \left( \lvert \tilde{\phi}_{xt} \rvert + \lvert \tilde{\phi}_{xxt} \rvert \right) \left( \lvert \tilde{\phi}_{xx} \rvert + \lvert \tilde{\phi}_x \rvert + \lvert \tilde{v}_x \rvert + \lvert \tilde{v}_x \rvert \lvert \tilde{\phi}_x \rvert \right) \, dx \\
& \quad + C \int_\mathbb{R} \lvert \tilde{v}_x \rvert \lvert \tilde{\phi}_x \rvert \left( \lvert \tilde{\phi}_{xt} \rvert + \lvert \tilde{\phi}_{xxt} \rvert \right) \, dx \\
& \leq C \left( \lVert \tilde{v} \rVert_{L^\infty} + \lVert \tilde{\phi}_x \rVert_{L^\infty} \right) \int_\mathbb{R} \left( \tilde{\phi}_{xxt}^2 + \tilde{\phi}_{xt}^2 + \tilde{\phi}_{xx}^2 + \tilde{\phi}_x^2 + \tilde{v}_x^2 \right) \, dx,
\end{split}
\]
and
\[
\begin{split}
\lvert \mathcal{P}_{63} \rvert & \leq C \int_\mathbb{R} \lvert \tilde{\phi}_{xt} \rvert \left( \lvert \bar{v}^X_x \rvert \lvert \tilde{\phi} \rvert \mathcal{O}(\lvert \tilde{\phi} \rvert) + \lvert \tilde{\phi}_x \rvert \mathcal{O}(\lvert \tilde{\phi} \rvert ) + \lvert \tilde{v}_x \rvert \mathcal{O}(\lvert \tilde{\phi} \rvert) + \lvert \tilde{v} \rvert \lvert \tilde{\phi}_x \rvert + \lvert \tilde{v} \rvert \lvert \tilde{\phi}_x \rvert \mathcal{O}(\lvert \tilde{\phi} \rvert ) \right) \, dx \\
& \leq C \left( \lVert \tilde{v} \rVert_{L^\infty} + \lVert \tilde{\phi} \rVert_{L^\infty} \right) \int_\mathbb{R} \left( \bar{v}^X_x \tilde{\phi}^2 + \tilde{\phi}_{xt}^2 + \tilde{\phi}_x^2 + \tilde{v}_x^2 \right) \, dx.
\end{split}
\]
In the estimate of $\mathcal{P}_{63}$, we used Taylor's expansion of $e^{\tilde{\phi}}$ near $\tilde{\phi}=0$.

Combining all the above estimates, we have
\[
\begin{split}
\lvert \mathcal{R}_3 \rvert & \leq C \left( \sqrt{\delta_S} + \lVert \tilde{v} \rVert_{L^\infty} + \lVert \tilde{u} \rVert_{W^{1,\infty}} + \lVert \tilde{\phi} \rVert_{W^{1,\infty}} \right) \\
& \qquad \times \int_\mathbb{R} \left( \bar{v}^X_x \tilde{v}^2 + \bar{v}^X_x \tilde{u}^2 + \bar{v}^X_x \tilde{\phi}^2 + \tilde{v}_x^2 + \tilde{u}_x^2 + \tilde{\phi}_x^2 + \tilde{\phi}_{xx}^2 + \tilde{\phi}_{xt}^2 + \tilde{\phi}_{xxt}^2 \right) \, dx.
\end{split}
\]
Therefore, by using $\delta_S \leq \delta_1$, \eqref{infty}, and \eqref{v^2}, we obtain the bound \eqref{R3}.
\end{proof}

\subsubsection{Combining the estimates}
We prove Lemma~\ref{RE} by collecting all the estimates. Combining \eqref{deceta} with \eqref{R1R2}, \eqref{R3}, \eqref{v^2} and integrating with respect to $t$, we have
\[
\begin{split}
& \int_\mathbb{R} a^X \eta(W|\bar{W}^X) (t,x) \, dx + \int_0^t \left( \delta_S \lvert \dot{X} \rvert^2 + G_1 + G^S + D \right) \, d\tau \\
& \leq \int_\mathbb{R} a(0,x) \eta(W|\bar{W})(0,x) \, dx + C \left( \sqrt{\delta_1} + \varepsilon_1 \right) \int_0^t \int_\mathbb{R} \left( \tilde{v}_x^2 + \bar{v}^X_x \tilde{\phi}^2 + \tilde{\phi}_x^2 + \tilde{\phi}_{xx}^2 + \tilde{\phi}_{xt}^2 + \tilde{\phi}_{xxt}^2 \right) \, dx d\tau
\end{split}
\]
for sufficiently small $\delta_1$ and $\varepsilon_1$. Thanks to the result of Lemma~\ref{entsim}, we have
\begin{equation}
\begin{split}
& \lVert (\tilde{v},\tilde{u}) (t,\cdot) \rVert_{L^2}^2 + \lVert \tilde{\phi}_x (t,\cdot) \rVert_{H^1}^2 + \int_0^t \left( \delta_S \lvert \dot{X} \rvert^2 + G_1 + G^S + D \right) \, d\tau \\
& \leq C \left( \lVert (\tilde{v}_0,\tilde{u}_0) \rVert_{L^2}^2 + \lVert \tilde{\phi}_x (0,\cdot) \rVert_{H^1}^2 \right) \\
& \quad + C \left( \sqrt{\delta_1} + \varepsilon_1 \right) \int_0^t \int_\mathbb{R} \left( \tilde{v}_x^2 + \bar{v}^X_x \tilde{\phi}^2 + \tilde{\phi}_x^2 + \tilde{\phi}_{xx}^2 + \tilde{\phi}_{xt}^2 + \tilde{\phi}_{xxt}^2 \right)  \, dx d\tau .
\end{split}
\end{equation}
Finally, by using \eqref{phiest}, we obtain the desired estimate \eqref{Ree}.

\section{Elliptic estimates}
This section is devoted to providing estimates for $\tilde{\phi}$ and its derivatives that appear on the right-hand side of \eqref{Ree}. For this purpose, we first rewrite \eqref{1c'} as follows:
\begin{equation} \label{temP}
\begin{split}
- \left[ \frac{\tilde{\phi}_x}{v} + \left( \frac{1}{v} - \frac{1}{\bar{v}^X} \right) \bar{\phi}^X_x \right]_x = - e^{\bar{\phi}^X}\tilde{v} - \bar{v}^X e^{\bar{\phi}^X} \tilde{\phi} + e^{\bar{\phi}^X} \mathcal{N}_p,
\end{split}
\end{equation}
where the nonlinear term $\mathcal{N}_p$ is defined by
\begin{equation} \label{Np}
\mathcal{N}_p := \bar{v}^X \left( 1+\tilde{\phi} - e^{\tilde{\phi}} \right) + \tilde{v} \left( 1 - e^{\tilde{\phi}} \right).
\end{equation}
With \eqref{temP}-\eqref{Np}, we obtain first the estimates for the perturbation around $\bar{\phi}^X$ and its $x$-derivatives. 

\begin{lemma} \label{Lemma:phi}
Under the assumptions in Proposition~\ref{Apriori}, there exists a positive constant $C>0$ such that
\begin{equation} \label{derivphi}
\int_\mathbb{R} \left( \bar{v}^X_x \tilde{\phi}^2 + \tilde{\phi}_x^2 + \tilde{\phi}_{xx}^2 \right)  \, dx  \leq C \left( G_1 + G^S + \int_\mathbb{R} \tilde{v}_x^2 \, dx \right)
\end{equation}
for all $t \in [0,T]$, where $G_1$ and $G^S$ are as defined in Lemma~\ref{RE}.
\end{lemma}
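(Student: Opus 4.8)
The plan is to prove \eqref{derivphi} by a direct energy argument on the reformulated Poisson equation \eqref{temP}, testing it against a well-chosen combination of $\tilde\phi$ and $\tilde v$ rather than against $\tilde\phi$ alone. The crucial observation is that the right-hand side of \eqref{temP} regroups as
\[
- e^{\bar\phi^X}\tilde v - \bar v^X e^{\bar\phi^X}\tilde\phi + e^{\bar\phi^X}\mathcal N_p \;=\; - e^{\bar\phi^X}\bar v^X\Big(\tilde\phi + \tfrac{\tilde v}{\bar v^X}\Big) + e^{\bar\phi^X}\mathcal N_p .
\]
So I would set $\psi := \tilde\phi + \tilde v/\bar v^X$, multiply \eqref{temP} by $\psi$, and integrate by parts once (all boundary terms vanish since $\tilde\phi_x$, $\tilde v$, $\bar\phi^X_x \to 0$ at $\pm\infty$). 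Writing $q:=\tilde\phi_x/v + (1/v-1/\bar v^X)\bar\phi^X_x$ for the flux in \eqref{temP} and expanding $q\psi_x$, this yields the identity
\[
\int_\mathbb{R}\frac{\tilde\phi_x^2}{v}\,dx + \int_\mathbb{R} e^{\bar\phi^X}\bar v^X\psi^2\,dx = -\int_\mathbb{R}\frac{\tilde\phi_x\tilde v_x}{v\bar v^X}\,dx + \int_\mathbb{R}\frac{\bar v^X_x\,\tilde\phi_x\tilde v}{v(\bar v^X)^2}\,dx - \int_\mathbb{R}\Big(\frac1v-\frac1{\bar v^X}\Big)\bar\phi^X_x\,\psi_x\,dx + \int_\mathbb{R} e^{\bar\phi^X}\mathcal N_p\,\psi\,dx .
\]
Since $v$ and $\bar v^X$ are bounded above and below (by \eqref{infty} and the smallness of $\delta_S$), the left-hand side is coercive: it controls $\int\tilde\phi_x^2 + \int\psi^2$ from below. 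This is precisely the device that avoids an uncontrollable $\|\tilde v\|_{L^2}^2$, since the dangerous zeroth-order coupling between $\tilde v$ and $\tilde\phi$ has been packaged into the sign-definite quadratic $e^{\bar\phi^X}\bar v^X\psi^2$.

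I would then estimate the four terms on the right. For the first, Young's inequality gives $\le \epsilon\int\tilde\phi_x^2 + C_\epsilon\int\tilde v_x^2$, where the $\epsilon$-term is absorbed on the left and $\int\tilde v_x^2$ is one of the allowed terms in \eqref{derivphi}. The second and third terms carry a factor $\bar v^X_x$ or $\bar\phi^X_x$, and by Lemma~\ref{lemma:4.5} together with \eqref{shderiv} one has $|\bar v^X_x|,|\bar\phi^X_x|\le C\delta_S^2$; using a Cauchy--Schwarz split such as $\int\bar v^X_x|\tilde\phi_x||\tilde v|\le(\int\bar v^X_x\tilde\phi_x^2)^{1/2}(\int\bar v^X_x\tilde v^2)^{1/2}$ together with $\sup\bar v^X_x\le C\delta_S^2$ (and analogously for the $\tilde v_x$ piece of $\psi_x$), these are bounded by $\epsilon\int\tilde\phi_x^2 + \epsilon\int\tilde v_x^2 + C\delta_S^2\int\bar v^X_x\tilde v^2$, and then $\int\bar v^X_x\tilde v^2\le C(G_1+G^S)$ by Lemma~\ref{Lv^2}, i.e. \eqref{v^2}. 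The last term is cubic: writing $\mathcal N_p=\bar v^X(1+\tilde\phi-e^{\tilde\phi})+\tilde v(1-e^{\tilde\phi})$ and Taylor-expanding ($1+\tilde\phi-e^{\tilde\phi}=O(\tilde\phi^2)$, $1-e^{\tilde\phi}=O(\tilde\phi)$) gives $|\mathcal N_p|\le C(\tilde\phi^2+|\tilde v||\tilde\phi|)$ and $|\psi|\le C(|\tilde\phi|+|\tilde v|)$, so that $\int|e^{\bar\phi^X}\mathcal N_p\psi|$ is controlled by $C\big(\|\tilde v\|_{W^{1,\infty}}+\|\tilde\phi\|_{W^{1,\infty}}\big)$ times $\int\psi^2$ plus small multiples of the other quadratic quantities, using \eqref{infty}; the $\int\psi^2$-part is absorbed on the left. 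Collecting all this gives $\int\tilde\phi_x^2 + \int\psi^2 \le C(G_1+G^S+\int\tilde v_x^2)$.

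It remains to recover the other two pieces of \eqref{derivphi}. For $\int\bar v^X_x\tilde\phi^2$, using $\tilde\phi=\psi-\tilde v/\bar v^X$ we get $\int\bar v^X_x\tilde\phi^2\le 2\int\bar v^X_x\psi^2 + C\int\bar v^X_x\tilde v^2\le C\delta_S^2\int\psi^2 + C(G_1+G^S)$, where we used $\sup\bar v^X_x\le C\delta_S^2$ and Lemma~\ref{Lv^2}; note that only the \emph{weighted} quantity $\int\bar v^X_x\tilde\phi^2$ enters here, which is exactly why the lemma is stated with the weight $\bar v^X_x$ and not with $\|\tilde\phi\|_{L^2}^2$. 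For $\int\tilde\phi_{xx}^2$, I would solve \eqref{temP} pointwise for $\tilde\phi_{xx}$,
\[
\tilde\phi_{xx} = v\,e^{\bar\phi^X}\bar v^X\psi + \frac{v_x}{v}\tilde\phi_x - v\Big[\Big(\frac1v-\frac1{\bar v^X}\Big)\bar\phi^X_x\Big]_x - v\,e^{\bar\phi^X}\mathcal N_p ,
\]
and estimate $\int\tilde\phi_{xx}^2$ term by term: $\int(v e^{\bar\phi^X}\bar v^X\psi)^2\le C\int\psi^2$; $\int(\tfrac{v_x}{v}\tilde\phi_x)^2\le C\|v_x\|_{L^\infty}^2\int\tilde\phi_x^2$ with $\|v_x\|_{L^\infty}\le C(\varepsilon_1+\delta_S^2)$ small by \eqref{infty}; the bracketed term, after differentiation, carries a factor $\bar\phi^X_x$ or $\bar\phi^X_{xx}$ (both $\le C\delta_S^2$), hence $\lesssim \delta_S^4\int\tilde v_x^2 + \delta_S^2\int\bar v^X_x\tilde v^2$; and the $\mathcal N_p$-term is cubic, handled as above. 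All of these are already controlled by $C(G_1+G^S+\int\tilde v_x^2)$, which completes the proof.

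The step I expect to be the main obstacle is the zeroth-order $\tilde v$--$\tilde\phi$ coupling in the Poisson equation: multiplying \eqref{temP} by $\tilde\phi$ directly would force the term $\int e^{\bar\phi^X}\tilde v\tilde\phi$, which cannot be closed without introducing the uncontrolled $\|\tilde v\|_{L^2}^2$ (there is no Poincar\'e inequality on $\mathbb{R}$). The remedy is the choice of test function $\psi=\tilde\phi+\tilde v/\bar v^X$, tailored so that the Poisson source becomes sign-definite in $\psi$. The secondary difficulty is bookkeeping: one has to verify that \emph{every} error term either carries a positive power of $\delta_S$ (from the shock-derivative bounds \eqref{shderiv}, Lemma~\ref{lemma:4.5}) or a factor $\|(\tilde v,\tilde\phi)\|_{W^{1,\infty}}$ (from \eqref{infty}), so that it is either absorbed or reduced to $G_1+G^S$ via Lemma~\ref{Lv^2}, and that no estimate ever requires an unweighted $L^2$-norm of $\tilde v$ or $\tilde\phi$.
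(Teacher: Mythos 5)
Your idea of testing \eqref{temP} against $\psi:=\tilde\phi+\tilde v/\bar v^X$, so that the linear part of the source becomes the sign-definite quantity $-e^{\bar\phi^X}\bar v^X\psi$, is a genuinely different (and elegant) way to kill the zeroth-order $\tilde v$--$\tilde\phi$ coupling. Unfortunately, as written the argument has a real gap in the nonlinear term, and the gap violates exactly the constraint you state at the end (``no estimate ever requires an unweighted $L^2$-norm of $\tilde v$ or $\tilde\phi$'').

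The problem is $\int_\mathbb{R} e^{\bar\phi^X}\mathcal N_p\,\psi\,dx$. Since $\mathcal N_p$ contains the piece $\tilde v(1-e^{\tilde\phi})=O(|\tilde v||\tilde\phi|)$ and $\psi$ contains the unweighted piece $\tilde v/\bar v^X$, their product produces a term of size $\int_\mathbb{R}\tilde v^2|\tilde\phi|\,dx$. The only available bound is $\|\tilde\phi\|_{L^\infty}\|\tilde v\|_{L^2}^2$, and the unweighted $\|\tilde v\|_{L^2}^2$ is \emph{not} controlled by $G_1+G^S+\int\tilde v_x^2$ (Lemma~\ref{Lv^2} only gives the weighted quantity $\int\bar v^X_x\tilde v^2$, and there is no Poincar\'e inequality on $\mathbb{R}$). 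The same obstruction reappears when you recover $\int\tilde\phi_{xx}^2$ from the pointwise formula: $\int(ve^{\bar\phi^X}\mathcal N_p)^2\,dx$ again contains $\|\tilde\phi\|_{L^\infty}^2\|\tilde v\|_{L^2}^2$. So two of your error terms cannot be absorbed or reduced to the allowed right-hand side, and the smallness factors $\varepsilon_1$ do not help, since the lemma requires a bound by quantities that may be much smaller than $O(\varepsilon_1^3)$ at a given time.

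The paper's proof circumvents this by never pairing an unweighted zeroth-order perturbation with another one: the zeroth-order test function is $\bar v^X_x\tilde\phi$ (so every term, including $e^{\bar\phi^X}\bar v^X_x\tilde\phi\,\mathcal N_p$, inherits the weight $\bar v^X_x$ and is reduced to $\int\bar v^X_x(\tilde v^2+\tilde\phi^2)$), and the $H^1$-level estimate is obtained by first differentiating \eqref{temP} in $x$ and then testing against $\tilde\phi_x$, so the source involves either differentiated perturbations ($\tilde v_x,\tilde\phi_x$) or perturbations multiplied by $\bar v^X_x$-type weights. If you want to keep your $\psi$-trick, you would need to also weight the test function (e.g.\ use $\bar v^X_x\psi$ for the zeroth-order step) and then repeat your argument on the differentiated equation for the first-order step --- which essentially reconstructs the paper's two-step scheme.
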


\begin{proof}
Multiplying \eqref{temP} by $\bar{v}^X_x \tilde{\phi}$ and integrating the resultant equation with respect to $x$, we have after rearrangement
\begin{equation} \label{Pest0}
\begin{split}
& \int_\mathbb{R} \bar{v}^X e^{\bar{\phi}^X} \bar{v}^X_x \tilde{\phi}^2 \, dx + \int_\mathbb{R} \frac{ \bar{v}^X_x \tilde{\phi}_x^2}{v} \, dx \\
& \quad = - \int_\mathbb{R} \bar{v}^X_x \bar{\phi}^X_x \tilde{\phi}_x  \left( \frac{1}{v} - \frac{1}{\bar{v}^X} \right) \, dx  - \int_\mathbb{R} \frac{\bar{v}^X_{xx}\tilde{\phi}\tilde{\phi}_x}{v} \, dx \\
& \qquad - \int_\mathbb{R} \bar{v}^X_{xx} \bar{\phi}^X_x \tilde{\phi}  \left( \frac{1}{v} - \frac{1}{\bar{v}^X} \right) \, dx - \int_\mathbb{R} e^{\bar{\phi}^X}\bar{v}^X_x \tilde{\phi} \tilde{v}  \, dx + \int_\mathbb{R} e^{\bar{\phi}^X}\bar{v}^X_x\tilde{\phi} \mathcal{N}_p \, dx \\
& \quad =: P_{11} + P_{12} + P_{13} + P_{14} + P_{15}.
\end{split}
\end{equation}
Here, by using the bounds $\lvert \bar{v}^X_{xx} \rvert,\lvert \bar{\phi}^X_x \rvert \leq C \bar{v}^X_x \leq C \delta_S^2$ and Young's inequality, the terms $P_{1j}$ are estimated as
\begin{equation} \label{Pest1}
\begin{split}
\lvert P_{11} \rvert & \leq C \int_\mathbb{R} \lvert \bar{v}^X_x \rvert^2 \lvert \tilde{\phi}_x \rvert \lvert \tilde{v} \rvert \, dx \leq C \delta_S^2 \int_\mathbb{R} \left(  \bar{v}^X_x \tilde{v}^2 + \tilde{\phi}_x^2 \right) \, dx, \\
\lvert P_{12} \rvert & \leq C \int_\mathbb{R} \lvert \bar{v}^X_x \rvert \lvert \tilde{\phi} \rvert \lvert \tilde{\phi}_x \rvert \, dx \leq C \left(\int_\mathbb{R} \lvert \bar{v}^X_x \rvert^{1/2} \tilde{\phi}_x^2 \, dx + \int_\mathbb{R} \lvert \bar{v}^X \rvert^{3/2} \tilde{\phi}^2 \, dx \right) \\
&  \leq C \delta_S \int_\mathbb{R} \left( \bar{v}^X_x \tilde{\phi}^2 + \tilde{\phi}_x^2 \right) \, dx, \\
\lvert P_{13} \rvert & \leq C \int_\mathbb{R} \lvert \bar{v}^X_x \rvert^2 \lvert \tilde{\phi} \rvert \lvert \tilde{v} \rvert \, dx \leq C \delta_S^2 \int_\mathbb{R} \bar{v}^X_x \left( \tilde{v}^2 + \tilde{\phi}^2 \right) \, dx, \\
\lvert P_{14} \rvert & \leq C \int_\mathbb{R} \bar{v}^X_x \lvert \tilde{\phi} \rvert \lvert \tilde{v} \rvert \, dx \leq \eta \int_\mathbb{R} \bar{v}^X_x \tilde{\phi}^2 \, dx + \frac{C}{\eta} \int_\mathbb{R} \bar{v}^X_x \tilde{v}^2 \, dx,
\end{split}
\end{equation}
and
\begin{equation} \label{Pest2}
\begin{split}
\lvert P_{15} \rvert & \leq C \int_\mathbb{R} \bar{v}^X_x \lvert \tilde{\phi} \rvert \left( \mathcal{O}(\lvert \tilde{\phi} \rvert^2) + \lvert \tilde{v} \rvert \mathcal{O}(\lvert \tilde{\phi} \rvert) \right) \, dx \leq C \lVert \tilde{\phi} \rVert_{L^\infty} \int_\mathbb{R} \bar{v}^X_x \left( \tilde{v}^2 + \tilde{\phi}^2 \right) \, dx,
\end{split}
\end{equation}
where $0 < \eta <1$ is an arbitrary constant and in the estimate of $P_{15}$, we used the definition \eqref{Np}, together with the Taylor expansion of $e^{\tilde{\phi}}$ around $\tilde{\phi}=0$. Combining \eqref{Pest0} with \eqref{Pest1} and \eqref{Pest2}, and taking $\eta$ sufficiently small, we obtain
\begin{equation} \label{barvp}
\begin{split}
\int_\mathbb{R} \bar{v}^X_x  \left( \tilde{\phi}^2+ \tilde{\phi}_x^2 \right) \, dx \leq C \left(  \int_\mathbb{R} \bar{v}^X_x \tilde{v}^2 \, dx + \delta_S \int_\mathbb{R} \tilde{\phi}_x^2 \, dx \right)
\end{split}
\end{equation}
for sufficiently small $\delta_S < \delta_1$ and $\lVert \tilde{\phi} \rVert_{L^\infty} \leq C \varepsilon_1$.

Next, we differentiate \eqref{temP} with respect to $x$. Then, we have
\[
\begin{split}
& - \left[ \frac{\tilde{\phi}_{xx}}{v} - \frac{(\tilde{v}_x + \bar{v}^X_x)\tilde{\phi}_x}{v^2} +  \left( \frac{1}{v} - \frac{1}{\bar{v}^X} \right)_x \bar{\phi}^X_x +  \left( \frac{1}{v} - \frac{1}{\bar{v}^X} \right) \bar{\phi}^X_{xx} \right]_x \\
& \quad = - e^{\bar{\phi}^X}\tilde{v}_x - \bar{v}^X e^{\bar{\phi}^X} \tilde{\phi}_x - \big( e^{\bar{\phi}^X} \big)_x \tilde{v} - \big( \bar{v}^X e^{\bar{\phi}^X} \big)_x \tilde{\phi} + \big( e^{\bar{\phi}^X} \mathcal{N}_p \big)_x .
\end{split}
\]
Multiplying it by $\tilde{\phi}_x$ and integrating the resultant equation with respect to $x$, we have after rearrangement
\[
\begin{split}
\int_\mathbb{R} \bigg( \bar{v}^X e^{\bar{\phi}^X} \tilde{\phi}_x^2 + \frac{ \tilde{\phi}_{xx}^2}{v} \bigg) \, dx & = \int_\mathbb{R} \tilde{\phi}_{xx} \left( \frac{(\tilde{v}_x + \bar{v}^X_x)\tilde{\phi}_x}{v^2} + \left( \frac{\tilde{v}}{v\bar{v}^X} \right)_x \bar{\phi}^X_x + \frac{\bar{\phi}^X_{xx}\tilde{v}}{v\bar{v}^X} \right) \, dx \\
& \quad  + \int_\mathbb{R} \tilde{\phi}_x \left( - e^{\bar{\phi}^X}\tilde{v}_x  - \big( e^{\bar{\phi}^X} \big)_x \tilde{v} - \big( \bar{v}^X e^{\bar{\phi}^X} \big)_x \tilde{\phi} \right) \, dx \\
& \quad + \int_\mathbb{R} \big( e^{\bar{\phi}^X} \big)_x \tilde{\phi}_x \mathcal{N}_p \, dx + \int_\mathbb{R} e^{\bar{\phi}} \tilde{\phi}_x \left( \mathcal{N}_{p} \right)_x \, dx \\
& =: P_{21} + P_{22} + P_{23} + P_{24}.
\end{split}
\]
Similarly to \eqref{Pest1}, we obtain the following estimates of $P_{2j}$ by using the bounds on the shock and applying Young's inequality:
\[
\begin{split}
\lvert P_{21} \rvert & \leq C \int_\mathbb{R} \lvert \bar{v}^X_x \rvert \lvert \tilde{\phi}_{xx} \rvert \lvert \tilde{v} \rvert \, dx + C \int_\mathbb{R} \lvert \bar{v}^X_x \rvert \lvert \tilde{\phi}_{xx} \rvert \left( \lvert \tilde{v}_x \rvert + \lvert \tilde{\phi}_x \rvert \right) \, dx \\
& \quad + C \int_\mathbb{R} \lvert \tilde{v}_x \rvert \lvert \tilde{\phi}_x \rvert \lvert \tilde{\phi}_{xx} \rvert + \lvert \tilde{v} \rvert \lvert \tilde{v}_x \rvert \lvert \tilde{\phi}_{xx} \rvert \, dx \\
& \leq C \left( \delta_S + \lVert \tilde{v} \rVert_{W^{1,\infty}} \right) \int_\mathbb{R} \left( \bar{v}^X_x \tilde{v}^2 + \tilde{v}_x^2 + \tilde{\phi}_x^2 + \tilde{\phi}_{xx}^2 \right) \, dx, 
\end{split}
\]
\[
\begin{split}
\lvert P_{22} \rvert & \leq C \int_\mathbb{R} \lvert \tilde{\phi}_x \rvert \lvert \tilde{v}_x \rvert \, dx + C \int_\mathbb{R} \lvert \bar{v}^X_x \rvert \lvert \tilde{\phi}_x \rvert \left( \lvert \tilde{v} \rvert + \lvert \tilde{\phi} \rvert \right) \, dx \\
& \leq \eta \int_\mathbb{R} \tilde{\phi}_x^2 \, dx + \frac{C}{\eta} \int_\mathbb{R} \tilde{v}_x^2 \, dx  + C \delta_S \int_\mathbb{R} \left( \bar{v}^X_x \tilde{v}^2 + \bar{v}^X_x \tilde{\phi}^2 + \tilde{\phi}_x^2 \right) \, dx,
\end{split}
\]
\[
\begin{split}
\lvert P_{23} \rvert & \leq C \int_\mathbb{R} \lvert \bar{v}^X_x \rvert \lvert \tilde{\phi}_x \rvert \lvert \mathcal{N}_p \rvert \, dx  \leq C \int_\mathbb{R} \lvert \bar{v}^X_x \rvert \lvert \tilde{\phi}_x \rvert \left( \mathcal{O}(\lvert \tilde{\phi} \rvert^2) + \lvert \tilde{v} \rvert \mathcal{O}( \lvert \tilde{\phi} \rvert )  \right) \, dx \\
& \leq C \lVert \tilde{\phi} \rVert_{L^\infty} \int_\mathbb{R} \left( \bar{v}^X_x \tilde{v}^2 + \bar{v}^X_x \tilde{\phi}^2 + \tilde{\phi}_x^2 \right) \, dx,
\end{split}
\]
and
\[
\begin{split}
\lvert P_{24} \rvert & \leq \int_\mathbb{R} \lvert \tilde{\phi}_x \rvert \left( \lvert \bar{v}^X_x \rvert \mathcal{O}( \lvert \tilde{\phi} \rvert^2) + \lvert \tilde{\phi}_x \rvert \mathcal{O}(\lvert \tilde{\phi} \rvert) + \lvert \tilde{v}_x \rvert \mathcal{O}(\lvert \tilde{\phi} \rvert) + \lvert \tilde{v} \rvert \lvert \tilde{\phi}_x \rvert (1 + \mathcal{O}(\lvert \tilde{\phi} \rvert ) ) \right) \, dx \\
& \leq C \lVert ( \tilde{v}, \tilde{\phi}) \rVert_{L^\infty} \int_\mathbb{R} \left( \bar{v}^X_x \tilde{\phi}^2 + \tilde{\phi}_x^2 + \tilde{v}_x^2 \right) \, dx.
\end{split}
\]

Collecting all the estimates of $P_{2j}$ and taking $\eta$ small enough in the estimate of $ P_{22} $, we have 
\[
\begin{split}
&\int_\mathbb{R} \left( \tilde{\phi}_x^2 + \tilde{\phi}_{xx}^2 \right) \, dx \leq C \left( \delta_S^2 + \lVert \tilde{\phi} \rVert_{L^\infty} \right) \int_\mathbb{R} \bar{v}^X_x \tilde{\phi}^2 \, dx + C \int_\mathbb{R} \bar{v}^X_x \tilde{v}^2 \, dx + C \int_\mathbb{R} \tilde{v}_x^2 \, dx
\end{split}
\]
for sufficiently small $\delta_S < \delta_1$ and $\lVert (\tilde{v}, \tilde{\phi}, \tilde{\phi}_x ) \rVert_{L^\infty} \leq C \varepsilon_1$. Combining this with \eqref{barvp} and using \eqref{v^2}, we obtain the desired bound \eqref{derivphi}.
\end{proof}

The estimates for $\tilde{\phi}_t$ and its $x$-derivatives are presented in the following two lemmas.
\begin{lemma} \label{Lemma:phit}
Under the assumptions in Proposition \ref{Apriori}, there exists a constant $C>0$ such that
\begin{equation} \label{phit}
\begin{split}
\int_\mathbb{R} \left( \tilde{\phi}_t^2 + \tilde{\phi}_{xt}^2 \right) \, dx & \leq C \left( \delta_1 + \varepsilon_1 \right) \left( \delta_S \lvert \dot{X} \rvert^2 + G_1 + G^S + \int_\mathbb{R} \tilde{v}_x^2  \, dx \right) + C D
\end{split}
\end{equation}
for all $t \in [0,T]$, where $G_1$, $G^S$, and $D$ are as defined in Lemma~\ref{RE}.
\end{lemma}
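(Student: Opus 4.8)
The plan is to run a weighted energy estimate for $\tilde\phi_t$ parallel to the proof of Lemma~\ref{Lemma:phi}, obtained by differentiating the reformulated Poisson equation \eqref{temP} in time. First I would record the relevant time derivatives: \eqref{NSP11} gives $v_t=u_x=\tilde u_x-\sigma\bar v^X_x$ (using $\bar u^X_x=-\sigma\bar v^X_x$ from \eqref{shODE}), \eqref{1a'} gives $\tilde v_t=\tilde u_x+\dot X\bar v^X_x$, and since $(\bar v^X,\bar\phi^X)(t,x)=(\bar v,\bar\phi)(x-\sigma t-X(t))$ we have $\bar v^X_t=-(\sigma+\dot X)\bar v^X_x$, $\bar\phi^X_t=-(\sigma+\dot X)\bar\phi^X_x$, $\bar\phi^X_{xt}=-(\sigma+\dot X)\bar\phi^X_{xx}$. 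Writing the flux of \eqref{temP} as $\frac{\tilde\phi_x}{v}-\frac{\bar\phi^X_x\tilde v}{v\bar v^X}$ and moving the term $\bar v^X e^{\bar\phi^X}\tilde\phi_t$ (coming from $\partial_t(-\bar v^X e^{\bar\phi^X}\tilde\phi)$) to the left, the differentiated equation takes the form
\[
-\Big[\frac{\tilde\phi_{xt}}{v}+\mathcal F\Big]_x+\bar v^X e^{\bar\phi^X}\tilde\phi_t=\mathcal G ,
\]
where $\mathcal F$ gathers the flux corrections and $\mathcal G$ the remaining zeroth-order sources (including $\partial_t(e^{\bar\phi^X}\mathcal N_p)$). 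The structural point, to be checked in detail, is that every term of $\mathcal F$ and $\mathcal G$ carries at least one of the perturbative factors $\tilde v,\tilde\phi,\tilde\phi_x,\tilde u_x,\dot X$, possibly times a shock derivative; in particular there are no ``pure shock'' terms, because the would-be shock contributions combine into multiples of $\frac1v-\frac1{\bar v^X}=-\frac{\tilde v}{v\bar v^X}$, $\frac1{v^2}-\frac1{(\bar v^X)^2}=O(|\tilde v|)$, or $\dot X$. The only $\tilde\phi_t$-terms surviving in $\mathcal G$ are $e^{\bar\phi^X}\bar v^X(1-e^{\tilde\phi})\tilde\phi_t$ and $-e^{\bar\phi^X}\tilde v e^{\tilde\phi}\tilde\phi_t$, both of size $O(\varepsilon_1)\tilde\phi_t$ by \eqref{infty}.

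Next I would multiply by $\tilde\phi_t$, integrate over $\mathbb R$, and integrate by parts in the flux term, obtaining
\[
\int_\mathbb R\frac{\tilde\phi_{xt}^2}{v}\,dx+\int_\mathbb R\bar v^X e^{\bar\phi^X}\tilde\phi_t^2\,dx=-\int_\mathbb R\mathcal F\,\tilde\phi_{xt}\,dx+\int_\mathbb R\mathcal G\,\tilde\phi_t\,dx .
\]
Since $v$ is bounded above and $\bar v^X e^{\bar\phi^X}\ge c>0$ (as in Lemma~\ref{entsim}, via the Poisson equation of \eqref{shODE}), the left-hand side dominates $c\int_\mathbb R(\tilde\phi_t^2+\tilde\phi_{xt}^2)\,dx$, so it remains to bound the right-hand side.

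For that I would estimate $-\int\mathcal F\tilde\phi_{xt}$ and $\int\mathcal G\tilde\phi_t$ term by term with Young's inequality, using: the shock bounds $|\bar v^X_x|,|\bar u^X_x|,|\bar\phi^X_x|,|\bar\phi^X_{xx}|\le C\bar v^X_x\le C\delta_S^2$ from \eqref{shbounds}--\eqref{shderiv}, together with $\int_\mathbb R\bar v^X_x\,dx\le C\delta_S$ and $\int_\mathbb R(\bar v^X_x)^2\,dx\le C\delta_S^3$; the $L^\infty$ bounds \eqref{infty} and the expansion $\mathcal N_p=O(\tilde\phi^2)+O(\tilde v\tilde\phi)$; the shift bound $|\dot X|\le C\lVert\tilde u\rVert_{L^\infty}$ from \eqref{X}; the elliptic estimate \eqref{derivphi}, which controls $\int_\mathbb R(\bar v^X_x\tilde\phi^2+\tilde\phi_x^2+\tilde\phi_{xx}^2)\,dx$ by $C(G_1+G^S+\int\tilde v_x^2\,dx)$; and Lemma~\ref{Lv^2}, which controls $\int_\mathbb R\bar v^X_x\tilde v^2\,dx$ by $C(G_1+G^S)$. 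The bookkeeping principle is that every term should come out with a prefactor that is a positive power of $\delta_S$ (hence $\le\delta_1$) or of $\varepsilon_1$: each shock derivative contributes a half-power $(\bar v^X_x)^{1/2}\le C\delta_S$ after an uneven Young split, and each perturbative factor either supplies an $\varepsilon_1$ via \eqref{infty} or gets absorbed into $G_1+G^S+\int\tilde v_x^2\,dx$ via \eqref{derivphi} and Lemma~\ref{Lv^2}. The sole exception is the term $-\int_\mathbb R e^{\bar\phi^X}\tilde u_x\tilde\phi_t\,dx$ coming from the $\tilde u_x$-part of $\tilde v_t$, which is bounded by $\epsilon\int_\mathbb R\tilde\phi_t^2\,dx+C_\epsilon D$ and accounts for the $+CD$ in \eqref{phit}; the $O(\varepsilon_1)\tilde\phi_t$-terms of $\mathcal G$ give $\le C\varepsilon_1\int_\mathbb R\tilde\phi_t^2\,dx$. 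Absorbing all the small multiples of $\int\tilde\phi_t^2$ and $\int\tilde\phi_{xt}^2$ into the left-hand side then yields \eqref{phit}.

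I expect the main obstacle to be organizational rather than conceptual: one must ensure that no uncontrolled quantity $\int_\mathbb R\tilde v^2\,dx$ (as opposed to the weighted $\int_\mathbb R\bar v^X_x\tilde v^2\,dx$) ever appears. This forces one to verify that in the differentiated equation every factor of $\tilde v$ is genuinely paired with a shock derivative — from $\bar\phi^X_t$, $\bar v^X_t$, $\bar\phi^X_{xt}$, $(\bar v^X e^{\bar\phi^X})_t$, or the $\bar v^X$-prefactor inside $\partial_t\mathcal N_p$ — or with another perturbative factor, so that Lemma~\ref{Lv^2} and \eqref{derivphi} close the estimate with a true $\delta_S$- or $\varepsilon_1$-gain; the $\dot X$-terms are handled the same way, since $\dot X$ always multiplies $\bar v^X_x$, reducing via $\int_\mathbb R(\bar v^X_x)^2\,dx\le C\delta_S^3$ to $\delta_S$-small multiples of $\delta_S|\dot X|^2$.
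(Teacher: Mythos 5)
Your proposal is correct and follows essentially the same route as the paper: differentiate \eqref{temP} in time, test with $\tilde{\phi}_t$, use the coercivity of $\bar{v}^X e^{\bar{\phi}^X}\tilde{\phi}_t^2+\tilde{\phi}_{xt}^2/v$, substitute $v_t=u_x$ and $\tilde{v}_t=\tilde{u}_x+\dot{X}\bar{v}^X_x$, and then close via Young's inequality, the shock bounds, \eqref{X}, Lemma~\ref{Lv^2} and Lemma~\ref{Lemma:phi} --- with the $\tilde{u}_x\tilde{\phi}_t$ term correctly identified as the source of the $+CD$ contribution. Your grouping into flux corrections $\mathcal{F}$ and sources $\mathcal{G}$ is only a cosmetic reorganization of the paper's decomposition into $\RNum{1}_1,\dots,\RNum{1}_5$.
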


\begin{proof}
Differentiating \eqref{temP} with respect to $t$ and multiplying the resultant equation by $\tilde{\phi}_t$, we obtain after integrating with respect to $x$
\[
\begin{split}
& - \int_\mathbb{R} \tilde{\phi}_t \left[ \frac{\tilde{\phi}_x}{v} + \left( \frac{1}{v} - \frac{1}{\bar{v}^X} \right)\bar{\phi}^X_x \right]_{xt} \, dx  = - \int_\mathbb{R} \tilde{\phi}_t \left( e^{\bar{\phi}^X} \tilde{v} + \bar{v}^X e^{\bar{\phi}^X} \tilde{\phi} \right)_t \, dx + \int_\mathbb{R} \tilde{\phi}_t \left( e^{\bar{\phi}^X} \mathcal{N}_p \right)_t \, dx.
\end{split}
\]
Then, by integration by parts and rearrangement, we obtain
\[
\begin{split}
\int_\mathbb{R} \bigg( \bar{v}^X e^{\bar{\phi}^X}\tilde{\phi}_t^2 + \frac{ \tilde{\phi}_{xt}^2}{v} \bigg) \, dx & = \int_\mathbb{R} \tilde{\phi}_{xt}  \left( \frac{v_t \tilde{\phi}_x}{v^2} + \left( \frac{\bar{\phi}^X_x}{\bar{v}^X} \right)_t \frac{\tilde{v}}{v} + \frac{\bar{\phi}^X_x \tilde{v}_t}{v\bar{v}^X} - \frac{\bar{\phi}^X_x v_t \tilde{v}}{v^2 \bar{v}^X} \right) \, dx \\
& \quad - \int_\mathbb{R} \tilde{\phi}_t \left( (e^{\bar{\phi}^X})_t \tilde{v} + e^{\bar{\phi}^X}\tilde{v}_t + ( \bar{v}^X e^{\bar{\phi}^X} )_t \tilde{\phi}  \right) \, dx \\
& \quad + \int_\mathbb{R} \tilde{\phi}_t e^{\bar{\phi}^X} \left[ \left( \bar{v}^X ( 1 + \tilde{\phi} - e^{\tilde{\phi}} ) \right)_t + \tilde{v}_t ( 1 - e^{\tilde{\phi}} ) + \tilde{v} (1-e^{\tilde{\phi}})_t \right] \, dx \\
& \quad + \int_\mathbb{R} \tilde{\phi}_t (e^{\bar{\phi}^X})_t \mathcal{N}_p \, dx.
\end{split}
\]
Here we use \eqref{NSP1} and \eqref{1a'} to obtain
\[
\int_\mathbb{R} \bigg( \bar{v}^X e^{\bar{\phi}^X}\tilde{\phi}_t^2 + \frac{\tilde{\phi}_{xt}^2}{v} \bigg) \, dx = \sum_{j=1}^5 \RNum{1}_j,
\]
where
\[
\begin{split}
\RNum{1}_1 & := - \int_\mathbb{R} e^{\bar{\phi}^X} \tilde{\phi}_t \tilde{u}_x \, dx, \quad \RNum{1}_2 := \int_\mathbb{R} \tilde{\phi}_{xt}  \left( \frac{\tilde{u}_x \tilde{\phi}_x}{v^2} - \frac{\bar{\phi}^X_x \tilde{u}_x \tilde{v}}{v^2\bar{v}^X}  \right) \, dx, \\
\RNum{1}_3 & := \int_\mathbb{R} \tilde{\phi}_{xt} \left( \frac{\bar{u}^X_x \tilde{\phi}_x}{v^2} + \frac{\bar{\phi}^X_x \tilde{u}_x}{v\bar{v}^X} -  \left( \sigma + \dot{X}(t) \right) \left( \frac{\bar{\phi}^X_x}{\bar{v}^X} \right)_x \frac{\tilde{v}}{v} - \frac{\bar{\phi}^X_x \bar{u}^X_x \tilde{v}}{v^2\bar{v}^X} \right) \, dx \\
& \quad + \left( \sigma + \dot{X}(t) \right) \int_\mathbb{R} \tilde{\phi}_t \left( (e^{\bar{\phi}^X})_x \tilde{v} + ( \bar{v}^X e^{\bar{\phi}^X} )_x \tilde{\phi} \right) \, dx,
\end{split}
\]
\[
\begin{split}
\RNum{1}_4 & = \dot{X}(t) \left(  \int_\mathbb{R} \tilde{\phi}_{xt} \frac{\bar{\phi}^X_x \bar{v}^X_x}{v\bar{v}^X} \, dx - \int_\mathbb{R} e^{\bar{\phi}^X}\bar{v}^X_x \tilde{\phi}_t \, dx \right),
\end{split}
\]
and
\[
\begin{split}
\RNum{1}_5 & = \int_\mathbb{R} \tilde{\phi}_t e^{\bar{\phi}^X} \left( -\sigma \bar{v}^X_x (1+\tilde{\phi} - e^{\tilde{\phi}})  + \bar{v}^X (1 + \tilde{\phi} - e^{\tilde{\phi}})_t +  \tilde{u}_x (1-e^{\tilde{\phi}}) + \tilde{v} (1-e^{\tilde{\phi}})_t \right) \, dx \\
& \quad - \dot{X}(t) \int_\mathbb{R} e^{\bar{\phi}^X} \bar{v}^X_x \tilde{\phi}_t \tilde{\phi} \, dx  - \left( \sigma + \dot{X}(t) \right) \int_\mathbb{R} \tilde{\phi}_t ( e^{\bar{\phi}^X} )_x \left( \bar{v}^X (1+\tilde{\phi}-e^{\tilde{\phi}}) + \tilde{v} (1-e^{\tilde{\phi}}) \right) \, dx.
\end{split}
\]

We estimate the terms $\RNum{1}_j$ for $j=1,\dots,5$. First, applying Young's inequality, we obtain the bounds on $\RNum{1}_1$
\[
\lvert \RNum{1}_1 \rvert \leq C \int_\mathbb{R} \lvert \tilde{\phi}_t \rvert \lvert \tilde{u}_x \rvert \, dx \leq \eta \int_\mathbb{R} \tilde{\phi}_t^2 \, dx + \frac{C}{\eta} \int_\mathbb{R} \tilde{u}_x^2 \, dx
\]
for any $ 0 < \eta < 1$. Similarly, the term $\RNum{1}_2$ is estimated by using the boundedness of $\lVert ( \tilde{v},\tilde{\phi}_x ) \rVert_{L^\infty}$ as
\[
\begin{split}
\lvert \RNum{1}_2 \rvert & \leq C \int_\mathbb{R} \left( \lvert \tilde{\phi}_x \rvert + \lvert \tilde{v} \rvert \right) \lvert \tilde{\phi}_{xt} \rvert \lvert \tilde{u}_x \rvert  \, dx \leq C \left( \lVert \tilde{v} \rVert_{L^\infty} + \lVert \tilde{\phi}_x \rVert_{L^\infty} \right) \int_\mathbb{R} \left( \tilde{\phi}_{xt}^2 + \tilde{u}_x^2 \right) \, dx.
\end{split}
\]
Next we use $\lvert \bar{u}^X_x \rvert \sim \lvert \bar{\phi}^X_x \rvert \sim \lvert \bar{v}^X_x \rvert \leq C \delta_S^2$ and $\lvert \dot{X} \rvert \leq C \lVert \tilde{u} \rVert_{L^\infty}$ to obtain
\[
\begin{split}
\lvert \RNum{1}_3 \rvert & \leq  C \int_\mathbb{R} \lvert \bar{v}^X_x \rvert \lvert \tilde{\phi}_{xt} \rvert \left( \lvert \tilde{\phi}_x \rvert + \lvert \tilde{u}_x \rvert + \lvert \bar{v}^X_x \rvert \lvert \tilde{v} \rvert \right) \, dx \\
& \quad + C \left( 1 + \lvert \dot{X} \rvert \right) \int_\mathbb{R} \lvert \bar{v}^X_x \rvert \left( \lvert \tilde{\phi}_t \rvert \lvert \tilde{v} \rvert + \lvert \tilde{\phi}_t \rvert \lvert \tilde{\phi} \rvert + \lvert \tilde{\phi}_{xt} \rvert \lvert \tilde{v} \rvert \right) \, dx \\
& \leq C \left( \int_\mathbb{R} \lvert \bar{v}^X_x \rvert \left( \tilde{\phi}_{xt}^2 + \tilde{\phi}_x^2 + \tilde{u}_x^2 \right) \, dx + \int_\mathbb{R} \lvert \bar{v}^X_x \rvert^2 \tilde{v}^2 \, dx \right)  \\
& \quad + C \left( 1 + \lVert \tilde{u} \rVert_{L^\infty} \right) \left( \int_\mathbb{R} \lvert \bar{v}^X_x \rvert^{1/2} \left( \tilde{\phi}_t^2 + \tilde{\phi}_{xt}^2 \right) \, dx + \int_\mathbb{R} \lvert \bar{v}^X_x \rvert^{3/2} \left( \tilde{v}^2 + \tilde{\phi}^2 \right) \, dx \right) \\
& \leq C \left( \delta_S + \lVert \tilde{u} \rVert_{L^\infty} \right) \int_\mathbb{R} \left( \bar{v}^X_x \tilde{v}^2 + \bar{v}^X_x \tilde{\phi}^2 + \tilde{\phi}_t^2 + \tilde{\phi}_{xt}^2 + \tilde{\phi}_x^2 + \tilde{u}_x^2 \right) \, dx.
\end{split}
\]
For the term $\RNum{1}_4$, we apply the H\"older inequality and Young's inequality to obtain
\[
\begin{split}
\lvert \RNum{1}_{4} \rvert & \leq C \lvert \dot{X} \rvert  \int_\mathbb{R} \bar{v}^X_x \left( \lvert \tilde{\phi}_{xt} \rvert + \lvert \tilde{\phi}_t \rvert \right) \, dx \\
& \leq C \lvert \dot{X} \rvert \sqrt{\int_\mathbb{R} \bar{v}^X_x \, dx} \left( \sqrt{\int_\mathbb{R} \bar{v}^X_x \tilde{\phi}_{xt}^2 \, dx} + \sqrt{\int_\mathbb{R} \bar{v}^X_x \tilde{\phi}_t^2 \, dx} \right) \\
& \leq C \delta_S^2 \lvert \dot{X} \rvert^2 + \frac{C}{\delta_S^2} \left( \int_\mathbb{R} \bar{v}^X_x \, dx \right) \left( \int_\mathbb{R} \bar{v}^X_x \tilde{\phi}_{xt}^2 \, dx + \int_\mathbb{R} \bar{v}^X_x \tilde{\phi}_t^2 \, dx \right) \\
& \leq C \delta_S \left( \delta_S \lvert \dot{X} \rvert^2 + \int_\mathbb{R} \left( \tilde{\phi}_{xt}^2 + \tilde{\phi}_t^2 \right) \, dx \right).
\end{split}
\]
Similar to $\RNum{1}_3$, the term $\RNum{1}_5$ is estimated as
\[
\begin{split}
\lvert \RNum{1}_{5} \rvert & \leq C \int_\mathbb{R} \lvert \bar{v}^X_x \rvert \lvert \tilde{\phi}_t \rvert \lvert \tilde{\phi} \rvert \mathcal{O}(\lvert \tilde{\phi} \rvert) \, dx + C \int_\mathbb{R} \lvert \tilde{\phi}_t \rvert \left( \lvert \tilde{\phi}_t \rvert \mathcal{O}(\lvert \tilde{\phi} \rvert) + \lvert \tilde{u}_x \rvert \mathcal{O}(\lvert \tilde{\phi} \rvert ) + \lvert \tilde{v} \rvert \lvert \tilde{\phi}_t \rvert + \lvert \tilde{v} \rvert \lvert \tilde{\phi}_t \rvert \mathcal{O}(\lvert \tilde{\phi} \rvert \right) \, dx \\
& \quad + C \lvert \dot{X} \rvert \int_\mathbb{R} \lvert \bar{v}^X_x \rvert \lvert \tilde{\phi}_t \rvert \lvert \tilde{\phi} \rvert \, dx + C \left( 1 + \lvert \dot{X} \rvert \right) \int_\mathbb{R} \lvert \bar{v}^X_x \rvert \lvert \tilde{\phi}_t \rvert \left( \lvert \tilde{\phi} \rvert + \lvert \tilde{v} \rvert \right)\mathcal{O}(\lvert \tilde{\phi} \rvert) \, dx \\
& \leq C \left( \lVert \tilde{v} \rVert_{L^\infty} + \lVert \tilde{u} \rVert_{L^\infty} + \lVert \tilde{\phi} \rVert_{L^\infty} \right) \int_\mathbb{R} \left( \bar{v}^X_x \tilde{v}^2 + \bar{v}^X_x \tilde{\phi}^2 + \tilde{\phi}_t^2 + \tilde{u}_x^2 \right) \, dx.
\end{split}
\]

Collecting all the above estimates of $\RNum{1}_j$ and taking $\eta$ sufficiently small, we obtain
\[
\begin{split}
\int_\mathbb{R} \left( \tilde{\phi}_t^2 + \tilde{\phi}_{xt}^2 \right) \, dx & \leq C \left( \delta_S + \lVert ( \tilde{v},\tilde{u},\tilde{\phi},\tilde{\phi}_x) \rVert_{L^\infty} \right) \\
& \qquad \times \left( \int_\mathbb{R} \left( \bar{v}^X_x \tilde{v}^2 + \bar{v}^X_x \tilde{\phi}^2 + \tilde{\phi}_x^2 \right) \, dx + \delta_S \lvert \dot{X} \rvert^2 \right) + C \int_\mathbb{R} \tilde{u}_x^2 \, dx
\end{split}
\]
for sufficiently small $\delta_S < \delta_1$ and $\lVert ( \tilde{v},\tilde{u},\tilde{\phi},\tilde{\phi}_x) \rVert_{L^\infty} \leq C \varepsilon_1$. Finally, we have the desired inequality by using the results of Lemmas~\ref{Lv^2} and \ref{Lemma:phi}.
\end{proof}

\begin{lemma} \label{Lemma:phixt}
Under the assumptions in Proposition \ref{Apriori}, there exists a constant $C>0$ such that
\begin{equation} \label{phixt}
\begin{split}
\int_\mathbb{R} \left( \tilde{\phi}_{xt}^2 + \tilde{\phi}_{xxt}^2 \right) \, dx & \leq C \left( \delta_1 + \varepsilon_1 \right)  \left( \delta_S \lvert \dot{X} \rvert^2 + G_1 + G^S + \int_\mathbb{R} \left( \tilde{v}_x^2 + \tilde{u}_{xx}^2 \right) \, dx \right) + C D
\end{split}
\end{equation}
for all $t \in [0,T]$, where $G_1$, $G^S$, and $D$ are as defined in Lemma~\ref{RE}.
\end{lemma}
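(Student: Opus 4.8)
The plan is to imitate the proof of Lemma~\ref{Lemma:phit}, but now differentiating the rewritten Poisson equation \eqref{temP} with respect to both $x$ and $t$. First I would apply $\partial_x\partial_t$ to \eqref{temP}, multiply the resulting identity by $\tilde\phi_{xt}$, integrate over $\mathbb{R}$, and integrate by parts once in $x$ on the elliptic side; the term coming from $(-\bar v^X e^{\bar\phi^X}\tilde\phi)_{xt}$ that is proportional to $\tilde\phi_{xt}$ is moved to the left, exactly as the term $-\bar v^X e^{\bar\phi^X}\tilde\phi_t$ was in the proof of Lemma~\ref{Lemma:phit}. Since $\bar v^X e^{\bar\phi^X}$ is bounded above and below by positive constants (by the Poisson equation of \eqref{shODE}, as in the proof of Lemma~\ref{entsim}) and $v\le\overline\kappa_1$, this yields a left-hand side $\int_\mathbb{R}\big(\bar v^X e^{\bar\phi^X}\tilde\phi_{xt}^2+\tilde\phi_{xxt}^2/v\big)\,dx$, which controls $c\int_\mathbb{R}(\tilde\phi_{xt}^2+\tilde\phi_{xxt}^2)\,dx$, with all remaining contributions collected on the right.

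Next I would use the continuity equations \eqref{NSP1} and \eqref{1a'}, together with their $x$-derivatives and \eqref{seqn1}, to eliminate every time derivative of $v$ and $\tilde v$ on the right-hand side, via $v_t=u_x$, $\tilde v_t=\tilde u_x+\dot X\bar v^X_x$, $v_{xt}=u_{xx}=\tilde u_{xx}+\bar u^X_{xx}$, and $\tilde v_{xt}=\tilde u_{xx}+\dot X\bar v^X_{xx}$. I would then split the right-hand side into the analogues of $\RNum{1}_1,\dots,\RNum{1}_5$ from the proof of Lemma~\ref{Lemma:phit}: a single genuinely top-order term $-\int_\mathbb{R} e^{\bar\phi^X}\tilde\phi_{xt}\tilde u_{xx}\,dx$; quadratic terms in which $\tilde u_{xx}$ or $\tilde\phi_{xxt}$ is multiplied either by a derivative of the shock profile (of size $O(\delta_S^2)$ via \eqref{shbounds}) or by an $L^\infty$-small perturbation (of size $O(\varepsilon_1)$ via \eqref{infty}); the terms carrying $\dot X(t)$, controlled by $|\dot X|\le C\|\tilde u\|_{L^\infty}$ from \eqref{X} and, when weighted by $\bar v^X_x$, by the Hölder-inequality trick used for $\RNum{1}_4$ that extracts a factor of $\delta_S$ using $\int_\mathbb{R}\bar v^X_x\,dx\sim\delta_S$; and purely lower-order or cubic terms, the latter handled through the Taylor expansion of $e^{\tilde\phi}$ in $\mathcal N_p$. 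For the dangerous term I would integrate by parts once more, writing $-\int_\mathbb{R} e^{\bar\phi^X}\tilde\phi_{xt}\tilde u_{xx}\,dx=\int_\mathbb{R} e^{\bar\phi^X}\tilde\phi_{xxt}\tilde u_x\,dx+\int_\mathbb{R} e^{\bar\phi^X}\bar\phi^X_x\tilde\phi_{xt}\tilde u_x\,dx$; the first piece is absorbed into the left after Young's inequality at the cost of $CD$, while the second is $O(\delta_S)$-small since $\bar\phi^X_x=O(\delta_S^2)$.

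Finally, after absorbing the small-coefficient $\tilde\phi_{xt}^2$ and $\tilde\phi_{xxt}^2$ contributions into the left, I would invoke Lemma~\ref{Lv^2} to bound $\int_\mathbb{R}\bar v^X_x\tilde v^2\,dx$ by $G_1+G^S$, Lemma~\ref{Lemma:phi} to bound $\int_\mathbb{R}(\bar v^X_x\tilde\phi^2+\tilde\phi_x^2+\tilde\phi_{xx}^2)\,dx$ by $G_1+G^S+\int_\mathbb{R}\tilde v_x^2\,dx$, and Lemma~\ref{Lemma:phit} to bound $\int_\mathbb{R}(\tilde\phi_t^2+\tilde\phi_{xt}^2)\,dx$, which together give \eqref{phixt}. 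I expect the main obstacle to be the appearance of the top-order quantity $\tilde u_{xx}$ arising from $\tilde v_{xt}$: a naive Young's inequality on $-\int_\mathbb{R} e^{\bar\phi^X}\tilde\phi_{xt}\tilde u_{xx}\,dx$ would produce $\int_\mathbb{R}\tilde u_{xx}^2\,dx$ with an $O(1)$ coefficient, which exceeds the budget of \eqref{phixt}, where only $D=\int_\mathbb{R}\tilde u_x^2\,dx$ is allowed with an $O(1)$ coefficient. The resolution is the extra integration by parts above, together with the observation that every other occurrence of $\tilde u_{xx}$ is automatically weighted by a shock-profile derivative or by an $L^\infty$-small perturbation, so that those fit the $C(\delta_1+\varepsilon_1)\int_\mathbb{R}\tilde u_{xx}^2\,dx$ term; carefully tracking the powers of $\delta_S$ (using $|a^X_x|\le C\delta_S^{3/2}$, $|\bar v^X_x|\le C\delta_S^2$, and $\int_\mathbb{R}\bar v^X_x\,dx\sim\delta_S$) throughout this bookkeeping is the delicate point.
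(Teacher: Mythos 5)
Your proposal is correct and follows essentially the same route as the paper: differentiate \eqref{temP} in $x$ and $t$, test against $\tilde{\phi}_{xt}$, eliminate time derivatives via \eqref{NSP1} and \eqref{1a'}, and organize the right-hand side exactly as in the proof of Lemma~\ref{Lemma:phit}. In particular, the term you reach after your extra integration by parts, $\int_\mathbb{R} e^{\bar{\phi}^X}\tilde{\phi}_{xxt}\tilde{u}_x\,dx$, is precisely the paper's $\RNum{2}_1$ (the paper simply moves the $x$-derivative onto $\tilde{\phi}_{xt}$ before substituting $\tilde{v}_t=\tilde{u}_x+\dot{X}\bar{v}^X_x$), and it is handled by Young's inequality at the cost of $CD$, just as you describe.
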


\begin{proof}
Differentiating \eqref{temP} once with respect to $x$ and $t$ and multiplying the resultant equation by $\tilde{\phi}_{xt}$, we obtain after integrating with respect to $x$
\[
\begin{split}
& - \int_\mathbb{R} \tilde{\phi}_{xt} \left[ \frac{\tilde{\phi}_x}{v} + \left( \frac{1}{v} - \frac{1}{\bar{v}^X} \right)\bar{\phi}^X_x \right]_{xxt} \, dx \\
& \quad = - \int_\mathbb{R} \tilde{\phi}_{xt} \left( e^{\bar{\phi}^X} \tilde{v} + \bar{v}^X e^{\bar{\phi}^X} \tilde{\phi} \right)_{xt} \, dx + \int_\mathbb{R} \tilde{\phi}_{xt} \left( e^{\bar{\phi}^X} \mathcal{N}_p \right)_{xt} \, dx.
\end{split}
\]
By integration by parts, we have after rearrangement
\[
\begin{split}
& \int_\mathbb{R} \bigg( \bar{v}^X e^{\bar{\phi}^X}\tilde{\phi}_{xt}^2 + \frac{ \tilde{\phi}_{xxt}^2}{v} \bigg) \, dx \\
& \quad = \int_\mathbb{R} \tilde{\phi}_{xxt} \frac{v_x \tilde{\phi}_{xt}}{v^2} \, dx + \int_\mathbb{R} \tilde{\phi}_{xxt} \left[ \frac{v_t\tilde{\phi}_x}{v^2} + \frac{\bar{\phi}^X_x \tilde{v}_t}{v\bar{v}^X} - \frac{\bar{\phi}^X_x v_t \tilde{v}}{v^2 \bar{v}^X} + \left( \frac{\bar{\phi}^X_x}{\bar{v}^X} \right)_t \frac{\tilde{v}}{v} \right]_x \, dx \\
& \qquad + \int_\mathbb{R} \tilde{\phi}_{xxt} \left( (e^{\bar{\phi}^X})_t\tilde{v} + e^{\bar{\phi}^X}\tilde{v}_t + (\bar{v}^X e^{\bar{\phi}^X})_t \tilde{\phi} \right) \, dx - \int_\mathbb{R} ( \bar{v}^X e^{\bar{\phi}^X} )_x \tilde{\phi}_{xt}\tilde{\phi}_t \, dx \\
& \qquad - \int_\mathbb{R} \tilde{\phi}_{xxt}  \left( (e^{\bar{\phi}^X})_t \tilde{v} (1-e^{\tilde{\phi}}) + e^{\bar{\phi}^X} \tilde{v}_t (1-e^{\tilde{\phi}}) + e^{\bar{\phi}^X} \tilde{v} (1-e^{\tilde{\phi}})_t \right) \, dx \\
& \qquad - \int_\mathbb{R} \tilde{\phi}_{xxt} \left( \bar{v}^X e^{\bar{\phi}^X}(1 + \tilde{\phi} - e^{\tilde{\phi}}) \right)_t \, dx.
\end{split}
\]
We use \eqref{NSP1} and \eqref{1a'} in the right-hand side to obtain
\[
\int_\mathbb{R} \bigg( \bar{v}^X e^{\bar{\phi}^X}\tilde{\phi}_{xt}^2 + \frac{\tilde{\phi}_{xxt}^2}{v} \bigg) \, dx  = \sum_{j=1}^6 \RNum{2}_j,
\]
where
\[
\begin{split}
\RNum{2}_1 & := \int_\mathbb{R}  e^{\bar{\phi}^X} \tilde{\phi}_{xxt} \tilde{u}_x \, dx, \\
\RNum{2}_2 & := \int_\mathbb{R} \tilde{\phi}_{xxt} \left( \frac{\tilde{v}_x\tilde{\phi}_{xt}}{v^2} + \frac{\tilde{u}_{xx}\tilde{\phi}_x}{v^2} + \frac{\tilde{u}_x \tilde{\phi}_{xx}}{v^2} - \frac{2 ( \tilde{v}_x + \bar{v}^X_x) \tilde{u}_x\tilde{\phi}_x}{v^3} - \frac{2 \bar{u}^X_x \tilde{v}_x \tilde{\phi}_x}{v^3} \right) \, dx \\
& \quad + \int_\mathbb{R} \tilde{\phi}_{xxt} \left( - \frac{2 \bar{\phi}^X_x \tilde{v}_x \tilde{u}_x}{v^2\bar{v}^X} - \dot{X}(t) \frac{\bar{\phi}^X_x \bar{v}^X_x \tilde{v}_x}{v^2\bar{v}^X} - \left( \frac{\bar{\phi}^X_x}{\bar{v}^X} \right)_x \frac{\tilde{u}_x \tilde{v}}{v^2}  - \frac{\bar{\phi}^X_x \tilde{u}_{xx}\tilde{v}}{v^2\bar{v}^X} \right) \, dx \\
& \quad + \int_\mathbb{R} \tilde{\phi}_{xxt} \left( + \frac{2\bar{\phi}^X_x ( \tilde{v}_x + \bar{v}^X_x )\tilde{u}_x\tilde{v}}{v^3 \bar{v}^X} + \frac{2 \bar{\phi}^X_x \bar{u}^X_x \tilde{v}_x \tilde{v}}{v^3 \bar{v}^X} + (\sigma + \dot{X}(t)) \left( \frac{\bar{\phi}^X_x}{\bar{v}^X} \right)_x \frac{\tilde{v}_x^2}{v^2} \right) \, dx, \\
\RNum{2}_3 & := \int_\mathbb{R} \tilde{\phi}_{xxt} \left( \frac{\bar{v}^X_x \tilde{\phi}_{xt}}{v^2} + \frac{\bar{u}^X_{xx} \tilde{\phi}_x}{v^2} + \frac{\bar{u}^X_x \tilde{\phi}_{xx}}{v^2} - \frac{2\bar{u}^X_x\bar{v}^X_x \tilde{\phi}_x}{v^3} + \left( \frac{\bar{\phi}^X_x}{\bar{v}^X} \right)_x \frac{\tilde{u}_x}{v}  +  \frac{\bar{\phi}^X_x\tilde{u}_{xx}}{v\bar{v}^X}  \right) \, dx \\
& \quad +  \int_\mathbb{R} \tilde{\phi}_{xxt} \left( - \frac{\bar{\phi}^X_x\bar{v}^X_x \tilde{u}_x}{v^2\bar{v}^X} - \left( \frac{\bar{\phi}^X_x\bar{u}^X_x}{\bar{v}^X} \right)_x \frac{\tilde{v}}{v^2} - \frac{\bar{\phi}^X_x \bar{u}^X_x \tilde{v}_x}{v^2 \bar{v}^X} + \frac{2\bar{\phi}^X_x\bar{u}^X_x\bar{v}^X_x\tilde{v}}{v^3\bar{v}^X} \right) \, dx \\
& \quad - (\sigma + \dot{X}(t)) \int_\mathbb{R}\tilde{\phi}_{xxt}  \left[ \left( \frac{\bar{\phi}^X_x}{\bar{v}^X} \right)_{xx} \frac{\tilde{v}}{v} + \left( \frac{\bar{\phi}^X_x}{\bar{v}^X} \right)_x \left( \frac{\tilde{v}_x}{v}- \frac{\bar{v}^X_x \tilde{v}_x}{v^2} \right) \right] \, dx \\
& \quad - (\sigma + \dot{X}(t)) \int_\mathbb{R} \tilde{\phi}_{xxt} \left( (e^{\bar{\phi}^X})_x\tilde{v} + (\bar{v}^Xe^{\bar{\phi}^X})_x \tilde{\phi} \right) \, dx  - \int_\mathbb{R} (\bar{v}^Xe^{\bar{\phi}^X})_x \tilde{\phi}_{xt}\tilde{\phi}_t \, dx,
\end{split}
\]
\[
\begin{split}
\RNum{2}_4 & := \dot{X}(t) \int_\mathbb{R}  \tilde{\phi}_{xxt}  \left( \frac{1}{v} \left(  \frac{\bar{\phi}^X_x  \bar{v}^X_x}{\bar{v}^X} \right)_x - \frac{\bar{\phi}^X_x ( \bar{v}^X_x )^2}{v^2\bar{v}^X} + e^{\bar{\phi}^X}\bar{v}^X_x \right) \, dx, \\
\RNum{2}_5 & := \int_\mathbb{R} \tilde{\phi}_{xxt}  \left( (\sigma + \dot{X}(t)) (e^{\bar{\phi}^X})_x \tilde{v} - e^{\bar{\phi}^X} \tilde{u}_x - \dot{X}(t) e^{\bar{\phi}^X} \bar{v}^X_x  \right) (1-e^{\tilde{\phi}}) \, dx \\
& \quad - \int_\mathbb{R} \tilde{\phi}_{xxt}  e^{\bar{\phi}^X}\tilde{v} (1-e^{\tilde{\phi}})_t \, dx,
\end{split}
\]
and
\[
\RNum{2}_6 := \int_\mathbb{R} \tilde{\phi}_{xxt} \left( (\sigma + \dot{X}(t) ) (\bar{v}^X e^{\bar{\phi}^X})_x (1+\tilde{\phi}-e^{\tilde{\phi}}) - \bar{v}^Xe^{\bar{\phi}^X} (1+\tilde{\phi}-e^{\tilde{\phi}})_t \right) \, dx.
\]

We estimate the terms $\RNum{2}_j$ for $j=1,\dots,6$. First, by Young's inequality, we have
\[
\lvert \RNum{2}_1 \rvert \leq C \int_\mathbb{R} \lvert \tilde{\phi}_{xxt} \rvert \lvert \tilde{u}_x \rvert \, dx \leq \eta \int_\mathbb{R} \tilde{\phi}_{xxt}^2  \, dx + \frac{C}{\eta} \int_\mathbb{R} \tilde{u}_x^2 \, dx
\]
for any $0 < \eta < 1$. The nonlinear term $\RNum{2}_2$ is estimated by using $\lvert \dot{X} \rvert \leq C \lVert \tilde{u} \rVert_{L^\infty}$ as
\[
\begin{split}
\lvert \RNum{2}_2 \rvert & \leq C \int_\mathbb{R} \left( \lvert \tilde{v} \rvert + \lvert \tilde{\phi}_x \rvert \right)  \lvert \tilde{\phi}_{xxt} \rvert  \left( \lvert \tilde{v}_x \rvert + \lvert \tilde{u}_x \rvert + \lvert \tilde{u}_{xx} \rvert  \right) \, dx \\
& \quad + C \int_\mathbb{R} \lvert \tilde{v}_x \rvert  \lvert \tilde{\phi}_{xxt} \rvert  \left( \lvert \tilde{\phi}_{xt} \rvert + \lvert \tilde{u}_x \rvert + \lvert \tilde{v}_x \rvert + \lvert \tilde{v} \rvert \lvert \tilde{u}_x \rvert + \lvert \tilde{u}_x \rvert \lvert \tilde{\phi}_x \rvert \right) \, dx \\
& \quad + C \int_\mathbb{R} \lvert \tilde{u}_x \rvert \lvert \tilde{\phi}_{xxt} \rvert  \lvert \tilde{\phi}_{xx} \rvert  \, dx + C \lvert \dot{X} \rvert \int_\mathbb{R} \lvert \tilde{\phi}_{xxt} \rvert \left( \lvert \tilde{v}_x \rvert + \lvert \tilde{v}_x \rvert^2 \right) \, dx \\
& \leq C \lVert (\tilde{v},\tilde{v}_x,\tilde{u}_x,\tilde{\phi}_x ) \rVert_{L^\infty}  \int_\mathbb{R} \left( \tilde{\phi}_{xt}^2 + \tilde{\phi}_{xxt}^2 + \tilde{u}_x^2 + \tilde{u}_{xx}^2 + \tilde{v}_x^2 + \tilde{\phi}_x^2 + \tilde{\phi}_{xx}^2 \right) \, dx. 
\end{split}
\]
We use the bounds $ \lvert \bar{\phi}^X_{xxx} \rvert, \lvert \bar{v}^X_{xx} \rvert, \lvert \bar{u}^X_{xx} \rvert, \lvert \bar{\phi}^X_{xx} \rvert \leq C \bar{v}^X_x \leq C \delta_S^2$ and $ \lvert \bar{u}^X_x \rvert \sim \lvert \bar{\phi}^X_x \rvert \sim \lvert \bar{v}^X_x \rvert$ to obtain
\[
\begin{split}
\lvert \RNum{2}_3 \rvert & \leq C \int_\mathbb{R} \lvert \bar{v}^X_x \rvert \lvert \tilde{\phi}_{xxt} \rvert  \left( \lvert \tilde{\phi}_t \rvert +  \lvert \tilde{\phi}_{xt} \rvert + \lvert \tilde{\phi}_x \rvert + \lvert \tilde{\phi}_{xx} \rvert + \lvert \tilde{u}_x \rvert + \lvert \tilde{u}_{xx} \rvert + \lvert \tilde{v}_x \rvert \right) \, dx \\
& \quad + C \left( 1 + \lvert \dot{X} \rvert \right)  \int_\mathbb{R} \bar{v}^X_x \lvert \tilde{\phi}_{xxt} \rvert \left( \lvert \tilde{v} \rvert + \lvert \tilde{v}_x \rvert + \lvert \tilde{\phi} \rvert \right) \, dx \\
& \leq C \int_\mathbb{R} \lvert \bar{v}^X_x \rvert \left( \tilde{\phi}_t^2 + \tilde{\phi}_{xt}^2 + \tilde{\phi}_{xxt}^2 + \tilde{\phi}_x^2 + \tilde{\phi}_{xx}^2 + \tilde{u}_x^2 + \tilde{u}_{xx}^2 + \tilde{v}_x^2 \right) \, dx \\
& \quad + C \left( 1 + \lVert \tilde{u} \rVert_{L^\infty} \right) \left( \int_\mathbb{R} \lvert \bar{v}^X_x \rvert^{1/2} \left( \tilde{\phi}_{xxt}^2 + \tilde{v}_x^2 \right) \, dx + \int_\mathbb{R} \lvert \bar{v}^X_x \rvert^{3/2} \left( \tilde{v}^2 + \tilde{\phi}^2 \right) \, dx \right) \\
& \leq C \delta_S^2 \int_\mathbb{R} \left( \tilde{\phi}_t^2 + \tilde{\phi}_{xt}^2 + \tilde{\phi}_{xxt}^2 + \tilde{\phi}_x^2 + \tilde{\phi}_{xx}^2 + \tilde{u}_x^2 + \tilde{u}_{xx}^2 + \tilde{v}_x^2 \right) \, dx \\
& \quad + C \delta_S \left( 1 + \lVert \tilde{u} \rVert_{L^\infty} \right) \int_\mathbb{R} \left( \bar{v}^X_x \tilde{v}^2 + \bar{v}^X_x \tilde{\phi}^2 + \tilde{\phi}_{xxt}^2 + \tilde{v}_x^2 \right) \, dx.
\end{split}
\]
For $\RNum{2}_4$, we apply the H\"older inequality and Young's inequality to obtain
\[
\begin{split}
\lvert \RNum{2}_4 \rvert & \leq C \lvert \dot{X} \rvert \int_\mathbb{R} \lvert \bar{v}^X_x \rvert \lvert \tilde{\phi}_{xxt} \rvert  \, dx \\
& \leq C \lvert \dot{X} \rvert \sqrt{\int_\mathbb{R} \bar{v}^X_x \, dx} \sqrt{ \int_\mathbb{R} \bar{v}^X_x \tilde{\phi}_{xxt}^2 \, dx} \\
& \leq C \delta_S^2 \lvert \dot{X} \rvert^2 + \frac{C}{\delta_S^2} \left( \int_\mathbb{R} \bar{v}^X_x \, dx \right) \left( \int_\mathbb{R} \bar{v}^X_x \tilde{\phi}_{xxt}^2  \, dx \right) \\
& \leq C \delta_S \left( \delta_S \lvert \dot{X} \rvert^2 + \int_\mathbb{R} \tilde{\phi}_{xxt}^2  \, dx \right).
\end{split}
\]
Similar to $\RNum{2}_2$, the nonlinear terms $\RNum{2}_5$ and $\RNum{2}_6$ are estimated as 
\[
\begin{split}
\lvert \RNum{2}_5 \rvert & \leq C \int_\mathbb{R} \lvert \tilde{\phi}_{xxt} \rvert \left( \bar{v}^X_x \lvert \tilde{v} \rvert + \lvert \tilde{u}_x \rvert \right) \mathcal{O}(\lvert \tilde{\phi} \rvert) \, dx  + C \lvert \dot{X} \rvert \int_\mathbb{R} \lvert \tilde{\phi}_{xxt} \rvert \left( \bar{v}^X_x + \bar{v}^X_x \lvert \tilde{v} \rvert \right) \mathcal{O}(\lvert \tilde{\phi} \rvert) \, dx \\
& \quad + C \int_\mathbb{R} \lvert \tilde{v} \rvert \lvert \tilde{\phi}_{xxt} \rvert  \left( \lvert \tilde{\phi}_t \rvert + \lvert \tilde{\phi}_t \rvert \mathcal{O}(\lvert \tilde{\phi} \rvert) \right) \, dx\\
& \leq C \lVert ( \tilde{v}, \tilde{u}, \tilde{\phi} ) \rVert_{L^\infty} \int_\mathbb{R} \left( \bar{v}^X_x \tilde{v}^2 + \bar{v}^X_x \tilde{\phi}^2 + \tilde{\phi}_t^2 + \tilde{\phi}_{xxt}^2 + \tilde{u}_x^2 \right) \, dx
\end{split}
\]
and
\[
\begin{split}
\lvert \RNum{2}_6 \rvert & \leq C (1 + \lvert \dot{X} \rvert) \int_\mathbb{R} \bar{v}^X_x \lvert \tilde{\phi}_{xxt} \rvert \mathcal{O}(\lvert \tilde{\phi} \rvert^2 )  \, dx + C \int_\mathbb{R}  \lvert \tilde{\phi}_{xxt} \rvert \lvert \tilde{\phi}_t \rvert \mathcal{O}(\lvert \tilde{\phi} \rvert) \, dx \\
& \leq C \lVert ( \tilde{u}, \tilde{\phi} ) \rVert_{L^\infty} \int_\mathbb{R} \left( \bar{v}^X_x \tilde{\phi}^2 + \tilde{\phi}_t^2 + \tilde{\phi}_{xxt}^2 \right) \, dx,
\end{split}
\]
respectively. 

Collecting all the estimates for $\RNum{2}_j$ and taking $\eta$ sufficiently small, we obtain
\[
\begin{split}
\int_\mathbb{R} \left( \tilde{\phi}_{xt}^2 +\tilde{\phi}_{xxt}^2 \right) \, dx & \leq C \left( \delta_S + \lVert ( \tilde{v}, \tilde{v}_x, \tilde{u}_x, \tilde{\phi}, \tilde{\phi}_x ) \rVert_{L^\infty} \right) \\
& \qquad \times \int_\mathbb{R} \left( \bar{v}^X_x \tilde{v}^2 + \bar{v}^X_x \tilde{\phi}^2 + \tilde{v}_x^2 + \tilde{u}_{xx}^2 + \tilde{\phi}_t^2 + \tilde{\phi}_x^2 + \tilde{\phi}_{xx}^2 \right) \, dx + C \int_\mathbb{R} \tilde{u}_x^2 \, dx. 
\end{split}
\]
Therefore, the estimate \eqref{phixt} follows from the results of Lemmas~\ref{Lv^2}, \ref{Lemma:phi}, and \ref{Lemma:phit}.
\end{proof}

\section{Higher-order estimates and proof of Proposition \ref{Apriori}}

We provide the estimates of the first and second derivatives of the perturbation $(\tilde{v},\tilde{u})$ and prove Proposition \ref{Apriori} by combining these results with those obtained in previous sections. First, we observe that the estimate \eqref{Ree}, together with \eqref{derivphi} and \eqref{phixt}, gives
\begin{equation} \label{Ree'}
\begin{split}
& \lVert (\tilde{v},\tilde{u}) (t,\cdot) \rVert_{L^2}^2 + \lVert \tilde{\phi} (t,\cdot) \rVert_{H^2}^2 + \int_0^t \left( \delta_S \lvert \dot{X} \rvert^2 + G_1 + G^S + D \right) \, d\tau \\
& \leq C \left( \lVert ( \tilde{v}_0,\tilde{u}_0 ) \rVert_{L^2}^2 + \lVert \tilde{v}_{0x} \rVert_{L^2}^2 \right) + C \left( \sqrt{\delta_1} + \varepsilon_1 \right) \int_0^t \left( \lVert \tilde{v}_x \rVert_{L^2}^2 + \lVert \tilde{u}_{xx} \rVert_{L^2}^2 \right) \, d\tau
\end{split}
\end{equation}
for $t \in [0,T]$. To close this a priori estimate, we need $H^1$-estimates for $(\tilde{v},\tilde{u})$ which yield good terms absorbing the last term on the right-hand side of the above inequality. However, as a consequence of Lemma~\ref{Lemma:v1}, one can find that further estimates up to $H^2$ are necessary. Accordingly, the next two subsections focus on deriving the higher-order estimates for the perturbation $(\tilde{v},\tilde{u})$.

\subsection{\texorpdfstring{$H^1$-estimates}{H1-estimates}}

\begin{lemma} \label{Lemma:v1}
Under the assumptions in Proposition \ref{Apriori}, there exists a constant $C>0$ such that
\begin{equation} \label{derivv1}
\begin{split}
& \lVert \tilde{v}_x (t,\cdot) \rVert_{L^2}^2 + \int_0^t \left( \lVert \tilde{v}_x \rVert_{L^2}^2 + \lVert \tilde{\phi}_{xx} \rVert_{H^1}^2 \right) \, d\tau \\
& \quad \leq C \left( \lVert ( \tilde{v}_0,\tilde{u}_0) \rVert_{L^2}^2 + \lVert \tilde{v}_{0x} \rVert_{L^2}^2 \right) \\
& \qquad + C \left( \delta_1 + \varepsilon_1 \right) \int_0^t \left( \delta_S \lvert \dot{X} \rvert^2 + G_1 + G^S + \lVert \tilde{v}_{xx} \rVert_{L^2}^2 + \lVert \tilde{u}_{xx}  \rVert_{L^2}^2 \right) \, d\tau
\end{split}
\end{equation}
for all $t \in [0,T]$, where $G_1$ and $G^S$ are as defined in Lemma~\ref{RE}.
\end{lemma}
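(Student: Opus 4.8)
The plan is to exploit the divergence form \eqref{1b} of the momentum equation together with the continuity relations, and then to test the resulting identity against a first-order quantity comparable to $\tilde{v}_x$. Differentiating $\log v-\log\bar{v}^X$ in $t$ and using $v_t=u_x$ from \eqref{NSP11} together with \eqref{seqn1}, one finds
\[
\frac{u_x}{v}-\frac{\bar{u}^X_x}{\bar{v}^X}=\big(\log v-\log\bar{v}^X\big)_t-\dot{X}(t)\,\big(\log\bar{v}^X\big)_x ,
\]
so that, setting $h:=\big(\log v-\log\bar{v}^X\big)_x=\frac{v_x}{v}-\frac{\bar{v}^X_x}{\bar{v}^X}$ — which equals $\frac{\tilde{v}_x}{v}$ plus a term supported where $\bar{v}^X_x\neq 0$ and hence of size $\mathcal{O}(\delta_S)$ relative to $\tilde{v}_x$ — equation \eqref{1b} takes the form
\[
(\tilde{u}-h)_t+\big(\tilde{p}(v)-\tilde{p}(\bar{v}^X)\big)_x=\big(\Phi(v,\phi)-\Phi(\bar{v}^X,\bar{\phi}^X)\big)_x+\dot{X}(t)\,\bar{u}^X_x-\dot{X}(t)\,\big(\log\bar{v}^X\big)_{xx}.
\]
I would multiply this identity by $-h$ and integrate over $\mathbb{R}$.

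Two structural facts make the estimate work. First, the pressure term, after one integration by parts, contributes the dissipative quantity $\int_\mathbb{R}\frac{-\tilde{p}'(v)}{v}\tilde{v}_x^2\,dx\ge c\int_\mathbb{R}\tilde{v}_x^2\,dx$ — here one uses $\tilde{p}(v)=2/v$, so $\tilde{p}'<0$, and the uniform positive lower and upper bounds on $v$ coming from \eqref{infty}; all the remaining pieces of this term carry a factor $\bar{v}^X_x$ or an $L^\infty$-norm of a perturbation and are absorbable. Second, the viscous term, again after integration by parts and using $u_x=v_t$, is converted into a total time derivative $\frac{d}{dt}\int_\mathbb{R}\frac{v_x^2}{2v^2}\,dx$ — which is exactly the $\lVert\tilde{v}_x(t)\rVert_{L^2}^2$ appearing on the left of \eqref{derivv1} — plus cubic and shock-localized errors. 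The term $(\tilde{u}-h)_t(-h)$ further produces $\frac{1}{2}\frac{d}{dt}\int_\mathbb{R}h^2\,dx$ and the cross term $\frac{d}{dt}\int_\mathbb{R}\tilde{u}h\,dx$, the latter being controlled after time integration by $\zeta\lVert\tilde{v}_x(t)\rVert_{L^2}^2+C_\zeta\lVert\tilde{u}(t)\rVert_{L^2}^2$ via Young's inequality, with $\lVert\tilde{u}(t)\rVert_{L^2}^2$ and the initial quantities bounded through \eqref{Ree'} and $\lVert h(0)\rVert_{L^2}^2\le C(\lVert\tilde{v}_{0x}\rVert_{L^2}^2+\lVert(\tilde{v}_0,\tilde{u}_0)\rVert_{L^2}^2)$.

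The delicate part is the electric-force term $\big(\Phi(v,\phi)-\Phi(\bar{v}^X,\bar{\phi}^X)\big)_x(-h)$. Integrating by parts once and expanding $\Phi$ via \eqref{Phi}, the integrand couples $h$ and $h_x$ (which is of size $\tilde{v}_{xx}$) with $\tilde{\phi}_x,\tilde{\phi}_{xx},\tilde{\phi}_{xxx}$ and shock derivatives. To close it I would (i) invoke the elliptic estimate \eqref{derivphi} of Lemma~\ref{Lemma:phi}, and (ii) carry out one further $x$-differentiation of \eqref{temP}, testing against $\tilde{\phi}_{xxx}$, so as to bound $\lVert\tilde{\phi}_{xxx}\rVert_{L^2}^2$ by $G_1+G^S$, $\lVert\tilde{v}_x\rVert_{L^2}^2$, and terms already appearing on the right-hand side of \eqref{derivv1}; this is what places $\lVert\tilde{\phi}_{xx}\rVert_{H^1}^2$ on the left-hand side of \eqref{derivv1}. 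The remaining couplings are then handled by suitable integrations by parts together with the a priori smallness \eqref{infty} and the shock bounds \eqref{shbounds}: each residual term involving two derivatives of the perturbation is arranged to appear as $\lVert\tilde{v}_{xx}\rVert_{L^2}^2$ or $\lVert\tilde{u}_{xx}\rVert_{L^2}^2$ with a small constant of size $\mathcal{O}(\delta_1+\varepsilon_1)$, every other term carrying a shock-derivative weight $\bar{v}^X_x=\mathcal{O}(\delta_S^2)$ or an $L^\infty$-norm of a perturbation; the $\dot{X}$ terms are treated likewise using \eqref{X}.

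Finally, integrating in time, absorbing $\zeta\lVert\tilde{v}_x\rVert_{L^2}^2$ and the small-prefactor $\lVert\tilde{v}_x\rVert_{L^2}^2$ into the dissipation, and collecting the uncontrolled contributions — which are precisely of the form $C(\delta_1+\varepsilon_1)\int_0^t\big(\delta_S|\dot{X}|^2+G_1+G^S+\lVert\tilde{v}_{xx}\rVert_{L^2}^2+\lVert\tilde{u}_{xx}\rVert_{L^2}^2\big)\,d\tau$ — yields \eqref{derivv1}. I expect the principal obstacle to be exactly this derivative-loss phenomenon: the test quantity $h$ is only first order, yet controlling it forces one to encounter $\tilde{v}_{xx}$ and $\tilde{u}_{xx}$, both in the nonlinear parts of the electric-force coupling and, through the Poisson equation, in the higher-order elliptic estimate for $\tilde{\phi}_{xxx}$. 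The scheme is consistent only because every such second-order term can be shown to carry a small coefficient, so that it is later reabsorbed by the genuine $H^2$-dissipation obtained in the following subsection; verifying the smallness of each of these coefficients, and secondarily fixing the correct sign of the pressure dissipation from the explicit form of $\tilde{p}$ and the uniform bounds on $v$, is the part that requires care.
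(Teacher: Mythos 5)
Your overall architecture matches the paper's: a first-order energy estimate built from the divergence form \eqref{1b} with a multiplier proportional to $\tilde{v}_x$ (your effective-velocity variable $h$ is equivalent, up to shock-weighted commutators, to the paper's choice of testing \eqref{1b} against $-v\tilde{v}_x$ and adding $\partial_x\eqref{1a'}\cdot\tilde{v}_x$), the cross term $\int \tilde{u}h$ handled by Young plus the zeroth-order estimate, and the pressure term supplying the $\tilde{v}_x^2$ dissipation. However, there is a genuine gap in your treatment of the principal electrostatic coupling, and it sits exactly at the point you yourself flag as delicate. After the integrations by parts, the leading linear coupling is $-\int_{\mathbb{R}}\frac{2\tilde{v}_x\tilde{\phi}_{xxx}}{\bar{v}^X}\,dx$ against an available pressure dissipation $\int_{\mathbb{R}}\frac{2\tilde{v}_x^2}{\bar{v}^X}\,dx$ (the factor $2$ coming from $\tilde{p}(v)=2/v$). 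Your plan is to bound $\lVert\tilde{\phi}_{xxx}\rVert_{L^2}^2$ by a \emph{separate} elliptic estimate and then absorb the cross term. But the once-differentiated Poisson equation gives, at leading order, the pointwise relation $\tilde{\phi}_{xxx}\approx \bar v^X e^{\bar\phi^X}\tilde{v}_x+(\bar v^X)^2e^{\bar\phi^X}\tilde{\phi}_x \approx \tilde{v}_x+\bar{v}^X\tilde{\phi}_x$, so the best available bound is $\int\frac{\tilde{\phi}_{xxx}^2}{\bar v^X}\le (1+\mu)\int\frac{\tilde{v}_x^2}{\bar v^X}+C_\mu\int\bar v^X\tilde\phi_x^2+\cdots$ for any $\mu>0$. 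Splitting the cross term as $\bigl|\frac{2\tilde v_x\tilde\phi_{xxx}}{\bar v^X}\bigr|\le\lambda\frac{\tilde v_x^2}{\bar v^X}+\frac{1}{\lambda}\frac{\tilde\phi_{xxx}^2}{\bar v^X}$ then consumes at least $\lambda+\frac{1+\mu}{\lambda}\ge 2\sqrt{1+\mu}>2$ units of $\int\frac{\tilde v_x^2}{\bar v^X}$ — strictly more than the $2$ units of dissipation available. The coupling is exactly borderline, and any strategy that discards the sign of the cross terms via Cauchy--Schwarz loses; it also cannot produce $\lVert\tilde\phi_{xx}\rVert_{H^1}^2$ on the left-hand side, since nothing coercive in $\tilde\phi_{xxx}$ survives.

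What the paper does instead is to \emph{add} the differentiated Poisson identity, multiplied by $-\tilde{\phi}_{xxx}/\bar{v}^X$, directly into the energy identity (equation \eqref{'7}), so that the left-hand side contains the full quadratic form
\begin{equation*}
\frac{2\tilde{v}_x^2}{\bar{v}^X}-\frac{2\tilde{v}_x\tilde{\phi}_{xxx}}{\bar{v}^X}+\frac{e^{-\bar{\phi}^X}\tilde{\phi}_{xxx}^2}{(\bar{v}^X)^2}+\tilde{\phi}_{xx}^2\;\ge\;c\left(\tilde{v}_x^2+\tilde{\phi}_{xxx}^2\right),
\end{equation*}
whose positive definiteness follows from the discriminant condition $\bar v^X<2e^{-\bar\phi^X}$ (true by quasi-neutrality) — equivalently, substituting $\tilde\phi_{xxx}\approx\tilde v_x+\bar v^X\tilde\phi_x$ shows the sign of the $\tilde v_x\tilde\phi_x$ cross terms cancels exactly and half the dissipation, $\frac{\tilde v_x^2}{\bar v^X}$, survives. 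This algebraic cancellation is the crux of the lemma and is what makes the residual $\lVert\tilde v_{xx}\rVert_{L^2}^2$ and $\lVert\tilde u_{xx}\rVert_{L^2}^2$ contributions carry only the small prefactors $\mathcal{O}(\delta_1+\varepsilon_1)$ (they arise solely from cubic terms and shock-weighted commutators); your assertion that every second-order residual is small is exactly what fails under your splitting. A secondary omission: the identity also produces $\int\frac{\tilde u_x^2}{v}$ with an $\mathcal{O}(1)$ constant, so besides $\lVert\tilde u(t)\rVert_{L^2}^2$ you must also invoke Lemma~\ref{RE} to control $\int_0^t D\,d\tau$ by the initial data, which is how the term $C\lVert(\tilde v_0,\tilde u_0)\rVert_{L^2}^2$ enters \eqref{derivv1}.
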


\begin{proof}
Differentiating \eqref{1a'} with respect to $x$ and multiplying the resultant equation by $\tilde{v}_x$, and multiplying \eqref{1b} by $- v \tilde{v}_x$, we have
\begin{equation} \label{'1}
\left( \frac{\tilde{v}_x^2}{2}  \right)_t - \tilde{v}_x \tilde{u}_{xx} = \dot{X}(t) \bar{v}^X_{xx} \tilde{v}_x, 
\end{equation}
and
\begin{equation} \label{'2}
\begin{split}
& - v\tilde{v}_x \tilde{u}_t - v \tilde{v}_x \left( \tilde{p}(v) - \tilde{p}(\bar{v}^X) \right)_x + v \tilde{v}_x \left(\frac{u_x}{v}-\frac{\bar{u}^X_x}{\bar{v}^X} \right)_x \\
& \quad = - \frac{1}{2} v \tilde{v}_x \left[ \left(\frac{\phi_x}{v} \right)^2 - \left( \frac{\bar{\phi}^X_x}{\bar{v}^X} \right)^2 \right]_x + v \tilde{v}_x \left[\frac{1}{v} \left( \frac{\phi_x}{v} \right)_x - \frac{1}{\bar{v}^X} \left( \frac{\bar{\phi}^X_x}{\bar{v}^X} \right)_x \right]_x - \dot{X}(t) v \tilde{v}_x \bar{u}^X_x,
\end{split}
\end{equation}
respectively. For the first term on the left-hand side of \eqref{'2}, we use \eqref{NSP11} and \eqref{1a'} to obtain
\[
\begin{split}
- v\tilde{v}_x \tilde{u}_t & = \left( - v \tilde{v}_x \tilde{u} \right)_t + v_t \tilde{v}_x \tilde{u} + v \tilde{v}_{xt} \tilde{u} \\
& = \left( - v \tilde{v}_x \tilde{u} \right)_t + \left( \tilde{u}_x + \bar{u}^X_x \right) \tilde{v}_x \tilde{u}+ v \left( \tilde{u}_{xx} + \dot{X}(t) \bar{v}^X_{xx} \right) \tilde{u} \\
& = \left( - v \tilde{v}_x \tilde{u} \right)_t + \bar{u}^X_x \tilde{v}_x \tilde{u} + (\cdots)_x - \bar{v}^X_x \tilde{u}_x \tilde{u} - v \tilde{u}_x^2 + \dot{X}(t) v \bar{v}^X_{xx} \tilde{u}.
\end{split}
\]
Then, by summing \eqref{'1} and \eqref{'2}, we have 
\begin{equation} \label{'3}
\begin{split}
& \left( \frac{\tilde{v}_x^2}{2} - v \tilde{v}_x \tilde{u} \right)_t - v \tilde{v}_x \left( \tilde{p}(v) - \tilde{p}(\bar{v}^X) \right)_x - v \tilde{v}_x \left[\frac{1}{v} \left( \frac{\phi_x}{v} \right)_x - \frac{1}{\bar{v}^X} \left( \frac{\bar{\phi}^X_x}{\bar{v}^X} \right)_x \right]_x \\
& \quad = (\cdots)_x - \bar{u}^X_x \tilde{v}_x \tilde{u} + \bar{v}^X_x \tilde{u}_x \tilde{u} + v \tilde{u}_x^2  + \frac{(\tilde{v}_x + \bar{v}^X_x)\tilde{v}_x \tilde{u}_x}{v} + v \tilde{v}_x \left( \frac{\bar{u}^X_x \tilde{v}}{v\bar{v}^X} \right)_x \\
& \qquad - \frac{1}{2} v \tilde{v}_x \left[ \left(\frac{\phi_x}{v} \right)^2 - \left( \frac{\bar{\phi}^X_x}{\bar{v}^X} \right)^2 \right]_x + \dot{X}(t) \left( - \bar{u}^X_x v \tilde{v}_x + \bar{v}^X_{xx}\tilde{v}_x - v \bar{v}^X_{xx}\tilde{u} \right).
\end{split} 
\end{equation}
Here, the second and third terms on the left-hand side are expanded as
\begin{equation} \label{'4}
\begin{split}
- v \tilde{v}_x \left( \tilde{p}(v) - \tilde{p}(\bar{v}^X) \right)_x = \frac{2\tilde{v}_x^2}{\bar{v}^X} - \frac{2\tilde{v}_x^2 \tilde{v}}{v\bar{v}^X} - \frac{2\bar{v}^X_x \tilde{v}_x \tilde{v}}{v\bar{v}^X} - \frac{2\bar{v}^X_x \tilde{v}_x \tilde{v}}{(\bar{v}^X)^2}
\end{split}
\end{equation}
and
\begin{equation} \label{'5}
\begin{split}
& - v \tilde{v}_x \left[\frac{1}{v} \left( \frac{\phi_x}{v} \right)_x - \frac{1}{\bar{v}^X} \left( \frac{\bar{\phi}^X_x}{\bar{v}^X} \right)_x \right]_x \\
& \quad = - \frac{\tilde{v}_x\tilde{\phi}_{xxx}}{\bar{v}^X} + \frac{\tilde{v}_x^2 \tilde{\phi}_{xx}}{v\bar{v}^X} + \frac{\bar{v}^X_x \tilde{v}_x \tilde{\phi}_{xx}}{v\bar{v}^X} + \frac{\bar{v}^X_x \tilde{v}_x \tilde{\phi}_{xx}}{(\bar{v}^X)^2}  + v\tilde{v}_x \left( \frac{\tilde{\phi}_{xx}\tilde{v}}{v^2\bar{v}^X} + \frac{2\bar{\phi}^X_{xx}\tilde{v}}{v^2\bar{v}^X} + \frac{\bar{\phi}^X_{xx}\tilde{v}^2}{v^2(\bar{v}^X)^2} \right)_x \\
& \qquad - \left( v \tilde{v}_x \right)_x \left( \frac{\tilde{v}_x\tilde{\phi}_x}{v^3} + \frac{\bar{v}^X_x \tilde{\phi}_x}{v^3} + \frac{\bar{\phi}^X_x \tilde{v}_x}{v^3} + \left( \frac{1}{v^3} - \frac{1}{(\bar{v}^X)^3} \right) \bar{\phi}^X_{xx} \right) + (\cdots)_x,
\end{split}
\end{equation}
respectively. To handle the first term on the right-hand side of \eqref{'5}, we use the Poisson equation for the perturbation to obtain a quadratic form consisting of $\tilde{v}_x$ and $\tilde{\phi}_{xxx}$. We rewrite \eqref{1c'} as
\[
\begin{split}
\tilde{v}_x & = \left[ e^{-\bar{\phi}^X} \left( \frac{\phi_x}{v} - \frac{\bar{\phi}^X_x}{\bar{v}^X} \right)_x \right]_x + \tilde{v}_x (1-e^{\tilde{\phi}}) + \tilde{v} (1-e^{\tilde{\phi}})_x \\
& \quad - \bar{v}^X \tilde{\phi}_x + \bar{v}^X_x (1-e^{\tilde{\phi}}) + \bar{v}^X (1+\tilde{\phi}-e^{\tilde{\phi}})_x.
\end{split}
\]
Multiplying it by $-\tilde{\phi}_{xxx}/\bar{v}^X$, we have after rearrangement
\begin{equation} \label{'7}
\begin{split}
& -\frac{ \tilde{v}_x \tilde{\phi}_{xxx}}{\bar{v}^X} + \frac{e^{-\bar{\phi}^X}\tilde{\phi}_{xxx}^2}{(\bar{v}^X)^2} + \tilde{\phi}_{xx}^2 \\
& \quad = \frac{e^{-\bar{\phi}^X}\tilde{\phi}_{xxx}}{\bar{v}^X} \left( \frac{2 \bar{v}^X_x \tilde{\phi}_{xx}}{(\bar{v}^X)^2} + \frac{\bar{v}^X_{xx}\tilde{\phi}_x}{(\bar{v}^X)^2} - \frac{2 (\bar{v}^X_x)^2 \tilde{\phi}_x}{(\bar{v}^X)^3} \right) + \frac{e^{\bar{\phi}^X}\tilde{\phi}_{xxx}}{\bar{v}^X} \left( \frac{\tilde{\phi}_x \tilde{v}}{v\bar{v}^X} + \frac{\bar{\phi}^X_x\tilde{v}}{v\bar{v}^X} \right)_{xx} \\
& \qquad + \frac{e^{-\bar{\phi}^X }\bar{\phi}^X_x \tilde{\phi}_{xxx}}{\bar{v}^X} \left( \frac{\tilde{\phi}_x}{v} - \frac{\bar{\phi}^X_x \tilde{v}}{v\bar{v}^X} \right)_x  - \frac{\tilde{v}_x \tilde{\phi}_{xxx}}{\bar{v}^X} (1-e^{\tilde{\phi}})  \\
& \qquad - \frac{\tilde{v} \tilde{\phi}_{xxx}}{\bar{v}^X} (1-e^{\tilde{\phi}})_x + (\cdots)_x - \frac{\bar{v}^X_x \tilde{\phi}_{xxx}}{\bar{v}^X} (1-e^{\tilde{\phi}}) - \tilde{\phi}_{xxx} (1+\tilde{\phi}-e^{\tilde{\phi}})_x.
\end{split}
\end{equation}
Collecting \eqref{'3}-\eqref{'7}, and using the expansion
\[
\left[ \left( \frac{\phi_x}{v} \right)^2 - \left( \frac{\bar{\phi}^X_x}{\bar{v}^X} \right)^2 \right] = \frac{\tilde{\phi}_x^2}{v^2} + \frac{2 \bar{\phi}^X_x \tilde{\phi}_x}{v^2} - \frac{(\bar{\phi}^X_x)^2 \tilde{v}^2}{v^2 (\bar{v}^X)^2} - \frac{2(\bar{\phi}^X_x)^2 \tilde{v}}{v^2 \bar{v}^X},
\] 
we obtain, after integration by parts,
\[
\begin{split}
& \left( \frac{\tilde{v}_x^2}{2} - v\tilde{v}_x\tilde{u} \right)_t + \frac{2\tilde{v}_x^2}{\bar{v}^X} - \frac{2 \tilde{v}_x \tilde{\phi}_{xxx}}{\bar{v}^X} + \frac{e^{-\bar{\phi}^X}\tilde{\phi}_{xxx}^2}{(\bar{v}^X)^2} + \tilde{\phi}_{xx}^2  \\
& \quad = \frac{2\tilde{v}_x^2 \tilde{v}}{v\bar{v}^X} + \frac{2\bar{v}^X_x \tilde{v}_x \tilde{v}}{v\bar{v}^X} + \frac{2\bar{v}^X_x \tilde{v}_x \tilde{v}}{(\bar{v}^X)^2} - \frac{\tilde{v}_x^2 \tilde{\phi}_{xx}}{v\bar{v}^X} - \frac{\bar{v}^X_x \tilde{v}_x \tilde{\phi}_{xx}}{v\bar{v}^X} - \frac{\bar{v}^X_x \tilde{v}_x \tilde{\phi}_{xx}}{(\bar{v}^X)^2} \\
& \qquad + \left( v\tilde{v}_x \right)_x \left( \frac{\tilde{\phi}_{xx}\tilde{v}}{v^2\bar{v}^X} + \frac{2\bar{\phi}^X_{xx}\tilde{v}}{v^2\bar{v}^X} + \frac{\bar{\phi}^X_{xx}\tilde{v}^2}{v^2(\bar{v}^X)^2} \right) \\
& \qquad + \left( v \tilde{v}_x \right)_x \left( \frac{\tilde{v}_x\tilde{\phi}_x}{v^3} + \frac{\bar{v}^X_x \tilde{\phi}_x}{v^3} + \frac{\bar{\phi}^X_x \tilde{v}_x}{v^3} + \left( \frac{1}{v^3} - \frac{1}{(\bar{v}^X)^3} \right) \bar{\phi}^X_{xx} \right) \\
& \qquad - \bar{u}^X_x \tilde{v}_x \tilde{u} + \bar{v}^X_x \tilde{u}_x \tilde{u} + v \tilde{u}_x^2 + \frac{(\tilde{v}_x + \bar{v}^X_x)\tilde{v}_x \tilde{u}_x}{v} - \left( v \tilde{v}_x \right)_x \left( \frac{\bar{u}^X_x \tilde{v}}{v\bar{v}^X} \right) \\
& \qquad + \frac{1}{2} \left( v \tilde{v}_x \right)_x \left( \frac{\tilde{\phi}_x^2}{v^2} + \frac{2\bar{\phi}^X_x \tilde{\phi}_x}{v^2} - \frac{(\bar{\phi}^X_x)^2 \tilde{v}^2}{v^2 (\bar{v}^X)^2} - \frac{2(\bar{\phi}^X_x)^2 \tilde{v}}{v^2 \bar{v}^X} \right) + \dot{X}(t) \left( - \bar{u}^X_x v \tilde{v}_x + \bar{v}^X_{xx}\tilde{v}_x - v \bar{v}^X_{xx}\tilde{u} \right) \\
& \qquad + \frac{e^{-\bar{\phi}^X}\tilde{\phi}_{xxx}}{\bar{v}^X} \left( \frac{2 \bar{v}^X_x \tilde{\phi}_{xx}}{(\bar{v}^X)^2} + \frac{\bar{v}^X_{xx}\tilde{\phi}_x}{(\bar{v}^X)^2} - \frac{2 (\bar{v}^X_x)^2 \tilde{\phi}_x}{(\bar{v}^X)^3} \right) + \frac{e^{-\bar{\phi}^X}\tilde{\phi}_{xxx}}{\bar{v}^X} \left( \frac{\tilde{\phi}_x \tilde{v}}{v\bar{v}^X} + \frac{\bar{\phi}^X_x\tilde{v}}{v\bar{v}^X} \right)_{xx} \\
& \qquad + \frac{e^{-\bar{\phi}^X }\bar{\phi}^X_x \tilde{\phi}_{xxx}}{\bar{v}^X} \left( \frac{\tilde{\phi}_x}{v} - \frac{\bar{\phi}^X_x \tilde{v}}{v\bar{v}^X} \right)_x  - \frac{\tilde{v}_x \tilde{\phi}_{xxx}}{\bar{v}^X} (1-e^{\tilde{\phi}}) - \frac{\tilde{v} \tilde{\phi}_{xxx}}{\bar{v}^X} (1-e^{\tilde{\phi}})_x  \\
& \qquad + (\cdots)_x - \frac{\bar{v}^X_x \tilde{\phi}_{xxx}}{\bar{v}^X} (1-e^{\tilde{\phi}}) - \tilde{\phi}_{xxx} (1+\tilde{\phi}-e^{\tilde{\phi}})_x.
\end{split}
\]
By integrating this equation with respect to $x$, we have
\begin{equation} \label{'8}
\begin{split}
\frac{d}{dt} \int_\mathbb{R} \bigg( \frac{\tilde{v}_x^2}{2} - v\tilde{v}_x\tilde{u} \bigg) \, dx + \int_\mathbb{R} \bigg( \frac{2\tilde{v}_x^2}{\bar{v}^X} - \frac{2 \tilde{v}_x \tilde{\phi}_{xxx}}{\bar{v}^X} + \frac{e^{-\bar{\phi}^X}\tilde{\phi}_{xxx}^2}{(\bar{v}^X)^2} + \tilde{\phi}_{xx}^2 \bigg) \, dx = \sum_{j=1}^8 \mathcal{V}^{(1)}_j,
\end{split}
\end{equation}
where
\[
\begin{split}
\mathcal{V}^{(1)}_1 & := \int_\mathbb{R} v \tilde{u}_x^2 \, dx, \quad \mathcal{V}^{(1)}_2 := - \dot{X}(t) \int_\mathbb{R} \left( \bar{u}^X_x v \tilde{v}_x - \bar{v}^X_{xx}\tilde{v}_x + v \bar{v}^X_{xx} \tilde{u} \right) \, dx,
\end{split}
\]

\[
\begin{split}
\mathcal{V}^{(1)}_3 & := \int_\mathbb{R} \bigg( \frac{2 \tilde{v}_x^2 \tilde{v}}{v\bar{v}^X} - \frac{\tilde{v}_x^2 \tilde{\phi}_{xx}}{v\bar{v}^X} \bigg) \, dx + \int_\mathbb{R} \left( \tilde{v}_x^2 + \bar{v}^X_x \tilde{v}_x + v \tilde{v}_{xx} \right) \left( \frac{\tilde{\phi}_{xx} \tilde{v}}{v^2 \bar{v}^X} + \frac{\bar{\phi}^X_x \tilde{v}^2}{v^2 (\bar{v}^X)^2} \right) \, dx \\
& \quad + \int_\mathbb{R} \frac{2 \bar{\phi}^X_{xx} \tilde{v}_x^2 \tilde{v}}{v^2 \bar{v}^X} \, dx + \int_\mathbb{R} \left( \tilde{v}_x^2 + \bar{v}^X_x \tilde{v}_x + v \tilde{v}_{xx} \right) \left( \frac{\tilde{v}_x \tilde{\phi}_x}{v^3} - \frac{\bar{v}^X_x \bar{\phi}^X_x \tilde{v}^3}{v^3(\bar{v}^X)^3} - \frac{3 \bar{v}^X_x \bar{\phi}^X_x\tilde{v}^2}{v^3(\bar{v}^X)^3} \right) \, dx \\
& \quad + \int_\mathbb{R} \tilde{v}_x^2 \left( \frac{\bar{v}^X_x \tilde{\phi}_x}{v^3} + \frac{\bar{\phi}^X_x \tilde{v}_x}{v^3} - \frac{3\bar{v}^X_x\bar{\phi}^X_x \tilde{v}}{v^3\bar{v}^X} \right) \, dx + \int_\mathbb{R} \frac{\tilde{v}_x^2 \tilde{u}_x}{v} \, dx - \int_\mathbb{R} \frac{\bar{u}^X_x \tilde{v}_x^2 \tilde{v}}{v\bar{v}^X} \, dx \\
& \quad + \frac{1}{2} \int_\mathbb{R} \left( \tilde{v}_x^2 + \bar{v}^X_x \tilde{v}_x + v\tilde{v}_{xx} \right) \left( \frac{\tilde{\phi}_x^2}{v^2} - \frac{(\bar{\phi}^X_x)^2 \tilde{v}^2}{v^2(\bar{v}^X)^2} \right) \, dx + \int_\mathbb{R} \tilde{v}_x^2 \left( \frac{\bar{\phi}^X_x \tilde{\phi}_x}{v^2} - \frac{(\bar{\phi}^X_x)^2 \tilde{v}}{v^2 \bar{v}^X} \right) \, dx,
\end{split}
\]

\[
\begin{split}
\mathcal{V}^{(1)}_4 & := \int_\mathbb{R} \frac{e^{-\bar{\phi}^X}\tilde{\phi}_{xxx}}{\bar{v}^X} \left( \frac{\tilde{\phi}_{xxx}\tilde{v}}{v\bar{v}^X} + \frac{2\tilde{\phi}_{xx}\tilde{v}_x}{v\bar{v}^X} - \frac{2\tilde{v}_x \tilde{\phi}_{xx} \tilde{v}}{v^2\bar{v}^X} - \frac{2\bar{v}^X_x \tilde{\phi}_{xx} \tilde{v}}{v^2\bar{v}^X} - \frac{2\bar{v}^X_x \tilde{\phi}_{xx}\tilde{v}}{v(\bar{v}^X)^2} \right) \, dx \\
& \quad + \int_\mathbb{R} \frac{e^{-\bar{\phi}^X}\tilde{\phi}_{xxx}}{\bar{v}^X} \left( \frac{\tilde{\phi}_x \tilde{v}_{xx}}{v\bar{v}^X} - \frac{2\tilde{\phi}_x \tilde{v}_x^2}{v^2\bar{v}^X} - \frac{2\bar{v}^X_x \tilde{\phi}_x \tilde{v}_x}{v^2 \bar{v}^X} - \frac{2\bar{v}^X_x \tilde{\phi}_x \tilde{v}_x}{v(\bar{v}^X)^2} - \frac{\tilde{v}_{xx}\tilde{\phi}_x \tilde{v}}{v^2\bar{v}^X} \right) \, dx \\
& \quad + \int_\mathbb{R} \frac{e^{-\bar{\phi}^X}\tilde{\phi}_{xxx}}{\bar{v}^X} \left( \frac{2\tilde{v}_x^2 \tilde{\phi}_x \tilde{v}}{v^3 \bar{v}^X} + \frac{4\bar{v}^X_x \tilde{v}_x \tilde{\phi}_x \tilde{v}}{v^3\bar{v}^X} + \frac{2\bar{v}^X_x \tilde{v}_x \tilde{\phi}_x \tilde{v}}{v^2(\bar{v}^X)^2} + \frac{2(\bar{v}^X_x)^2 \tilde{\phi}_x \tilde{v}}{v^3\bar{v}^X} \right) \, dx \\
& \quad - \int_\mathbb{R} \frac{e^{-\bar{\phi}^X}\tilde{\phi}_{xxx}}{\bar{v}^X} \left( \left( \frac{\bar{v}^X_x}{\bar{v}^X} \right)_x \frac{\tilde{\phi}_x \tilde{v}}{v^2} - \frac{(\bar{v}^X_x)^2 \tilde{\phi}_x \tilde{v}}{v^2 (\bar{v}^X)^2} + \left( \frac{\bar{v}^X_x}{(\bar{v}^X)^2} \right)_x \frac{\tilde{\phi}_x \tilde{v}}{v} + \frac{2 \bar{\phi}^X_x \tilde{v}_x^2}{v^2 \bar{v}^X} \right) \, dx \\
& \quad - \int_\mathbb{R} \frac{e^{-\bar{\phi}^X}\tilde{\phi}_{xxx}}{\bar{v}^X} \left( \frac{\bar{\phi}^X_x \tilde{v}_{xx}\tilde{v}}{v^2 \bar{v}^X} + \frac{2\bar{\phi}^X_x \tilde{v}_x^2 \tilde{v}}{v^3\bar{v}^X} - \frac{4\bar{\phi}^X_x \bar{v}^X_x \tilde{v}_x \tilde{v}}{v^3\bar{v}^X} + \left( \frac{\bar{\phi}^X_x}{\bar{v}^X} \right)_x \frac{2\tilde{v}_x \tilde{v}}{v^2}  \right) \, dx \\
& \quad - \int_\mathbb{R} \frac{e^{-\bar{\phi}^X} \bar{\phi}^X_x \tilde{\phi}_{xxx}}{\bar{v}^X} \left( \frac{\tilde{v}_x \tilde{\phi}_x}{v^2} + \frac{\bar{\phi}^X_x \tilde{v}_x \tilde{v}}{v^2 \bar{v}^X} \right) \, dx,
\end{split}
\]

\[
\begin{split}
\mathcal{V}^{(1)}_5 & := \int_\mathbb{R} \bigg( \frac{2 \bar{v}^X_x \tilde{v}_x \tilde{v}}{v \bar{v}^X} + \frac{2 \bar{v}^X_x \tilde{v}_x \tilde{v}}{(\bar{v}^X)^2} \bigg) \, dx + \int_\mathbb{R} \frac{2\bar{\phi}^X_{xx} \tilde{v}_{xx} \tilde{v}}{v\bar{v}^X} \, dx - \int_\mathbb{R} \left( \bar{u}^X_x \tilde{v}_x \tilde{u} - \bar{v}^X_x \tilde{u}_x \tilde{u} \right) \, dx \\
& \quad - \int_\mathbb{R} \frac{\bar{u}^X_x \tilde{v}_{xx} \tilde{v}}{\bar{v}^X} \, dx + \int_\mathbb{R} \left( \frac{\bar{\phi}^X_x}{\bar{v}^X} \right)_x \frac{e^{-\bar{\phi}^X}\tilde{\phi}_{xxx}\tilde{v}}{v\bar{v}^X} \, dx,
\end{split}
\]

\[
\begin{split}
\mathcal{V}^{(1)}_6 & := - \int_\mathbb{R} \frac{\bar{v}^X_x \tilde{v}_x \tilde{\phi}_{xx}}{v\bar{v}^X} \, dx - \int_\mathbb{R} \frac{\bar{v}^X_x \tilde{v}_x \tilde{\phi}_{xx}}{(\bar{v}^X)^2} \, dx + \int_\mathbb{R} \frac{2 \bar{\phi}^X_{xx}\bar{v}^X_x \tilde{v}_x \tilde{v}}{v^2 \bar{v}^X} \, dx \\
& \quad + \int_\mathbb{R} \left( \bar{v}^X_x \tilde{v}_x + v\tilde{v}_{xx} \right) \left( \frac{\bar{v}^X_x \tilde{\phi}_x}{v^3} + \frac{\bar{\phi}^X_x \tilde{v}_x}{v^3} - \frac{3\bar{v}^X_x\bar{\phi}^X_x \tilde{v}}{v^3\bar{v}^X} \right) \, dx + \int_\mathbb{R} \frac{\bar{v}^X_x \tilde{v}_x \tilde{u}_x}{v} \, dx \\
& \quad - \int_\mathbb{R} \frac{\bar{v}^X_x \bar{u}^X_x \tilde{v}_x \tilde{v}}{v\bar{v}^X} \, dx + \int_\mathbb{R} \left( \bar{v}^X_x \tilde{v}_x + v\tilde{v}_{xx} \right) \left( \frac{\bar{\phi}^X_x \tilde{\phi}_x}{v^2} - \frac{(\bar{\phi}^X_x)^2 \tilde{v}}{v^2\bar{v}^X} \right) \, dx,
\end{split}
\]

\[
\begin{split}
\mathcal{V}^{(1)}_7 & := \int_\mathbb{R} \frac{e^{-\bar{\phi}^X}\tilde{\phi}_{xxx}}{\bar{v}^X} \left( \frac{2\bar{v}^X_x \tilde{\phi}_{xx}}{(\bar{v}^X)^2} + \frac{\bar{v}^X_{xx} \tilde{\phi}_x}{(\bar{v}^X)^2} - \frac{2(\bar{v}^X_x)^2 \tilde{\phi}_x}{(\bar{v}^X)^3} \right) \, dx \\
& \quad + \int_\mathbb{R} \frac{e^{-\bar{\phi}^X}\tilde{\phi}_{xxx}}{\bar{v}^X} \left( \frac{\bar{\phi}^X_x \tilde{v}_{xx}}{v\bar{v}^X} - \frac{2\bar{v}^X_x \bar{\phi}^X_x \tilde{v}_x}{v^2\bar{v}^X} + \left( \frac{\bar{\phi}^X_x}{\bar{v}^X} \right)_x \frac{2\tilde{v}_x}{v} \right) \, dx \\
& \quad + \int_\mathbb{R}  \frac{e^{-\bar{\phi}^X}\tilde{\phi}_{xxx}}{\bar{v}^X} \left( \frac{2\bar{\phi}^X_x (\bar{v}^X_x)^2 \tilde{v}}{v^3\bar{v}^X} - \left( \frac{\bar{\phi}^X_x \bar{v}^X_x}{\bar{v}^X} \right)_x \frac{\tilde{v}}{v^2} - \left( \frac{\bar{\phi}^X_x}{\bar{v}^X} \right)_x \frac{\bar{v}^X_x \tilde{v}}{v^2} \right) \, dx \\
& \quad + \int_\mathbb{R} \frac{e^{-\bar{\phi}^X}\bar{\phi}^X_x \tilde{\phi}_{xxx}}{\bar{v}^X} \left( \frac{\tilde{\phi}_{xx}}{v} - \frac{\bar{v}^X_x \tilde{\phi}_x}{v^2} - \frac{\bar{\phi}^X_x \tilde{v}_x}{v\bar{v}^X} + \frac{\bar{\phi}^X_x \bar{v}^X_x \tilde{v}}{v^2 \bar{v}^X} - \left( \frac{\bar{\phi}^X_x}{\bar{v}^X} \right)_x \frac{\tilde{v}}{v} \right) \, dx,
\end{split}
\]
and
\[
\begin{split}
\mathcal{V}^{(1)}_8 & := - \int_\mathbb{R} \bigg( \frac{\tilde{v}_x \tilde{\phi}_{xxx}}{\bar{v}^X} (1 - e^{\tilde{\phi}}) - \frac{\tilde{v}\tilde{\phi}_{xxx}}{\bar{v}^X} (1-e^{\tilde{\phi}})_x  - \frac{\bar{v}^X_x \tilde{\phi}_{xxx}}{\bar{v}^X} (1-e^{\tilde{\phi}}) - \tilde{\phi}_{xxx} (1+\tilde{\phi}-e^{\tilde{\phi}})_x \bigg) \, dx.
\end{split}
\]

We estimate the terms $\mathcal{V}^{(1)}_j$ for $j=1,\dots,8$. First, we directly have
\[
\begin{split}
\lvert \mathcal{V}^{(1)}_1 \rvert & \leq C \int_\mathbb{R} \tilde{u}_x^2 \, dx.
\end{split}
\]
For the term $\mathcal{V}^{(1)}_2$, we have by integration by parts
\[
\begin{split}
\mathcal{V}^{(1)}_2 & = - \dot{X}(t) \int_\mathbb{R} \left( \bar{u}^X_x v \tilde{v}_x - \bar{v}^X_{xx} \tilde{v}_x - (\bar{v}^X_x)^2 \tilde{u} - \bar{v}^X_x v \tilde{u}_x \right) \, dx + \dot{X}(t) \int_\mathbb{R} \bar{v}^X_x \tilde{v}_x \tilde{u} \, dx.
\end{split}
\]
The first term on the right-hand side is bounded by using $ \lvert \bar{u}^X_x \rvert, \lvert \bar{v}^X_{xx} \rvert \leq C \lvert \bar{v}^X_x \rvert \leq C \delta_S^2 $ as follows:
\[
\begin{split}
& \bigg\lvert - \dot{X}(t) \int_\mathbb{R} \bar{u}^X_x v \tilde{v}_x - \bar{v}^X_{xx} \tilde{v}_x - (\bar{v}^X_x)^2 \tilde{u} - \bar{v}^X_x v \tilde{u}_x \, dx \bigg\rvert \\
& \quad \leq C \lvert \dot{X} \rvert \int_\mathbb{R} \bar{v}^X_x \left( \lvert \tilde{v}_x \rvert + \lvert \tilde{u}_x \rvert + \lvert \bar{v}^X_x \rvert \lvert \tilde{u} \rvert \right) \, dx \\
& \quad \leq C \lvert \dot{X} \rvert \sqrt{ \int_\mathbb{R} \bar{v}^X_x \, dx }  \left( \sqrt{ \int_\mathbb{R} \bar{v}^X_x \tilde{v}_x^2 } + \sqrt{ \int_\mathbb{R} \bar{v}^X_x \tilde{u}_x^2 \, dx} + \sqrt{\int_\mathbb{R} \lvert \bar{v}^X_x \rvert^3 \tilde{u}^2 \, dx}  \right) \\
& \quad \leq C \delta_S^2 \lvert \dot{X} \rvert^2 + C \delta_S \int_\mathbb{R} \left( \tilde{v}_x^2 + \tilde{u}_x^2 + \bar{v}^X_x \tilde{u}^2 \right) \, dx, 
\end{split}
\]
where the H\"older inequality and Young's inequality are applied in the second and third inequalities, respectively. The second term is estimated by using $\lvert \dot{X} \rvert \leq C \lVert \tilde{u} \rVert_{L^\infty}$ as
\[
\begin{split}
\bigg \lvert \dot{X}(t) \int_\mathbb{R} \bar{v}^X_x \tilde{v}_x \tilde{u} \, dx \bigg\rvert  \leq C \lvert \dot{X} \rvert \int_\mathbb{R} \bar{v}^X_x \lvert \tilde{v}_x \rvert \lvert \tilde{u} \rvert \, dx  \leq C \lVert \tilde{u} \rVert_{L^\infty} \int_\mathbb{R} \left( \bar{v}^X_x \tilde{u}^2 +\tilde{v}_x^2 \right) \, dx.
\end{split}
\]
Thus, we obtain
\[
\lvert \mathcal{V}^{(1)}_2 \rvert \leq C \left( \delta_S + \lVert \tilde{u} \rVert_{L^\infty} \right) \left( \delta_S \lvert \dot{X} \rvert^2 + \int_\mathbb{R} \left( \bar{v}^X_x \tilde{u}^2  + \tilde{v}_x^2 + \tilde{u}_x^2 \right) \, dx \right).
\]

Note that $\mathcal{V}^{(1)}_3$ and $\mathcal{V}^{(1)}_4$ are nonlinear terms, as is $\mathcal{P}_6$ described in Remark \ref{RemP}, involving cubic and higher-order products of perturbations. First, the term $\mathcal{V}^{(1)}_3$ is estimated by using the bound $\lvert \bar{\phi}^X_{x} \rvert \leq C \lvert \bar{v}^X_x \rvert$ and Young's inequality as
\[
\begin{split}
\lvert \mathcal{V}^{(1)}_3 \rvert & \leq C \int_\mathbb{R} \lvert \tilde{v} \rvert \left( \lvert \tilde{v}_x \rvert^2 + \lvert \tilde{v}_x \rvert \lvert \tilde{\phi}_{xx} \rvert + \lvert \tilde{v}_x \rvert^2 \lvert \tilde{\phi}_{xx} \rvert + \lvert \tilde{v}_{xx} \rvert \lvert \tilde{\phi}_{xx} \rvert + \lvert \bar{v}^X_x \rvert \lvert \tilde{v} \rvert \lvert \tilde{v}_x \rvert + \lvert \bar{v}^X_x \rvert \lvert \tilde{v} \rvert^2 \lvert \tilde{v}_x \rvert \right) \, dx \\
& \quad + C \int_\mathbb{R} \lvert \tilde{v} \rvert \left( \lvert \bar{v}^X_x \rvert \lvert \tilde{v} \rvert \lvert \tilde{v}_{xx} \rvert + \lvert \bar{v}^X_x \rvert \lvert \tilde{v} \rvert^2 \lvert \tilde{v}_{xx} \rvert + \lvert \tilde{v} \rvert \lvert \tilde{v}_x \rvert^2 + \lvert \tilde{v} \rvert^2 \lvert \tilde{v}_x \rvert^2 \right) \, dx \\
& \quad + C \int_\mathbb{R} \lvert \tilde{v}_x \rvert \left( \lvert \tilde{v}_x \rvert \lvert \tilde{\phi}_{xx} \rvert + \lvert \tilde{v}_x \rvert \lvert \tilde{\phi}_x \rvert + \lvert \tilde{v}_{xx} \rvert \lvert \tilde{\phi}_x \rvert + \lvert \tilde{v}_x \rvert^2 + \lvert \tilde{v}_x \rvert \lvert \tilde{u}_x \rvert \right) \, dx \\
& \quad + C \int_\mathbb{R} \lvert \tilde{\phi}_x \rvert \left( \lvert \tilde{v}_x \rvert \lvert \tilde{\phi}_x \rvert + \lvert \tilde{v}_{xx} \rvert \lvert \tilde{\phi}_x \rvert + \lvert \tilde{\phi}_x \rvert^2 \lvert \tilde{v}_x \rvert + \lvert \tilde{v}_x \rvert^3 \right) \, dx \\
& \leq C \left( \lVert \tilde{v} \rVert_{W^{1,\infty}} + \lVert \tilde{\phi}_x \rVert_{L^\infty} \right) \int_\mathbb{R} \left( \bar{v}^X_x \tilde{v}^2 + \tilde{v}_x^2 + \tilde{v}_{xx}^2 + \tilde{u}_x^2 + \tilde{\phi}_x^2 + \tilde{\phi}_{xx}^2 \right) \, dx. 
\end{split}
\]
Similarly, we obtain the estimates of $\mathcal{V}^{(1)}_4$ by applying Young's inequality:
\[
\begin{split}
\lvert \mathcal{V}^{(1)}_4 \rvert & \leq C \int_\mathbb{R} \lvert \tilde{v} \rvert \lvert \tilde{\phi}_{xxx} \rvert \left( \lvert \tilde{\phi}_{xxx} \rvert + \lvert \tilde{\phi}_{xx} \rvert + \lvert \tilde{\phi}_x \rvert + \lvert \tilde{v}_{xx} \rvert + \lvert \tilde{v}_x \rvert \right) \, dx \\
& \quad + C \int_\mathbb{R} \left( \lvert \tilde{v} \rvert \lvert \tilde{v}_x \rvert + \lvert \tilde{v}_x \rvert \right) \lvert \tilde{\phi}_{xxx} \rvert \left( \lvert \tilde{\phi}_{xx} \rvert + \lvert \tilde{\phi}_x \rvert + \lvert \tilde{v}_x \rvert \right) \, dx \\
& \quad + C \int_\mathbb{R} \lvert \tilde{\phi}_x \rvert \lvert \tilde{\phi}_{xxx} \rvert \left( \lvert \tilde{v}_{xx} \rvert + \lvert \tilde{v}_x \rvert^2 + \lvert \tilde{v} \rvert \lvert \tilde{v}_{xx} \rvert + \lvert \tilde{v} \rvert \lvert \tilde{\phi}_x \rvert^2 \right) \, dx \\
& \leq C \left( \lVert \tilde{v} \rVert_{W^{1,\infty}} + \lVert \tilde{\phi}_x \rVert_{L^\infty} \right) \int_\mathbb{R} \left( \tilde{\phi}_{xxx}^2 + \tilde{\phi}_{xx}^2 + \tilde{\phi}_x^2 + \tilde{v}_{xx}^2 + \tilde{v}_x^2 \right) \, dx.
\end{split}
\]
For the terms $\mathcal{V}^{(1)}_5$, $\mathcal{V}^{(1)}_6$ and $\mathcal{V}^{(1)}_7$, we use the bounds \eqref{shbounds} with $ \lvert \bar{v}^X_x \rvert \leq C \delta_S^2$ to obtain
\[
\begin{split}
\lvert \mathcal{V}^{(1)}_5 \rvert & \leq C \int_\mathbb{R} \lvert \bar{v}^X_x \rvert \lvert \tilde{v} \rvert \left( \lvert \tilde{v}_x \lvert + \lvert \tilde{v}_{xx} \rvert + \lvert \tilde{\phi}_{xxx} \rvert \right) \, dx + C \int_\mathbb{R} \lvert \bar{v}^X_x \rvert \lvert \tilde{u} \rvert \left( \lvert \tilde{v}_x \rvert + \lvert \tilde{u}_x \rvert \right) \, dx \\
& \leq C \left( \int_\mathbb{R} \lvert \bar{v}^X_x \rvert^{1/2} \left( \lvert \tilde{v}_x \rvert^2 + \lvert \tilde{v}_{xx} \rvert^2 + \lvert \tilde{\phi}_{xxx} \rvert^2 + \lvert \tilde{u}_x \rvert^2 \right) \, dx + \int_\mathbb{R} \lvert \bar{v}^X_x \rvert^{3/2} \left( \lvert \tilde{v} \rvert^2 + \lvert \tilde{u} \rvert^2 \right) \, dx \right) \\
& \leq C \delta_S \int_\mathbb{R} \left( \bar{v}^X_x \tilde{v}^2 + \bar{v}^X_x \tilde{u}^2 +\tilde{v}_x^2 + \tilde{v}_{xx}^2 + \tilde{u}_x^2 + \tilde{\phi}_{xxx}^2 \right) \, dx,
\end{split}
\]

\[
\begin{split}
\lvert \mathcal{V}^{(1)}_6 \rvert & \leq C \int_\mathbb{R} \lvert \bar{v}^X_x \rvert \lvert \tilde{v}_x \rvert \left( \lvert \tilde{\phi}_{xx} \rvert + \lvert \tilde{\phi}_x \rvert + \lvert \bar{v}^X_x \rvert \lvert \tilde{v} \rvert + \lvert \tilde{v}_x \rvert + \lvert \tilde{u}_x \rvert \right) \, dx \\
& \quad + C \int_\mathbb{R} \lvert \bar{v}^X_x \rvert \lvert \tilde{v}_{xx} \rvert \left( \lvert \tilde{\phi}_x \rvert + \lvert \tilde{v}_x \rvert + \lvert \bar{v}^X_x \rvert \lvert \tilde{v} \rvert \right) \, dx \\
& \leq C \delta_S^2 \int_\mathbb{R} \left( \bar{v}^X_x \tilde{v}^2 + \tilde{\phi}_{xx}^2 + \tilde{\phi}_x^2 + \tilde{v}_x^2 + \tilde{u}_x^2 \right) \, dx,
\end{split}
\]
and
\[
\begin{split}
\lvert \mathcal{V}^{(1)}_7 \rvert & \leq C \int_\mathbb{R} \lvert \bar{v}^X_x \rvert \lvert \tilde{\phi}_{xxx} \rvert \left( \lvert \tilde{\phi}_{xx} \rvert + \lvert \tilde{\phi}_x \rvert + \lvert \tilde{v}_{xx} \rvert + \lvert \tilde{v}_x \rvert + \lvert \bar{v}^X_x \rvert \lvert \tilde{v} \rvert  \right) \, dx \\
& \leq C \delta_S^2 \int_\mathbb{R} \left( \bar{v}^X_x \tilde{v}^2 +\tilde{\phi}_{xxx}^2 + \tilde{\phi}_x^2 + \tilde{v}_{xx}^2 + \tilde{v}_x^2 \right) \, dx.
\end{split}
\]
Lastly, the term $\mathcal{V}^{(1)}_8$ is bounded by using the expansion $e^{\tilde{\phi}} = 1 + \mathcal{O}(\tilde{\phi})$ around $\tilde{\phi}=0$ as follows
\[
\begin{split}
\lvert \mathcal{V}^{(1)}_8 \rvert & \leq C \int_\mathbb{R} \lvert \bar{v}^X_x \rvert  \lvert \tilde{\phi}_{xxx} \lvert \tilde{\phi} \rvert \left( 1 + \mathcal{O}(\lvert \tilde{\phi} \rvert) \right) \, dx + C \int_\mathbb{R} \lvert \tilde{v} \rvert \lvert \tilde{\phi}_{xxx} \rvert \lvert \tilde{\phi}_x \rvert \left( 1 + \mathcal{O}(\lvert \tilde{\phi} \rvert ) \right)  \, dx \\
& \quad + C \int_\mathbb{R} \lvert \tilde{\phi}_{xxx} \rvert \left( \lvert \tilde{v}_x \rvert + \lvert \tilde{\phi}_x \rvert \right) \mathcal{O}(\lvert \tilde{\phi} \rvert) \, dx \\
& \leq C \left( 1 + \lVert \tilde{\phi} \rVert_{L^\infty} \right) \left( \int_\mathbb{R} \lvert \bar{v}^X_x \rvert^{1/2} \tilde{\phi}_{xxx}^2 \, dx + \int_\mathbb{R} \lvert \bar{v}^X_x \rvert^{3/2} \tilde{\phi}^2 \, dx \right) \\
& \quad + C \left( \lVert \tilde{v} \rVert_{L^\infty} + \lVert \tilde{\phi} \rVert_{L^\infty}  \right) \int_\mathbb{R} \left( \tilde{\phi}_{xxx}^2 + \tilde{\phi}_x^2 + \tilde{v}_x^2 \right) \, dx \\
& \leq C \left( \delta_S + \lVert \tilde{v} \rVert_{L^\infty} + \lVert \tilde{\phi} \rVert_{L^\infty} \right) \int_\mathbb{R} \left(  \bar{v}^X_x \tilde{\phi}^2 + \tilde{\phi}_{xxx}^2 + \tilde{\phi}_x^2 + \tilde{v}_x^2 \right) \, dx.
\end{split}
\]

Combining \eqref{'8} with all the estimates of $\mathcal{V}^{(1)}_j$, we have
\begin{equation} \label{'9}
\begin{split}
& \frac{d}{dt} \int_\mathbb{R} \left( \frac{\tilde{v}_x^2}{2} - v \tilde{v}_x \tilde{u} \right) \, dx + \int_\mathbb{R} \bigg( \frac{2\tilde{v}_x^2}{\bar{v}^X} - \frac{2 \tilde{v}_x \tilde{\phi}_{xxx}}{\bar{v}^X} + \frac{ e^{-\bar{\phi}^X} \tilde{\phi}_{xxx}^2}{(\bar{v}^X)^2} + \tilde{\phi}_{xx}^2 \bigg) \, dx \\
& \quad \leq C \int_\mathbb{R} \tilde{u}_x^2 \, dx + C \left( \delta_S + \lVert \tilde{v} \rVert_{W^{1,\infty}} + \lVert \tilde{u} \rVert_{L^\infty} + \lVert \tilde{\phi} \rVert_{W^{1,\infty}} \right) \\
& \qquad \times \left( \int_\mathbb{R} \left( \bar{v}^X_x \tilde{v}^2 + \bar{v}^X_x \tilde{u}^2 + \bar{v}^X_x \tilde{\phi}^2 + \tilde{v}_x^2 + \tilde{v}_{xx}^2 + \tilde{\phi}_x^2 + \tilde{\phi}_{xx}^2 + \tilde{\phi}_{xxx}^2 \right) \, dx  + \delta_S \lvert \dot{X} \rvert^2 \right).
\end{split}
\end{equation}
Note that there exists a positive constant $c$ such that
\[
\bigg\lvert \frac{e^{-\bar{\phi}^X}}{\bar{v}^X} \bigg\rvert = \bigg\lvert 1 -\frac{e^{-\bar{\phi}^X}}{\bar{v}^X} \left( \frac{\bar{\phi}^X_x}{\bar{v}^X} \right)_x \bigg\rvert \geq 1 - C \delta_S^2 \geq c
\]
for sufficiently small $\delta_S < \delta_1$. Thus, we have
\[
\frac{2\tilde{v}_x^2}{\bar{v}^X} - \frac{2 \tilde{v}_x \tilde{\phi}_{xxx}}{\bar{v}^X} + \frac{ e^{-\bar{\phi}^X} \tilde{\phi}_{xxx}^2}{(\bar{v}^X)^2} \geq c \left( \tilde{v}_x^2 + \tilde{\phi}_{xxx}^2 \right)
\]
for a generic constant $c>0$, provided that $\delta_1$ is small enough. Using this lower bound of the quadratic term on the left-hand side of \eqref{'9}, we obtain after the application of the results of Lemmas \ref{Lv^2} and \ref{Lemma:phi}
\[
\begin{split}
& \frac{d}{dt} \int_\mathbb{R} \bigg( \frac{\tilde{v}_x^2}{2} - v \tilde{v}_x \tilde{u} \bigg) \, dx +  \lVert \tilde{v}_x (t,\cdot) \rVert_{L^2}^2 + \lVert \tilde{\phi}_{xx} (t,\cdot) \rVert_{H^1}^2 \\
& \quad \leq C \left( \delta_1 + \varepsilon_1 \right) \left( G_1 + G^S + \delta_S \lvert \dot{X} \rvert^2 + \lVert \tilde{v}_{xx} (t,\cdot) \rVert_{L^2}^2 \right) + C D
\end{split}
\]
for sufficiently small $\delta_1$, $\varepsilon_1$. We integrate this with respect to $t$ to have
\[
\begin{split}
& \lVert \tilde{v}_x (t,\cdot) \rVert_{L^2}^2  + \int_0^t \left( \lVert \tilde{v}_x \rVert_{L^2}^2 + \lVert \tilde{\phi}_{xx} \rVert_{H^1}^2 \right) \, d\tau \\
& \quad \leq C \lVert \tilde{v}_{0x} \rVert_{L^2}^2 + \bigg[ \int_\mathbb{R} v\tilde{v}_x \tilde{u} \, dx \bigg]^{t=t}_{t=0} + C \left( \delta_1 + \varepsilon_1 \right) \int_0^t \left( G_1 + G^S + \delta_S \lvert \dot{X} \rvert^2 + \lVert \tilde{v}_{xx}\rVert_{L^2}^2 \right) \, d\tau \\
& \qquad + C \int_0^t D \, d\tau.
\end{split}
\]
Then we obtain by applying Young's inequality
\[
\begin{split}
& \lVert \tilde{v}_x (t,\cdot) \rVert_{L^2}^2 + \int_0^t \left( \lVert \tilde{v}_x \rVert_{L^2}^2 + \lVert \tilde{\phi}_{xx} \rVert_{H^1}^2 \right) \, d\tau \\
& \quad \leq C \lVert ( \tilde{v}_{0x},\tilde{u}_0) \rVert_{L^2}^2 + C \left( \delta_1 + \varepsilon_1 \right) \int_0^t \left( G_1 + G^S + \delta_S \lvert \dot{X} \rvert^2 + \lVert \tilde{v}_{xx} \rVert_{L^2}^2 \right) \, d\tau \\
& \qquad + C \left( \lVert \tilde{u} (t,\cdot) \rVert_{L^2}^2 + \int_0^t D \, d\tau \right).
\end{split}
\]
Finally, we use \eqref{RE} to obtain the desired estimate \eqref{derivv1}.

\end{proof}

\begin{lemma} \label{Lemma:u1}
Under the assumptions in Proposition \ref{Apriori}, there exists a constant $C>0$ such that
\begin{equation} \label{derivu1}
\begin{split}
& \lVert \tilde{u}_x (t,\cdot) \rVert_{L^2}^2 + \int_0^t \lVert \tilde{u}_{xx} \rVert_{L^2}^2 \, d\tau \\
& \quad \leq C \lVert \tilde{u}_{0x} \rVert_{L^2}^2 +  C \left( \delta_1 + \varepsilon_1 \right) \int_0^t \left( \delta_S \lvert \dot{X} \rvert^2 + G_1 + G^S + D \right) \, d\tau \\
& \qquad + \eta \int_0^t \lVert \tilde{u}_{x} \rVert_{H^1}^2 d \tau + \frac{C}{\eta} \int_0^t \left( \lVert \tilde{v}_x \rVert_{L^2}^2 + \lVert \tilde{\phi}_{xx} \rVert_{L^2}^2 \right) \, d\tau
\end{split}
\end{equation}
for all $t \in [0,T]$, where $0 < \eta < 1$ is an arbitrary constant and $G_1$, $G^S$, and $D$ are as defined in Lemma~\ref{RE}.
\end{lemma}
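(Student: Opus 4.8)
The plan is to derive the $H^1$-estimate for $\tilde u$ by a one-derivative energy argument on the momentum equation, routing each resulting error term into one of the groups on the right-hand side of \eqref{derivv1}--\eqref{derivu1}. Concretely, I would multiply the non-divergence momentum equation \eqref{1b'}---which keeps the electric force in the low-order form $-(\tfrac{\phi_x}{v}-\tfrac{\bar\phi^X_x}{\bar v^X})$ and so never forces $\tilde\phi_{xxx}$ or $\tilde u_{xxx}$ to appear---by $-\tilde u_{xx}$ and integrate over $\mathbb R$. Integration by parts in $x$ turns the time term into $\tfrac{d}{dt}\int\tfrac{\tilde u_x^2}{2}\,dx$ (note that, unlike Lemma~\ref{Lemma:v1}, no cross term $\int v\tilde v_x\tilde u\,dx$ is generated), and the viscous term $-(\tfrac{u_x}{v}-\tfrac{\bar u^X_x}{\bar v^X})_x$ produces the good dissipation $\int\tfrac{\tilde u_{xx}^2}{v}\,dx\ (\ge c\int\tilde u_{xx}^2\,dx$, using the uniform bounds on $v$ from Proposition~\ref{local}) together with lower-order commutator terms involving $v_x=\tilde v_x+\bar v^X_x$.

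Next I would estimate the right-hand side term by term. For the pressure term, writing $(p(v)-p(\bar v^X))_x=p'(v)\tilde v_x+(p'(v)-p'(\bar v^X))\bar v^X_x$, the principal piece is bounded by $\eta\int\tilde u_{xx}^2\,dx+\tfrac{C}{\eta}\int\tilde v_x^2\,dx$ via Young, while the $\bar v^X_x$-weighted remainder is $O(\delta_S)$ after $\lvert\bar v^X_x\rvert\le C\delta_S^2$ and Lemma~\ref{Lv^2}. For the electric-force term I would substitute $\tfrac{\phi_x}{v}-\tfrac{\bar\phi^X_x}{\bar v^X}=\tfrac{\tilde\phi_x}{v}-\tfrac{\bar\phi^X_x\tilde v}{v\bar v^X}$ and integrate $\int\tfrac{\tilde\phi_x\tilde u_{xx}}{v}\,dx$ by parts once more, so that the lost derivative falls on $\tilde\phi$: this yields $\eta\int\tilde u_x^2\,dx+\tfrac{C}{\eta}\int\tilde\phi_{xx}^2\,dx$ plus genuinely small corrections controlled through $\lVert(\tilde v,\tilde u,\tilde\phi)\rVert_{W^{1,\infty}}\le C\varepsilon_1$ (Lemma~\ref{lemma:4.6}) and \eqref{derivphi}; the $\bar\phi^X_x$-weighted piece is absorbed using $\lvert\bar\phi^X_x\rvert\le C\bar v^X_x$, Cauchy--Schwarz in the measure $\bar v^X_x\,dx$, and Lemma~\ref{Lv^2}. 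The viscous commutator terms carry $O(\varepsilon_1)$ by the $W^{1,\infty}$-smallness, and the shift term $-\dot X\bar u^X_x\tilde u_{xx}$ is treated by one integration by parts, $\lvert\bar u^X_{xx}\rvert\le C\bar v^X_x$, $\int_\mathbb{R}\bar v^X_x\,dx=\delta_S$, and a $\delta_S$-tuned Young inequality, producing a contribution of the form $C\delta_1(\delta_S\lvert\dot X\rvert^2+D)$.

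Finally I would collect the pointwise-in-time identities, absorb the $O(\varepsilon_1)+O(\delta_S)$ multiples of $\int\tilde u_{xx}^2\,dx$ into the good dissipation by taking $\delta_1,\varepsilon_1$ small, integrate in $t$ (the only boundary-in-time contribution being $\tfrac12\lVert\tilde u_x(t,\cdot)\rVert_{L^2}^2-\tfrac12\lVert\tilde u_{0x}\rVert_{L^2}^2$), and invoke Lemmas~\ref{RE}, \ref{Lv^2}, and \ref{Lemma:phi} to re-express the $\int_0^t\!\int_\mathbb{R}\bar v^X_x(\tilde v^2+\tilde\phi^2)$-type terms in terms of $G_1+G^S$, arriving at \eqref{derivu1}.

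I expect the main obstacle to be bookkeeping rather than any single hard inequality: one must ensure that every $\tilde\phi$-derivative loss lands on $\tilde\phi_{xx}$ (which the statement permits with the $\tfrac{C}{\eta}$ weight) and never on $\tilde\phi_x$ or $\tilde\phi$ with an $O(1)$ coefficient, since pushing those back through \eqref{derivphi} would reinstate $G_1,G^S$ with non-small constants and break the closure. The delicate point is therefore the choice of integration-by-parts direction for the momentum--Poisson coupling terms, so that the final inequality has precisely the structure that can be closed against Lemmas~\ref{Lemma:v1} and \ref{Lemma:phi}--\ref{Lemma:phixt} in the subsequent subsection.
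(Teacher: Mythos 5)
Your proposal is correct and follows essentially the same route as the paper: the paper differentiates \eqref{1b'} in $x$ and tests with $\tilde{u}_x$, which after integration by parts is identical to your testing of \eqref{1b'} with $-\tilde{u}_{xx}$, and the subsequent treatment of each term (Young's inequality sending $\tilde{v}_x$ and $\tilde{\phi}_{xx}$ into the $C/\eta$ group, $W^{1,\infty}$-smallness for the cubic terms, $\lvert\bar{v}^X_x\rvert\le C\delta_S^2$ with Lemma~\ref{Lv^2} for the shock-weighted terms, and H\"older plus a $\delta_S$-tuned Young inequality for the $\dot{X}$ term) matches the paper's estimates of $\mathcal{U}^{(1)}_1,\dots,\mathcal{U}^{(1)}_4$. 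Your observation that the non-divergence form \eqref{1b'} must be used here, so that only $\tilde{\phi}_{xx}$ (and not $\tilde{\phi}_{xxx}$) is generated, is exactly the structural point the paper relies on.
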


\begin{proof}
Differentiating \eqref{1b'} with respect to $x$ and multiplying the resultant equation by $ \tilde{u}_x$, we have after rearrangement
\begin{equation} \label{ux11}
\begin{split}
\left( \frac{\tilde{u}_x^2}{2} \right)_t + \frac{\tilde{u}_{xx}^2}{v} & = (\cdots)_x +  \tilde{u}_x \left( \frac{\tilde{v}}{v\bar{v}^X} \right)_{xx} + \frac{\tilde{v}_x \tilde{u}_{xx}\tilde{u}_x}{v^2} + \frac{\bar{v}^X_x \tilde{u}_{xx}\tilde{u}_x}{v^2} + \tilde{u}_{xx} \left( \frac{\bar{u}^X_x\tilde{v}}{v\bar{v}^X} \right)_x \\
& \quad - \tilde{u}_x \left( \frac{\tilde{\phi}_x}{v} - \frac{\bar{\phi}^X_x \tilde{v}}{v\bar{v}^X} \right)_x + \dot{X}(t) \bar{u}^X_{xx}\tilde{u}_x,
\end{split}
\end{equation}
where we have used
\[
\begin{split}
- \tilde{u}_x \left( \frac{u_x}{v} - \frac{\bar{u}^X_x}{\bar{v}^X} \right)_{xx} & = (\cdots)_x + \frac{ \tilde{u}_{xx}^2}{v} - \frac{\tilde{v}_x \tilde{u}_{xx}\tilde{u}_x}{v^2} - \frac{\bar{v}^X_x \tilde{u}_{xx}\tilde{u}_x}{v^2} - \tilde{u}_{xx} \left( \frac{\bar{u}^X_x\tilde{v}}{v\bar{v}^X} \right)_x.
\end{split}
\]
Integrating \eqref{ux11} over $\mathbb{R}$, we have by integration by parts
\[
\begin{split}
& \frac{d}{dt} \int_\mathbb{R} \frac{\tilde{u}_x^2}{2} \, dx + \int_\mathbb{R} \frac{ \tilde{u}_{xx}^2}{v} \, dx = \sum_{j=1}^4 \mathcal{U}^{(1)}_j,
\end{split}
\]
where
\[
\begin{split}
\mathcal{U}^{(1)}_1 & := - \int_\mathbb{R} \bigg( \frac{\tilde{u}_{xx}\tilde{v}_x}{v\bar{v}^X} + \frac{\tilde{u}_x \tilde{\phi}_{xx}}{v} \bigg) \, dx, \quad \mathcal{U}^{(1)}_2 := \dot{X}(t) \int_\mathbb{R} \bar{u}^X_{xx}\tilde{u}_x  \, dx, \\
\mathcal{U}^{(1)}_3 & := \int_\mathbb{R} \tilde{u}_{xx} \left( \frac{\tilde{v}_x\tilde{v}}{v^2\bar{v}^X} + \frac{\tilde{v}_x\tilde{u}_x}{v^2} - \frac{\bar{u}^X_x\tilde{v}_x\tilde{v}}{v^2\bar{v}^X} \right) \, dx + \int_\mathbb{R} \tilde{u}_x \left( \frac{\tilde{v}_x \tilde{\phi}_x}{v^2} - \frac{\bar{\phi}^X_x \tilde{v}_x\tilde{v}}{v^2\bar{v}^X} \right) \, dx, \\
\mathcal{U}^{(1)}_4 & := \int_\mathbb{R} \tilde{u}_{xx} \left( \frac{\bar{v}^X_x \tilde{v}}{v^2\bar{v}^X} + \frac{\bar{v}^X_x \tilde{v}}{v (\bar{v}^X)^2} + \frac{\bar{v}^X_x \tilde{u}_x}{v^2} + \frac{\bar{u}^X_x \tilde{v}_x}{v\bar{v}^X} - \frac{\bar{v}^X_x \bar{u}^X_x \tilde{v}}{v^2\bar{v}^X} + \left( \frac{\bar{u}^X_x}{\bar{v}^X} \right)_x \frac{\tilde{v}}{v} \right) \, dx \\
& \quad + \int_\mathbb{R} \tilde{u}_x \left( \frac{\bar{v}^X_x \tilde{\phi}_x}{v^2} + \frac{\bar{\phi}^X_x \tilde{v}_x}{v\bar{v}^X} - \frac{\bar{v}^X_x \bar{\phi}^X_x \tilde{v}}{v^2 \bar{v}^X} + \left( \frac{\bar{\phi}^X_x}{\bar{v}^X} \right)_x \frac{\tilde{v}}{v} \right) \, dx.
\end{split}
\]

We estimate the terms $\mathcal{U}^{(1)}_j$ for $j=1,\dots,4$. First, by applying Young's inequality, we obtain
\[
\begin{split}
\lvert \mathcal{U}^{(1)}_1 \rvert & \leq C \int_\mathbb{R} \lvert \tilde{u}_{xx} \rvert \lvert \tilde{v}_x \rvert + \lvert \tilde{u}_x \rvert \lvert \tilde{\phi}_{xx} \rvert \, dx \\
& \leq \eta \int_\mathbb{R} \left( \tilde{u}_{xx}^2 + \tilde{u}_x^2 \right) \, dx + \frac{C}{\eta} \int_\mathbb{R} \left( \tilde{v}_x^2 + \tilde{\phi}_{xx}^2 \right) \, dx
\end{split}
\]
for any constant $0 < \eta < 1$. For the term $\mathcal{U}^{(1)}_2$, we use $\lvert \bar{u}^X_{xx} \rvert \leq C \lvert \bar{v}^X_x \rvert$ and apply the H\"older inequality and Young's inequality to obtain
\[
\begin{split}
\lvert \mathcal{U}^{(1)}_2 \rvert & \leq C \lvert \dot{X} \rvert \sqrt{ \int_\mathbb{R} \bar{v}^X_x \, dx} \sqrt{ \int_\mathbb{R} \bar{v}^X_x \tilde{u}_x^2 \, dx} \\
& \leq C \delta_S^2 \lvert \dot{X} \rvert^2 + \frac{C}{\delta_S^2} \left( \int_\mathbb{R} \bar{v}^X_x \, dx \right) \left( \int_\mathbb{R} \bar{v}^X_x \tilde{u}_x^2 \, dx \right) \\
& \leq C \delta_S \left( \delta_S \lvert \dot{X} \rvert^2 + \int_\mathbb{R} \tilde{u}_x^2 \, dx \right). 
\end{split}
\]
The nonlinear term $\mathcal{U}^{(1)}_3$ satisfies the bounds
\[
\begin{split}
\lvert \mathcal{U}^{(1)}_3 \rvert & \leq C \int_\mathbb{R} \lvert \tilde{v} \rvert \left( \lvert \tilde{v}_x \rvert \lvert \tilde{u}_{xx} \rvert + \lvert \tilde{v}_x \rvert \lvert \tilde{u}_x \rvert \right) \, dx + C \int_\mathbb{R} \lvert \tilde{v}_x \rvert \left( \lvert \tilde{u}_x \rvert \lvert \tilde{u}_{xx} \rvert + \lvert \tilde{u}_x \rvert \lvert \tilde{\phi}_x \rvert \right) \, dx \\
& \leq C \lVert \tilde{v} \rVert_{W^{1,\infty}} \int_\mathbb{R} \left( \tilde{v}_x^2 + \tilde{u}_x^2 + \tilde{u}_{xx}^2 + \tilde{\phi}_x^2 \right) \, dx.
\end{split}
\]
The term $\mathcal{U}^{(1)}_4$ is estimated by using the bounds $\lvert \bar{u}^X_{xx} \rvert, \lvert \bar{\phi}^X_{xx} \rvert \leq C \bar{v}^X_x$ and $\lvert \bar{u}^X_x \rvert \sim \lvert \bar{v}^X_x \rvert \leq C \delta_S^2$ as
\[
\begin{split}
\lvert \mathcal{U}^{(1)}_4 \rvert & \leq C \int_\mathbb{R} \lvert \bar{v}^X_x \rvert \left( \lvert \tilde{u}_{xx} \rvert \lvert \tilde{v} \rvert + \lvert \tilde{u}_x \rvert \lvert \tilde{v} \rvert \right) \, dx + C \int_\mathbb{R} \lvert \bar{v}^X_x \rvert \lvert \tilde{u}_{xx} \rvert \left( \lvert \tilde{v}_x \rvert + \lvert \tilde{u}_x \rvert \right) \, dx \\
& \quad + C \int_\mathbb{R} \lvert \bar{v}^X_x \rvert \lvert \tilde{u}_x \rvert \left( \lvert \tilde{v}_x \rvert + \lvert \tilde{\phi}_x \rvert \right) \, dx \\
& \leq C \int_\mathbb{R} \lvert \bar{v}^X_x \rvert^{1/2} \left( \tilde{u}_{xx}^2 + \tilde{u}_x^2 \right) \, dx + C \int_\mathbb{R} \lvert \bar{v}^X_x \rvert^{3/2} \tilde{v}^2 \, dx  + C \int_\mathbb{R} \lvert \bar{v}^X_x \rvert \left( \tilde{u}_{xx}^2 + \tilde{u}_x^2 + \tilde{v}_x^2 + \tilde{\phi}_x^2 \right) \, dx \\
& \leq C \delta_S \int_\mathbb{R} \left( \bar{v}^X_x \tilde{v}^2 + \tilde{u}_{xx}^2 + \tilde{u}_x^2 + \tilde{v}_x^2 + \tilde{\phi}_x^2 \right) \, dx. 
\end{split}
\]

Collecting all the estimates, together with $\delta_S \leq C \delta_1$ and $\lVert \tilde{v} \rVert_{W^{1,\infty}} \leq C \varepsilon_1$, and integrating the resultant inequality with respect to $t$, we obtain \eqref{derivu1} after the application of the results of Lemmas~\ref{Lv^2} and \ref{Lemma:phi}.

\end{proof}

\subsection{\texorpdfstring{$H^2$-estimates}{H2-estimates}}

\begin{lemma} \label{Lemma:v2}
Under the assumptions in Proposition \ref{Apriori}, there exists a constant $C>0$ such that
\begin{equation} \label{derivv2}
\begin{split}
& \lVert \tilde{v}_{xx} (t,\cdot) \rVert_{L^2}^2 + \int_0^t \lVert \tilde{v}_{xx} \rVert_{L^2}^2 \, d\tau \\
& \quad \leq C \left( \lVert \tilde{v}_{0xx} \rVert_{L^2}^2 + \lVert \tilde{u}_{0x} \rVert_{L^2}^2 \right)  + C \lVert \tilde{u}_x (t,\cdot) \rVert_{L^2}^2 + C \int_0^t \left( \lVert \tilde{u}_{xx} \rVert_{L^2}^2 + \lVert \tilde{\phi}_{xx} \rVert_{L^2}^2 \right) \, d\tau \\
& \qquad + C \left( \delta_1 + \varepsilon_1 \right) \int_0^t \left( \delta_S \lvert \dot{X} \rvert^2 + G_1 + G^S + D + \lVert \tilde{v}_x \rVert_{L^2}^2 \right) \, d\tau
\end{split}
\end{equation}
for all $t \in [0,T]$, where $G_1$, $G^S$, and $D$ are as defined in Lemma~\ref{RE}.
\end{lemma}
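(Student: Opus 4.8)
The plan is to run the energy estimate for $\tilde v_{xx}$ in complete analogy with the $H^1$-estimate of Lemma~\ref{Lemma:v1}, but one order higher in $x$ and, crucially, using the \emph{non-divergence} form \eqref{1b'} of the momentum equation (equivalently the original system \eqref{NSP11}--\eqref{NSP22}) in place of the divergence form \eqref{1b}. Concretely, I differentiate \eqref{1a'} twice in $x$ and multiply by $\tilde v_{xx}$, differentiate \eqref{1b'} once in $x$ and multiply by $-v\tilde v_{xx}$, and then add the two identities before integrating over $\mathbb{R}$. In the viscous contribution, $\big(\tfrac{u_x}{v}-\tfrac{\bar u^X_x}{\bar v^X}\big)_{xx}$ contains the top-order piece $\tfrac{u_{xxx}}{v}=\tfrac{\tilde u_{xxx}+\bar u^X_{xxx}}{v}$, so $v\tilde v_{xx}\big(\tfrac{u_x}{v}\big)_{xx}$ produces $\tilde v_{xx}\tilde u_{xxx}$, which cancels identically the term $-\tilde v_{xx}\tilde u_{xxx}$ arising from the twice-differentiated equation \eqref{1a'}. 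This cancellation (which fails for the divergence form) is the reason the original form is used, and it is precisely why the scheme is forced up to the second derivative of $\tilde v$ rather than stopping at the first, as anticipated in the remarks preceding Lemma~\ref{Lemma:v1}.

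Next I extract the coercive quantity. Since $p(v)=1/v$, expanding $-v\tilde v_{xx}(p(v)-p(\bar v^X))_{xx}$ gives the good term $\tfrac{\tilde v_{xx}^2}{v}\ge c\,\tilde v_{xx}^2$ plus terms carrying either a derivative of the shock or a perturbation factor. For the electric term of \eqref{1b'}, after one $x$-differentiation it equals $\bar v^X e^{\bar\phi^X}-v e^{\phi}$ by the Poisson equation \eqref{1c'}, which by \eqref{temP}--\eqref{Np} is $-e^{\bar\phi^X}\tilde v-\bar v^X e^{\bar\phi^X}\tilde\phi+e^{\bar\phi^X}\mathcal N_p$; hence it brings in no derivative of $\phi$ of order higher than $\tilde\phi_{xx}$, and after one integration by parts its leading part $\int_\mathbb{R} v e^{\bar\phi^X}\tilde v_{xx}\tilde v\,dx$ produces an additional coercive term $-\int_\mathbb{R} v e^{\bar\phi^X}\tilde v_x^2\,dx\le -c\int_\mathbb{R}\tilde v_x^2\,dx$ (using $v e^{\bar\phi^X}\ge c>0$ from the shock Poisson equation, exactly as in Lemma~\ref{entsim}), the rest carrying a prefactor $\delta_S$, $\|\tilde v\|_{L^\infty}$ or $\|\tilde\phi\|_{L^\infty}$, or being controllable by $\|\tilde\phi_x\|_{L^2}^2$ and $\|\tilde\phi_{xx}\|_{L^2}^2$. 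Finally, treating $-v\tilde v_{xx}\tilde u_{xt}$ through $\partial_t(-v\tilde v_{xx}\tilde u_x)$ and using $u_x=v_t$ together with \eqref{1a'} yields, after one integration by parts, the only genuinely $O(1)$ right-hand side term $\int_\mathbb{R} v\tilde u_{xx}^2\,dx$, which is what accounts for the $\int_0^t\|\tilde u_{xx}\|_{L^2}^2\,d\tau$ in \eqref{derivv2}.

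The remaining terms are grouped and estimated exactly as the $\mathcal V^{(1)}_j$ of Lemma~\ref{Lemma:v1}: terms carrying $\dot X(t)$ together with a shock-profile derivative are bounded using $|\dot X|\le C\|\tilde u\|_{L^\infty}$ (see \eqref{X}), the H\"older inequality and $\int_\mathbb{R}\bar v^X_x\,dx=O(\delta_S)$, producing the factor in front of $\delta_S|\dot X|^2$; terms whose coefficient is a shock-profile derivative are bounded by \eqref{shbounds} and $|\bar v^X_x|\le C\delta_S^2$; and the genuinely nonlinear (cubic and higher) terms are bounded using the $W^{1,\infty}$-smallness \eqref{infty} of $(\tilde v,\tilde u,\tilde\phi)$ and Young's inequality — in every case the output is, at worst, $\|\tilde v_{xx}\|_{L^2}^2+\|\tilde u_{xx}\|_{L^2}^2+\|\tilde\phi_{xxx}\|_{L^2}^2$ (the last absorbed into $\|\tilde\phi_{xx}\|_{H^1}^2$) multiplied by a small prefactor. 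After integrating in $t$, the boundary term $\int_\mathbb{R} v\tilde v_{xx}\tilde u_x\,dx$ is split by Young's inequality, half the coercive $\|\tilde v_{xx}\|_{L^2}^2$ at time $t$ absorbing it and the remainder being $C\|\tilde u_x(t,\cdot)\|_{L^2}^2$, which is the term kept on the right of \eqref{derivv2}; the quadratic quantities $\int_\mathbb{R}\bar v^X_x\tilde v^2$, $\int_\mathbb{R}\bar v^X_x\tilde\phi^2$, $\int_\mathbb{R}\tilde\phi_x^2$, $\int_\mathbb{R}\tilde\phi_{xx}^2$ are then rewritten via the elliptic estimate \eqref{derivphi}, Lemma~\ref{Lv^2}, and the zeroth-order bound \eqref{Ree}. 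Collecting everything yields \eqref{derivv2}.

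The main obstacle is the first step: one must verify that \emph{all} third- and fourth-order-in-$x$ contributions — those from $\big(\tfrac{u_x}{v}\big)_{xx}$ against the doubly-differentiated continuity equation, those hidden in the $\partial_t(-v\tilde v_{xx}\tilde u_x)$ manipulation, and those produced from the electric term after invoking the Poisson equation — either cancel identically, reduce to the coercive $\tfrac{\tilde v_{xx}^2}{v}$, reduce to the admissible $\int_\mathbb{R} v\tilde u_{xx}^2\,dx$, or come with a small prefactor, leaving no uncontrolled top-order remainder. Once this bookkeeping is checked, the rest is the same (lengthy but routine) commutator computation already carried out in Lemmas~\ref{Lemma:v1}--\ref{Lemma:u1}.
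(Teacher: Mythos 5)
Your skeleton coincides with the paper's: the same two multipliers ($\tilde v_{xx}$ against the twice-differentiated continuity equation, $-v\tilde v_{xx}$ against the once-differentiated non-divergence momentum equation \eqref{1b'}), the same cancellation of $\tilde v_{xx}\tilde u_{xxx}$, the same rewriting of $-v\tilde v_{xx}\tilde u_{xt}$ through $\partial_t(-v\tilde v_{xx}\tilde u_x)$ producing $\int_\mathbb{R} v\tilde u_{xx}^2\,dx$, and the same Young-splitting of the boundary term $\int_\mathbb{R} v\tilde v_{xx}\tilde u_x\,dx$. However, your treatment of the electric force term has a genuine gap. You substitute the Poisson equation, turning $v\tilde v_{xx}\big(\tfrac{\phi_x}{v}-\tfrac{\bar\phi^X_x}{\bar v^X}\big)_x$ into $v\tilde v_{xx}\big(e^{\bar\phi^X}\tilde v+\bar v^X e^{\bar\phi^X}\tilde\phi-e^{\bar\phi^X}\mathcal N_p\big)$. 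The first piece indeed yields a bonus coercive term in $\tilde v_x$ after integration by parts, but the piece $\int_\mathbb{R} v\bar v^X e^{\bar\phi^X}\tilde\phi\,\tilde v_{xx}\,dx$ is fatal: estimated directly it requires $\|\tilde\phi\|_{L^2}^2$ (whose time integral is controlled by nothing — the dissipation only gives $\int\bar v^X_x\tilde\phi^2$), and after integration by parts it leaves the $O(1)$ coupling $\int_\mathbb{R} v\bar v^X e^{\bar\phi^X}\tilde\phi_x\tilde v_x\,dx$, hence an $O(1)$ multiple of $\|\tilde\phi_x\|_{L^2}^2$ even after absorbing part of it into your new $\tilde v_x$-coercivity. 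But $\|\tilde\phi_x\|_{L^2}^2$ is not among the admissible right-hand-side quantities of \eqref{derivv2}; via \eqref{derivphi} it converts only into $O(1)\big(G_1+G^S+\|\tilde v_x\|_{L^2}^2\big)$, whereas the lemma requires these to appear with the small prefactor $C(\delta_1+\varepsilon_1)$ (an $O(1)$ constant on $G_1+G^S$ could not be absorbed by the coefficient-one dissipation coming from \eqref{Ree} in the final assembly of Proposition \ref{Apriori}). The same objection applies to $-\int_\mathbb{R} ve^{\bar\phi^X}\mathcal N_p\tilde v_{xx}\,dx$ unless it too is integrated by parts.

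The paper avoids this by \emph{not} invoking the Poisson equation here: it keeps the term as $v\tilde v_{xx}\big(\tfrac{\phi_x}{v}-\tfrac{\bar\phi^X_x}{\bar v^X}\big)_x$ and expands it directly, so that the leading coupling is $\int_\mathbb{R}\tilde v_{xx}\tilde\phi_{xx}\,dx$ (the term $\mathcal V^{(2)}_1$), which Young's inequality splits into $\eta\|\tilde v_{xx}\|_{L^2}^2+\tfrac{C}{\eta}\|\tilde\phi_{xx}\|_{L^2}^2$ — and $\|\tilde\phi_{xx}\|_{L^2}^2$ \emph{is} one of the terms the lemma keeps on the right with an $O(1)$ constant (it is later controlled by the $\|\tilde\phi_{xx}\|_{H^1}^2$ dissipation of Lemma \ref{Lemma:v1}). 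The remaining pieces then all carry a factor $\bar v^X_x$ or a perturbation in $L^\infty$, exactly as you describe for the other groups. In short: here, having \emph{more} $x$-derivatives on $\tilde\phi$ is what makes the term harmless, since only $\tilde\phi_{xx}$ (not $\tilde\phi$ or $\tilde\phi_x$) is admissible with an $O(1)$ coefficient; replacing derivatives of $\tilde\phi$ by undifferentiated $\tilde v$ and $\tilde\phi$ through the Poisson equation moves you to quantities whose unweighted space-time $L^2$ norms are not dissipated.
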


\begin{proof}
Differentiating \eqref{1a'} twice with respect to $x$ and multiplying the resultant equation by $\tilde{v}_{xx} $, we have
\begin{equation} \label{derivv21}
\left( \frac{\tilde{v}_{xx}^2}{2} \right)_t - \tilde{v}_{xx} \tilde{u}_{xxx} = \dot{X}(t) \bar{v}^X_{xxx}\tilde{v}_{xx}.
\end{equation}
Differentiating \eqref{1b'} with respect to $x$ and multiplying the resultant equation by $ - v \tilde{v}_{xx} $, we have
\begin{equation} \label{derivv22}
\begin{split}
& - v \tilde{v}_{xx} \tilde{u}_{xt} - v \tilde{v}_{xx} \left( \frac{1}{v} - \frac{1}{\bar{v}^X} \right)_{xx}  + v \tilde{v}_{xx} \left( \frac{u_x}{v} - \frac{\bar{u}^X_x}{\bar{v}^X} \right)_{xx} \\
& \quad = v \tilde{v}_{xx} \left( \frac{\phi_x}{v} - \frac{\bar{\phi}^X_x}{\bar{v}^X} \right)_x - \dot{X}(t) v \bar{u}^X_{xx} \tilde{v}_{xx}.
\end{split}
\end{equation}
Here, the first term on the left-hand side can be written as
\[
\begin{split}
- v \tilde{v}_{xx} \tilde{u}_{xt} & = \left( - v \tilde{v}_{xx}\tilde{u}_x \right)_t + v_t \tilde{v}_{xx}\tilde{u}_x + v \tilde{v}_{xxt} \tilde{u}_x \\
& = \left( - v \tilde{v}_{xx}\tilde{u}_x \right)_t + u_x \tilde{v}_{xx}\tilde{u}_x + v \tilde{u}_x \tilde{u}_{xxx} + \dot{X}(t) v \bar{v}^X_{xxx} \tilde{u}_x \\
& = \left( - v \tilde{v}_{xx}\tilde{u}_x \right)_t + (\cdots)_x  + u_x \tilde{v}_{xx}\tilde{u}_x - v_x \tilde{u}_x \tilde{u}_{xx} - v \tilde{u}_{xx}^2 + \dot{X}(t) v \bar{v}^X_{xxx} \tilde{u}_x, 
\end{split}
\]
where we used \eqref{1a'} in the second equality. The second and third terms are expanded as
\[
\begin{split}
- v \tilde{v}_{xx} \left( \frac{1}{v} - \frac{1}{\bar{v}^X} \right)_{xx} & = \frac{\tilde{v}_{xx}^2}{\bar{v}^X} - \tilde{v}_{xx} \left( \frac{\tilde{v}_x^2}{v \bar{v}^X} + \frac{\bar{v}^X_x \tilde{v}_x}{v\bar{v}^X} + \frac{\bar{v}^X_x \tilde{v}_x}{(\bar{v}^X)^2} \right) \\
& \quad - v \tilde{v}_{xx} \left( \frac{\tilde{v}_x\tilde{v}}{v^2\bar{v}^X} + \frac{\bar{v}^X_x \tilde{v}}{v^2 \bar{v}^X} + \frac{\bar{v}^X_x \tilde{v}}{v(\bar{v}^X)^2} \right)_x
\end{split}
\]
and
\[
\begin{split}
v \tilde{v}_{xx} \left( \frac{u_x}{v} - \frac{\bar{u}^X_x}{\bar{v}^X} \right)_{xx} & = \tilde{v}_{xx}\tilde{u}_{xxx} - \frac{\tilde{v}_{xx}\tilde{v}_x \tilde{u}_{xx}}{v} - \frac{\bar{v}^X_x \tilde{v}_{xx}\tilde{u}_{xx}}{v} \\
& \quad - v \tilde{v}_{xx} \left( \frac{\tilde{v}_x \tilde{u}_x}{v^2} + \frac{\bar{v}^X_x \tilde{u}_x}{v^2} \right)_x - v\tilde{v}_{xx} \left(  \frac{\bar{u}^X_x \tilde{v}}{v\bar{v}^X} \right)_{xx},
\end{split}
\]
respectively. Thus, by summing \eqref{derivv21}, \eqref{derivv22} and integrating the resultant equation with respect to $x$, we have
\begin{equation} \label{''v2}
\begin{split}
\frac{d}{dt} \int_\mathbb{R} \bigg( \frac{\tilde{v}_{xx}^2}{2} - v\tilde{v}_{xx}\tilde{u}_x \bigg) \, dx + \int_\mathbb{R} \frac{\tilde{v}_{xx}^2}{\bar{v}^X} \, dx = \sum_{j=1}^4 \mathcal{V}^{(2)}_j,
\end{split}
\end{equation}
where
\[
\begin{split}
\mathcal{V}^{(2)}_1 & := \int_\mathbb{R} \left( v\tilde{u}_{xx}^2 + \tilde{v}_{xx}\tilde{\phi}_{xx} \right) \, dx, \quad \mathcal{V}^{(2)}_2 := \dot{X}(t) \int_\mathbb{R} \left( \bar{v}^X_{xxx} \tilde{v}_{xx} - v\bar{u}^X_{xx} \tilde{v}_{xx} - v \bar{v}^X_{xxx} \tilde{u}_x \right) \, dx, \\
\mathcal{V}^{(2)}_3 & := \int_\mathbb{R} \left( \tilde{v}_x\tilde{u}_x\tilde{u}_{xx} - \tilde{u}_x^2 \tilde{v}_{xx} \right) \, dx + \int_\mathbb{R} \tilde{v}_{xx} \left( \frac{2\tilde{v}_x^2}{v\bar{v}^X} + \frac{\tilde{v}_{xx}\tilde{v}}{v\bar{v}^X} - \frac{2\tilde{v}_x^2 \tilde{v}}{v^2 \bar{v}^X} - \frac{4\bar{v}^X_x\tilde{v}_x\tilde{v}}{v^2 \bar{v}^X}- \frac{2\bar{v}^X_x\tilde{v}_x\tilde{v}}{v(\bar{v}^X)^2} \right) \, dx \\
& \quad + \int_\mathbb{R} \tilde{v}_{xx} \left( \frac{2\tilde{v}_x\tilde{u}_{xx}}{v} + \frac{\tilde{v}_{xx}\tilde{u}_x}{v} - \frac{2 \tilde{v}_x^2 \tilde{u}_x}{v^2} - \frac{4\bar{v}^X_x \tilde{v}_x \tilde{u}_x}{v^2} - \frac{\tilde{v}_x \tilde{\phi}_x}{v} + \frac{\bar{\phi}^X_x\tilde{v}_x \tilde{v}}{v\bar{v}^X} \right) \, dx \\
& \quad + \int_\mathbb{R} \tilde{v}_{xx} \left( - \frac{2\bar{u}^X_x\tilde{v}_x^2}{v\bar{v}^X} - \frac{\bar{u}^X_x\tilde{v}_{xx}\tilde{v}}{v\bar{v}^X} + \frac{2\bar{u}^X_x\tilde{v}_x^2 \tilde{v}}{v^2 \bar{v}^X} + \frac{4\bar{v}^X_x\bar{u}^X_x\tilde{v}_x \tilde{v}}{v^2 \bar{v}^X} - \left( \frac{2 \bar{u}^X_x}{\bar{v}^X} \right)_x \frac{\tilde{v}_x \tilde{v}}{v} \right) \, dx
\end{split}
\]
and
\[
\begin{split}
\mathcal{V}^{(2)}_4 & := \int_\mathbb{R} \left( \bar{v}^X_x \tilde{u}_x \tilde{u}_{xx} - \bar{u}^X_x \tilde{v}_{xx}\tilde{u}_x \right) \, dx + \int_\mathbb{R} \tilde{v}_{xx} \left( \frac{2\bar{v}^X_x \tilde{u}_{xx}}{v} - \frac{2(\bar{v}^X_x)^2 \tilde{u}_x}{v^2} + \frac{\bar{v}^X_{xx} \tilde{u}_x}{v} \right) \, dx \\
& \quad + \int_\mathbb{R} \tilde{v}_{xx} \left( \frac{2\bar{v}^X_x \tilde{v}_x}{v\bar{v}^X} + \frac{2\bar{v}^X_x\tilde{v}_x}{(\bar{v}^X)^2} - \frac{2(\bar{v}^X_x)^2 \tilde{v}}{v^2 \bar{v}^X} + \left(\frac{\bar{v}^X_x}{\bar{v}^X} \right)_x \frac{\tilde{v}}{v} - \frac{(\bar{v}^X_x)^2\tilde{v}}{v (\bar{v}^X)^2} + \left( \frac{\bar{v}^X_x}{(\bar{v}^X)^2} \right)_x \tilde{v} \right) \, dx \\
& \quad + \int_\mathbb{R} \tilde{v}_{xx} \left( \frac{\bar{u}^X_x \tilde{v}_{xx}}{\bar{v}^X} - \frac{2\bar{v}^X_x\bar{u}^X_x\tilde{v}_x}{v\bar{v}^X} + \left( \frac{2\bar{u}^X_x}{\bar{v}^X} \right)_x \tilde{v}_x + \frac{2(\bar{v}^X_x)^2 \bar{u}^X_x \tilde{v}}{v^2\bar{v}^X} - \left( \frac{\bar{v}^X_x\bar{u}^X_x}{\bar{v}^X} \right)_x \frac{\tilde{v}}{v} \right) \, dx \\
& \quad + \int_\mathbb{R} \tilde{v}_{xx} \left( -\left( \frac{\bar{u}^X_x}{\bar{v}^X} \right)_x \frac{\bar{v}^X_x\tilde{v}}{v} + \left( \frac{\bar{u}^X_x}{\bar{v}^X} \right)_{xx} \tilde{v} - \frac{\bar{v}^X_x \tilde{\phi}_x}{v} - \frac{\bar{\phi}^X_x \tilde{v}_x}{\bar{v}^X} + \frac{\bar{v}^X_x \bar{\phi}^X_x \tilde{v}}{v\bar{v}^X} - \left( \frac{\bar{\phi}^X_x}{\bar{v}^X} \right)_x \tilde{v} \right) \, dx.
\end{split}
\]

We estimate the terms $\mathcal{V}^{(2)}_j$ for $j=1,\dots,4$. By Young's inequality, the term $\mathcal{V}^{(2)}_1$ is bounded as follows
\[
\begin{split}
\lvert \mathcal{V}^{(2)}_1 \rvert & \leq C \int_\mathbb{R} \left( \lvert \tilde{u}_{xx} \rvert^2 +  \lvert \tilde{v}_{xx} \rvert \lvert \tilde{\phi}_{xx} \rvert \right) \, dx \leq \eta \int_\mathbb{R} \tilde{v}_{xx}^2 \, dx + \frac{C}{\eta} \int_\mathbb{R} \tilde{\phi}_{xx}^2 \, dx + C \int_\mathbb{R} \tilde{u}_{xx}^2 \, dx.
\end{split}
\]
For $\mathcal{V}^{(2)}_2$, using the bounds $\lvert \bar{v}^X_{xxx} \rvert, \lvert \bar{u}^X_{xx} \rvert \leq C \lvert \bar{v}^X_x \rvert$, we obtain
\[
\begin{split}
\lvert \mathcal{V}^{(2)}_2 \rvert & \leq C \lvert \dot{X} \rvert \sqrt{\int_\mathbb{R} \bar{v}^X_x \, dx} \left( \sqrt{ \int_\mathbb{R} \bar{v}^X_x \tilde{v}_{xx}^2 \, dx} + \sqrt{ \int_\mathbb{R} \bar{v}^X_x \tilde{u}_x^2 \, dx} \right) \\
& \leq C \delta_S^2 \lvert \dot{X} \rvert^2 + \frac{C}{\delta_S^2} \left( \int_\mathbb{R} \bar{v}^X_x \, dx \right) \left( \int_\mathbb{R} \bar{v}^X_x \tilde{v}_{xx}^2 \, dx + \int_\mathbb{R} \bar{v}^X_x \tilde{u}_x^2 \, dx \right) \\
& \leq C \delta_S \left( \delta_S \lvert \dot{X} \rvert^2 + \int_\mathbb{R} \left( \tilde{v}_{xx}^2 + \tilde{u}_x^2 \right) \, dx \right),
\end{split}
\]
where in the first and second inequality, we have applied the H\"older inequality and Young's inequality, respectively. The nonlinear term $\mathcal{V}^{(2)}_3$ is estimated as
\[
\begin{split}
\lvert \mathcal{V}^{(2)}_3 \rvert & \leq C \int_\mathbb{R} \lvert \tilde{v} \rvert \lvert \tilde{v}_{xx} \rvert \left( \lvert \tilde{v}_{xx} \rvert + \lvert \tilde{v}_x \rvert^2 + \lvert \tilde{v}_x \rvert  \right) \, dx \\
& \quad + C \int_\mathbb{R} \lvert \tilde{v}_x \rvert \lvert \tilde{v}_{xx} \rvert \left( \lvert \tilde{v}_x \rvert + \lvert \tilde{u}_{xx} \rvert + \lvert \tilde{v}_x \rvert \lvert \tilde{u}_x \rvert + \lvert \tilde{u}_x \rvert + \lvert \tilde{\phi}_x \rvert \right) \, dx \\
& \quad + C \int_\mathbb{R} \lvert \tilde{u}_x \rvert \left( \lvert \tilde{v}_x \rvert \lvert \tilde{u}_{xx} \rvert + \lvert \tilde{u}_x \rvert \lvert \tilde{v}_{xx} \rvert + \lvert \tilde{v}_{xx} \rvert^2 \right) \, dx \\
& \leq C \left( \lVert \tilde{v} \rVert_{W^{1,\infty}} + \lVert \tilde{u}_x \rVert_{L^\infty} \right) \int_\mathbb{R} \left( \tilde{v}_{xx}^2 + \tilde{v}_x^2 + \tilde{u}_{xx}^2 + \tilde{u}_x^2 + \tilde{\phi}_x^2 \right) \, dx.
\end{split}
\]
Lastly, we use the bounds $\lvert \bar{v}^X_{xx} \rvert, \lvert \bar{u}^X_{xx} \rvert \leq C \lvert \bar{v}^X_x \rvert$ and $\lvert \bar{\phi}^X_x \rvert \sim \lvert \bar{u}^X_x \rvert \sim \lvert \bar{v}^X_x \rvert \leq C \delta_S^2$ to obtain
\[
\begin{split}
\lvert \mathcal{V}^{(2)}_4 \rvert & \leq C \int_\mathbb{R} \lvert \bar{v}^X_x \rvert \lvert \tilde{v}_{xx} \rvert \lvert \tilde{v} \rvert \, dx + C \int_\mathbb{R} \lvert \bar{v}^X_x \rvert \lvert \tilde{u}_x \rvert \left( \lvert \tilde{u}_{xx} \rvert + \lvert \tilde{v}_{xx} \rvert \right) \, dx \\
& \quad + C \int_\mathbb{R} \lvert \bar{v}^X_x \rvert \lvert \tilde{v}_{xx} \rvert \left( \lvert \tilde{u}_{xx} \rvert + \lvert \tilde{u}_x \rvert + \lvert \tilde{v}_{xx} \rvert + \lvert \tilde{v}_x \rvert + \lvert \tilde{\phi}_x \rvert \right)  \, dx \\
& \leq C \int_\mathbb{R} \lvert \bar{v}^X_x \rvert^{1/2} \tilde{v}_{xx}^2 \, dx + C \int_\mathbb{R} \lvert \bar{v}^X_x \rvert^{3/2} \tilde{v}^2 \, dx  + C \int_\mathbb{R} \lvert \bar{v}^X_x \rvert \left( \tilde{v}_{xx}^2 + \tilde{v}_x^2 + \tilde{u}_{xx}^2 + \tilde{u}_x^2 + \tilde{\phi}_x^2 \right) \, dx \\
& \leq C \delta_S \int_\mathbb{R} \left( \bar{v}^X_x \tilde{v}^2 + \tilde{v}_{xx}^2 + \tilde{v}_x^2 + \tilde{u}_{xx}^2 + \tilde{u}_x^2 + \tilde{\phi}_x^2 \right) \, dx.
\end{split}
\]

Combining \eqref{''v2} with all the estimates of $\mathcal{V}^{(2)}_j$ and taking $\eta$ sufficiently small, we obtain after the application of  the results of Lemmas~\ref{Lv^2} and \ref{Lemma:phi}
\[
\begin{split}
& \frac{d}{dt} \int_\mathbb{R} \bigg( \frac{\tilde{v}_{xx}^2 }{2} - v\tilde{v}_{xx}\tilde{u}_x \bigg) \, dx + \int_\mathbb{R} \tilde{v}_{xx}^2 \, dx \\
& \quad \leq C \left( \delta_1 + \varepsilon_1 \right) \left( G_1 + G^S + D + \delta_S \lvert \dot{X} \rvert^2 +  \int_\mathbb{R} \tilde{v}_x^2 \, dx \right) + C \int_\mathbb{R} \left( \tilde{u}_{xx}^2 + \tilde{\phi}_{xx}^2 \right) \, dx
\end{split}
\]
for sufficiently small $\delta_1$ and $\varepsilon_1$. Integrating this with respect to $t$, we have
\[
\begin{split}
& \lVert \tilde{v}_{xx} (t,\cdot) \rVert_{L^2}^2 + \int_0^t \lVert \tilde{v}_{xx} \rVert_{L^2}^2  \, d\tau  \\
& \quad \leq C \lVert \tilde{v}_{0xx} \rVert_{L^2}^2 + \left[ \int_\mathbb{R} v\tilde{v}_{xx}\tilde{u}_x \, dx \right]^{t=t}_{t=0} + C \int_0^t \left( \lVert \tilde{u}_{xx} \rVert_{L^2}^2 + \lVert \tilde{\phi}_{xx} \rVert_{L^2}^2 \right) \, d\tau \\
& \qquad + C \left( \delta_1 + \varepsilon_1 \right) \int_0^t \left( G_1 + G^S + D + \delta_S \lvert \dot{X} \rvert^2 + \lVert \tilde{v}_x \rVert_{L^2}^2 \right) \, d\tau.
\end{split}
\]
Therefore, the application of Young's inequality to the second term on the right-hand side yields \eqref{derivv2}.

\end{proof}

\begin{lemma} \label{Lemma:u2}
Under the assumptions in Proposition \ref{Apriori}, there exists a constant $C>0$ such that
\begin{equation} \label{derivu2}
\begin{split}
& \lVert \tilde{u}_{xx} (t,\cdot) \rVert_{L^2}^2 + \int_0^t \lVert \tilde{u}_{xxx} \rVert_{L^2}^2 \, d\tau \\
& \quad \leq C \lVert \tilde{u}_{0xx} \rVert_{L^2}^2 + C \int_0^t \left( \lVert \tilde{v}_{xx} \rVert_{L^2}^2 + \lVert \tilde{\phi}_{xx} \rVert_{L^2}^2 \right) \, d\tau \\
& \qquad + C \left( \delta_1 + \varepsilon_1 \right) \int_0^t \left( \delta_S \lvert \dot{X} \rvert^2 + G_1 + G^S + D + \lVert \tilde{v}_x \rVert_{L^2}^2 + \lVert \tilde{u}_{xx} \rVert_{L^2}^2 \right) \, d\tau
\end{split}
\end{equation}
for all $t \in [0,T]$, where $G_1$, $G^S$, and $D$ are as defined in Lemma~\ref{RE}.
\end{lemma}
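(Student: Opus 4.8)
The plan is to carry out, at the level of $\tilde{u}_{xx}$, the same energy argument that was used for Lemma~\ref{Lemma:u1}. I would first differentiate \eqref{1b'} twice with respect to $x$ and multiply the resulting equation by $\tilde{u}_{xx}$. Using $\tilde{u}_{xx}\tilde{u}_{xxt}=\partial_t(\tilde{u}_{xx}^2/2)$, writing the viscous flux as $u_x/v-\bar{u}^X_x/\bar{v}^X=\tilde{u}_x/v-\bar{u}^X_x\tilde{v}/(v\bar{v}^X)$, and integrating the viscous contribution by parts once, one extracts the principal dissipation $\int_\mathbb{R}\tilde{u}_{xxx}^2/v\,dx$, so that after integration over $\mathbb{R}$ there is an identity of the form
\begin{equation*}
\frac{d}{dt}\int_\mathbb{R}\frac{\tilde{u}_{xx}^2}{2}\,dx+\int_\mathbb{R}\frac{\tilde{u}_{xxx}^2}{v}\,dx=\sum_j\mathcal{U}^{(2)}_j,
\end{equation*}
where, by analogy with \eqref{ux11}, the $\mathcal{U}^{(2)}_j$ collect: the linear couplings coming from the pressure term $-(\tilde{v}/(v\bar{v}^X))_{xxx}$ (after one integration by parts, $\tilde{u}_{xxx}$ paired against $(\tilde{v}/(v\bar{v}^X))_{xx}$) and from the electric force $-(\tilde{\phi}_x/v-\bar{\phi}^X_x\tilde{v}/(v\bar{v}^X))_{xx}$ (after one integration by parts, $\tilde{u}_{xxx}$ paired against $(\tilde{\phi}_x/v-\bar{\phi}^X_x\tilde{v}/(v\bar{v}^X))_x$); the shift term $\dot{X}(t)\bar{u}^X_{xxx}\tilde{u}_{xx}$; the terms in which the shock-profile factors $\bar{v}^X_x,\bar{u}^X_x,\bar{\phi}^X_x$ and their derivatives multiply quadratic combinations of the lower-order perturbations; and the genuinely nonlinear (cubic and higher) terms. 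The decisive structural point is to perform the integrations by parts at the right level so that the only top-order quantities that ever appear are $\tilde{u}_{xxx}$ (the dissipation), $\tilde{v}_{xx}$ and $\tilde{\phi}_{xx}$; in particular no third derivative of $\tilde{v}$ or $\tilde{\phi}$ survives.

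I would then estimate each $\mathcal{U}^{(2)}_j$ exactly as in the proofs of Lemmas~\ref{Lemma:u1} and \ref{Lemma:v2}. For the couplings with $\tilde{v}_{xx}$ and $\tilde{\phi}_{xx}$, Young's inequality with a small parameter gives a multiple of $\int_\mathbb{R}\tilde{u}_{xxx}^2\,dx$ (absorbed by the dissipation) plus an $O(1)$ multiple of $\int_\mathbb{R}(\tilde{v}_{xx}^2+\tilde{\phi}_{xx}^2)\,dx$. For the $\dot{X}(t)$-term, the bound $|\bar{u}^X_{xxx}|\le C\bar{v}^X_x\le C\delta_S^2$, the Hölder estimate using $\int_\mathbb{R}\bar{v}^X_x\,dx\le C\delta_S$, and Young's inequality produce $C\delta_S(\delta_S|\dot{X}|^2+\int_\mathbb{R}\tilde{u}_{xx}^2\,dx)$. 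For the shock-weighted quadratic terms, splitting $|\bar{v}^X_x|=|\bar{v}^X_x|^{1/2}|\bar{v}^X_x|^{1/2}$ and applying Young's inequality yields a $C\delta_S$ prefactor, after which the resulting $\int_\mathbb{R}\bar{v}^X_x\tilde{v}^2\,dx$ is controlled through Lemma~\ref{Lv^2}. The nonlinear terms are bounded using $\lVert(\tilde{v},\tilde{u},\tilde{\phi})\rVert_{W^{1,\infty}}\le C\varepsilon_1$ from \eqref{infty}, together with Young's inequality. In this bookkeeping, every occurrence of $\tilde{u}_{xx}^2$ on the right-hand side carries a factor that is either $\lVert v_x\rVert_{L^\infty}\le C(\delta_1+\varepsilon_1)$ (from the commutator terms crossed with $\tilde{u}_{xxx}$ after Young) or a $\delta_S$- or $\varepsilon_1$-weight; this is precisely why \eqref{derivu2} keeps $\lVert\tilde{u}_{xx}\rVert_{L^2}^2$ only inside the $(\delta_1+\varepsilon_1)$-bracket, while $\lVert\tilde{v}_{xx}\rVert_{L^2}^2$ and $\lVert\tilde{\phi}_{xx}\rVert_{L^2}^2$ appear with an $O(1)$ coefficient.

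Collecting the estimates, choosing the Young parameters small, using $\delta_S\le\delta_1$ and \eqref{infty}, and applying Lemmas~\ref{Lv^2} and \ref{Lemma:phi} to replace $\int_\mathbb{R}\bar{v}^X_x\tilde{v}^2\,dx$ and the residual $\tilde{\phi}$-integrals by $G_1+G^S+\int_\mathbb{R}\tilde{v}_x^2\,dx$, one arrives at a differential inequality of the form
\begin{equation*}
\frac{d}{dt}\int_\mathbb{R}\frac{\tilde{u}_{xx}^2}{2}\,dx+c\int_\mathbb{R}\tilde{u}_{xxx}^2\,dx\le C\int_\mathbb{R}(\tilde{v}_{xx}^2+\tilde{\phi}_{xx}^2)\,dx+C(\delta_1+\varepsilon_1)\Big(\delta_S|\dot{X}|^2+G_1+G^S+D+\int_\mathbb{R}(\tilde{v}_x^2+\tilde{u}_{xx}^2)\,dx\Big).
\end{equation*}
Integrating in $t\in[0,T]$ and noting that, unlike the $\tilde{v}_x$- and $\tilde{v}_{xx}$-estimates, the energy $\int_\mathbb{R}\tilde{u}_{xx}^2/2\,dx$ has no cross-term and hence produces no boundary contribution to be controlled, one obtains \eqref{derivu2}. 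The main obstacle is purely organizational: one must verify term by term that the integrations by parts dictated by the structure of \eqref{1b'} and the perturbed Poisson equation \eqref{1c'} leave only $\tilde{u}_{xxx}$, $\tilde{v}_{xx}$ and $\tilde{\phi}_{xx}$ at top order, and that the coefficient multiplying $\lVert\tilde{u}_{xx}\rVert_{L^2}^2$ is genuinely of size $\delta_1+\varepsilon_1$ rather than $O(1)$, since no $\tilde{u}_{xx}$-dissipation is available within this single estimate (it is recovered only when this lemma is combined with Lemma~\ref{Lemma:u1} in the proof of Proposition~\ref{Apriori}).
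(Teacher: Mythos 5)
Your proposal is correct and follows essentially the same route as the paper: differentiate \eqref{1b'} twice, test with $\tilde{u}_{xx}$, integrate by parts once on the pressure and electric-force contributions so that only $\tilde{u}_{xxx}$, $\tilde{v}_{xx}$, $\tilde{\phi}_{xx}$ survive at top order, and then estimate the $\dot X$-term via H\"older/Young with $|\bar u^X_{xxx}|\le C\bar v^X_x$, the shock-weighted terms via the $|\bar v^X_x|^{1/2}$ splitting, and the nonlinear terms via the $W^{1,\infty}$ smallness, before invoking Lemmas~\ref{Lv^2} and \ref{Lemma:phi} and integrating in time. Your structural remarks (no cross-term boundary contribution, and the $(\delta_1+\varepsilon_1)$ coefficient on $\lVert\tilde u_{xx}\rVert_{L^2}^2$) match the paper's bookkeeping exactly.
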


\begin{proof}
Differentiating \eqref{1b'} twice with respect to $x$, and multiplying the resultant equation by $\tilde{u}_{xx}$, we have
\begin{equation} \label{ux22}
\begin{split}
\left( \frac{\tilde{u}_{xx}^2}{2} \right)_t + \frac{\tilde{u}_{xxx}^2}{v} & = (\cdots)_x + \tilde{u}_{xx} \left( \frac{\tilde{v}}{v\bar{v}^X} \right)_{xxx} + \tilde{u}_{xxx} \left( \frac{\tilde{v}_x \tilde{u}_{xx}}{v^2} + \frac{\bar{v}^X_x \tilde{u}_{xx}}{v^2} \right) \\
& \quad + \tilde{u}_{xxx} \left( \frac{\tilde{v}_x \tilde{u}_x}{v^2} + \frac{\bar{v}^X_x \tilde{u}_x}{v^2} \right)_x + \tilde{u}_{xxx} \left( \frac{\bar{u}^X_x \tilde{v}}{v\bar{v}^X} \right)_{xx} - \tilde{u}_{xx} \left( \frac{\tilde{\phi}_x}{v} - \frac{\bar{\phi}^X_x\tilde{v}}{v\bar{v}^X} \right)_{xx} \\
& \quad + \dot{X} (t) \bar{u}^X_{xxx} \tilde{u}_{xx},
\end{split}
\end{equation}
where we have used
\[
\begin{split}
- \tilde{u}_{xx} \left( \frac{u_x}{v} - \frac{\bar{u}^X_x}{\bar{v}^X} \right)_{xxx} & = (\cdots)_x + \frac{\tilde{u}_{xxx}^2}{v} - \tilde{u}_{xxx} \left( \frac{\tilde{v}_x \tilde{u}_{xx}}{v^2} + \frac{\bar{v}^X_x \tilde{u}_{xx}}{v^2} \right) \\
& \quad - \tilde{u}_{xxx} \left( \frac{\tilde{v}_x \tilde{u}_x}{v^2} + \frac{\bar{v}^X_x \tilde{u}_x}{v^2} \right)_x - \tilde{u}_{xx} \left( \frac{\bar{u}^X_x \tilde{v}}{v \bar{v}^X} \right)_{xx}.
\end{split}
\]
Integrating \eqref{ux22} with respect to $x$ and applying integration by parts, we have
\begin{equation} \label{ux222}
\frac{d}{dt} \int_\mathbb{R} \frac{\tilde{u}_{xx}^2}{2} \, dx + \int_\mathbb{R} \frac{\tilde{u}_{xxx}^2}{v} \, dx  = \sum_{j=1}^4 \mathcal{U}^{(2)}_j,
\end{equation}
where
\[
\begin{split}
\mathcal{U}^{(2)}_1 & := \int_\mathbb{R} \left( - \frac{\tilde{u}_{xxx}\tilde{v}_{xx}}{v\bar{v}^X} + \frac{\tilde{u}_{xxx}\tilde{\phi}_{xx}}{v} \right) \, dx, \quad \mathcal{U}^{(2)}_2 := \dot{X}(t) \int_\mathbb{R} \bar{u}^X_{xxx} \tilde{u}_{xx} \, dx, \\
\mathcal{U}^{(2)}_3 & := \int_\mathbb{R} \tilde{u}_{xxx} \left( \frac{2 \tilde{v}_x^2}{v^2\bar{v}^X} + \frac{\tilde{v}_{xx}\tilde{v}}{v^2\bar{v}^X} - \frac{2\tilde{v}_x^2 \tilde{v}}{v^3 \bar{v}^X} - \frac{4\bar{v}^X_x \tilde{v}_x \tilde{v}}{v^3\bar{v}^X} - \frac{2 \bar{v}^X_x \tilde{v}_x \tilde{v}}{v^2(\bar{v}^X)^2} \right) \, dx \\
& \quad + \int_\mathbb{R} \tilde{u}_{xxx} \left( \frac{2\tilde{v}_x\tilde{u}_{xx}}{v^2} + \frac{\tilde{v}_{xx}\tilde{u}_x}{v^2} - \frac{2 \tilde{v}_x^2 \tilde{u}_x}{v^3} - \frac{4 \bar{v}^X_x \tilde{v}_x\tilde{u}_x}{v^3} \right) \, dx \\
& \quad + \int_\mathbb{R} \tilde{u}_{xxx} \left[ \frac{\bar{u}^X_x}{\bar{v}^X} \left( \frac{2 \tilde{v}_x^2 \tilde{v}}{v^3} + \frac{4\bar{v}^X_x \tilde{v}_x \tilde{v}}{v^3} - \frac{2\tilde{v}_x^2}{v^2} - \frac{\tilde{v}_{xx}\tilde{v}}{v^2} \right) - \left( \frac{\bar{u}^X_x}{\bar{v}^X} \right)_x \frac{\tilde{v}_x\tilde{v}}{v^2} \right] \, dx \\
& \quad + \int_\mathbb{R} \tilde{u}_{xxx} \left( \frac{\bar{\phi}^X_x \tilde{v}_x \tilde{v}}{v^2 \bar{v}^X} - \frac{\tilde{v}_x \tilde{\phi}_x}{v^2} \right) \, dx,
\end{split}
\]
and
\[
\begin{split}
\mathcal{U}^{(2)}_4 & := \int_\mathbb{R} \tilde{u}_{xxx} \left( \frac{2\bar{v}^X_x \tilde{v}_x}{v^2 \bar{v}^X} + \frac{2\bar{v}^X_x \tilde{v}_x}{v(\bar{v}^X)^2} - \frac{2(\bar{v}^X_x)^2 \tilde{v}}{v^3\bar{v}^X} - \frac{\bar{v}^X_{xx} \tilde{v}}{v^2 \bar{v}^X} + \frac{2(\bar{v}^X)^2 \tilde{v}}{v^2(\bar{v}^X)^2} - \left( \frac{\bar{v}^X_x}{(\bar{v}^X)^2} \right)_x \frac{\tilde{v}}{v} \right) \, dx \\
& \quad + \int_\mathbb{R} \tilde{u}_{xxx} \left( \frac{\bar{v}^X_x \tilde{u}_{xx}}{v^2} + \frac{\bar{v}^X_x \tilde{u}_{xx}}{v^2} - \frac{2(\bar{v}^X_x)^2 \tilde{u}_x}{v^3} + \frac{\bar{v}^X_{xx} \tilde{u}_x}{v^2} \right) \, dx \\
& \quad + \int_\mathbb{R} \tilde{u}_{xxx} \left[ \frac{\bar{u}^X_x}{\bar{v}^X} \left( \frac{\tilde{v}_{xx}}{v} - \frac{2\bar{v}^X_x \tilde{v}_x}{v^2} + \frac{2(\bar{v}^X_x)^2 \tilde{v}}{v^3} - \frac{\bar{v}^X_{xx}\tilde{v}}{v^2} \right) - \left( \frac{ \bar{u}^X_x}{\bar{v}^X} \right)_x \frac{2 \bar{v}^X_x \tilde{v}}{v^2} + \left( \frac{\bar{u}^X_x}{\bar{v}^X} \right)_{xx}  \frac{\tilde{v}}{v} \right] \, dx \\
& \quad + \int_\mathbb{R} \tilde{u}_{xxx} \left( - \frac{\bar{v}^X_x \tilde{\phi}_x}{v^2} - \frac{\bar{\phi}^X_x \tilde{v}_x}{v\bar{v}^X} + \frac{\bar{v}^X_x \bar{\phi}^X_x \tilde{v}}{v^2 \bar{v}^X} - \left( \frac{\bar{\phi}^X_x}{\bar{v}^X} \right)_x \frac{\tilde{v}}{v} \right) \, dx.
\end{split}
\]

We estimate the term $\mathcal{U}^{(2)}_j$ in a similar manner to the estimate of $\mathcal{U}^{(1)}_j$ for each $j$ in the proof of Lemma~\ref{Lemma:u1}. The term $\mathcal{U}^{(2)}_1$ is estimated by applying Young's inequality as
\[
\lvert \mathcal{U}^{(2)}_1 \rvert \leq C \int_\mathbb{R} \lvert \tilde{u}_{xxx} \rvert \left( \lvert \tilde{v}_{xx} \rvert + \lvert \tilde{\phi}_{xx} \rvert \right) \, dx  \leq \eta \int_\mathbb{R} \tilde{u}_{xxx}^2 \, dx + \frac{C}{\eta} \int_\mathbb{R} \left( \tilde{v}_{xx}^2 + \tilde{\phi}_{xx}^2 \right) \, dx
\]
for any $0 < \eta < 1$. We obtain the bounds on $\mathcal{U}^{(2)}_2$ by using $\lvert \bar{u}^X_{xxx} \rvert \leq C \lvert \bar{v}^X_x \rvert$, the H\"older inequality, and Young's inequality
\[
\begin{split}
\lvert \mathcal{U}^{(2)}_2 \rvert & \leq C \lvert \dot{X} \rvert \sqrt{ \int_\mathbb{R} \bar{v}^X_x \, dx } \sqrt{\int_\mathbb{R} \bar{v}^X_x \tilde{u}_{xx}^2 \, dx } \\
& \leq C \delta_S^2 \lvert \dot{X} \rvert^2 + \frac{C}{\delta_S^2} \left( \int_\mathbb{R} \bar{v}^X_x \, dx \right) \left( \int_\mathbb{R} \bar{v}^X_x \tilde{u}_{xx}^2 \, dx \right) \\
& \leq C \delta_S \left( \delta_S \lvert \dot{X} \rvert^2 + \int_\mathbb{R} \tilde{u}_{xx}^2 \, dx \right).
\end{split}
\]
The nonlinear term $\mathcal{U}^{(2)}_3$ is estimates as
\[
\begin{split}
\lvert \mathcal{U}^{(2)}_3 \rvert & \leq C \int_\mathbb{R} \lvert \tilde{v} \rvert \lvert \tilde{u}_{xxx} \rvert \left( \lvert \tilde{v}_{xx} \rvert + \lvert \tilde{v}_x \rvert + \lvert \tilde{v}_x \rvert^2 \right) \, dx + C \int_\mathbb{R} \lvert \tilde{u}_x \rvert \lvert \tilde{u}_{xxx} \rvert \lvert \tilde{v}_{xx} \rvert \, dx \\
& \quad + C \int_\mathbb{R} \lvert \tilde{v}_x \rvert \lvert \tilde{u}_{xxx} \rvert \left( \lvert \tilde{v}_x \rvert + \lvert \tilde{u}_{xx} \rvert + \lvert \tilde{u}_x \rvert + \lvert \tilde{v}_x \rvert \lvert \tilde{u}_x \rvert + \lvert \tilde{\phi}_x \rvert \right) \, dx \\
& \leq C \left( \lVert \tilde{v} \rVert_{W^{1,\infty}} + \lVert \tilde{u}_x \rVert_{L^\infty} \right) \int_\mathbb{R} \left( \tilde{u}_{xxx}^2 + \tilde{u}_{xx}^2 + \tilde{u}_x^2 + \tilde{v}_{xx}^2 + \tilde{v}_x^2 + \tilde{\phi}_x^2 \right) \, dx.
\end{split}
\]
Finally, we use the bounds $\lvert \bar{v}^X_{xx} \rvert, \lvert \bar{u}^X_{xx} \rvert, \lVert \bar{\phi}^X_{xx} \rvert, \lvert \bar{u}^X_{xxx} \rvert \leq C \lvert \bar{v}^X_x \rvert$ and $\lvert \bar{\phi}^X_x \rvert \sim \lvert \bar{u}^X_x \rvert \sim \lvert \bar{v}^X_x \rvert \leq C \delta_S^2$ to obtain
\[
\begin{split}
\lvert \mathcal{U}^{(2)}_4 \rvert & \leq C \int_\mathbb{R} \lvert \bar{v}^X_x \rvert \lvert \tilde{u}_{xxx} \rvert \lvert \tilde{v} \rvert \, dx + C \int_\mathbb{R} \lvert \bar{v}^X_x \rvert \lvert \tilde{u}_{xxx} \rvert \left( \lvert \tilde{v}_{xx} \rvert + \lvert \tilde{v}_x \rvert + \lvert \tilde{u}_{xx} \rvert + \lvert \tilde{u}_x \rvert + \lvert \tilde{\phi}_x \rvert \right) \, dx \\
& \leq C \int_\mathbb{R} \lvert \bar{v}^X_x \rvert^{1/2} \tilde{u}_{xxx}^2 \, dx + C \int_\mathbb{R} \lvert \bar{v}^X_x \rvert^{3/2} \tilde{v}^2 \, dx \\
& \quad + C \int_\mathbb{R} \lvert \bar{v}^X_x \rvert \left( \tilde{u}_{xxx}^2 + \tilde{u}_{xx}^2 + \tilde{u}_x^2 + \tilde{v}_{xx}^2 + \tilde{v}_x^2 + \tilde{\phi}_x^2 \right) \, dx \\
& \leq C \delta_S \int_\mathbb{R} \left( \bar{v}^X_x \tilde{v}^2 + \tilde{u}_{xxx}^2 + \tilde{u}_{xx}^2 + \tilde{u}_x^2 + \tilde{v}_{xx}^2 + \tilde{v}_x^2 + \tilde{\phi}_x^2 \right) \, dx.
\end{split}
\]

Combining \eqref{ux222} with the estimates of $\mathcal{U}^{(2)}_j$ and taking $\eta$ sufficiently small, by the smallness of the parameters $\delta_1, \varepsilon_1$ and the results of Lemmas~\ref{Lv^2} and \ref{Lemma:phi}, we obtain 
\[
\begin{split}
\frac{d}{dt} \int_\mathbb{R} \frac{\tilde{u}_{xx}^2}{2} \, dx + \int_\mathbb{R} \tilde{u}_{xxx}^2\, dx & \leq C \left( \delta_1 + \varepsilon_1 \right)  \left( G_1 + G^S  + D + \int_\mathbb{R} \left( \tilde{v}_x^2 + \tilde{u}_{xx}^2 \right) \, dx + \delta_S \lvert \dot{X} \rvert^2 \right) \\
& \quad + C \int_\mathbb{R} \left( \tilde{v}_{xx}^2 + \tilde{\phi}_{xx}^2 \right) \, dx. 
\end{split}
\]
By integrating this with respect to $t$, we obtain \eqref{derivu2}.
\end{proof}

\subsection{Proof of Proposition \ref{Apriori}}
Now, to prove Proposition \ref{Apriori}, we combine \eqref{Ree'} with the higher-order estimates \eqref{derivv1}, \eqref{derivu1}, \eqref{derivv2}, \eqref{derivu2}. Summing \eqref{Ree'}, \eqref{derivv1}, \eqref{derivu1}, \eqref{derivv2}, and \eqref{derivu2}, we have
\[
\begin{split}
& \lVert (\tilde{v},\tilde{u},\tilde{\phi}) (t,\cdot) \rVert_{H^2}^2 + \int_0^t \left( \delta_S \lvert \dot{X} \rvert^2 + G_1 + G^S + D + \lVert \tilde{v}_x \rVert_{H^1}^2 + \lVert \tilde{u}_{xx} \rVert_{H^1}^2 + \lVert \tilde{\phi}_{xx} \rVert_{H^1}^2 \right) \, d\tau \\
& \leq C \lVert (\tilde{v}_0,\tilde{u}_0) \rVert_{H^2}^2 + \eta \int_0^t \lVert \tilde{u}_{x} \rVert_{H^1}^2 (\tau) \, d\tau + \frac{C}{\eta} \int_0^t \left( \lVert \tilde{v}_x \rVert_{L^2}^2 + \lVert \tilde{\phi}_{xx} \rVert_{L^2}^2 \right) \, d\tau \\
& \quad + C \left(\sqrt{\delta_1} + \varepsilon_1 \right) \int_0^t \left( \delta_S \lvert \dot{X} \rvert^2 + G_1 + G^S + D + \lVert \tilde{v}_x \rVert_{H^1}^2 + \lVert \tilde{u}_{xx} \rVert_{L^2}^2 \right) \, d\tau \\
& \quad + C \lVert \tilde{u}_x (t,\cdot) \rVert_{L^2}^2  + C \int_0^t \left( \lVert \tilde{v}_{xx} \rVert_{L^2}^2 + \lVert \tilde{u}_{xx} \rVert_{L^2}^2 + \lVert \tilde{\phi}_{xx} \rVert_{L^2}^2 \right) \, d\tau,
\end{split}
\]
where $ \eta > 0$ is any small constant and $C>0$ is a generic constant. To control $\textstyle \int_0^t \lVert \tilde{v}_{xx} \rVert_{L^2}^2 $ and $\textstyle  \lVert \tilde{u}_x(t,\cdot) \rVert_{L^2}^2 + \int_0^t \lVert \tilde{u}_{xx} \rVert_{L^2}^2$, we use \eqref{derivv2} and \eqref{derivu1} successively. Then we obtain
\[
\begin{split}
& \lVert (\tilde{v},\tilde{u},\tilde{\phi}) (t,\cdot) \rVert_{H^2}^2 + \int_0^t \left( \delta_S \lvert \dot{X} \rvert^2 + G_1 + G^S + D + \lVert \tilde{v}_x \rVert_{H^1}^2 + \lVert \tilde{u}_{xx} \rVert_{H^1}^2 + \lVert \tilde{\phi}_{xx} \rVert_{H^1}^2 \right) \, d\tau \\
& \leq C \lVert (\tilde{v}_0,\tilde{u}_0) \rVert_{H^2}^2 + \tilde{\eta} \int_0^t \lVert \tilde{u}_{x} \rVert_{H^1}^2 (\tau) \, d\tau + \frac{C}{\tilde{\eta}} \int_0^t \left( \lVert \tilde{v}_x \rVert_{L^2}^2 + \lVert \tilde{\phi}_{xx} \rVert_{L^2}^2 \right) \, d\tau \\
& \quad + C \left(\sqrt{\delta_1} + \varepsilon_1 \right) \int_0^t \left( \delta_S \lvert \dot{X} \rvert^2 + G_1 + G^S + D + \lVert \tilde{v}_x \rVert_{H^1}^2 + \lVert \tilde{u}_{xx} \rVert_{L^2}^2 \right) \, d\tau  + C \int_0^t \lVert \tilde{\phi}_{xx} \rVert_{L^2}^2 \, d\tau
\end{split}
\]
for any $\tilde{\eta}$ with $0 < \tilde{\eta} < 1$. Taking $\tilde{\eta}$ sufficiently small, we have
\[
\begin{split}
& \lVert (\tilde{v},\tilde{u},\tilde{\phi}) (t,\cdot) \rVert_{H^2}^2 + \int_0^t \left( \delta_S \lvert \dot{X} \rvert^2 + G_1 + G^S + D + \lVert \tilde{v}_x \rVert_{H^1}^2 + \lVert \tilde{u}_{xx} \rVert_{H^1}^2 + \lVert \tilde{\phi}_{xx} \rVert_{H^1}^2 \right) \, d\tau \\
& \leq C \lVert (\tilde{v}_0,\tilde{u}_0) \rVert_{H^2}^2  + C \left(\sqrt{\delta_1} + \varepsilon_1 \right) \int_0^t \left( \delta_S \lvert \dot{X} \rvert^2 + G_1 + G^S + D + \lVert \tilde{v}_x \rVert_{H^1}^2 + \lVert \tilde{u}_{xx} \rVert_{L^2}^2 \right) \, d\tau \\
& \quad + C \int_0^t \left( \lVert \tilde{v}_x \rVert_{L^2}^2 + \lVert \tilde{\phi}_{xx} \rVert_{L^2}^2 \right) \, d\tau.
\end{split}
\]
Lastly, using \eqref{derivv1}, we obtain \eqref{apriori} by the smallness of the parameters $\delta_1$ and $\varepsilon_1$.

\bigskip

\appendix

\section{Basic elliptic estimates for the Poisson equation}
In this appendix, we provide basic $L^2$-estimates of the perturbation $\tilde{\phi}$ and its $x$-derivatives.

\begin{lemma}
Under the assumptions in Proposition \ref{Apriori}, it holds that
\begin{equation} \label{phiest}
\lVert \tilde{\phi} (t,\cdot) \rVert_{H^k}^2 \leq C \lVert \tilde{v} (t,\cdot) \rVert_{H^{k-1}}^2
\end{equation}
for all $t \in [0,T]$ and $k=1,2,3$, where $\lVert \cdot \rVert_{H^0} = \lVert \cdot \rVert_{L^2}$ and $C>0$ is a generic constant.
\end{lemma}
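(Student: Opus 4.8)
The plan is to carry out weighted $L^2$ energy estimates directly on the elliptic equation \eqref{temP} and its $x$-derivatives, proceeding by induction on $k$. The key structural observation is that \eqref{temP} is a \emph{uniformly elliptic} equation for $\tilde\phi$ with a \emph{coercive} zeroth-order term: since $v$ is bounded above and below by the a priori assumption, the operator $-[\,\cdot/v]_x$ is uniformly elliptic, while by the Poisson equation of \eqref{shODE} one has $\bar v^X e^{\bar\phi^X} = 1 + (\bar\phi^X_x/\bar v^X)_x \ge 1 - C\delta_S^2 \ge c > 0$, so the coefficient of $\tilde\phi$ on the right-hand side of \eqref{temP} is bounded below by a positive constant. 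This is precisely the mechanism that upgrades the weighted control $\int_\mathbb R \bar v^X_x\tilde\phi^2\,dx$ obtained in Lemma~\ref{Lemma:phi} to the full $L^2$ control claimed here.

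For the base case $k=1$ I would multiply \eqref{temP} by $\tilde\phi$ and integrate over $\mathbb R$. After integration by parts the left-hand side produces $\int_\mathbb R \tilde\phi_x^2/v\,dx$ together with the cross term $\int_\mathbb R \bigl(\tfrac1v - \tfrac1{\bar v^X}\bigr)\bar\phi^X_x\tilde\phi_x\,dx$, which is $\mathcal O(\delta_S^2)$-small after writing $\tfrac1v - \tfrac1{\bar v^X} = -\tilde v/(v\bar v^X)$, using $|\bar\phi^X_x|\le C\bar v^X_x$ and Young's inequality. On the right-hand side, the term $-\bar v^X e^{\bar\phi^X}\tilde\phi$ contributes, once moved to the left, the coercive quantity $\int_\mathbb R \bar v^X e^{\bar\phi^X}\tilde\phi^2\,dx \ge c\int_\mathbb R\tilde\phi^2\,dx$; the linear forcing $-e^{\bar\phi^X}\tilde v$ is absorbed by $\varepsilon\int_\mathbb R\tilde\phi^2\,dx + C_\varepsilon\int_\mathbb R\tilde v^2\,dx$; and the nonlinear term $e^{\bar\phi^X}\mathcal N_p$, which by Taylor expansion of $e^{\tilde\phi}$ has the form $\bar v^X\mathcal O(\tilde\phi^2) + \tilde v\,\mathcal O(\tilde\phi)$, is bounded by $C\lVert\tilde\phi\rVert_{L^\infty}\int_\mathbb R(\tilde\phi^2+\tilde v^2)\,dx$. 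Choosing $\varepsilon$ small and using the smallness of $\delta_S$ and of $\lVert\tilde\phi\rVert_{L^\infty}\le C\varepsilon_1$ from \eqref{infty} to absorb all $\tilde\phi$-terms into the left-hand side yields $\lVert\tilde\phi\rVert_{H^1}^2 \le C\lVert\tilde v\rVert_{L^2}^2$.

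For $k=2$ and $k=3$ I would differentiate \eqref{temP} once, respectively twice, in $x$ and repeat the argument, testing against $\tilde\phi_x$, respectively $\tilde\phi_{xx}$. Each differentiation generates: the same coercive pair of terms, now at the top order; commutator terms in which at least one derivative falls on the shock profile and which therefore carry a factor $\bar v^X_x = \mathcal O(\delta_S^2)$ (via Lemma~\ref{lemma:4.5}) and are absorbable; forcing terms carrying one more $x$-derivative on $\tilde v$, producing $\lVert\tilde v_x\rVert_{L^2}$, respectively $\lVert\tilde v_{xx}\rVert_{L^2}$; and genuinely nonlinear remainders, cubic or higher in the perturbation, controlled by $\lVert(\tilde v,\tilde\phi)\rVert_{W^{k-1,\infty}}\le C\varepsilon_1$ times lower-order norms. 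All lower-order $\tilde\phi$-norms appearing in these bounds are then eliminated by the estimate already established for $k-1$, closing the induction.

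The argument contains no genuinely delicate point — in particular no sharp Poincaré inequality or maximization is needed, in contrast with the zeroth-order estimate — so the main work is the careful bookkeeping of the many commutator and nonlinear terms produced by differentiating \eqref{temP}, checking that each is either of top order (absorbed by coercivity), of size $\mathcal O(\delta_S)$ or $\mathcal O(\varepsilon_1)$ (absorbed by smallness), or dominated by $\lVert\tilde v\rVert_{H^{k-1}}$ or by the inductive hypothesis. The one mild subtlety worth flagging is maintaining the uniform positivity of the zeroth-order coefficient after differentiation, which holds because $(\bar v^X e^{\bar\phi^X})_x$ and $(\bar v^X e^{\bar\phi^X})_{xx}$ are both $\mathcal O(\delta_S^2)$, so the differentiated zeroth-order terms split into a coercive leading piece plus small, absorbable remainders.
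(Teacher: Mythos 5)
Your proposal is correct and follows essentially the same route as the paper's Appendix A: test \eqref{temP} (equivalently \eqref{1c'}) and its first and second $x$-derivatives against $\tilde\phi$, $\tilde\phi_x$, $\tilde\phi_{xx}$, use the uniform ellipticity in $v$ and the lower bound $\bar v^X e^{\bar\phi^X}\ge c>0$ for coercivity, and absorb the cross, commutator, and nonlinear terms by the smallness of $\delta_S$ and $\varepsilon_1$ together with the lower-order case. No gaps.
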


\begin{proof}
Taking the $L^2$ inner product of \eqref{1c'} against $\tilde{\phi}$, we have
\[
\la - \left( \frac{\tilde{\phi}_x}{v} - \frac{\bar{\phi}^X_x \tilde{v}}{v\bar{v}^X} \right)_x, \tilde{\phi} \ra = \la - e^{\bar{\phi}^X}\tilde{v} - \bar{v}^X e^{\bar{\phi}^X}, \tilde{\phi} \ra + \la \bar{v}^X e^{\bar{\phi}^X}\left( 1 + \tilde{\phi} - e^{\tilde{\phi}} \right) + e^{\bar{\phi}^X}\tilde{v}\left( 1 - e^{\tilde{\phi}} \right), \tilde{\phi} \ra.
\]
By integration by parts, one can obtain
\[
\begin{split}
\la \frac{\tilde{\phi}_x}{v},\tilde{\phi}_x \ra + \la \bar{v}^Xe^{\bar{\phi}^X}\tilde{\phi}, \tilde{\phi} \ra & = \la \frac{\bar{\phi}^X_x \tilde{v}}{v\bar{v}^X}, \tilde{\phi}_x \ra + \la - e^{\bar{\phi}^X} \tilde{v}, \tilde{\phi} \ra \\
& \quad + \la \bar{v}^X e^{\bar{\phi}^X}\left( 1 + \tilde{\phi} - e^{\tilde{\phi}} \right) + e^{\bar{\phi}^X}\tilde{v}\left( 1 - e^{\tilde{\phi}} \right), \tilde{\phi} \ra.
\end{split}
\]
Using the bound $\lvert \bar{\phi}^X_x \rvert \leq C \delta_S^2 $ and the expansion $e^{\tilde{\phi}} = 1 + \mathcal{O}(\tilde{\phi})$ near $\tilde{\phi}=0$, we have after several applications of Young's inequality
\[
\lVert \tilde{\phi} \rVert_{L^2}^2 + \lVert \tilde{\phi}_x \rVert_{L^2}^2  \leq \eta \left( \lVert \tilde{\phi} \rVert_{L^2}^2 + \lVert \tilde{\phi}_x \rVert_{L^2}^2 \right) + \frac{C}{\eta} \lVert \tilde{v} \rVert_{L^2}^2 + C \left( \delta_S^2 + \lVert \tilde{v} \rVert_{L^\infty} + \lVert \tilde{\phi} \rVert_{L^\infty} \right) \lVert \tilde{\phi} \rVert_{L^2}^2
\]
for any constant $0 < \eta < 1$. Taking $\eta$ small enough, we obtain
\begin{equation} \label{Apphi1}
\lVert \tilde{\phi} \rVert_{H^1}^2 \leq C \lVert \tilde{v} \rVert_{L^2}^2
\end{equation}
for sufficiently small $\delta_1$, $\varepsilon_1$.

Next we estimate $\lVert \tilde{\phi}_x \rVert_{H^1}$. Differentiating \eqref{1c'} and taking the $L^2$ inner product of the resultant equation against $\tilde{\phi}_x$, we have
\[
\begin{split}
\la - \left( \frac{\tilde{\phi}_x}{v} - \frac{\bar{\phi}^X_x \tilde{v}}{v\bar{v}^X} \right)_{xx}, \tilde{\phi}_x \ra & = - \la ( e^{\bar{\phi}^X}\tilde{v} + \bar{v}^X e^{\bar{\phi}^X} )_x, \tilde{\phi}_x \ra \\
& \quad + \la \left( \bar{v}^X e^{\bar{\phi}^X}\left( 1 + \tilde{\phi} - e^{\tilde{\phi}} \right) + e^{\bar{\phi}^X}\tilde{v}\left( 1 - e^{\tilde{\phi}} \right) \right)_x, \tilde{\phi}_x \ra.
\end{split}
\]
By integration by parts, we have after rearrangement
\[
\begin{split}
\la \frac{\tilde{\phi}_{xx}}{v}, \tilde{\phi}_{xx} \ra + \la \bar{v}^X e^{\bar{\phi}^X} \tilde{\phi}_x, \tilde{\phi}_x \ra & = \la \frac{v_x \tilde{\phi}_x}{v^2} + \left( \frac{\bar{\phi}^X_x \tilde{v}}{v\bar{v}^X} \right)_x, \tilde{\phi}_{xx} \ra - \la (e^{\bar{\phi}^X}\tilde{v})_x + (\bar{v}^X e^{\bar{\phi}^X})_x \tilde{\phi}, \tilde{\phi}_x \ra \\
& \quad + \la \left( \bar{v}^X e^{\bar{\phi}^X}\left( 1 + \tilde{\phi} - e^{\tilde{\phi}} \right) + e^{\bar{\phi}^X}\tilde{v}\left( 1 - e^{\tilde{\phi}} \right) \right)_x, \tilde{\phi}_x \ra.
\end{split}
\]
Similarly to the estimate of $\lVert \tilde{\phi} \rVert_{H^1}$, we obtain by Young's inequality
\begin{equation} \label{Ap2}
\begin{split}
\lVert \tilde{\phi}_{xx} \rVert_{L^2}^2 + \lVert \tilde{\phi}_x \rVert_{L^2}^2 & \leq  \eta  \lVert \tilde{\phi}_x \rVert_{L^2}^2 + \frac{C}{\eta} \lVert \tilde{v}_x \rVert_{L^2}^2 \\
& \quad + C \left( \delta_S^2 + \lVert \tilde{v} \rVert_{L^\infty} + \lVert \tilde{\phi} \rVert_{W^{1,\infty}} \right) \left( \lVert \tilde{\phi} \rVert_{H^2}^2 + \lVert \tilde{v} \rVert_{H^1}^2 \right) 
\end{split}
\end{equation}
for any $0<\eta<1$. For sufficiently small $\eta$, $\delta_1$, and $\varepsilon_1$, the inequality \eqref{Ap2}, together with \eqref{Apphi1}, yields
\[
\lVert \tilde{\phi} \rVert_{H^2}^2 \leq C \lVert \tilde{v} \rVert_{H^1}^2.
\]
Similarly, one can obtain the $H^3$-estimate by differentiating \eqref{1c'} twice and taking the $L^2$ inner product against $\tilde{\phi}_{xx}$. We omit the details. 
\end{proof}

\section{Global existence of perturbations}
We show that the local solution $(v,u,\phi)$ obtained in Proposition \ref{local} remains close to the shifted shock profile $(\bar{v}^X,\bar{u}^X,\bar{\phi}^X)$ in a small time interval, provided the reference state $(\underline{v},\underline{u},\underline{\phi})$ is appropriately chosen. We choose smooth functions $\underline{v}$, $\underline{u}$, and $\underline{\phi}$ satisfying
\[
\begin{split}
& \sum_{\pm} \left( \lVert \underline{v} - v_\pm \rVert_{L^2(\mathbb{R}_\pm)} + \lVert \underline{u} - u_\pm \rVert_{L^2(\mathbb{R}_\pm)} + \lVert \underline{\phi} - \phi_\pm \rVert_{L^2(\mathbb{R}_\pm)}  \right) \\
& \quad + \lVert \underline{v}_x \rVert_{H^1(\mathbb{R})} + \lVert \underline{u}_x \rVert_{H^1(\mathbb{R})} + \lVert  \underline{\phi}_x \rVert_{H^2(\mathbb{R})} < C \delta_S.
\end{split}
\]
Then, by \eqref{shderiv}, we have
\[
\begin{split}
& \lVert \underline{v} - \bar{v} \rVert_{H^2(\mathbb{R})} + \lVert \underline{u} - \bar{u} \rVert_{H^2(\mathbb{R})} + \lVert \underline{\phi} - \bar{\phi} \rVert_{H^3(\mathbb{R})} \\
& \quad \leq \sum_{\pm} \left( \lVert \underline{v} - v_\pm \rVert_{L^2(\mathbb{R}_\pm)} +  \lVert \underline{u} - u_\pm \rVert_{L^2(\mathbb{R}_\pm)} +  \lVert \underline{\phi} - \phi_\pm \rVert_{L^2(\mathbb{R}_\pm)} \right) \\
& \qquad + \sum_{\pm} \left( \lVert \bar{v} - v_\pm \rVert_{L^2(\mathbb{R}_\pm)} + \lVert \bar{u} - u_\pm \rVert_{L^2(\mathbb{R}_\pm)} + \lVert \bar{\phi} - \phi_\pm \rVert_{L^2(\mathbb{R}_\pm)} \right) \\
& \qquad + \lVert \underline{v}_x \rVert_{H^1(\mathbb{R})} + \lVert  \underline{u}_x \rVert_{H^1(\mathbb{R})} + \lVert \underline{\phi}_x \rVert_{H^2(\mathbb{R})}  + \lVert \bar{v}_x \rVert_{H^1(\mathbb{R})} + \lVert \bar{u}_x \rVert_{H^1(\mathbb{R})} + \lVert \bar{\phi}_x \rVert_{H^2(\mathbb{R})} \\
& \quad \leq C \sqrt{\delta_S}.
\end{split}
\]
For sufficiently small $\delta_S$, we choose $\varepsilon_2$ as
\[
\varepsilon_2 < \frac{\varepsilon_1}{3} - C \delta_S - C \sqrt{\delta_S}.
\]
Consider any initial data $(v_0,u_0)$ satisfying
\[
\sum_{\pm} \left( \lVert v_0 - v_\pm \rVert_{L^2(\mathbb{R}_\pm)} + \lVert u_0 - u_\pm \rVert_{L^2(\mathbb{R}_\pm)} \right) + \lVert v_{0x} \rVert_{H^1(\mathbb{R})} + \lVert u_{0x} \rVert_{H^1(\mathbb{R})} < \varepsilon_2.
\]
Then, we obtain
\[
\begin{split}
& \lVert v_0 - \underline{v} \rVert_{H^2(\mathbb{R})} + \lVert u_0 - \underline{u} \rVert_{H^2(\mathbb{R})} \\
& \quad \leq \sum_{\pm} \left( \lVert v_0 - v_\pm \rVert_{L^2(\mathbb{R}_\pm)} + \lVert u_0 - u_\pm \rVert_{L^2(\mathbb{R}_\pm)} + \lVert \underline{v} - v_\pm \rVert_{L^2(\mathbb{R}_\pm)} +\lVert \underline{u} - u_\pm \rVert_{L^2(\mathbb{R}_\pm)} \right) \\
& \qquad + \lVert \underline{v}_x \rVert_{H^1(\mathbb{R})} + \lVert \underline{u}_x \rVert_{H^1(\mathbb{R})} + \lVert v_{0x} \rVert_{H^1(\mathbb{R})} + \lVert u_{0x} \rVert_{H^1(\mathbb{R})} \\
& \quad \leq \varepsilon_2 + C \delta_S < \frac{\varepsilon_1}{3}.
\end{split}
\]
By the result of Proposition \ref{local}, there exists $T_0>0$ such that
\begin{equation} \label{sol-s}
\lVert v - \underline{v} \rVert_{L^\infty(0,T_0;H^2(\mathbb{R}))} + \lVert u - \underline{u} \rVert_{L^\infty(0,T_0;H^2(\mathbb{R}))} + \lVert \phi - \underline{\phi} \rVert_{L^\infty(0,T_0;H^3(\mathbb{R}))} \leq \frac{\varepsilon_1}{2}.
\end{equation}
On the other hand, it holds that
\[
\begin{split}
& \lVert \underline{v} - \bar{v}^X \rVert_{H^2(\mathbb{R})} + \lVert \underline{u} - \bar{u}^X \rVert_{H^2(\mathbb{R})} + \lVert \underline{\phi} - \bar{\phi}^X \rVert_{H^3(\mathbb{R})} \\
& \quad \leq \sum_{\pm} \left( \lVert \underline{v} - v_\pm \rVert_{L^2(\mathbb{R}_\pm)} +  \lVert \underline{u} - u_\pm \rVert_{L^2(\mathbb{R}_\pm)} +  \lVert \underline{\phi} - \phi_\pm \rVert_{L^2(\mathbb{R}_\pm)} \right) \\
& \qquad + \sum_{\pm} \left( \lVert \bar{v}^X - v_\pm \rVert_{L^2(\mathbb{R}_\pm)} + \lVert \bar{u}^X - u_\pm \rVert_{L^2(\mathbb{R}_\pm)} + \lVert \bar{\phi}^X - \phi_\pm \rVert_{L^2(\mathbb{R}_\pm)} \right) \\
& \qquad + \lVert \underline{v}_x \rVert_{H^1(\mathbb{R})} + \lVert  \underline{u}_x \rVert_{H^1(\mathbb{R})} + \lVert \underline{\phi}_x \rVert_{H^2(\mathbb{R})}  + \lVert \bar{v}^X_x \rVert_{H^1(\mathbb{R})} + \lVert \bar{u}^X_x \rVert_{H^1(\mathbb{R})} + \lVert \bar{\phi}^X_x \rVert_{H^2(\mathbb{R})} \\
& \quad \leq C \sqrt{\delta_S} \left( 1 + \sqrt{\lvert t \rvert} + \sqrt{\lvert X(t) \rvert} \right) \leq C \sqrt{\delta_S} \left( 1 + \sqrt{t} \right).
\end{split}
\]
Taking $T \in (0,T_0)$ small enough so that $C \sqrt{\delta_S}(1+\sqrt{T}) \leq \varepsilon_1/ 2$, we have
\begin{equation} \label{s-sp}
\lVert \underline{v} - \bar{v}^X \rVert_{L^\infty(0,T;H^2(\mathbb{R}))} + \lVert \underline{u} - \bar{u}^X \rVert_{L^\infty(0,T;H^2(\mathbb{R}))} + \lVert \underline{\phi} - \bar{\phi}^X \rVert_{L^\infty(0,T;H^3(\mathbb{R}))} \leq \frac{\varepsilon_1}{2}.
\end{equation}
Combining \eqref{sol-s} and \eqref{s-sp}, we have
\[
\lVert v - \bar{v}^X \rVert_{L^\infty(0,T;H^2(\mathbb{R}))} + \lVert u - \bar{u}^X \rVert_{L^\infty(0,T;H^2(\mathbb{R}))} + \lVert \phi - \bar{\phi}^X \rVert_{L^\infty(0,T;H^3(\mathbb{R}))} \leq \varepsilon_1
\]
for the initial data satisfying
\[
\begin{split}
& \lVert v_0 - \bar{v} \rVert_{H^2(\mathbb{R})} + \lVert u_0 - \bar{u} \rVert_{H^2(\mathbb{R})} \\
& \quad \leq \lVert v_0 - \underline{v} \rVert_{H^2(\mathbb{R})} + \lVert u_0 - \underline{u} \rVert_{H^2(\mathbb{R})} + \lVert \bar{v} - \underline{v} \rVert_{H^2(\mathbb{R})} + \lVert \bar{u} - \underline{u} \rVert_{H^2(\mathbb{R})} \\
& \quad \leq  \varepsilon_2 + C \delta_S + C \sqrt{\delta_S} \leq \frac{\varepsilon_1}{3} =: \varepsilon_0.
\end{split}
\]
Then, the a priori estimate \eqref{apriori} implies that $T$ can be extended to $+\infty$, thereby proving the first assertion of Theorem \ref{Main}. Moreover, the global solution satisfies the estimate
\begin{equation} \label{MainR}
\begin{split}
& \sup_{t >0} \lVert (v-\bar{v}^X,u-\bar{u}^X, \phi-\bar{\phi}^X ) \rVert_{H^2(\mathbb{R})}^2 + \delta_S \int_0^\infty \lvert \dot{X}(t) \rvert^2 \, dt \\
& + \frac{\sigma}{\sqrt{\delta_S}} \int_0^\infty \int_\mathbb{R} \bar{v}^X_x \bigg\lvert \tilde{p}(v)-\tilde{p}(\bar{v}^X) - \frac{u-\bar{u}^X}{2C_*} \bigg\rvert^2 \, dx dt + \int_0^\infty \int_\mathbb{R} \bar{v}^X_x \lvert u - \bar{u}^X \rvert^2 \, dx dt \\
& + \int_0^\infty \int_\mathbb{R} \left( \lvert \partial_x(v-\bar{v}^X) \rvert^2 + \lvert \partial_{xx}(v-\bar{v}^X) \rvert^2 + \lvert \partial_x(u-\bar{u}^X) \rvert^2 + \lvert \partial_{xx}(u-\bar{u}^X) \rvert^2 \right) \, dx dt \\
& + \int_0^\infty \int_\mathbb{R} \left( \lvert \partial_{xxx}(u-\bar{u}^X) \rvert^2 + \lvert \partial_{xx}(\phi-\bar{\phi}^X) \rvert^2 + \lvert \partial_{xxx}(\phi-\bar{\phi}^X) \rvert^2 \right) \, dx dt \\
& \qquad \leq C \lVert (v_0-\bar{v},u_0-\bar{u}) \rVert_{H^2(\mathbb{R})}^2
\end{split}
\end{equation}
for some positive constants $C>0$, $C_*>0$.

\section{Time-asymptotic behavior}
Now we investigate the time-asymptotic behavior of the global solution obtained in the previous Appendix. For this purpose, we define the function $g(t)$ as
\[
g(t) := \lVert ( \tilde{v}_x, \tilde{u}_x, \tilde{\phi}_x) (t,\cdot) \rVert_{L^2}^2,
\]
where
\[
(\tilde{v},\tilde{u},\tilde{\phi})(t,x) := (v,u,\phi)(t,x) - (\bar{v}^X,\bar{u}^X,\bar{\phi}^X)(t,x).
\]
Our goal is to verify that $g(t)$ goes to zero as $t \rightarrow +\infty$, by showing $g \in W^{1,1}(\mathbb{R}_+)$. Thanks to \eqref{MainR}, we have
\begin{equation} \label{vxuxL1}
\int_0^\infty \lVert (\tilde{v}_x,\tilde{u}_x) (t,\cdot) \rVert_{L^2}^2 \, dt \leq C \lVert (v_0-\bar{v},u_0-\bar{u}) \rVert_{H^2}^2.
\end{equation}
By \eqref{derivphi}, together with \eqref{MainR}, we also have
\begin{equation} \label{phixL1}
\begin{split}
\int_0^\infty \lVert \tilde{\phi}_x (t,\cdot) \rVert_{L^2}^2 \, dt & \leq C \int_0^\infty \int_\mathbb{R} \bigg( \frac{\bar{v}^X_x}{\sqrt{\delta_S}} \bigg\lvert \tilde{p}(v) - \tilde{p}(\bar{v}^X) - \frac{\tilde{u}}{2C_*} \bigg\rvert^2 + \bar{v}^X_x \tilde{u}^2 + \tilde{v}_x^2 \bigg) \, dx dt \\
& \leq C \lVert (v_0-\bar{v},u_0-\bar{u}) \rVert_{H^2}^2.
\end{split}
\end{equation}
Thus, we obtain that $g(t) \in L^1$.

Next, we consider the first derivative $g'(t)$. First, using \eqref{1a'}, we have
\[
\begin{split}
\bigg\lvert \frac{d}{dt} \int_\mathbb{R} \tilde{v}_x^2  \, dx \bigg\rvert  & = \bigg\lvert 2 \int_\mathbb{R} \tilde{v}_x \tilde{v}_{xt}  \, dx  \bigg\rvert \leq \bigg\lvert 2 \int_\mathbb{R} \tilde{v}_x \tilde{u}_{xx}  \, dx \bigg\rvert  +  \bigg\lvert 2 \dot{X}(t)  \int_\mathbb{R} \bar{v}^X_{xx} \tilde{v}_x \, dx \bigg\rvert.
\end{split}
\]
Here, we apply Young's inequality and the H\"older inequality on the right-hand side of the inequality to obtain
\[
\begin{split}
RHS & \leq  C  \int_\mathbb{R} \tilde{v}_x^2 + \tilde{u}_{xx}^2 \, dx + C \delta_S \lvert \dot{X} \rvert^2 + \frac{C}{\delta_S} \left( \int_\mathbb{R} \lvert \bar{v}^X_{xx} \rvert \lvert \tilde{v}_x \rvert \, dx \right)^2 \\
& \leq C  \int_\mathbb{R} \tilde{v}_x^2 + \tilde{u}_{xx}^2 \, dx + C \delta_S \lvert \dot{X} \rvert^2 + \frac{C}{\delta_S} \left( \int_\mathbb{R} \lvert \bar{v}^X_{xx} \rvert \, dx \right) \left( \int_\mathbb{R} \lvert \bar{v}^X_{xx} \rvert \lvert \tilde{v}_x \rvert^2 \, dx \right) \\
& \leq C \int_\mathbb{R} \tilde{v}_x^2 + \tilde{u}_{xx}^2 \, dx + C \delta_S \lvert \dot{X} \rvert^2,
\end{split}
\]
where in the last inequality, we used the bound $\lvert \bar{v}^X_{xx} \rvert \leq C \lvert \bar{v}^X_x \rvert$. Also, by \eqref{derivphi} and \eqref{phit}, we obtain
\[
\begin{split}
\bigg\lvert \frac{d}{dt} \int_\mathbb{R} \tilde{\phi}_x^2 \, dx \bigg\rvert & = \bigg\lvert 2 \int_\mathbb{R} \tilde{\phi}_x \tilde{\phi}_{xt} \, dx \bigg\rvert \leq C \left( \int_\mathbb{R} \tilde{\phi}_x^2 \, dx + \int_\mathbb{R} \tilde{\phi}_{xt}^2 \, dx \right) \\
& \leq  C \int_\mathbb{R} \bigg( \frac{\bar{v}^X_x}{\sqrt{\delta_S}} \bigg\lvert \tilde{p}(v) - \tilde{p}(\bar{v}^X) - \frac{\tilde{u}}{2C_*} \bigg\rvert^2 + \bar{v}^X_x \tilde{u}^2 + \tilde{v}_x^2 + \tilde{u}_x^2 + \delta_S \lvert \dot{X} \rvert^2 \bigg) \, dx.
\end{split}
\]
Lastly, we have by using \eqref{1b'} and integrating by parts
\[
\begin{split}
\bigg\lvert \frac{d}{dt} \int_\mathbb{R} \tilde{u}_x^2 \, dx \bigg\rvert & = \bigg\lvert 2 \int_\mathbb{R} \tilde{u}_x \tilde{u}_{xt} \, dx \bigg\rvert \\
& = 2 \bigg\lvert \int_\mathbb{R} \tilde{u}_{xx} \left[ \left( \frac{1}{v} - \frac{1}{\bar{v}^X} \right)_x - \left( \frac{u_x}{v} - \frac{\bar{u}^X_x}{\bar{v}^X} \right)_x + \left( \frac{\phi_x}{v} - \frac{\bar{\phi}^X_x}{\bar{v}^X} \right) \right] + \tilde{u}_x \dot{X}(t) \bar{u}^X_{xx} \, dx \bigg\rvert \\
& \leq C \bigg\lvert \int_\mathbb{R} \tilde{u}_{xx} \left( \frac{\tilde{v}}{v\bar{v}^X}  \right)_x \, dx \bigg\rvert + C \bigg\lvert \int_\mathbb{R} \tilde{u}_{xx} \left( \frac{\tilde{u}_x}{v} - \frac{\bar{u}^X_x\tilde{v}}{v\bar{v}^X}  \right)_x \, dx \bigg\rvert \\
& \quad + C \bigg\lvert \int_\mathbb{R} \tilde{u}_{xx} \left( \frac{\tilde{\phi}_x}{v} - \frac{\bar{\phi}^X_x\tilde{v}}{v\bar{v}^X} \right) \, dx \bigg\rvert + C \bigg\lvert \int_\mathbb{R} \tilde{u}_x \dot{X}(t) \bar{u}^X_{xx} \, dx \bigg\rvert \\
& =: \mathcal{U}_1 + \mathcal{U}_2 + \mathcal{U}_3 + \mathcal{U}_4.
\end{split}
\]
The term $\mathcal{U}_1$ is estimated by using \eqref{v^2} as
\[
\begin{split}
\lvert \mathcal{U}_1 \rvert & \leq C \int_\mathbb{R} \lvert \tilde{u}_{xx} \rvert \left( \lvert \tilde{v}_x \rvert  + \lvert \tilde{v} \rvert \lvert \tilde{v}_x \rvert + \lvert \bar{v}^X_x \rvert \lvert \tilde{v} \rvert \right) \, dx \\
& \leq C \left( \int_\mathbb{R} \tilde{u}_{xx}^2 \, dx + \left( 1 + \lVert \tilde{v} \rVert_{L^\infty}\right) \int_\mathbb{R} \tilde{v}_x^2 \, dx + \int_\mathbb{R} \bar{v}^X_x  \tilde{v}^2 \, dx \right) \\
& \leq C \int_\mathbb{R} \bigg( \frac{\bar{v}^X_x}{\sqrt{\delta_S}} \bigg\lvert \tilde{p}(v) - \tilde{p}(\bar{v}^X) - \frac{\tilde{u}}{2C_*} \bigg\rvert^2 + \bar{v}^X_x \tilde{u}^2 + \tilde{v}_x^2 + \tilde{u}_{xx}^2 \bigg) \, dx.
\end{split}
\]
For the terms $\mathcal{U}_2$ and $\mathcal{U}_3$, we use the bounds $\lvert \bar{u}^X_{xx} \rvert, \lvert \bar{u}^X_x \rvert, \lvert \bar{\phi}^X_x \rvert \leq C \bar{v}^X_x \leq C \delta_S^2$ and \eqref{v^2} to obtain
\[
\begin{split}
\lvert \mathcal{U}_2 \rvert &  \leq C \int_\mathbb{R} \lvert \tilde{u}_{xx} \rvert \left( \lvert \tilde{u}_{xx} \rvert + \lvert \tilde{v}_x \rvert \lvert \tilde{u}_x \rvert + \lvert \bar{v}^X_x \rvert \lvert \tilde{u}_x \rvert \right) \, dx +  C \int_\mathbb{R} \lvert \bar{v}^X_x \rvert \lvert \tilde{u}_{xx} \rvert \left( \lvert \tilde{v} \rvert +  \lvert \tilde{v}_x \rvert + \lvert \tilde{v}_x \rvert  \lvert \tilde{v} \rvert \right) \, dx \\
& \leq C \left( \int_\mathbb{R} \tilde{u}_{xx}^2 \, dx + \left( \delta_S^2 + \lVert \tilde{v}_x \rVert_{L^\infty} \right) \int_\mathbb{R}  \tilde{u}_x^2 \, dx \right) \\
& \quad + C \left( \int_\mathbb{R} \bar{v}^X_x \tilde{v}^2 \, dx + \left( \lVert \tilde{v} \rVert_{L^\infty} + 1 \right) \int_\mathbb{R} \tilde{v}_x^2 \, dx \right) \\
& \leq C \int_\mathbb{R} \bigg( \frac{\bar{v}^X_x}{\sqrt{\delta_S}} \bigg\lvert \tilde{p}(v) - \tilde{p}(\bar{v}^X) - \frac{\tilde{u}}{2C_*} \bigg\rvert^2 + \bar{v}^X_x \tilde{u}^2 + \tilde{v}_x^2 + \tilde{u}_x^2 + \tilde{u}_{xx}^2 \bigg) \, dx
\end{split}
\]
and
\[
\begin{split}
\lvert \mathcal{U}_3 \rvert & \leq C \int_\mathbb{R} \lvert \tilde{u}_{xx} \rvert \left( \lvert \tilde{\phi}_x \rvert + \lvert \bar{v}^X_x \rvert \lvert \tilde{v} \rvert \right) \, dx \\
& \leq  C \int_\mathbb{R} \bigg( \frac{\bar{v}^X_x}{\sqrt{\delta_S}} \bigg\lvert \tilde{p}(v) - \tilde{p}(\bar{v}^X) - \frac{\tilde{u}}{2C_*} \bigg\rvert^2 + \bar{v}^X_x \tilde{u}^2 + \tilde{u}_{xx}^2 + \tilde{\phi}_x^2 \bigg) \, dx.
\end{split}
\]
By applying Young's inequality and the H\"oder inequality, we obtain the bound
\[
\begin{split}
\lvert \mathcal{U}_4 \rvert & \leq C \delta_S \lvert \dot{X} \rvert^2 + \frac{C}{\delta_S} \left( \int_\mathbb{R} \bar{v}^X_x \lvert ( u - \bar{u}^X)_x \rvert \, dx \right)^2 \\
& \leq C \delta_S \lvert \dot{X} \rvert^2 + \frac{C}{\delta_S} \left( \int_\mathbb{R} \bar{v}^X_x \, dx \right) \left( \int_\mathbb{R} \bar{v}^X_x \lvert \tilde{u}_x \rvert^2 \, dx \right) \\
& \leq C \left( \delta_S \lvert \dot{X} \rvert^2 + \int_\mathbb{R} \tilde{u}_x^2 \, dx \right).
\end{split}
\]
Thus, we have
\begin{equation} \label{g'L1}
\int_0^\infty \lvert g'(t) \rvert \, dt = \int_0^\infty \bigg\lvert \frac{d}{dt} \int_\mathbb{R} \left( \tilde{v}_x^2 + \tilde{u}_x^2 + \tilde{\phi}_x^2 \right) \, dx \bigg\rvert \, dt \leq C \lVert (v_0-\bar{v},u_0-\bar{u}) \rVert_{H^2}^2,
\end{equation}
by \eqref{MainR}. Collecting \eqref{vxuxL1}, \eqref{phixL1}, and \eqref{g'L1}, we have that $g \in W^{1,1}(\mathbb{R}_+)$. Therefore, by Gagliardo-Nirenberg interpolation inequality, we obtain
\[
\begin{split}
& \lVert \tilde{v} (t,\cdot) \rVert_{L^\infty} + \lVert \tilde{u} (t,\cdot) \rVert_{L^\infty} + \lVert \tilde{\phi} (t,\cdot) \rVert_{L^\infty} \\
& \quad \leq C \left( \lVert \tilde{v} \rVert_{L^2}^{1/2} \lVert \tilde{v}_x \rVert_{L^2}^{1/2} + \lVert \tilde{u} \rVert_{L^2}^{1/2} \lVert \tilde{u}_x \rVert_{L^2}^{1/2} + \lVert \tilde{\phi} \rVert_{L^2}^{1/2} \lVert \tilde{\phi}_x \rVert_{L^2}^{1/2} \right) \rightarrow 0 \quad \text{as} \ t \rightarrow +\infty.
\end{split}
\]
Furthermore, by the estimate \eqref{Xest}, we have
\[
\lim_{t \to +\infty} \lvert \dot{X} (t) \rvert \leq C \lim_{t \to +\infty} \lVert \tilde{u} (t,\cdot) \rVert_{L^\infty}  = 0,
\]
which completes the proof of Theorem \ref{Main}.

\bigskip

\end{document}